\newcommand{\gra}[1]{\raisebox{-.4cm}{\includegraphics[height=1cm]{BF#1.eps}}}
\newcommand{\graa}[1]{\raisebox{-.6cm}{\includegraphics[height=1.5cm]{BF#1.eps}}}
\newcommand{\grb}[1]{\raisebox{-.8cm}{\includegraphics[height=2cm]{BF#1.eps}}}
\newcommand{\grc}[1]{\raisebox{-1.3cm}{\includegraphics[height=3cm]{BF#1.eps}}}
\begin{document}

\theoremstyle{plain}
\newtheorem{theorem}{Theorem~}[section]
\newtheorem*{main}{Main Theorem~}
\newtheorem{lemma}[theorem]{Lemma~}
\newtheorem{proposition}[theorem]{Proposition~}
\newtheorem{corollary}[theorem]{Corollary~}
\newtheorem{definition}{Definition~}[section]
\newtheorem{notation}{Notation~}[section]
\newtheorem{example}{Example~}[section]
\newtheorem*{remark}{Remark~}
\newtheorem*{question}{Question}
\newtheorem*{claim}{Claim}
\newtheorem*{ac}{Acknowledgement}
\newtheorem*{conjecture}{Conjecture~}
\renewcommand{\proofname}{\bf Proof}

\title{Composed inclusions of $A_3$ and $A_4$ subfactors}
\author{Zhengwei Liu}
\date{\today}
\maketitle

\begin{abstract}
In this article, we classify all standard invariants that can arise from a composed inclusion of an $A_3$ with an $A_4$ subfactor.
More precisely, if $\mathcal{N}\subset \mathcal{P}$ is the $A_3$ subfactor and $\mathcal{P}\subset\mathcal{M}$ is the $A_4$ subfactor, then only four standard invariants can arise from the composed inclusion $\mathcal{N}\subset\mathcal{M}$.
This answers a question posed by Bisch and Haagerup in 1994.
The techniques of this paper also show that there are exactly four standard invariants for the composed inclusion of two $A_4$ subfactors.
\end{abstract}

\section{Introduction}
Jones classified the indices of subfactors of type II$_1$ in \cite{Jon83}. It is given by
$$\{4\cos^2(\frac{\pi}{n}), n=3,4,\cdots \}\cup [4,\infty].$$

For a subfactor $\mathcal{N}\subset\mathcal{M}$ of type II$_1$ with finite index,
the Jones tower is a sequence of factors
obtained by repeating $the$ $basic$ $construction$. The system of higher relative commutants
is called the standard invariant of the subfactor \cite{GHJ,Pop90}.
A subfactor is said to be finite depth, if its $principal~graph$ is finite. The standard invariant is a complete invariant of a finite depth subfactor \cite{Pop90}. So we hope to classify the standard invariants of subfactors.



Subfactor planar algebras were introduced by Jones as a diagrammatic axiomatization of the standard invariant \cite{JonPA}.
Other axiomatizations are known as Ocneanu's paragroups \cite{Ocn88} and Popa's $\lambda$-lattices \cite{Pop95}.
Each subfactor planar algebra contains a Temperley-Lieb planar subalgebra which is generated by the sequence of Jones projections.
When the index of the Temperley-Lieb subfactor planar algebra is $4\cos^2(\frac{\pi}{n+1})$, its principal graph is the Coxeter-Dynkin diagram $A_n$.

Given two subfactors $\mathcal{N}\subset\mathcal{P}$ and $\mathcal{P}\subset\mathcal{M}$, the composed inclusion $\mathcal{N}\subset\mathcal{P}\subset\mathcal{M}$ tells the $relative~position$ of these factors. The $group~type~inclusion$ $\mathcal{R}^H\subset\mathcal{R}\subset\mathcal{R}\rtimes K$ for outer actions of finite groups $H$ and $K$ on the hyperfinite factor $\mathcal{R}$ of type II$_1$ was discussed by Bisch and Haagerup \cite{BisHaa96}.

We are interested in studying the composed inclusion of two subfactors of type A, i.e., a subfactor $\mathcal{N}\subset\mathcal{M}$ with an intermediate subfactor $\mathcal{P}$, such that the principal graphs of $\mathcal{N}\subset\mathcal{P}$ and $\mathcal{P}\subset\mathcal{M}$ are type $A$ Coxeter-Dynkin diagrams.
From the planar algebra point of view, the planar algebra of $\mathcal{N}\subset\mathcal{M}$ is a composition of two Temperley-Lieb subfactor planar algebras.
Their tensor product is well known \cite{JonPA}\cite{Liuex}. Their $free~product$ as a minimal composition is discovered by Bisch and Jones $\cite{BisJonFC}$, called the Fuss-Catalan subfactor planar algebra.
In general, the composition of two Temperley-Lieb subfactor planar algebras is still not understood.

The easiest case is the composed inclusion of two $A_3$ subfactors. In this case, the index is 4, and such subfactors are extended type $D$ \cite{GHJ}\cite{Pop94}. They also arise as a group type inclusion $\mathcal{R}^H\subset\mathcal{R}\subset\mathcal{R}\rtimes K$, where $H\cong\mathbb{Z}_2$ and $K\cong\mathbb{Z}_2$.

The first non-group-like case is the composed inclusion of an $A_3$ with an $A_4$ subfactor. Its principal graph is computed by Bisch and Haagerup in their unpublished manuscript in 1994.
Either it is a free composed inclusion, then its planar algebra is Fuss-Catalan; or its principal graph is a Bisch-Haagerup fish graph as
$$\gra{principalgraph8}.$$
Then they asked whether this sequence of graphs are the principal graphs of subfactors.
The first Bisch-Haagerup fish graph is the principal graph of the tensor product of an $A_3$ and an $A_4$ subfactor. By considering the flip on $\mathcal{R}\otimes \mathcal{R}$, Bisch and Haagerup constructed a subfactor whose principal graph is the second Bisch-Haagerup fish graph. Later Izumi generalised the Haagerup factor \cite{AsaHaa} while considering endomorphisms of Cuntz algebras \cite{Izu01}, and he constructed an Izumi-Haagerup subfactor for the group $\mathbb{Z}_4$ in his unpublished notes, also called the $3^{\mathbb{Z}_4}$ subfactor \cite{PetPen}. The third Bisch-Haagerup fish graph is the principal graph of an intermediate subfactor of a reduced subfactor of the dual of $3^{\mathbb{Z}_4}$. It turns out the even half is Morita equivalent to the even half of $3^{\mathbb{Z}_4}$.

In this paper, we will prove the following results.
\begin{theorem}\label{main1}
There are exactly four subfactor planar algebras as a composition of an $A_3$ with an $A_4$ planar algebra.
\end{theorem}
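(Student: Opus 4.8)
The plan is to realize every such planar algebra as an extension of the Fuss-Catalan planar algebra and to classify the possible extensions. Since $\mathcal{N}\subset\mathcal{P}\subset\mathcal{M}$ has the intermediate subfactor $\mathcal{P}$, the subfactor planar algebra $\mathcal{S}$ of $\mathcal{N}\subset\mathcal{M}$ contains the biprojection of $\mathcal{P}$, and by Bisch--Jones the planar subalgebra it generates is a copy of the Fuss-Catalan planar algebra $FC$ with the two colored moduli $\delta_1=2\cos(\pi/4)=\sqrt2$ and $\delta_2=2\cos(\pi/5)=\tfrac{1+\sqrt5}{2}$ coming from the $A_3$ and $A_4$ inclusions. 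The total index is then $\delta_1^2\delta_2^2=3+\sqrt5$, and the problem becomes: classify the subfactor planar algebras $\mathcal{S}\supseteq FC$ of this index whose principal graph is one of the Bisch--Haagerup fish graphs, where the free case $\mathcal{S}=FC$ (the first graph) is already understood.

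First I would compare the box spaces of $\mathcal{S}$ with those of $FC$. Using the principal graphs computed by Bisch and Haagerup, the dimensions $\dim\mathcal{S}_k$ are determined, and one locates the smallest $k$ at which $\mathcal{S}_k\supsetneq FC_k$. This produces a single new generator $S$, which I would normalize to be a self-adjoint lowest-weight vector: annihilated by every Fuss-Catalan capping into a lower box space and an eigenvector of the rotation. The aim of this step is to prove that $\mathcal{S}$ is generated by $FC$ together with $S$, so that the whole planar algebra is pinned down by the relations that $S$ satisfies.

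Next I would extract those relations. Self-adjointness and the rotational eigenvalue supply the linear data; the essential nonlinear data is a quadratic relation expressing the various ways of joining two copies of $S$ through Fuss-Catalan strings back in terms of $FC$-diagrams and $S$ itself. The $A_3$ condition on $\mathcal{N}\subset\mathcal{P}$ and the $A_4$ condition on $\mathcal{P}\subset\mathcal{M}$ enter as constraints that force the admissible structure constants, after which sphericality, unitarity, and positive semidefiniteness of the trace form cut the finitely many formal solutions down to a short list. I expect this to leave exactly four admissible sets of structure constants.

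Finally I would check that each surviving solution is genuinely realized and that the relations suffice to evaluate every closed diagram by a jellyfish-style algorithm, so that each solution defines a distinct subfactor planar algebra; these four are then matched with the examples of the introduction, namely $FC$ itself, the tensor product of an $A_3$ with an $A_4$ planar algebra, the Bisch--Haagerup flip construction, and the planar algebra arising from $3^{\mathbb{Z}_4}$. The main obstacle is the middle step: deriving a \emph{complete} set of relations for $S$ and proving simultaneously that they admit no more than four positive solutions and are rich enough to evaluate all diagrams. In other words, the difficulty is to bound the classification from above while guaranteeing consistency and positivity of each candidate, and this is where the rigidity of the interaction between $S$ and the Fuss-Catalan structure must be exploited in full.
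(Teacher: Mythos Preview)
Your overall architecture is right and matches the paper: identify the Fuss-Catalan subalgebra generated by the biprojection, find a single new lowest-weight self-adjoint totally uncappable generator $S$ (at depth $2n$ for the $n$th fish), pin down its relations, and use a jellyfish-style argument for uniqueness once the admissible cases are isolated. Where your proposal has a genuine gap is the middle step, and it is not merely a matter of effort.

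You write that ``sphericality, unitarity, and positive semidefiniteness of the trace form cut the finitely many formal solutions down to a short list.'' But the obstruction you need is not among finitely many structure constants for a fixed graph: there are infinitely many candidate principal graphs (one fish for each $n\ge1$), and for each $n$ the abstract relations you list ($S^*=S$, $S^2=f_{2n}$, uncappability, $\rho(S)=\omega S$) are \emph{consistent} at the level of formal skein theory. Positivity alone, stated abstractly, does not give a uniform obstruction for all $n\ge4$. The paper's essential extra ingredient, which your proposal does not mention, is the \emph{embedding theorem}: one embeds $\mathcal{S}$ into the graph planar algebra of the (dual) principal graph, determines the image of the biprojection via the refined principal graph, and then solves for the generator $R=\omega_0\delta^{-1}\mathcal{F}(S)$ explicitly as a linear combination of loops. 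The relations force an inductive recursion on loop coefficients, and the key computation shows that two specific families of coefficients are periodic in $n$ (periods $5$ and $20$); comparing these with other coefficients yields a contradiction for every $n\ge4$ simultaneously. Without passing to the graph planar algebra and running this loop-coefficient recursion, there is no mechanism in your outline to exclude the infinite tail of fish graphs.

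For $n=1,2,3$ your plan is essentially what the paper does: existence from the known constructions, and uniqueness via Fuss-Catalan jellyfish relations derived from Wenzl-type formulas for the minimal projections built from $S$ and $R$.
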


This answers the question posed by Bisch and Haagerup.
When $n\geq 4$, the $n_{th}$ Bisch-Haagerup fish graph is not the principal graph of a subfactor.

\begin{theorem}\label{main2}
There are exactly four subfactors planar algebras as a composition of two $A_4$ planar algebras.
\end{theorem}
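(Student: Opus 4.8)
The plan is to follow the strategy used to establish Theorem~\ref{main1}, since the structural setup is identical: a composed inclusion $\mathcal{N}\subset\mathcal{P}\subset\mathcal{M}$ with $\mathcal{N}\subset\mathcal{P}$ and $\mathcal{P}\subset\mathcal{M}$ both of type $A_4$ produces a subfactor planar algebra $P$ equipped with a biprojection $b\in P_{2,+}$ encoding the intermediate subfactor $\mathcal{P}$. The lower and upper planar algebras generated by $b$ together with the Jones projections are the two Temperley-Lieb planar algebras of index $\phi^2$, where $\phi=\tfrac{1+\sqrt5}{2}$. By the Bisch-Jones description of the free composition, the Fuss-Catalan planar algebra $FC$ built from these two $A_4$ factors embeds in $P$ as a planar subalgebra, so the classification reduces to understanding how $P$ can sit over $FC$.

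First I would compare the graded dimensions $\dim P_{n,\pm}$ with the Fuss-Catalan numbers $\dim FC_{n,\pm}$. If equality holds for all $n$ then $P=FC$ and we obtain the free composed inclusion; this is the first of the four planar algebras. Otherwise there is a smallest box space in which $P$ strictly contains $FC$, and I would isolate a lowest-weight generator $S$ living there, normalised to be a rotational eigenvector orthogonal to $FC$. The principal graph of $P$ is then constrained to be one of the fish-type graphs analogous to the Bisch-Haagerup fish graphs, with the location of $S$ pinned down by the first vertex at which this graph deviates from the Fuss-Catalan graph.

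The technical heart is to write down the skein (evaluation) relations that $S$ must satisfy: the $A_4$ Temperley-Lieb relation forces the relevant Jones-Wenzl projection to vanish in both the lower and the upper strands, the biprojection relations for $b$ constrain how $S$ interacts with capping and with $b$, and sphericality together with the trace give quadratic identities among the structure constants obtained by resolving $S\cdot S$ inside $FC$. Carrying out this reduction I expect the system to be rigid, admitting only finitely many consistent solutions; a positivity check on the resulting trace (equivalently, the requirement that the candidate planar algebra be a genuine $C^*$-planar algebra) then eliminates all but three nontrivial solutions, which together with $FC$ give exactly four. Finally I would exhibit each of the four by an explicit construction — the free product, the tensor product of two $A_4$ subfactors, and the two further examples analogous to those appearing in Theorem~\ref{main1} — so that each surviving solution is realised and the count is exact.

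The main obstacle will be the generator analysis in the absence of the $\mathbb{Z}_2$ self-duality that the $A_3$ factor supplied in Theorem~\ref{main1}: with both factors equal to $A_4$, the lowest-weight space is larger and the generator $S$ carries more free parameters, so the quadratic skein relations and the positivity obstruction that rule out the higher fish graphs must be recomputed directly inside the larger $A_4\otimes A_4$ Fuss-Catalan algebra rather than imported from the $A_3$-$A_4$ computation. Showing that this over-determined system again collapses to exactly the three nontrivial solutions — that is, that the fourth and all higher fish graphs are genuinely obstructed — is where the real work lies.
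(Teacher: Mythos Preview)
Your high-level outline is close to the paper's strategy --- Fuss-Catalan as a planar subalgebra, a totally uncappable lowest-weight generator orthogonal to it, classification of the possible refined principal graphs --- but you misidentify both the main difficulty and the main shortcut, and your list of constructions is not quite right.

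The key point you miss is that the $A_4$--$A_4$ case is not harder than the $A_3$--$A_4$ case; it is essentially the \emph{same} computation. The paper first pins down the refined principal graph: it must be one of a discrete family $\Omega_n$ (Section~5). Then comes the crucial observation: $\Omega_n$ shares its $\mathcal{N}/\mathcal{M}$-coloured and $\mathcal{P}$-coloured vertices, and the dimension vector on them, with the refined \emph{dual} principal graph of the $n$th Bisch-Haagerup fish (Lemma~\ref{refined dual principal graph}). The generator $T$ satisfies relations (1)--(4) identical in form to those of $R$ in Proposition~\ref{relation of R}, and solving for $T$ in the graph planar algebra of $\Omega_n$ is the same loop-coefficient calculation as solving for $R$, with the two-click rotation $\rho$ replaced by the Fourier transform $\mathcal{F}$. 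So all of the coefficient lemmas (Lemmas~\ref{d3}--\ref{dfinal}) and the obstruction for $n\ge 4$ are imported wholesale, not recomputed. Your expectation that ``the lowest-weight space is larger and the generator $S$ carries more free parameters'' is wrong: the space is still one-dimensional, and the inductive determination of coefficients proceeds exactly as in Lemma~\ref{key lemma}.

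Two smaller corrections. The obstruction that kills $n\ge 4$ is not a positivity failure but an inconsistency in the rotation eigenvalue condition (4): coefficients computed from the two ends of the graph must agree in modulus and do not. And the three non-Fuss-Catalan examples are not built directly (in particular not as a tensor product of two $A_4$'s); they arise as reduced subfactors of the three $A_3$--$A_4$ compositions, taken at the vertex $a_3$ in the dual principal graph.
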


Now we sketch the ideas of the proof. Following the spirit of \cite{Pet10} \cite{BMPS}, if the principal graph of a subfactor planar algebra is the $n_{th}$ Bisch-Haagerup fish graph, then
by the embedding theorem \cite{JonPen}, the planar algebra is embedded in the $graph~planar~algebra$ \cite{Jon00}. 
By the existence of a ``normalizer" in the Bisch-Haagerup fish graph, there will be a $biprojection$ in the subfactor planar algebra, and the planar subalgebra generated by the biprojection is Fuss-Catalan. The image of the biprojecion is determined by the unique possible $refined~principal~graph$, see Definition \ref{refined principal graph of PA} and Theorem \ref{embedding P}. Furthermore the planar algebra is decomposed as an $annular$ $Fuss-Catalan$ module, similar to the Temperely-Lieb case, \cite{Jonann,JonRez}. Comparing the principal graph of this Fuss-Catalan subfactor planar algebra and the Bisch-Haagerup fish graph, there is a lowest weight vector in the orthogonal complement of Fuss-Catalan. It will satisfy some specific relations,
and there is a ``unique" potential solution of these relations in the graph planar algebra.

The similarity of all the Bisch-Haagerup fish graphs admits us to compute the coefficients of loops of the potential solutions simultaneously.  The coefficients of two sequences of loops has periodicity 5 and 20 with respect to $n$. Comparing with the coefficients of the other two sequences of loops, we will rule out the all the Bisch-Haagerup fish graphs, except the first three.

The existence of the first three follows from the construction mentioned above. The uniqueness follows from the ``uniqueness" of the potential solution.

Furthermore we consider the composition of two $A_4$ planar algebras in the same process. In this list, there are exactly four subfactor planar algebras.
They all arise from reduced subfactors of the four compositions of $A_3$ with $A_4$.

The skein theoretic construction of these subfactor planar algebras could be realized by the $Fuss-Catalan~Jellyfish~relations$ of a generating vector space.

In the meanwhile, Izumi, Morrison and Penneys have ruled out the $4_{th}-10_{th}$ Bisch-Haagerup fish graphs using a different method, see \cite{IMD}.



\begin{ac}
I would like to thank my advisor Vaughan Jones and Dietmar Bisch for a fruitful discussion about this problem and to thank Corey Jones and Jiayi Jiang for computations.
\end{ac}
\section{Background}
We refer the reader to \cite{Jon12} for the definition of planar algebras.
\begin{notation}
In a planar tangle, we use a thick string with a number $k$ to indicate $k$ parallel strings.
\end{notation}
A subfactor planar algebra $\mathscr{S}=\{\mathscr{S}_{n,\pm}\}_{n\in \mathbb{N}_0}$ will be a spherical planar *-algebra over $\mathbb{C}$, such that $\dim(\mathscr{S}_{n,\pm})<\infty$, for all $n$, $\dim(\mathscr{S}_{0,\pm})=1$, and the Markov trace induces a positive definite inner product of $\mathscr{S}_{n,\pm}$ \cite{Jon12}\cite{JonPA}.
Note that $\dim(\mathscr{S}_{0,\pm})=1$, then $\mathscr{S}_{0,\pm}$ is isomorphic to $\mathbb{C}$ as a field. It is spherical means
$$\gra{lefttr1}=\gra{righttr1}$$
as a number in $\mathbb{C}$, for any $x\in\mathscr{S}_{1,\pm}$.
The inner product of $\mathscr{S}_{n,\pm}$ defined as
$$<y,z>=tr(z^*y)=\grb{tryz},$$
the Markov trace of $z^*y$, for any $y,z\in \mathscr{S}_{n,\pm}$, is positive definite.

It is called a subfactor planar algebra, because it is the same as the standard invariant of a finite index extremal subfactor $\mathcal{N}$ of a factor $\mathcal{M}$ of type II$_1$ \cite{JonPA}.

A subfactor planar algebra is always unital, where unital means any tangle without inner discs can be identified as a vector of $\mathscr{S}$. Note that $\mathscr{S}_{0,\pm}$ is isomorphic to $\mathbb{C}$, the (shaded or unshaded) empty diagram can be identified as the number $1$ in $\mathbb{C}$. The value of a (shaded or unshaded) closed string is $\delta$.
And $\delta^{-1}\gra{notation1}$ in $\mathscr{S}_{n,+}$, denoted by $e_{n-1}$, is the Jones projection $e_{\mathcal{M}_{n-3}}$, for $n\geq2$.
The graded algebra generated by Jones projections is the smallest subfactor planar algebra, well known as the Temperley-Lieb algebra, denoted by $TL(\delta)$. Its vector can be written as a linear sum of tangles without inner discs.

\begin{notation}
We may identify $\mathscr{S}_{-,m}$ as a subspace of $\mathscr{S}_{+,m+1}$ by adding one string to the left.
\end{notation}

\begin{definition}
Let us define the ($1$-string) coproduct of $x\in \mathscr{S}_{i,\pm}$ and $y\in\mathscr{S}_{j,\pm}$, for $i,j\geq 1$, to be
$$x*y=\grb{coproductXY},$$ whenever the shading matched.
\end{definition}

Let us recall some facts about the embedding theorem. Then we generalize these results to prove the embedding theorem for an intermediate subfactor in the next section.

\subsection{Principal graphs}
Suppose $\mathcal{N}\subset \mathcal{M}$ is an irreducible subfactor of type II$_1$ with finite index.
Then $L^2(\mathcal{M})$ forms an irreducible $(\mathcal{N},\mathcal{M})$ bimodule, denoted by $X$. Its conjugate $\overline{X}$ is an $(\mathcal{M},\mathcal{N})$ bimodule.
The tensor products $X\otimes\overline{X}\otimes\cdots\otimes \overline{X}$, $X\otimes\overline{X}\otimes\cdots\otimes X$, $\overline{X}\otimes X\otimes\cdots\otimes X$ and $\overline{X}\otimes X\otimes\cdots\otimes\overline{X}$ are decomposed into irreducible bimodules over $(\mathcal{N},\mathcal{N})$, $(\mathcal{N},\mathcal{M})$, $(\mathcal{M},\mathcal{N})$ and $(\mathcal{M},\mathcal{M})$ respectively, where $\otimes$ is Connes fusion of bimodules.
\begin{definition}\label{def principal graph}
The principal graph of the subfactor $\mathcal{N}\subset \mathcal{M}$ is a bipartite graph. Its vertices are equivalent classes of irreducible bimodules over $(\mathcal{N},\mathcal{N})$ and $(\mathcal{N},\mathcal{M})$ in the above decomposed inclusion.
The number of edges connecting two vertices, a $(\mathcal{N},\mathcal{N})$ bimodule $Y$ and a $(\mathcal{N},\mathcal{M})$ bimodule $Z$, is the multiplicity of the equivalent class of $Z$ as a sub bimodule of $Y\otimes \overline{X}$.
The vertex corresponds to the $(\mathcal{N},\mathcal{N})$ bimodule $L^2(\mathcal{N})$ is marked by a star sign.
The dimension vector of the bipartite graph is a function $\lambda$ from the vertices of the graph to $\mathbb{R}^+$. Its value at a vertex is defined to be the dimension of the corresponding bimodule.

The dual principal graph is defined in a similar way.
\end{definition}

\begin{remark}
By Frobenius reciprocity theorem, the multiplicity of $Z$ in $Y\otimes \overline{X}$ equals to the multiplicity of $Y$ in $Z\otimes X$.
\end{remark}


\subsection{The standard invariant}
For an irreducible subfactor $\mathcal{N}\subset\mathcal{M}$ of type II$_1$ with finite index,
the Jones tower is a sequence of factors
$\mathcal{N}\subset \mathcal{M}\subset \mathcal{M}_1 \subset \mathcal{M}_2\subset \cdot\cdot\cdot$
obtained by repeating the~basic~construction. The system of higher relative commutants
$$\begin{array}{ccccccccc}
\mathbb{C}=\mathcal{N}'\cap\mathcal{N}&\subset&\mathcal{N}'\cap\mathcal{M}&\subset&\mathcal{N}'\cap\mathcal{M}_1&\subset&\mathcal{N}'\cap\mathcal{M}_2&\subset&\cdots\\
&&\cup&&\cup&&\cup&&\\
&&\mathrm{C}=\mathcal{M}'\cap\mathcal{M}&\subset&\mathcal{M}'\cap\mathcal{M}_1&\subset&\mathcal{M}'\cap\mathcal{M}_2&\subset&\cdots\\
\end{array}$$
is called the standard invariant of the subfactor \cite{GHJ}\cite{Pop90}.

There is a natural isomorphism between homomorphisms of bimodules $X\otimes\overline{X}\otimes\cdots\otimes \overline{X}$, $X\otimes\overline{X}\otimes\cdots\otimes X$, $\overline{X}\otimes X\otimes\cdots\otimes X$ and $\overline{X}\otimes X\otimes\cdots\otimes\overline{X}$ and the standard invariant of the subfactor \cite{Bis97}. Then the equivalent class of a minimal projection corresponds to an irreducible bimodule. So the principal graph tells how minimal projections are decomposed after the inclusion. Then we may define the principal graph for a subfactor planar algebra without the presumed subfactor.

\begin{proposition}\label{frob e 1}
Suppose $\mathscr{S}$ is a subfactor planar algebra. If $P_1,~P_2$ are minimal projections of $\mathscr{S}_{m,+}$. Then $P_1e_{m+1},~P_2e_{m+1}$ are minimal projections of $\mathscr{S}_{m+2,+}$.
Moreover $P_1$ and $P_2$ are equivalent in $\mathscr{S}_{m,+}$ if and only if $P_1e_{m+1}$ and $P_2e_{m+1}$ are equivalent in $\mathscr{S}_{m+2,+}$.
\end{proposition}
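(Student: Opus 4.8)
The plan is to deduce both assertions from one structural fact: the map $\phi\colon \mathscr{S}_{m,+}\to \mathscr{S}_{m+2,+}$, $\phi(x)=xe_{m+1}$, is a $*$-isomorphism of $\mathscr{S}_{m,+}$ onto the corner $e_{m+1}\mathscr{S}_{m+2,+}e_{m+1}$. Once this is in hand, the proposition reduces to two elementary facts about minimal projections in finite-dimensional $C^*$-algebras, applied with $A=\mathscr{S}_{m+2,+}$ and $p=e_{m+1}$.

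First I would check that $\phi$ is a well-defined $*$-homomorphism. Since $P_i$, and more generally any $x\in\mathscr{S}_{m,+}$, occupies the first $m$ strings while $e_{m+1}$ acts on strings $m+1,m+2$, the two commute; together with $e_{m+1}^2=e_{m+1}$ and $e_{m+1}^*=e_{m+1}$ this makes $\phi$ multiplicative and $*$-preserving, with image inside $e_{m+1}\mathscr{S}_{m+2,+}e_{m+1}$. Injectivity follows from faithfulness of the Markov trace: if $xe_{m+1}=0$ then $\mathrm{tr}(x^*x\,e_{m+1})=0$, and the Markov property $\mathrm{tr}(w\,e_{m+1})=\delta^{-1}\mathrm{tr}(w)$ for $w\in\mathscr{S}_{m+1,+}$ gives $\delta^{-1}\mathrm{tr}(x^*x)=0$, hence $x=0$. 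The one genuinely diagrammatic step is surjectivity onto the corner: for any $z\in\mathscr{S}_{m+2,+}$, sandwiching $z$ between two copies of $e_{m+1}$ caps off the last two strings of $z$ at both the top and the bottom, leaving an element $y\in\mathscr{S}_{m,+}$ together with one residual cap--cup, so that $e_{m+1}ze_{m+1}=ye_{m+1}=\phi(y)$. Hence $e_{m+1}\mathscr{S}_{m+2,+}e_{m+1}=\mathscr{S}_{m,+}e_{m+1}$ and $\phi$ is onto the corner.

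With $\phi$ identified as a $*$-isomorphism onto $pAp$, the two parts follow formally. For minimality, note that $\phi(P_i)=P_ie_{m+1}$ is a minimal projection of $pAp$, being the image of the minimal projection $P_i$ under $\phi$, and that $P_ie_{m+1}\le e_{m+1}=p$; the general fact that a projection $q\le p$ which is minimal in $pAp$ is already minimal in $A$ (because $qAq=q(pAp)q=\mathbb{C}q$ when $q\le p$) then shows $P_ie_{m+1}$ is minimal in $\mathscr{S}_{m+2,+}$. For the equivalence, I would use that for projections $q_1,q_2\le p$ any partial isometry $v\in A$ with $v^*v=q_1$ and $vv^*=q_2$ automatically satisfies $v=q_2vq_1=pvp\in pAp$, so that Murray--von Neumann equivalence in $A$ and in $pAp$ coincide for subprojections of $p$. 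Combining this with the isomorphism $\phi$ gives the chain $P_1\sim P_2 \text{ in } \mathscr{S}_{m,+}\iff \phi(P_1)\sim\phi(P_2)\text{ in }pAp\iff P_1e_{m+1}\sim P_2e_{m+1}\text{ in }\mathscr{S}_{m+2,+}$.

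The only step that needs real care is the corner identity $e_{m+1}\mathscr{S}_{m+2,+}e_{m+1}=\mathscr{S}_{m,+}e_{m+1}$, i.e. checking pictorially that conjugation by $e_{m+1}$ collapses the rightmost two strings while keeping track of the $\delta$-normalizations; this is exactly the planar-algebra incarnation of the basic construction. Everything after that is formal finite-dimensional $C^*$-algebra, so I expect no further obstacle.
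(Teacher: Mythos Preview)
Your argument is correct, and it is the standard one: identify $\mathscr{S}_{m,+}$ with the corner $e_{m+1}\mathscr{S}_{m+2,+}e_{m+1}$ via $x\mapsto xe_{m+1}$, and then read off both assertions from elementary facts about minimal projections and Murray--von Neumann equivalence under a corner. The only cosmetic slip is the constant in the Markov relation: with the paper's normalization $e_{m+1}=\delta^{-1}(\text{cap--cup})$ one gets $\mathrm{tr}(we_{m+1})=\delta^{-2}\mathrm{tr}(w)$ for $w\in\mathscr{S}_{m+1,+}$, not $\delta^{-1}$; this of course does not affect the injectivity argument.

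As for comparison with the paper: there is nothing to compare. The paper states this proposition in the background section without proof, treating it as a well-known consequence of the basic construction (and later, at Proposition~\ref{equ pro}, simply cites it as ``the same'' statement in the biprojection setting). Your write-up supplies exactly the proof one would expect a reader to fill in, including the diagrammatic verification of $e_{m+1}\mathscr{S}_{m+2,+}e_{m+1}=\mathscr{S}_{m,+}e_{m+1}$, which is precisely the corner identity underlying the Jones basic construction.
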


\begin{proposition}[Frobenius Reciprocity]\label{frob e 2}
Suppose $\mathscr{S}$ is a subfactor planar algebra. If $P$ is a minimal projection of $\mathscr{S}_m$ and $Q$ is a minimal projection of $\mathscr{S}_{m+1}$, then $\dim(P\mathscr{S}_{m+1}Q)=\dim(Pe_{m+1}\mathscr{S}_{m+2}Q)$.
\end{proposition}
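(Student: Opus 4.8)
The plan is to realize the two corners as the images of an explicit pair of planar maps between $\mathscr{S}_{m+1}$ and $\mathscr{S}_{m+2}$, and to show that this pair restricts to mutually inverse bijections between $P\mathscr{S}_{m+1}Q$ and $Pe_{m+1}\mathscr{S}_{m+2}Q$. Throughout I include $\mathscr{S}_m$ into $\mathscr{S}_{m+1}$ and $\mathscr{S}_{m+2}$ by adding through-strings on the right, so that $P$ is supported on the first $m$ strings, $Q$ on the first $m+1$ strings, and $e_{m+1}$ on the last two; in particular $P$ commutes with $e_{m+1}$, so $Pe_{m+1}$ is a genuine projection (a product of two commuting projections, cf.\ Proposition~\ref{frob e 1}), and the corner $Pe_{m+1}\mathscr{S}_{m+2}Q$ is well defined. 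I would define $\Phi\colon\mathscr{S}_{m+1}\to\mathscr{S}_{m+2}$ by $\Phi(x)=e_{m+1}(x\otimes 1)$ (add a string on the right, then cap with the Jones projection), and let $\varepsilon\colon\mathscr{S}_{m+2}\to\mathscr{S}_{m+1}$ be the right partial trace that closes off the last string, setting $\Psi=\delta\varepsilon$.

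First I would verify that the maps respect the corners. Since $x=PxQ$ and $P$ commutes with $e_{m+1}$, a one-move diagram manipulation gives $\Phi(x)=Pe_{m+1}(x\otimes 1)(Q\otimes 1)$, which lies in $Pe_{m+1}\mathscr{S}_{m+2}Q$; hence $\Phi$ maps $P\mathscr{S}_{m+1}Q$ into $Pe_{m+1}\mathscr{S}_{m+2}Q$. Conversely, because $P$ and $Q$ are supported away from the last string, they commute with $\varepsilon$ and pull out of it, so $\Psi$ maps $Pe_{m+1}\mathscr{S}_{m+2}Q$ back into $P\mathscr{S}_{m+1}Q$. I would then establish $\Psi\Phi=\mathrm{id}$ by a straightening isotopy: tracing the string that was capped by $e_{m+1}$ reconnects the cup of $e_{m+1}$ to its cap, recovering $x$ with no closed loop formed, while the factor $\delta^{-1}$ produced by $e_{m+1}$ is cancelled by the $\delta$ in $\Psi=\delta\varepsilon$. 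In particular $\Phi$ is injective on $P\mathscr{S}_{m+1}Q$, which already gives $\dim(P\mathscr{S}_{m+1}Q)\le\dim(Pe_{m+1}\mathscr{S}_{m+2}Q)$.

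The reverse inequality is the crux and is where I expect the main obstacle to lie. For $y\in Pe_{m+1}\mathscr{S}_{m+2}Q$ one has $y=Pe_{m+1}y$, and multiplying on the left by $e_{m+1}$ and using $[P,e_{m+1}]=0$ yields $e_{m+1}y=y$; thus $y$ lies in the range $e_{m+1}\mathscr{S}_{m+2}$ of $\Phi$. What remains is to check that $\Phi\Psi=\mathrm{id}$ on this subspace, equivalently that $\varepsilon$ is injective on $e_{m+1}\mathscr{S}_{m+2}$, in the precise form $e_{m+1}\bigl(\varepsilon(y)\otimes 1\bigr)=\delta^{-1}y$ whenever $e_{m+1}y=y$. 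This is exactly the standard basic-construction identification $\mathscr{S}_{m+1}\cong e_{m+1}\mathscr{S}_{m+2}$, and the verification is a routine but slightly delicate isotopy: one opens the top cap supplied by $e_{m+1}$, observes that tracing the last string and then re-capping merely rebuilds the same cap--cup configuration, and tracks the single surviving factor of $\delta$. The delicate point is keeping the $\delta$-powers and the absence of stray closed loops under control; everything else is formal.

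Granting $\Phi\Psi=\mathrm{id}$ on $Pe_{m+1}\mathscr{S}_{m+2}Q$, the map $\Phi$ restricts to a bijection between the two corners, and the dimension equality follows. I note that one can repackage the injectivity half through the inner product, since the Markov trace gives $\langle\Phi(x),\Phi(x')\rangle=\delta^{-1}\langle x,x'\rangle$, so that $\Phi$ is an isometry up to scale; however, extracting surjectivity onto $Pe_{m+1}\mathscr{S}_{m+2}Q$ still reduces to the same basic-construction identity, so this reformulation does not avoid the essential step.
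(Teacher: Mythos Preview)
Your proposal is correct and follows essentially the same approach as the paper. The paper does not spell out a proof of Proposition~\ref{frob e 2} itself, but when it later proves the refined analogue (Proposition~\ref{Frob}), it does so by exactly the pair of planar maps you describe---add a string and cap with the Jones projection in one direction, contract the rightmost string in the other---and checks that $\phi_1\circ\phi_2$ and $\phi_2\circ\phi_1$ are the identity on the respective corners; your $\Phi,\Psi$ are precisely these $\phi_1,\phi_2$, and the ``delicate'' isotopy you flag is the same one-line diagrammatic argument the paper invokes.
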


By the above two propositions, the Bratteli diagram of $\mathscr{S}_m\subset \mathscr{S}_{m+1}$ is identified as a subgraph of the Bratteli diagram of $\mathscr{S}_{m+1}\subset \mathscr{S}_{m+2}$.
So it makes sense to take the limit of the Bratteli diagram of $\mathscr{S}_m\subset \mathscr{S}_{m+1}$, when $m$ approaches infinity.
\begin{definition}
The principal graph of a subfactor planar algebra $\mathscr{S}$ is the limit of the Bratteli diagram of $\mathscr{S}_{m,+}\subset \mathscr{S}_{m+1,+}$. The vertex corresponds to the identity in $\mathscr{S}_{0,+}$ is marked by a star sign. The dimension vector $\lambda$ at a vertex is defined to be the Markov trace of the minimal projection corresponding to that vertex.

Similarly the dual principal graph of a subfactor planar algebra $\mathscr{S}$ is the limit of the Bratteli diagram of $\mathscr{S}_{m,-}\subset \mathscr{S}_{m+1,-}$. The vertex corresponds to the identity in $\mathscr{S}_{0,-}$ is marked by a star sign. The dimension vector $\lambda'$ at a vertex is defined to be the Markov trace of the minimal projection corresponding to that vertex.
\end{definition}

The Bratteli diagram of $\mathscr{S}_m\subset \mathscr{S}_{m+1}$, as a subgraph of the Bratteli diagram of $\mathscr{S}_{m+1}\subset \mathscr{S}_{m+2}$, corresponds to the two-sided ideal $\mathscr{I}_{m+1}$ of $\mathscr{S}_{m+1}$ generated by the Jones projection $e_m$. So the two graphs coincide if and only if $\mathscr{S}_{m+1}=\mathscr{I}_{m+1}$.

\begin{definition}
For a subfactor planar algebra $\mathscr{S}$,
if its principal graph is finite, then the subfactor planar algebra is said to be finite depth. Furthermore it is of depth $m$, if $m$ is the smallest number such that $\mathscr{S}_{m+1}=\mathscr{S}_{m+1}e_m\mathscr{S}_{m+1}$.
\end{definition}

\subsection{Finite-dimensional inclusions}
We refer the reader to Chapter 3 of \cite{JS} for the inclusions of finite dimensional von Neumann algebras.

\begin{definition}
Suppose $\mathcal{A}$ is a finite-dimensional von Neumann algebra and $\tau$ is a trace on it. The dimension vector $\lambda_\mathcal{A}^\tau$ is a function from the set of minimal central projections (or equivalent classes of minimal projections or irreducible representations up to unitary equivalence) of $\mathcal{A}$ to $\mathbb{C}$ with following property, for any minimal central projection $z$, $\lambda_A^\tau(z)=\tau(x)$, where $x\in A$ is a minimal projection with central support $z$.
\end{definition}

The trace of a minimal projection only depends on its equivalent class, so the dimension vector is well defined.
On the other hand, given a function from the set of minimal central projections of $\mathcal{A}$ to $\mathbb{C}$, we may construct a trace of $\mathcal{A}$, such that the corresponding dimension vector is the given function. So it is a one-to-one map.

Let us recall some facts about the inclusion of finite dimensional von Neumann algebras $\mathcal{B}_0\subset \mathcal{B}_1$.

The Bratteli diagram $Br$ for the inclusion $\mathcal{B}_0\subset \mathcal{B}_1$ is a bipartite graph. Its even or odd vertices are indexed by the equivalence classes of irreducible representations of $\mathcal{B}_0$ or $\mathcal{B}_1$ respectively.  The number of edges connects a vertex corresponding to an irreducible representation $U$ of $\mathcal{B}_0$ to a vertex corresponding to an irreducible representation $V$ of $\mathcal{B}_1$ is given by the multiplicity of $U$ in the restriction of $V$ on $\mathcal{B}_0$.

Let $Br_\pm$ be the even/odd vertices of $Br$. The Bratteli diagram can be interpreted as the adjacent matrix $\Lambda=\Lambda_{\mathcal{B}_0}^{\mathcal{B}_1}: L^2(Br_-)\rightarrow L^2(Br_+)$, where $\Lambda_{u,v}$ is defined as the number of edges connects $u$ to $v$ for any $u\in Br_+,~ v\in Br_-.$

\begin{proposition}
For the inclusion $\mathcal{B}_0\subset \mathcal{B}_1$ and a trace $\tau$ on it, we have $\lambda_{\mathcal{B}_0}^{\tau}=\Lambda\lambda_{\mathcal{B}_1}^{\tau}$.
\end{proposition}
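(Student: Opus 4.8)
The plan is to reduce the identity to a block-by-block computation using the Artin--Wedderburn decomposition of the two algebras. Write $\mathcal{B}_0=\bigoplus_{u\in Br_+}\mathcal{B}_0 z_u$ and $\mathcal{B}_1=\bigoplus_{v\in Br_-}\mathcal{B}_1 z_v'$, where $z_u$ and $z_v'$ range over the minimal central projections of $\mathcal{B}_0$ and $\mathcal{B}_1$, so that each summand is a full matrix algebra. For a fixed $u$, choose a minimal projection $p_u\in\mathcal{B}_0 z_u$; by definition $\lambda_{\mathcal{B}_0}^\tau(u)=\tau(p_u)$, and likewise $\lambda_{\mathcal{B}_1}^\tau(v)=\tau(q_v)$ for a minimal projection $q_v\in\mathcal{B}_1 z_v'$. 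Thus it suffices to show $\tau(p_u)=\sum_{v}\Lambda_{u,v}\,\tau(q_v)$ for every $u$.

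First I would use $1=\sum_v z_v'$ together with the additivity and traciality of $\tau$ to write $\tau(p_u)=\sum_v \tau(p_u z_v')$. Here $p_u z_v'$ is a projection lying in the single matrix block $\mathcal{B}_1 z_v'$, since $z_v'$ is central in $\mathcal{B}_1$ and $p_u$ commutes with it (as $p_u\in\mathcal{B}_0\subset\mathcal{B}_1$). In a matrix block every projection of a given rank is equivalent to a sum of that many minimal projections, so $\tau(p_u z_v')=r_{u,v}\,\tau(q_v)$, where $r_{u,v}$ is the rank of $p_u z_v'$ viewed as a projection in $\mathcal{B}_1 z_v'$.

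The crux is therefore to identify $r_{u,v}$ with the Bratteli multiplicity $\Lambda_{u,v}$. For this I would look at the irreducible representation $\pi_v$ of $\mathcal{B}_1$ attached to $z_v'$ and restrict it to $\mathcal{B}_0$. By the very definition of the Bratteli diagram, $\pi_v|_{\mathcal{B}_0}\cong\bigoplus_{u'\in Br_+}\Lambda_{u',v}\,\sigma_{u'}$, where $\sigma_{u'}$ is the irreducible representation of $\mathcal{B}_0$ indexed by $u'$. Since $p_u$ is a minimal projection in the block $z_u$, it annihilates every isotypic component $u'\neq u$ and acts as a rank-one projection on each of the $\Lambda_{u,v}$ copies of $\sigma_u$. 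Hence $\pi_v(p_u)$ has rank exactly $\Lambda_{u,v}$, i.e. $r_{u,v}=\Lambda_{u,v}$. Combining this with the previous step gives $\tau(p_u)=\sum_v\Lambda_{u,v}\,\tau(q_v)=(\Lambda\lambda_{\mathcal{B}_1}^\tau)(u)$, which is the asserted equality of dimension vectors.

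I expect the only real obstacle to be the bookkeeping in this last step: one must be careful that $r_{u,v}$ really is the rank of the compression inside the block $z_v'$ and that this rank coincides with the restriction multiplicity, rather than with, say, the dimension of the block. Everything else is routine once the correspondence between minimal central projections, matrix blocks, and vertices of $Br$ is set up, and the traciality of $\tau$ guarantees that $\tau(p_u z_v')$ depends only on the rank of $p_u z_v'$ and not on the particular minimal projection chosen.
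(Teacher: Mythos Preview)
Your proof is correct and complete. The paper itself does not prove this proposition; it is stated without proof as a standard fact about inclusions of finite-dimensional von Neumann algebras, with the reader referred to Chapter~3 of \cite{JS} at the start of the subsection. Your argument---decomposing a minimal projection $p_u\in\mathcal{B}_0$ via the central decomposition of $\mathcal{B}_1$, identifying the rank of each piece $p_u z_v'$ with the restriction multiplicity $\Lambda_{u,v}$, and summing---is exactly the standard proof one finds in that reference, so there is nothing to compare.
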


If the trace $\tau$ is a faithful state, then by GNS construction we will obtain a right $\mathcal{B}_1$ module $L^2(\mathcal{B}_1)$. And $L^2(\mathcal{B}_0)$ is identified as a subspace of $L^2(\mathcal{B}_1)$. Let $e$ be the Jones projection on to the subspace $L^2(\mathcal{B}_0)$. Let $\mathcal{B}_2$ be the von Neumann algebra $(\mathcal{B}_1\cup\{e\})''$. Then we obtain a tower $\mathcal{B}_0\subset\mathcal{B}_1\subset\mathcal{B}_2$ which is called the basic construction. Furthermore if the tracial state $\tau$ satisfies the condition $\Lambda^*\Lambda\lambda_{\mathcal{B}_1}^\tau=\mu\lambda_{\mathcal{B}_1}^\tau$ for some scalar $\mu$, then it is said to be a Markov trace. In this case the scalar $\mu$ is $||\Lambda||^2$. Then $\lambda^{\tau}=
\begin{bmatrix}
\lambda_{\mathcal{B}_0}^{\tau}\\
\delta\lambda_{\mathcal{B}_1}^{\tau}\\
\end{bmatrix}$
is a Perron-Frobenius eigenvector for
$\begin{bmatrix}
0&\Lambda\\
\Lambda^*&0\\
\end{bmatrix}$.

\begin{definition}
We call $\lambda^{\tau}$ the Perron-Frobenius eigenvector with respect to the Markov trace $\tau$.
\end{definition}

The existence of a Markov trace for the inclusion $\mathcal{B}_0\subset\mathcal{B}_1$ follows from the Perron-Frobenius theorem. The Markov trace is unique if and only if the Bratteli diagram for the inclusion $\mathcal{B}_0\subset\mathcal{B}_1$ is connected.

We will see the importance of the Markov trace from the following proposition.

\begin{proposition}
If $\tau$ is a Markov trace for the inclusion $\mathcal{B}_0\subset\mathcal{B}_1$, then $\tau$ extends uniquely to a trace on $\mathcal{B}_2$, still denoted by $\tau$. Moreover $\tau$ is a Markov trace for the inclusion $\mathcal{B}_1\subset\mathcal{B}_2$.
\end{proposition}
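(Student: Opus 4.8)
The plan is to treat $\mathcal{B}_2=(\mathcal{B}_1\cup\{e\})''$ through its finite-dimensional structure as recorded in \cite{JS}: it is linearly spanned by $\{xey:x,y\in\mathcal{B}_1\}$, its minimal central projections are in bijection with those of $\mathcal{B}_0$, the inclusion matrix of $\mathcal{B}_1\subset\mathcal{B}_2$ is $\Lambda^*$, and $e$ satisfies $e^2=e=e^*$, $exe=E(x)e$ for $x\in\mathcal{B}_1$ (with $E=E_{\mathcal{B}_0}$ the $\tau$-preserving conditional expectation) and $e\in\mathcal{B}_0'$. I would prove existence by exhibiting the extension through its dimension vector, and uniqueness by showing that the trace is forced on the spanning set $\{xey\}$.

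For existence, identify the minimal central projections of $\mathcal{B}_2$ with those of $\mathcal{B}_0$ and define a trace $\tau_2$ by the dimension vector $\lambda_{\mathcal{B}_2}^{\tau_2}:=\mu^{-1}\lambda_{\mathcal{B}_0}^{\tau}$; this determines a trace by the one-to-one correspondence between dimension vectors and traces. That $\tau_2$ restricts to $\tau$ on $\mathcal{B}_1$ follows by computing the restriction via $\Lambda^*$ and using both $\lambda_{\mathcal{B}_0}^{\tau}=\Lambda\lambda_{\mathcal{B}_1}^{\tau}$ and the Markov relation $\Lambda^*\Lambda\lambda_{\mathcal{B}_1}^{\tau}=\mu\lambda_{\mathcal{B}_1}^{\tau}$:
\[
\Lambda^*\lambda_{\mathcal{B}_2}^{\tau_2}=\mu^{-1}\Lambda^*\Lambda\lambda_{\mathcal{B}_1}^{\tau}=\lambda_{\mathcal{B}_1}^{\tau}.
\]
The Markov property for $\mathcal{B}_1\subset\mathcal{B}_2$ is the eigenvalue equation $\Lambda\Lambda^*\lambda_{\mathcal{B}_2}^{\tau_2}=\mu\lambda_{\mathcal{B}_2}^{\tau_2}$, which follows from the same two relations:
\[
\Lambda\Lambda^*\lambda_{\mathcal{B}_2}^{\tau_2}=\mu^{-1}\Lambda\Lambda^*\Lambda\lambda_{\mathcal{B}_1}^{\tau}=\Lambda\lambda_{\mathcal{B}_1}^{\tau}=\lambda_{\mathcal{B}_0}^{\tau}=\mu\,\lambda_{\mathcal{B}_2}^{\tau_2}.
\]
Thus $\tau_2$ is a Markov trace extending $\tau$.

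For uniqueness, let $\tau_2$ be any trace on $\mathcal{B}_2$ restricting to $\tau$. Since $\mathcal{B}_2=\mathrm{span}\{xey\}$ and $\tau_2(xey)=\tau_2(yxe)$, the functional $f(x):=\tau_2(xe)$ on $\mathcal{B}_1$ determines $\tau_2$ entirely. From $exe=E(x)e$ and $e^2=e$ one gets $f(x)=\tau_2(exe)=\tau_2(E(x)e)=f(E(x))$, so $f$ factors through $E$, and using $e\in\mathcal{B}_0'$ the restriction $f|_{\mathcal{B}_0}$ is a trace on $\mathcal{B}_0$. It then remains to pin down $f|_{\mathcal{B}_0}$. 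Taking a Pimsner--Popa basis $\{\lambda_k\}\subset\mathcal{B}_1$ with $\sum_k\lambda_ke\lambda_k^*=1$ and $\sum_k\lambda_k\lambda_k^*=\mu$, and applying $\tau_2$ to $y=\sum_k y\lambda_ke\lambda_k^*$, I obtain for every $y\in\mathcal{B}_1$
\[
\tau(y)=\sum_k f\big(\lambda_k^* y\lambda_k\big)=\sum_k f\big(E(\lambda_k^* y\lambda_k)\big).
\]
This linear system forces $f|_{\mathcal{B}_0}=\mu^{-1}\tau|_{\mathcal{B}_0}$, equivalently $\tau_2(xe)=\mu^{-1}\tau(x)$ for all $x\in\mathcal{B}_1$; hence $\tau_2$ is unique and coincides with the trace built above.

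The main obstacle is exactly this last rigidification step. A finite-dimensional algebra carries many traces, and the subtlety is that the restriction condition $\Lambda^*\lambda_{\mathcal{B}_2}=\lambda_{\mathcal{B}_1}$ alone does not determine $\lambda_{\mathcal{B}_2}$ once $\Lambda^*$ has a kernel; what removes the ambiguity is the presence of the specific Jones projection $e$, whose relations $exe=E(x)e$, $e\in\mathcal{B}_0'$ and reconstruction $\sum_k\lambda_ke\lambda_k^*=1$ (with normalization $\sum_k\lambda_k\lambda_k^*=\mu$) force the value of $\tau_2$ on every $xey$. I expect the normalization of the Pimsner--Popa basis and the invertibility of the system for $f|_{\mathcal{B}_0}$ to be the only genuinely technical points; the Markov hypothesis on $\tau$ enters precisely as the eigenvalue relation that makes $\mu^{-1}\lambda_{\mathcal{B}_0}^{\tau}$ simultaneously restrict correctly and satisfy the next Markov equation, which is what propagates the construction up the Jones tower.
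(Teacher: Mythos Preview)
The paper does not actually prove this proposition; it is stated as standard background and the reader is referred to Chapter~3 of \cite{JS}. So there is no ``paper's own proof'' to compare against, and your write-up is essentially a reconstruction of the classical Jones--Sunder argument.

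Your existence argument is correct and clean: defining the extension via the dimension vector $\lambda_{\mathcal{B}_2}^{\tau_2}=\mu^{-1}\lambda_{\mathcal{B}_0}^{\tau}$ and checking both the restriction identity $\Lambda^*\lambda_{\mathcal{B}_2}^{\tau_2}=\lambda_{\mathcal{B}_1}^{\tau}$ and the Markov eigenvalue equation $\Lambda\Lambda^*\lambda_{\mathcal{B}_2}^{\tau_2}=\mu\lambda_{\mathcal{B}_2}^{\tau_2}$ is exactly the right computation, and it uses the Markov hypothesis in the expected way.

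For uniqueness you have the right architecture---reduce to the functional $f(x)=\tau_2(xe)$, show it factors through $E$, then pin down $f|_{\mathcal{B}_0}$---but the last step is only asserted, not carried out. You write that the linear system ``forces $f|_{\mathcal{B}_0}=\mu^{-1}\tau|_{\mathcal{B}_0}$'' and flag the invertibility as a technical point; that is honest, but it is the entire content of uniqueness. Two remarks. First, your normalization $\sum_k\lambda_k\lambda_k^*=\mu$ is delicate here: the usual proof of this identity applies the $\tau_2$-preserving conditional expectation $\mathcal{B}_2\to\mathcal{B}_1$ to $\sum_k\lambda_ke\lambda_k^*=1$, which presupposes the very extension you are constructing. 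You should instead derive it directly from the Pimsner--Popa relations for $\mathcal{B}_0\subset\mathcal{B}_1$ and the Markov property of $\tau$ on $\mathcal{B}_1$. Second, a cleaner route to uniqueness avoids the linear system entirely: since $x\mapsto xe$ is a $*$-isomorphism $\mathcal{B}_0\to e\mathcal{B}_2e=\mathcal{B}_0e$, a minimal projection $p\in\mathcal{B}_0$ with central support $z$ maps to a minimal projection $pe\in\mathcal{B}_2$ with central support $JzJ$, so $\lambda_{\mathcal{B}_2}^{\tau_2}(JzJ)=\tau_2(pe)=f(p)$. Thus determining $f|_{\mathcal{B}_0}$ is equivalent to determining $\lambda_{\mathcal{B}_2}^{\tau_2}$, and your Pimsner--Popa identity $\tau(y)=\sum_k f(E(\lambda_k^* y\lambda_k))$ specialized to $y\in\mathcal{B}_0$ becomes a genuine constraint you can solve block by block.
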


In this case, we may repeat the basic construction to obtain a sequence of finite dimensional von Neumann algebras $\mathcal{B}_0\subset\mathcal{B}_1\subset\mathcal{B}_2\subset\mathcal{B}_3\subset\cdots$ and a sequence of Jones projections $e_1,e_2,e_3\cdots$.

\subsection{Graph Planar Algebras}\label{graphpa}
Given a finite connected bipartite graph $\Gamma$, it can be realised as the Bratteli diagram of the inclusion of finite dimensional von Neumann algebras $\mathcal{B}_0\subset \mathcal{B}_1$ with a (unique) Markov trace. Applying the basic construction, we will obtain the
sequence of finite dimensional von Neumann algebras $\mathcal{B}_0\subset\mathcal{B}_1\subset\mathcal{B}_2\subset\mathcal{B}_3\subset\cdots$. Take $\mathscr{S}_{m,+}$ to be $\mathcal{B_0}'\cap \mathcal{B}_m$ and $\mathscr{S}_{m,-}$ to be $\mathcal{B_1}'\cap \mathcal{B}_{m+1}$. Then $\{\mathscr{S}_{m,\pm}\}$ forms a planar algebra, called the $graph~planar~algebra$ of the bipartite graph $\Gamma$. Moreover $\mathscr{S}_{m,\pm}$ has a natural basis given by length $2m$ loops of $\Gamma$. We refer the reader to \cite{Jon00,JonPen} for more details.
We cite the conventions used in section 3.4 of \cite{JonPen}.

\begin{definition}
Let us define $\mathscr{G}=\{\mathscr{G}_{m,\pm}\}$ to be the graph planar algebra of a finite connected bipartite graph $\Gamma$.
Let $\lambda$ be the Perron-Frobenius eigenvector with respect to the Markov trace.
\end{definition}

A vertex of the $\Gamma$ corresponds to an equivalent class of minimal projections, so $\lambda$ is also defined as a function from $\mathcal{V}_\pm$ to $\mathbb{R}^+$.
If $\Gamma$ is the principal graph of a subfactor, then its dimension vector is a multiple of the Perron-Frobenius eigenvector.
In this paper, we only need the proportion of values of $\lambda$ at vertices. We do not have to distinguish these two vectors.

Let $\mathcal{V}_\pm$ be the sets of black/white vertices of $\Gamma$, and let $\mathcal{E}$ be the sets of all edges of $\Gamma$ directed from black to white vertices. Then we have the source and target functions $s:\mathcal{E}\rightarrow \mathcal{V}_+$ and $t:\mathcal{E}\rightarrow \mathcal{V}_-$. For a directed edge $\varepsilon\in \mathcal{E}$, we define $\varepsilon^*$ to be the same edge with an opposite direction. The source function $s:\mathcal{E}^*=\{ \varepsilon^* | \varepsilon\in\mathcal{E}\}\rightarrow \mathcal{V}_-$ and the target function $t:\mathcal{E}^*\rightarrow \mathcal{V}_+$ are defined as $s(\varepsilon^*)=t(\varepsilon)$ and $t(\varepsilon^*)=s(\varepsilon)$.

A length 2m loop in $\mathscr{G}_{m,+}$ is denoted by $[\varepsilon_1\varepsilon_2^*\cdots\varepsilon_{2m-1}\varepsilon_{2m}^*]$ satisfying

(i)$t(\varepsilon_k)=s(\varepsilon_{k+1}^*)=t(\varepsilon_{k+1})$, for all odd $k<2m$;

(ii)$t(\varepsilon_k^*)=s(\varepsilon_{k})=t(\varepsilon_{k+1})$, for all even $k<2m$;

(iii)$t(\varepsilon_{2m}^*)=s(\varepsilon_{2m})=t(\varepsilon_{1})$.

The graph planar algebra is always unital. The unshaded empty diagram is given by $\sum_{v\in\mathcal{V}_+} v$; And the shaded empty diagram is given by $\sum_{v\in\mathcal{V}_-} v$.
It is mentioning that the Jones projection is given by
$$e_1=\delta^{-1}\gra{e1}=\delta^{-1}\sum_{s(\varepsilon_1)=s(\varepsilon_3)} \sqrt{\frac{\lambda(t(\varepsilon_1))\lambda(t(\varepsilon_3))}{\lambda(s(\varepsilon_1))\lambda(s(\varepsilon_3))}}[\varepsilon_1\varepsilon_1^*\varepsilon_3\varepsilon_3^*].$$

Now let us describe the actions on $\mathscr{G}$. The adjoint operation is defined as the anti-linear extension of $$[\varepsilon_1\varepsilon_2^*\cdots\varepsilon_{2m-1}\varepsilon_{2m}^*]^*=[\varepsilon_{2m}\varepsilon_{2m-1}^*\cdots\varepsilon_{2}\varepsilon_{1}^*].$$
For $\mathscr{G}_{m,-}$, we have similar conventions.

\begin{definition}
The Fourier transform $\mathcal{F}: \mathscr{G}_{m,+}\rightarrow \mathscr{G}_{m,-}, m>0$ is defined as the linear extension of
$$\mathcal{F}([\varepsilon_1\varepsilon_2^*\cdots\varepsilon_{2m-1}\varepsilon_{2m}^*])=
\left\{
\begin{array}{ll}
\sqrt{\frac{\lambda(s(\varepsilon_{2m}))}{\lambda(t(\varepsilon_{2m}))}} \sqrt{\frac{\lambda(s(\varepsilon_m))}{\lambda(t(\varepsilon_m))}}[\varepsilon_{2m}^*\varepsilon_1\varepsilon_2^*\cdots\varepsilon_{2m-1}]& for~m~even.\\
\sqrt{\frac{\lambda(s(\varepsilon_{2m}))}{\lambda(t(\varepsilon_{2m}))}} \sqrt{\frac{\lambda(t(\varepsilon_m))}{\lambda(s(\varepsilon_m))}}[\varepsilon_{2m}^*\varepsilon_1\varepsilon_2^*\cdots\varepsilon_{2m-1}]& for~m~odd
\end{array}
\right.$$

Similarly it is also defined from $\mathscr{G}_{m,-}$ to $\mathscr{G}_{m,+}$.
\end{definition}

The Fourier transform has a diagrammatic interpretation as a one-click rotation
$$\grb{fouriertransform}.$$

\begin{definition}
Let us define $\rho$ to be $\mathcal{F}^2$. Then $\rho$ is defined from $\mathscr{G}_{m,+}$ to $\mathscr{G}_{m,+}$ as a two-click rotation for $m>0$,
$$\rho([\varepsilon_1\varepsilon_2^*\cdots\varepsilon_{2m-1}\varepsilon_{2m}^*])=
\sqrt{\frac{\lambda(s(\varepsilon_{2m}))}{\lambda(s(\varepsilon_{2m-1}))}} \sqrt{\frac{\lambda(s(\varepsilon_m))}{\lambda(s(\varepsilon_{m-1}))}}
[\varepsilon_{2m-1}\varepsilon_{2m}^*\varepsilon_1\varepsilon_2^*\cdots\varepsilon_{2m-3}\varepsilon_{2m-2}^*].$$
It is similar for $\mathscr{G}_{m,-}$.
\end{definition}

For $l_1,l_2\in\mathscr{G}_{m,+},~l_1=[\varepsilon_1\varepsilon_2^*\cdots\varepsilon_{2m-1}\varepsilon_{2m}^*],~l_2=[\xi_1\xi_2^*\cdots\xi_{2m-1}\xi_{2m}]$, we have

$$\grb{product}=
\left\{
\begin{array}{ll}
\prod_{1\leq k\leq m} \delta_{\varepsilon_{m+k}, \xi_{m+1-k}} [\varepsilon_1 \varepsilon_2^*\cdots\varepsilon_m^* \xi_{m+1} \cdots\xi_{2m-1}\xi_{2m}^*]     & when~m~is~even;\\
\prod_{1\leq k\leq m} \delta_{\varepsilon_{m+k}, \xi_{m+1-k}} [\varepsilon_1 \varepsilon_2^*\cdots\varepsilon_m \xi_{m+1}^* \cdots\xi_{2m-1}\xi_{2m}^*]     & when~m~is~odd.
\end{array}
\right.$$

$$\graa{incl}=
\left\{
\begin{array}{ll}
\sum_{s(\varepsilon)=s(\varepsilon_m)} [\varepsilon_1 \varepsilon_2^*\cdots\varepsilon_m^*\varepsilon \varepsilon^* \varepsilon_{m+1} \cdots\varepsilon_{2m-1}\varepsilon_{2m}^*]     & when~m~is~even;\\
\sum_{t(\varepsilon)=t(\varepsilon_m)} [\varepsilon_1 \varepsilon_2^*\cdots\varepsilon_m \varepsilon^* \varepsilon \varepsilon_{m+1}^* \cdots\varepsilon_{2m-1}\varepsilon_{2m}^*]     & when~m~is~odd.
\end{array}
\right.$$

$$\graa{conl}=
\left\{
\begin{array}{ll}
\delta_{\varepsilon_{m}, \varepsilon_{m+1}} \frac{\lambda(s(\varepsilon_m))}{\lambda(t(\varepsilon_m))}[\varepsilon_1 \varepsilon_2^*\cdots\varepsilon_m^*\varepsilon \varepsilon^* \varepsilon_{m+1} \cdots\varepsilon_{2m-1}\varepsilon_{2m}^*]     & when~m~is~even;\\
\delta_{\varepsilon_{m}, \varepsilon_{m+1}} \frac{\lambda(t(\varepsilon_m))}{\lambda(s(\varepsilon_m))} [\varepsilon_1 \varepsilon_2^*\cdots\varepsilon_m \varepsilon^* \varepsilon \varepsilon_{m+1}^* \cdots\varepsilon_{2m-1}\varepsilon_{2m}^*]     & when~m~is~odd.
\end{array}
\right.$$

In general, the action of a planar tangle could be realised as a composed inclusion of actions mentioned above. It has a nice formula, see page 11 in \cite{Jon00}.

\subsection{The embedding theorem}
For a depth $2r$ (or $2r+1$) subfactor planar algebra $\mathscr{S}$, we have $$\mathscr{S}_{m+1}=\mathscr{S}_{m+1}e_{m}\mathscr{S}_{m+1}=\mathscr{S}_{m}e_{m+1}\mathscr{S}_{m}, ~\text{whenever}~ m\geq 2r+1.$$ So $\mathscr{S}_{m-1}\subset \mathscr{S}_{m} \subset \mathscr{S}_{m+1}$ forms a basic construction. Note that the Bratteli diagram of $\mathscr{S}_{2r}\subset\mathscr{S}_{2r+1}$ is the principal graph. So the graph planar algebra $\mathscr{G}$ of the principal graph is given by $$\mathscr{G}_{k,+}=\mathscr{S}_{2r}'\cap\mathscr{S}_{2r+k}; \quad \mathscr{G}_{k,-}=\mathscr{S}_{2r+1}'\cap\mathscr{S}_{2r+k+1}.$$ Moreover the map $\Phi: \mathscr{S}\rightarrow\mathscr{G}$ by adding $2r$ strings to the left preserves the planar algebra structure. It is not obvious that the left conditional expectation is preserved. We have the following embedding theorem, see Theorem 4.1 in \cite{JonPen}.
\begin{theorem}
A finite depth subfactor planar algebra is naturally embedded into the graph planar algebra of its principal graph.
\end{theorem}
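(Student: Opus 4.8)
The plan is to show that the map $\Phi\colon\mathscr{S}\to\mathscr{G}$ given by adding $2r$ strings to the left is an injective morphism of planar algebras, where $\mathscr{G}$ is the graph planar algebra of the principal graph $\Gamma$. First I would nail down the identification already recorded above: since $\mathscr{S}$ has depth $2r$, the tower $\mathscr{S}_{2r}\subset\mathscr{S}_{2r+1}\subset\mathscr{S}_{2r+2}\subset\cdots$ is an iterated basic construction whose first inclusion has Bratteli diagram $\Gamma$ and whose trace is the Markov trace. By the uniqueness of the basic construction tower attached to a connected bipartite graph together with its Markov trace, this tower is canonically the one used to build $\mathscr{G}$, so that indeed $\mathscr{G}_{k,+}=\mathscr{S}_{2r}'\cap\mathscr{S}_{2r+k}$ and $\mathscr{G}_{k,-}=\mathscr{S}_{2r+1}'\cap\mathscr{S}_{2r+k+1}$ as finite-dimensional $*$-algebras, carrying the same Markov trace on both sides. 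Under this identification $\Phi$ is precisely the shift that sends $x\in\mathscr{S}_{k,+}$ to the element commuting with the $2r$ spectator strings, landing in $\mathscr{S}_{2r}'\cap\mathscr{S}_{2r+k}$.

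Next I would verify the easy half of the planar structure. The map $\Phi$ is visibly a unital $*$-homomorphism on each box space: it respects multiplication and the adjoint, it is compatible with the inclusions that add a string on the right, it carries the Jones projections to the Jones projections of $\mathscr{G}$, and it intertwines the right conditional expectation, because all of these tangles act away from the $2r$ strings placed on the left. Since $\Phi$ preserves the Markov trace, which is faithful, $\Phi$ is injective on every finite-dimensional $\mathscr{S}_{k,\pm}$; this already gives the embedding of filtered $*$-algebras.

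To upgrade this to a morphism of planar algebras it suffices, by Jones' description of the planar operad by generating tangles, to check that $\Phi$ intertwines one further family of tangles. All the right-handed operations above are automatic, so the entire content is concentrated on the left, and the left cap is obtained from the right cap by rotation. Thus the problem reduces to the single identity $\Phi\circ E_L=E_L^{\mathscr{G}}\circ\Phi$, where $E_L$ is the conditional expectation capping the leftmost string (equivalently, to the statement that $\Phi$ intertwines the one-click rotation $\mathcal{F}$).

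This last step is the main obstacle, and is exactly the point flagged above as not obvious: capping on the left meets the $2r$ added strings, so the identity cannot be read off formally. To prove it I would compute both sides in the loop basis of $\mathscr{G}$. On the graph-planar-algebra side the left cap is the weighted tangle whose coefficients are the factors $\lambda(s(\varepsilon))/\lambda(t(\varepsilon))$ appearing in the conditional-expectation formula recorded above, with $\lambda$ the Perron--Frobenius eigenvector. On the subfactor side $E_L$ is determined intrinsically by the Markov trace. The key is that the dimension vector of $\mathscr{S}$ at a vertex of $\Gamma$ is the Markov trace of the corresponding minimal projection, hence proportional to $\lambda$; since $\Phi$ preserves this trace, the weight factors produced by the subfactor's left expectation coincide exactly with those inserted by $\mathscr{G}$. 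Equivalently, I would express $\mathcal{F}$ through $E_R$, the Jones projections, and conjugation by the rotation — all of which are already preserved — and invoke the rigidity of the spherical trace to conclude that $\mathcal{F}$, and therefore $E_L$, is intertwined. Carrying out this weight-matching computation carefully is where the real work lies.
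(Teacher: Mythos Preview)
The paper does not give its own proof of this theorem; it sets up the shift map $\Phi$ (adding $2r$ strings on the left), records that the left conditional expectation is the non-obvious compatibility, and then simply cites Theorem 4.1 of Jones--Penneys. Your plan matches that setup exactly and correctly isolates the left cap as the only substantive point, so there is nothing to compare against beyond that citation.

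One remark on your final paragraph: the alternative you float --- expressing $\mathcal{F}$ through $E_R$, the Jones projections, and ``conjugation by the rotation'' --- is circular as stated, since the rotation is itself built from both left and right caps and so cannot be assumed preserved before $E_L$ is. The honest route is the direct loop-basis computation you describe first, matching the Perron--Frobenius weights coming from $\mathscr{G}$ against the Markov-trace values of minimal projections in $\mathscr{S}$; this is indeed where the work sits in the Jones--Penneys argument. Your sketch does not carry that computation out, but it correctly identifies it and the ingredients (sphericality, proportionality of the dimension vector to $\lambda$) are the right ones.
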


\begin{remark}
A general embedding theorem is proved in \cite{MorWal}.
\end{remark}
\subsection{Fuss-Catalan}
The Fuss-Catalan subfactor planar algebras are discovered by Bisch and Jones as $free~products$ of Temperley-Lieb subfactor planar algebras while studying the intermediate subfactors of a subfactor \cite{BisJonFC}.
We refer the reader to \cite{BisJonfree} \cite{Lan02} for the definition of the free product of subfactor planar algebras.
It has a nice diagrammatic interpretation. For two Temperley-Lieb subfactor planar algebras $TL(\delta_a)$ and $TL(\delta_b)$, their free product $FC(\delta_a,\delta_b)$ is a subfactor planar algebra. A vector in $FC(\delta_a,\delta_b)_{m,+}$ can be expressed as a linear sum of Fuss-Catalan diagrams, a diagram consisting of disjoint $a,b$-colour strings whose boundary points are ordered as $\underbrace{abba~abba\cdots abba}_m$, $m$ copies of $abba$, after the dollar sign. It is similar for a vector in $FC(\delta_a,\delta_b)_{m,-}$, but the boundary points are ordered as $\underbrace{baab~baab\cdots baab}_m$. For the action of a planar tangle on a simple tensor of Fuss-Catalan diagrams, first we replace each string of the planar tangle by a pair of parallel a-colour and b-colour strings which matches the a,b-colour boundary points, then the out put is $gluing$ the new tangle with the input diagrams. If there is an $a$ or $b$-colour closed circle, then it contributes to a scalar $\delta_a$ or $\delta_b$ respectively.

The Fuss-Catalan subfactor planar algebra $FC(\delta_a,\delta_b)$ is naturally derived from an intermediate subfactor of a subfactor.
Suppose $\mathcal{N}\subset \mathcal{M}$ is an irreducible subfactor with finite index, and $\mathcal{P}$ is an intermediate subfactor. Then there are two Jones projections $e_{\mathcal{N}}$ and $e_{\mathcal{P}}$ acting on $L^2(\mathcal{M})$,
and we have the basic construction $\mathcal{N}\subset\mathcal{P}\subset\mathcal{M}\subset \mathcal{P}_1\subset \mathcal{M}_1$.
Repeating this process, we will obtain a sequence of factors $\mathcal{N}\subset\mathcal{P}\subset\mathcal{M}\subset \mathcal{P}_1\subset \mathcal{M}_1\subset\mathcal{P}_2\subset\mathcal{M}_2\cdots$ and a sequence of Jones projections $e_{\mathcal{N}}$, $e_{\mathcal{P}}$, $e_{\mathcal{M}}$, $e_{\mathcal{P}_1}\cdots$.
The algebra generated by these Jones projections forms a planar algebra. That is $FC(\delta_a,\delta_b)$, where $\delta_a=\sqrt{[\mathcal{P}:\mathcal{N}]}$ and $\delta_b=\sqrt{[\mathcal{M}:\mathcal{P}]}$.
Moreover $e_{\mathcal{P}}\in FC(\delta_a,\delta_b)_{2,+}$ and $e_{\mathcal{P}_1}\in FC(\delta_a,\delta_b)_{2,-}$ could be expressed as $\delta_b^{-1}\graa{p1}$ and $\delta_a^{-1}\graa{p2}$ respectively.
Specifically $\mathcal{F}(e_{\mathcal{P}})$ is a multiple of $e_{\mathcal{P}_1}$.

\begin{definition}
For a subfactor planar algebra $\mathscr{S}$,
a projection $Q\in \mathscr{S}_{2,+}$ is called a biprojection, if $\mathcal{F}(Q)$ is a multiple of a projection.
\end{definition}

Suppose $\mathscr{S}$ is the planar algebra for $\mathcal{N}\subset \mathcal{M}$, then $e_{\mathcal{P}}\in\mathscr{S}_{2,+}$ is a biprojection. Conversely all the biprojections in $\mathscr{S}_{2,+}$ are realised in this way. That means there is a one-to-one correspondence between intermediate subfactors and biprojections.

\begin{proposition}\label{ex of bipro}
If we identify $\mathscr{S}_{2,-}$ as a subspace of $\mathscr{S}_{3,+}$ by adding a string to the left, then a biprojection $Q\in\mathscr{S}_{2,+}$ will satisfy
$Q\mathcal{F}(Q)=\mathcal{F}(Q)Q$, i.e.
$$\graa{exq1}=\graa{exq2},$$
called the exchange relation of a biprojection.
\end{proposition}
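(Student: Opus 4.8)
My plan is to reduce the exchange relation to a single positivity statement and then settle that statement by a diagrammatic computation. Write $Q$ for the biprojection and $\hat{Q}=\mathcal{F}(Q)$, both regarded inside $\mathscr{S}_{3,+}$ as in the statement ($Q$ acting on the first two strands, $\hat Q$ on the last two). Since $Q$ is a projection, $Q=Q^{*}=Q^{2}$; since $Q$ is a biprojection, $\hat Q$ is a multiple of a projection, and I first record the two standard refinements that I will use: $\hat Q=cP$ is a \emph{positive} multiple ($c>0$) of a projection $P$, and $\hat Q^{*}=\hat Q$. Self-adjointness follows from $\mathcal{F}(x)^{*}=\mathcal{F}^{-1}(x^{*})$ together with the rotational invariance $\rho(Q)=Q$ of a biprojection, and positivity of $c$ follows from positivity of the Markov trace. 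Granting this, $(Q\hat Q)^{*}=\hat Q^{*}Q^{*}=\hat Q Q$, so the exchange relation $Q\hat Q=\hat Q Q$ is \emph{equivalent} to $Q\hat Q$ being self-adjoint, i.e.\ to the vanishing of the commutator $D:=Q\hat Q-\hat Q Q$.

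Because $D^{*}=-D$, positive-definiteness of the trace inner product gives $D=0$ if and only if $\mathrm{tr}(D^{*}D)=0$. Expanding $\mathrm{tr}(D^{*}D)=-\mathrm{tr}(D^{2})$ and using $Q^{2}=Q$, $\hat Q^{2}=c\hat Q$ and cyclicity of the trace, a short computation yields $\mathrm{tr}(D^{*}D)=2\bigl(c\,\mathrm{tr}(Y)-\mathrm{tr}(Y^{2})\bigr)$, where $Y:=Q\hat Q Q$. Now $Y$ is positive, and since $\hat Q=cP\le c\mathbf{1}$ we get $0\le Y\le cQ\le c\mathbf{1}$; hence the spectrum of $Y$ lies in $[0,c]$, so $Y^{2}\le cY$ and $\mathrm{tr}(cY-Y^{2})\ge 0$, with equality precisely when $Y^{2}=cY$. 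Combining, the exchange relation holds if and only if $Y=Q\,\mathcal{F}(Q)\,Q$ is a scalar multiple of a projection. This is the whole content, now isolated into one clean assertion.

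It remains to prove that $Q\,\mathcal{F}(Q)\,Q$ is a multiple of a projection, and this is the step where the full force of the biprojection hypothesis is needed; I expect it to be the main obstacle. I would translate the idempotence $\hat Q^{2}=c\hat Q$ back through the Fourier transform: by the convolution identity relating $\mathcal{F}$ to the coproduct $*$, the condition that $\mathcal{F}(Q)$ be a multiple of a projection is equivalent to the coproduct-idempotence $Q*Q=\mu Q$ for a scalar $\mu$. Using this together with $Q^{2}=Q$, I would evaluate the three-strand diagram for $(Q\,\mathcal{F}(Q)\,Q)^{2}$ directly: isotoping the inner boxes so that the two copies of $Q$ which share the middle strand merge through the coproduct, the relation $Q*Q=\mu Q$ should collapse the picture to a multiple of $Q\,\mathcal{F}(Q)\,Q$, giving exactly $Y^{2}=cY$. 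The delicate point is precisely this planar reorganisation of $Q\,\mathcal{F}(Q)\,Q\,\mathcal{F}(Q)\,Q$, since the boxes for $Q$ (on strands $1,2$) and $\mathcal{F}(Q)$ (on strands $2,3$) overlap in a single strand and must be slid past one another before the coproduct-idempotence can be applied.

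As a cross-check and an alternative route, note that the excerpt already identifies biprojections with intermediate subfactors and $Q$ with a multiple of the Jones projection $e_{\mathcal{P}}$. One can therefore instead realise $Q$ and $\mathcal{F}(Q)$ as the two Jones projections attached to the tower $\mathcal{N}\subset\mathcal{P}\subset\mathcal{M}$ and deduce their commutation from the corresponding commuting-square relations. I would nevertheless keep the planar positivity argument as the main line, since it stays entirely inside the planar algebra and pins the entire difficulty onto the single diagrammatic identity $Y^{2}=cY$.
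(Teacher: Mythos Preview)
The paper does not actually prove this proposition; it is stated as background and the reader is referred to \cite{Liuex} for further discussion. So there is no ``paper's own proof'' to compare against, and your attempt must be assessed on its merits.

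Your reduction is correct and elegant: writing $\hat Q=\mathcal{F}(Q)=cP$ with $c>0$ and $P$ a projection, the commutator $D=Q\hat Q-\hat Q Q$ is skew-adjoint, and the computation $\mathrm{tr}(D^*D)=2\bigl(c\,\mathrm{tr}(Y)-\mathrm{tr}(Y^2)\bigr)$ with $Y=Q\hat Q Q$ is right, as is the spectral argument showing this is nonnegative with equality iff $Y^2=cY$. So the exchange relation is indeed equivalent to $Q\hat Q Q$ being a scalar multiple of a projection.

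The gap is in your main line for proving $Y^2=cY$. You propose to use $Q*Q=\mu Q$, but this relation is precisely the Fourier transform of $\hat Q^2=c\hat Q$, which you have already used to reduce $Y^2$ to $Q\hat Q Q\hat Q Q$; it carries no new information in the $3$-box algebra beyond what is already encoded. The diagrammatic reorganisation you sketch (``merging through the coproduct'') does not obviously apply: in the product $Q\hat Q Q\hat Q Q$ the copies of $Q$ sit on strands $1,2$ and the copies of $\hat Q$ on strands $2,3$, and there is no planar isotopy that produces a $Q*Q$ sub-picture without first moving a $Q$ past a $\hat Q$---which is the exchange relation itself. In other words, the step you flag as ``delicate'' is not merely delicate; as stated it is circular.

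Your alternative route is the one that actually works and is how the result is established in the literature: realising $Q$ as $e_{\mathcal P}$ and $\hat Q$ as a multiple of $e_{\mathcal P_1}$ for the tower $\mathcal N\subset\mathcal P\subset\mathcal M\subset\mathcal P_1$, one has the Fuss--Catalan/Jones relation $e_{\mathcal P}e_{\mathcal P_1}e_{\mathcal P}=\tau\,e_{\mathcal P}$ (see \cite{BisJonFC}), which gives $Y=Q\hat Q Q$ as a scalar multiple of $Q$ and hence $Y^2=cY$ immediately. If you want a purely planar-algebraic argument, you would need an independent proof that the subalgebra generated by $Q$ is Fuss--Catalan, or else a direct verification of $e_{\mathcal P}e_{\mathcal P_1}e_{\mathcal P}=\tau\,e_{\mathcal P}$ from the biprojection axioms; either way this is where the content lies, not in the positivity reduction.
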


Conversely if a self-adjoint operator in $\mathscr{S}_{2,+}$ satisfies the exchange relation, then it is a biprojection.
We refer the reader to \cite{Liuex} for some other approaches to the biprojection.
The Fuss-Catalan subfactor planar algebra could also be viewed as a planar algebra generated by a biprojection with its exchange relation.

If there is a subfactor planar algebra whose principal graph is a Bisch-Haagerup fish graph, then it has a trace-2 biprojection, due to the existence of a ``normalizer".
So it contains $FC(\delta_a,\delta_b)$, where $\delta_a=\sqrt{2},\delta_b=\frac{\sqrt{5}+1}{2}$, as a planar subalgebra.
The principal graph and dual principal graph of $FC(\delta_a,\delta_b)$ are given as

$$\gra{principalgraph6}$$
$$\grb{principalgraph7}.$$

\section{The embedding theorem for an intermediate subfactor}
If there is a subfactor planar algebra $\mathscr{S}$ whose principal graph is a Bisch-Haagerup fish graph $\Gamma$, then it is embedded in the graph planar algebra $\mathscr{G}$ of $\Gamma$, by the embedding theorem.
While $\mathscr{S}_{2,+}$ contains a trace-2 biprojection. We hope to know the image of the biprojection in $\mathscr{G}$. Recall that the image of the Jones projection $e_1$ is determined by the principal graph,
$$\delta e_1=\sum_{s(\varepsilon_1)=s(\varepsilon_3)} \sqrt{\frac{\lambda(t(\varepsilon_1))}{\lambda(s(\varepsilon_1))}\frac{\lambda(t(\varepsilon_3))}{\lambda(s(\varepsilon_3))}}[\varepsilon_1\varepsilon_1^*\varepsilon_3\varepsilon_3^*].$$
The image of the biprojection has a similar formula. It is determined by the $refined~principal~graph$. The refined principal graph is already considered by Bisch and Haagerup for bimodules, by Bisch and Jones for planar algebras. For the embedding theorem, we will use the one for planar algebras.

The lopsided version of embedding theorem for an intermediate subfactor is involved in a general embedding theorem proved by Morrison in \cite{MorWal}.
To consider some algebraic structures, we need the spherical version of the embedding theorem. Their relations are described in\cite{MorPet12}.
For convenience, we prove the spherical version of embedding theorem, similar to the one proved by Jones and Penneys in \cite{JonPen}.

In this section, we always assume $\mathcal{N}\subset\mathcal{M}$ is an irreducible subfactor of type II$_1$ with finite index, and $\mathcal{P}$ is an intermediate subfactor.
If the subfactor has an intermediate subfactor, then its planar algebra becomes an $\mathcal{N}-\mathcal{P}-\mathcal{M}$ planar algebras. For $\mathcal{N}-\mathcal{P}-\mathcal{M}$ planar algebras, we refer the reader to Chapter 4 in \cite{Harth}.
In this case, the subfactor planar algebra contains a biprojection $P$, and a planar tangle labeled by $P$ can be replaced by a $Fuss-Catalan~planar~tangle$. In this paper, we will use planar tangles labeled by $P$, instead of Fuss-Catalan planar tangles.

\subsection{Principal graphs}
For the embedding theorem, we will consider the principal graph of $\mathcal{N}\subset\mathcal{P}\subset\mathcal{M}$.
It refines the principal graph of $\mathcal{N}\subset\mathcal{M}$. Instead of a bipartite graph, it will be an ($\mathcal{N},\mathcal{P},\mathcal{M}$) coloured graph.

\begin{definition}\label{def three}
An ($\mathcal{N},\mathcal{P},\mathcal{M}$) coloured graph $\Gamma$ is a locally finite graph, such that the set $\mathcal{V}$ of its vertices is divided into three disjoint subsets $\mathcal{V}_{\mathcal{N}}$, $\mathcal{V}_{\mathcal{P}}$ and $\mathcal{V}_{\mathcal{M}}$, and the set $\mathcal{E}$ of its edges is divided into two disjoint subsets $\mathcal{E}_{+}$, $\mathcal{E}_{-}$. Moreover every edge in $\mathcal{E}_{+}$ connects a vertex in $\mathcal{V}_{\mathcal{N}}$ to one in $\mathcal{V}_{\mathcal{P}}$ and
every edge in $\mathcal{E}_{-}$ connects a vertex in $\mathcal{V}_{\mathcal{P}}$ to one in $\mathcal{V}_{\mathcal{M}}$.
Then we define the source function $s: \mathcal{E}\rightarrow \mathcal{V}_{\mathcal{N}} \cup \mathcal{V}_{\mathcal{M}}$ and the target function $t: \mathcal{E}\rightarrow \mathcal{V}_{\mathcal{P}}$ in the obvious way.
The operation $*$ reverses the direction of an edge.
\end{definition}


\begin{definition}
From an ($\mathcal{N},\mathcal{P},\mathcal{M}$) coloured graph $\Gamma$, we will obtain a ($\mathcal{N},\mathcal{M}$) coloured bipartite graph $\Gamma'$ as follows,
the $\mathcal{N}/\mathcal{M}$ coloured vertices of $\Gamma'$ are identical to the $\mathcal{N}/\mathcal{M}$ coloured vertices of $\Gamma$;
for two vertices $v_n$ in $\mathcal{V}_{\mathcal{N}}$ and $v_m\in\mathcal{V}_{\mathcal{M}}$,
the number of edges between $v_n$ and $v_m$ in $\Gamma$ is given by the number of length two pathes from $v_n$ to $v_m$ in $\Gamma'$.
The graph $\Gamma'$ is said to be the bipartite graph induced from the graph $\Gamma$.
The graph $\Gamma$ is said to be a refinement of the graph $\Gamma'$.
\end{definition}

For a factor $\mathcal{M}$ of type II$_1$, if $\mathcal{N}\subset\mathcal{P}\subset\mathcal{M}$ is a sequence of irreducible subfactors with finite index,
then $L^2(\mathcal{P})$ forms an irreducible $(\mathcal{N},\mathcal{P})$ bimodule, denoted by $X$,
and $L^2(\mathcal{M})$ forms an irreducible $(\mathcal{P},\mathcal{M})$ bimodule, denoted by $Y$.
Their conjugates $\overline{X}$, $\overline{Y}$ are $(\mathcal{P},\mathcal{N})$, $(\mathcal{P},\mathcal{M})$ bimodules respectively.
The tensor products
$X\otimes Y\otimes\overline{Y}\otimes\overline{X}\otimes\cdots\otimes \overline{X}$,
$X\otimes Y\otimes\overline{Y}\otimes\overline{X}\otimes\cdots\otimes X$,
$X\otimes Y\otimes\overline{Y}\otimes\overline{X}\otimes\cdots\otimes Y$,
$X\otimes Y\otimes\overline{Y}\otimes\overline{X}\otimes\cdots\otimes \overline{Y}$,
are decomposed into irreducible bimodules over $(\mathcal{N},\mathcal{N})$, $(\mathcal{N},\mathcal{P})$, $(\mathcal{N},\mathcal{M})$ and $(\mathcal{N},\mathcal{P})$ respectively.

\begin{definition}
The principal graph for the inclusion of factors $\mathcal{N}\subset\mathcal{P}\subset\mathcal{M}$ is an ($\mathcal{N},\mathcal{P},\mathcal{M}$) coloured graph. Its vertices are equivalent classes of irreducible bimodules over $(\mathcal{N},\mathcal{N})$, $(\mathcal{N},\mathcal{P})$ and $(\mathcal{N},\mathcal{M})$ in the above decomposed inclusion.
The number of edges connecting two vertices, a $(\mathcal{N},\mathcal{N})$ (or $(\mathcal{N},\mathcal{M})$) bimodule $U$ (or $V$) and a $(\mathcal{N},\mathcal{P})$ bimodule $W$, is the multiplicity of the equivalent class of $U$ (or $V$) as a sub bimodule of $W\otimes \overline{X}$ (or $W\otimes Y$).
The vertex corresponding to the irreducible $(\mathcal{N},\mathcal{N})$ bimodule $L^2(\mathcal{N})$ is marked by a star sign $*$.
The dimension vector of the pincipal graph is a function $\lambda$ from the vertices of the graph to $\mathbb{R}^+$. Its value at a point is defined to be the dimension of the corresponding bimodule.

Similarly the dual principal graph for the inclusion of factors is defined by considering the decomposed inclusion of $(\mathcal{M},\mathcal{M})$, $(\mathcal{M},\mathcal{P})$, $(\mathcal{M},\mathcal{N})$ bimodules.
\end{definition}



There is another principal graph given by decomposed inclusions of $(\mathcal{P},\mathcal{N})$, $(\mathcal{P},\mathcal{P})$ and $(\mathcal{P},\mathcal{M})$ bimodules, but we do not need it in this paper.

\begin{proposition}
The (dual) principal graph for the inclusion of factors $\mathcal{N}\subset\mathcal{P}\subset\mathcal{M}$ is a refinement of the (dual) principal graph of the subfactor $\mathcal{N}\subset\mathcal{M}$¡£
\end{proposition}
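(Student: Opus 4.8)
The plan is to reduce everything to the single structural identity ${}_{\mathcal N}L^2(\mathcal M)_{\mathcal M}\cong X\otimes Y$, where the tensor product is Connes fusion over $\mathcal P$, together with repeated applications of Frobenius reciprocity. Throughout I write $Z={}_{\mathcal N}L^2(\mathcal M)_{\mathcal M}$ for the generating $(\mathcal N,\mathcal M)$ bimodule of the subfactor $\mathcal N\subset\mathcal M$, and recall $X={}_{\mathcal N}L^2(\mathcal P)_{\mathcal P}$ and $Y={}_{\mathcal P}L^2(\mathcal M)_{\mathcal M}$. Since $L^2(\mathcal P)\otimes_{\mathcal P}L^2(\mathcal M)\cong L^2(\mathcal M)$ as $(\mathcal N,\mathcal M)$ bimodules, we have $Z\cong X\otimes Y$ and hence $\overline Z\cong\overline Y\otimes\overline X$.

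First I would match the vertex sets. The $(\mathcal N,\mathcal N)$ and $(\mathcal N,\mathcal M)$ vertices of the three-coloured graph $\Gamma$ are the irreducible summands of $X\otimes Y\otimes\overline Y\otimes\overline X\otimes\cdots$ ending in $\overline X$ (respectively in $Y$). Grouping the factors in consecutive pairs and using $Z\cong X\otimes Y$ and $\overline Z\cong\overline Y\otimes\overline X$, these chains become $Z\otimes\overline Z\otimes\cdots$ ending in $\overline Z$ (respectively in $Z$), whose irreducible summands are exactly the $(\mathcal N,\mathcal N)$ (respectively $(\mathcal N,\mathcal M)$) vertices of the principal graph of $\mathcal N\subset\mathcal M$. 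So $\Gamma'$ and that principal graph have the same vertices, with the marked vertex $L^2(\mathcal N)$ corresponding in both cases.

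The heart of the argument is the comparison of edge multiplicities. Fix an $(\mathcal N,\mathcal N)$ vertex $U$ and an $(\mathcal N,\mathcal M)$ vertex $V$. By the definition of the induced bipartite graph, the number of edges between $U$ and $V$ in $\Gamma'$ is the number of length-two paths through $(\mathcal N,\mathcal P)$ vertices, namely
$$\sum_W \dim\mathrm{Hom}(W\otimes\overline X,\,U)\,\dim\mathrm{Hom}(W\otimes Y,\,V),$$
the sum running over all irreducible $(\mathcal N,\mathcal P)$ bimodules $W$. Frobenius reciprocity rewrites the two factors as $\dim\mathrm{Hom}(W,\,U\otimes X)$ and $\dim\mathrm{Hom}(W,\,V\otimes\overline Y)$, where $U\otimes X$ and $V\otimes\overline Y$ are both $(\mathcal N,\mathcal P)$ bimodules. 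Since $\{W\}$ runs over all irreducible $(\mathcal N,\mathcal P)$ bimodules, semisimplicity (Schur's lemma) gives
$$\sum_W \dim\mathrm{Hom}(W,\,U\otimes X)\,\dim\mathrm{Hom}(W,\,V\otimes\overline Y)=\dim\mathrm{Hom}(U\otimes X,\,V\otimes\overline Y).$$
One more application of Frobenius reciprocity, moving $\overline Y$ to the other side, turns the right-hand side into $\dim\mathrm{Hom}(U\otimes X\otimes Y,\,V)=\dim\mathrm{Hom}(U\otimes Z,\,V)$, which is precisely the multiplicity of $V$ in $U\otimes Z$, i.e. the number of edges between $U$ and $V$ in the principal graph of $\mathcal N\subset\mathcal M$. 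Hence $\Gamma'$ is exactly that principal graph, so $\Gamma$ is a refinement of it. The dual statement follows by running the same argument from the $\mathcal M$ side, decomposing $(\mathcal M,\mathcal M)$, $(\mathcal M,\mathcal P)$, $(\mathcal M,\mathcal N)$ bimodules and using $Z\cong X\otimes Y$ read with the roles of $\mathcal N$ and $\mathcal M$ interchanged.

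The only genuinely nontrivial input is the fusion identity $Z\cong X\otimes Y$; everything else is forced once it is in place. I expect the main obstacle to be purely bookkeeping: keeping track at each Frobenius step of which conjugate bimodule appears, so that every $\mathrm{Hom}$-space is taken between bimodules of matching type, and making sure the pairing of factors in the alternating tensor chains is compatible with the conventions in the definition of the three-coloured principal graph (in particular that the edge from $(\mathcal N,\mathcal P)$ to $(\mathcal N,\mathcal N)$ uses $\overline X$ while the edge to $(\mathcal N,\mathcal M)$ uses $Y$), rather than anything analytic.
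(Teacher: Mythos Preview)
Your proof is correct and is essentially a careful unpacking of the paper's own argument, which is the one-line ``It follows from the definition and the fact that $X\otimes Y$ is the $(\mathcal N,\mathcal M)$ bimodule $L^2(\mathcal M)$.'' Both rest on the same single structural input $Z\cong X\otimes Y$; you have simply made explicit the vertex-matching via grouping of tensor factors and the edge-count via Frobenius reciprocity and semisimplicity that the paper leaves to the reader.
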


\begin{proof}
If follows from the definition and the fact that $X\otimes Y$ is the $(\mathcal{N},\mathcal{M})$ bimodule $L^2(\mathcal{M})$.
\end{proof}

Let $\delta_a$ be $\sqrt{[\mathcal{P}:\mathcal{N}]}$, the dimension of $X$,
and $\delta_b$ be $\sqrt{[\mathcal{M}:\mathcal{P}]}$, the dimension of $Y$.
Then by Frobenius reciprocity theorem, we have the following proposition.

\begin{proposition}\label{dab}
For the principal graph of factors $\mathcal{N}\subset\mathcal{P}\subset\mathcal{M}$ and the dimension vector $\lambda$, we have

$$\delta_a\lambda(u)=\sum_{\varepsilon\in \mathcal{E}_+,s(\varepsilon)=u} \lambda(t(\varepsilon)),~\forall u\in\mathcal{V}_\mathcal{N}; \quad \delta_b\lambda(w)=\sum_{\varepsilon\in \mathcal{E}_-,s(\varepsilon)=w} \lambda(t(\varepsilon)),~\forall w\in\mathcal{V}_\mathcal{M};$$

$$\delta_a\lambda(v)=\sum_{\varepsilon\in \mathcal{E}_{+}, t(\varepsilon)=v} \lambda(s(\varepsilon)),\quad
\delta_b\lambda(v)=\sum_{\varepsilon\in \mathcal{E}_{-}} \lambda(s(\varepsilon)),~\forall v\in\mathcal{V}_\mathcal{P};$$

\end{proposition}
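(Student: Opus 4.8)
The plan is to recognise that all four identities are statements about the (von Neumann/statistical) dimensions of bimodules, and to derive each one by decomposing a suitable Connes-fusion product into irreducibles and then comparing dimensions on the two sides. The only ingredients needed are that $\dim$ is multiplicative under $\otimes$ and additive under direct sums, the facts $\dim X = \dim \overline{X} = \delta_a$ and $\dim Y = \dim \overline{Y} = \delta_b$ (dimension of $L^2(\mathcal{P})$ as an $(\mathcal{N},\mathcal{P})$ bimodule is $\sqrt{[\mathcal{P}:\mathcal{N}]}$, and a bimodule has the same dimension as its conjugate), together with Frobenius reciprocity as recorded in the remark following Definition \ref{def principal graph}.

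First I would treat the two relations involving $\delta_a$. Let $u \in \mathcal{V}_\mathcal{N}$ correspond to the irreducible $(\mathcal{N},\mathcal{N})$ bimodule $U$. Fusing on the right with $X$ gives an $(\mathcal{N},\mathcal{P})$ bimodule with decomposition $U \otimes X = \bigoplus_w m_{u,w}\, W$, where by Frobenius reciprocity $m_{u,w}$ equals the multiplicity of $U$ in $W \otimes \overline{X}$, which is exactly the number of edges of $\mathcal{E}_+$ joining $u$ and $w$. Taking dimensions and using $\dim(U \otimes X) = \delta_a \lambda(u)$ yields the first identity. For $v \in \mathcal{V}_\mathcal{P}$ corresponding to the $(\mathcal{N},\mathcal{P})$ bimodule $W$, I would instead decompose $W \otimes \overline{X} = \bigoplus_u m_{u,v}\, U$ into $(\mathcal{N},\mathcal{N})$ bimodules; here $m_{u,v}$ is by definition the number of edges of $\mathcal{E}_+$ with target $v$ and source $u$, and comparing dimensions with $\dim(W \otimes \overline{X}) = \delta_a \lambda(v)$ gives the third identity.

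The two relations involving $\delta_b$ are proved symmetrically, fusing with $Y$ and $\overline{Y}$ in place of $X$ and $\overline{X}$. For $w \in \mathcal{V}_\mathcal{M}$ with bimodule $V$, I would decompose the $(\mathcal{N},\mathcal{P})$ bimodule $V \otimes \overline{Y}$ into irreducibles; by Frobenius reciprocity the multiplicity of an $(\mathcal{N},\mathcal{P})$ bimodule $W$ equals the multiplicity of $V$ in $W \otimes Y$, i.e.\ the number of edges of $\mathcal{E}_-$ joining $w$ and the corresponding $\mathcal{P}$-vertex, and comparing dimensions (with $\dim(V \otimes \overline{Y}) = \delta_b \lambda(w)$) yields the second identity. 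For $v \in \mathcal{V}_\mathcal{P}$ with bimodule $W$, I would decompose $W \otimes Y$ into $(\mathcal{N},\mathcal{M})$ bimodules; the multiplicity of $V$ is the number of edges of $\mathcal{E}_-$ with target $v$, and comparing dimensions gives the last identity (where the sum is understood to run over the edges of $\mathcal{E}_-$ incident to $v$).

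No step presents a genuine difficulty; the only care required is the bookkeeping of the fusion conventions. In each case one must fuse on the correct side and with the correctly barred or unbarred bimodule so that the resulting decomposition lands among the vertices appearing in the claimed sum, and one must invoke Frobenius reciprocity in the direction matching the edge-multiplicity definitions of $\mathcal{E}_+$ and $\mathcal{E}_-$. Once these conventions are pinned down, all four identities collapse to the single observation that summing the dimensions of the irreducible constituents of a fused bimodule recovers the product of the dimensions of its two factors.
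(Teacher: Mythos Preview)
Your proof is correct and follows exactly the approach the paper intends: the paper's own proof is simply the sentence ``by Frobenius reciprocity theorem'' preceding the statement, and you have spelled out precisely that argument---decomposing the appropriate tensor products $U\otimes X$, $W\otimes\overline{X}$, $V\otimes\overline{Y}$, $W\otimes Y$ and matching multiplicities to edge counts via Frobenius reciprocity, then reading off the dimension relations. Your bookkeeping of the source/target conventions and the direction in which Frobenius reciprocity is applied is correct, and your parenthetical reading of the last sum (over edges of $\mathcal{E}_-$ incident to $v$) fixes an evident typo in the displayed statement.
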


\begin{definition}
For an $(\mathcal{N},\mathcal{P},\mathcal{M})$ coloured graph $\Gamma$, if there exits a function $\lambda: \mathcal{V}\rightarrow \mathbb{R}^+$ with the proposition mentioned above, then we call it a graph with parameter $(\delta_a,\delta_b)$.
\end{definition}

\begin{proposition}
The principal graph of factors $\mathcal{N}\subset\mathcal{P}\subset\mathcal{M}$ is a graph with parameter $(\sqrt{[\mathcal{P:N}]},\sqrt{[\mathcal{M:P}]})$.
Consequently if $\mathcal{N}\subset\mathcal{M}$ is finite depth, then the principal graph of $\mathcal{N}\subset\mathcal{P}\subset\mathcal{M}$ is finite.
\end{proposition}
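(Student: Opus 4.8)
The statement splits into two assertions, and the first is almost a tautology given what precedes it. The plan for the parameter claim is simply to unwind the definition: a graph with parameter $(\delta_a,\delta_b)$ is an $(\mathcal{N},\mathcal{P},\mathcal{M})$ coloured graph admitting a function $\lambda:\mathcal{V}\to\mathbb{R}^+$ satisfying the four balance relations displayed in Proposition~\ref{dab}. Taking $\lambda$ to be the dimension vector of the principal graph of $\mathcal{N}\subset\mathcal{P}\subset\mathcal{M}$ and setting $\delta_a=\sqrt{[\mathcal{P:N}]}$, $\delta_b=\sqrt{[\mathcal{M:P}]}$, Proposition~\ref{dab} asserts exactly that these relations hold. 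Hence no further computation is needed for the first claim; it is a direct application of Proposition~\ref{dab}.

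For the finiteness assertion I would exploit the refinement structure. By the refinement proposition proved just above, the principal graph $\Gamma$ of $\mathcal{N}\subset\mathcal{P}\subset\mathcal{M}$ refines the principal graph $\Gamma'$ of $\mathcal{N}\subset\mathcal{M}$, and by the construction of the refinement the $\mathcal{N}$-coloured and $\mathcal{M}$-coloured vertices of $\Gamma$ are literally the vertices of $\Gamma'$. Finite depth of $\mathcal{N}\subset\mathcal{M}$ says $\Gamma'$ is finite, so $\mathcal{V}_\mathcal{N}$ and $\mathcal{V}_\mathcal{M}$ are finite. It remains only to bound the number of intermediate ($\mathcal{P}$-coloured) vertices, and then to observe that local finiteness of $\Gamma$ (built into Definition~\ref{def three}) turns finitely many vertices into finitely many edges.

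To control $\mathcal{V}_\mathcal{P}$, I would argue as follows. Let $W$ be any irreducible $(\mathcal{N},\mathcal{P})$ bimodule occurring as a vertex of $\Gamma$. Then $W\otimes\overline{X}$ is a nonzero $(\mathcal{N},\mathcal{N})$ bimodule, and since $W$ occurs in one of the defining iterated tensor products, appending $\overline{X}$ keeps us among those decompositions, so the irreducible summands of $W\otimes\overline{X}$ are again vertices of $\Gamma$; choose such a summand $U\in\mathcal{V}_\mathcal{N}$. By Frobenius reciprocity the multiplicity of $U$ in $W\otimes\overline{X}$ equals the multiplicity of $W$ in $U\otimes X$, so $W$ is a sub-bimodule of $U\otimes X$. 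As $X$ has finite dimension $\delta_a$ and $U$ has finite dimension, $U\otimes X$ is a finite-dimensional $(\mathcal{N},\mathcal{P})$ bimodule, hence a finite direct sum of irreducibles. Thus every $\mathcal{P}$-vertex is a summand of $U\otimes X$ for some $U$ in the finite set $\mathcal{V}_\mathcal{N}$, and there are only finitely many such summands in all. Therefore $\mathcal{V}_\mathcal{P}$ is finite, and $\Gamma$ is finite.

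The routine points I would not belabor are the finite decomposability of each $U\otimes X$, which is immediate from finite dimensionality, and the matching of colours in the tensor products. The one place demanding care is the claim that each $\mathcal{P}$-vertex is incident to at least one edge in $\mathcal{E}_+$, i.e. that $W\otimes\overline{X}$ actually contains a genuine vertex of $\Gamma$ rather than some irreducible lying outside the graph; this is the step that uses the fact that $W$, being a vertex, truly appears in one of the defining tensor products, so that the decomposition producing $U$ stays inside $\Gamma$. Once this is granted, the counting argument is the only substantive step.
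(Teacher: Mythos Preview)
Your proof is correct and follows essentially the same route as the paper. The paper's own proof is extremely terse: it says the first statement ``follows from the definition'' (i.e.\ Proposition~\ref{dab}, exactly as you note), and for finiteness it simply invokes that bimodule dimensions are at least $1$, which is precisely what underlies your count of irreducible summands of $U\otimes X$. Your version spells out the refinement step and the Frobenius reciprocity argument that the paper leaves implicit, but the substance is the same.
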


\begin{proof}
The first statement follows from the definition.
Note that the dimension of a bimodule is at least 1. By this restriction, $\mathcal{N}\subset\mathcal{M}$ is finite depth implies the principal graph of $\mathcal{N}\subset\mathcal{P}\subset\mathcal{M}$ is finite.
\end{proof}


\subsection{The standard invariant}
We will define the refined (dual) principal graph for a subfactor planar algebra with a biprojection. This definition coincides with the definition given by bimodules, but we do not need this fact in this paper.
Given $\mathcal{N}\subset\mathcal{P}\subset\mathcal{M}$, there are two Jones projections $e_{\mathcal{N}}$ and $e_{\mathcal{P}}$ acting on $L^2(\mathcal{M})$.
Then we have the basic construction $\mathcal{N}\subset\mathcal{P}\subset\mathcal{M}\subset \mathcal{P}_1\subset \mathcal{M}_1$.
Repeating this process, we will obtain a sequence of factors $\mathcal{N}\subset\mathcal{P}\subset\mathcal{M}\subset \mathcal{P}_1\subset \mathcal{M}_1\subset\mathcal{P}_2\subset\mathcal{M}_2\cdots$ and a sequence of Jones projections $e_{\mathcal{N}}$, $e_{\mathcal{P}}$, $e_{\mathcal{M}}$, $e_{\mathcal{P}_1}\cdots$.
Then the standard invariant is refined as

$\begin{array}{ccccccccccc}
\mathbb{C}=\mathcal{N}'\cap\mathcal{N}&\subset&\mathcal{N}'\cap\mathcal{P}&\subset&\mathcal{N}'\cap\mathcal{M}&\subset&\mathcal{N}'\cap\mathcal{P}_1&\subset& \mathcal{N}'\cap\mathcal{M}_1&\subset&\cdots\\
&&\cup&&\cup&&\cup&&\\
&&\mathbb{C}=\mathcal{P}'\cap\mathcal{P}&\subset&\mathcal{P}'\cap\mathcal{M}&\subset&\mathcal{P}'\cap\mathcal{P}_1&\subset& \mathcal{P}'\cap\mathcal{M}_1&\subset&\cdots\\
&&&&\cup&&\cup&&\cup&&\\
&&&&\mathrm{C}=\mathcal{M}'\cap\mathcal{M}&\subset&\mathcal{M}'\cap\mathcal{P}_1&\subset&\mathcal{M}'\cap\mathcal{M}_1&\subset&\cdots\\
\end{array}$

For Fuss-Catalan, the corresponding Bratteli diagram is describe by the $middle patterns$, see page 114-115 in \cite{BisJonFC}.

We hope to define the refined principal graph as the limit of the Bratteli diagram $Br_{k}$ of $\mathcal{N}'\cap\mathcal{M}_{k-2} \subset \mathcal{N}'\cap\mathcal{P}_{k-1}\subset \mathcal{N}'\cap\mathcal{M}_{k-1}$.
To show the limit is well defined, we need to prove that $Br_{k}$ is identified as a subgraph of $Br_{k+1}$.
To define it for a subfactor planar algebra with a biprojection without the presumed factors, we need to do some translations motivated by the fact
$$\mathcal{N}'\cap\mathcal{P}_k=\mathcal{N}'\cap(\mathcal{M}_k \cap \{e_{\mathcal{P}_k}\}')=(\mathcal{N}'\cap\mathcal{M}_k) \cap \{e_{\mathcal{P}_k}\}'.$$

\begin{definition}
Let $\mathscr{S}=\mathscr{S}_{m,\pm} $ be a subfactor planar algebra. And $e_1,e_2,\cdots$ be the sequence of Jones projections.

Suppose $p_1$ is a biprojection in $\mathscr{S}_{2,+}$.
Then we will obtain another sequence of Jones projections $p_1,p_2,p_3,\cdots$, corresponding to the intermediate subfactors,
precisely $p_2$ in $\mathscr{S}_{2,-}\subset \mathscr{S}_{3,+}$ is a multiple of $\mathcal{F}(p_1)$, and $p_k$ is obtained by adding two strings on the left side of $p_{k-2}$.

For $m\geq 1$,
let us define  $\mathscr{S}'_{m,+}$ to be  $\mathscr{S}_{m,+} \cap \{p_{m}\}'$ and
$\mathscr{S}'_{m,-}$ to be  $\mathscr{S}_{m,-} \cap \{p_{m+1}\}'$.
\end{definition}

\begin{proposition}\label{equ def}
For $X\in\mathscr{S}_{m,+}$, $m\geq1$, we have
$$Xp_{m}=p_{m}X \iff \mathcal{F}(X)=\mathcal{F}(X)p_{m}.$$
\end{proposition}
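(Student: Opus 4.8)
The plan is to read both conditions as equalities inside $\mathscr{S}_{m+1,+}$ and to connect them through the one-click rotation $\mathcal{F}$. Recall the relevant placements: viewing $\mathscr{S}_{m,+}\subset\mathscr{S}_{m+1,+}$ by adding a through-string on the right, $X$ occupies the leftmost $m$ strings; viewing $\mathscr{S}_{m,-}\subset\mathscr{S}_{m+1,+}$ by adding a through-string on the left, $\mathcal{F}(X)$ is the one-click rotation of $X$ sitting on the rightmost $m$ strings; and $p_m$ is, up to the normalizing scalar making it a projection, the identity on the leftmost $m-1$ strings tensored with $p_1$ (for $m$ odd) or with $\mathcal{F}(p_1)$ (for $m$ even) on the rightmost two strings. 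Since $p_1$ is a biprojection, $\mathcal{F}(p_1)$ is again a multiple of a projection, so in both parities the \emph{active part} of $p_m$ is a multiple of a projection supported on the two rightmost strings, and $p_{m+1}$ is a multiple of $\mathcal{F}(p_m)$.

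First I would prove the forward direction. Starting from $Xp_m=p_mX$, I apply $\mathcal{F}$ to both sides. Because the active strings of $p_m$ are exactly the outermost strings that the one-click rotation wraps around, I expect $\mathcal{F}$ to convert right multiplication by $p_m$ into right multiplication by $p_m$ on $\mathcal{F}(X)$, that is $\mathcal{F}(Xp_m)=\mathcal{F}(X)p_m$, while converting left multiplication into the bare element, $\mathcal{F}(p_mX)=\mathcal{F}(X)$: the strand of the biprojection that is rotated to the far side closes up against the rest of the projection and is absorbed, precisely because $\mathcal{F}(p_1)$ (equivalently $p_1$) is a projection. The tool that makes these two rewritings legitimate is the exchange relation of the biprojection, Proposition~\ref{ex of bipro}, which lets the biprojection strands be slid past the rotated string; the Perron--Frobenius weight factors produced by the parity-dependent formula for $\mathcal{F}$ cancel against the normalization already built into $p_m$ as a genuine projection. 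Equating the two images yields $\mathcal{F}(X)p_m=\mathcal{F}(X)$.

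For the converse I would run the same computation backwards, using that $\mathcal{F}$ is invertible with inverse a one-click rotation in the opposite direction and that every step above is a reversible planar identity. The only genuinely new point is the passage from a one-sided absorption, $\mathcal{F}(X)p_m=\mathcal{F}(X)$, back to a two-sided commutation, $Xp_m=p_mX$; here I would use self-adjointness $p_m=p_m^*$ together with the compatibility of the adjoint with $\mathcal{F}$, so that right absorption of $\mathcal{F}(X)$ is equivalent to left absorption of $\mathcal{F}(X)^*$ and hence, after un-rotating, to the vanishing of the commutator on both sides. The hard part will be exactly this \emph{one-sided to two-sided} step rather than any single diagram, since it is what secretly encodes that the rotation trades a two-sided relation for a one-sided one; the remaining difficulty is purely the bookkeeping of shadings, string positions, and the scalar factors in the two cases $m$ even and $m$ odd, which must be checked to yield the same conclusion.
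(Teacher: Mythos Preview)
Your proposal has the ingredients misplaced and the converse does not close.

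For the forward direction, the paper applies a single planar tangle to the equation $Xp_m=p_mX$ and reads off $\mathcal{F}(X)=\mathcal{F}(X)p_m$ directly; no exchange relation is needed here. Your claimed identities $\mathcal{F}(Xp_m)=\mathcal{F}(X)p_m$ and $\mathcal{F}(p_mX)=\mathcal{F}(X)$ are not general facts about the Fourier transform of a product in $\mathscr{S}_{m+1,+}$, and the second one in particular would involve an absorption that is not reversible, so your later claim that ``every step above is a reversible planar identity'' cannot hold.

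The real gap is in the converse, which you correctly flag as the hard step but do not actually carry out. Taking adjoints turns $\mathcal{F}(X)p_m=\mathcal{F}(X)$ into $p_m\mathcal{F}(X)^*=\mathcal{F}(X)^*$, i.e.\ $p_m\mathcal{F}^{-1}(X^*)=\mathcal{F}^{-1}(X^*)$. This is still a one-sided condition, now on a \emph{different} element; it does not combine with the original one-sided condition to yield the two-sided commutation $Xp_m=p_mX$, and ``un-rotating'' only reproduces the same difficulty for $X^*$. The paper's converse proceeds differently: it rewrites $\mathcal{F}(X)=\mathcal{F}(X)p_m$ as the coproduct identity $X=X*\mathcal{F}(p_1)$, and then applies the exchange relation of the biprojection (Proposition~\ref{ex of bipro}) to slide the biprojection through and obtain $p_mX=Xp_m$. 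So the exchange relation is the substantive tool for the converse, not the forward direction, and the coproduct reformulation is the step you are missing.
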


That means $\mathscr{S}'_{m,+}$ is the invariant subspace of $\mathscr{S}_{m,+}$ under the ``right action" of the biprojection. Diagrammatically its consists of vectors with one $a/b$-colour through string on the rightmost.

\begin{proof}
If $p_mX=Xp_m$, then take the action given by the planar tangle $\graa{equdef1}$, we have $\mathcal{F}(X)=\mathcal{F}(X)p_{m}$.

For $m$ odd,
if $\mathcal{F}(X)=\mathcal{F}(X)p_{m}$, then $X=X*\mathcal{F}(p_1)$, i.e.
$$X=\graa{xcp}.$$
By the exchange relation of the biprojection, we have
$$\grb{xcpfp}=\grb{xcpfp1}.$$
So $p_mX=Xp_m$.

For $m$ even, the proof is similar.
\end{proof}

Note that $\mathscr{S}_{m-1,+}$ is in the commutant of ${p_{m}}'$.
So we have the inclusion of finite dimensional von Neumann algebras
$$\mathscr{S}_{0,+}\subset\mathscr{S}_{1,+}'\subset\mathscr{S}_{1,+}\subset\mathscr{S}_{2,+}'\subset\mathscr{S}_{2,+}\subset\cdots.$$
Then we obtain the Bratteli diagram $Br_m$ for the inclusion $\mathscr{S}_{m-1,+}\subset\mathscr{S}_{m,+}'\subset\mathscr{S}_{m,+}$.
To take the limit of $Br_m$, we need to prove that $Br_m$ is identified as a subgraph of $Br_{m+1}$.


\begin{proposition}\label{equ pro}
If $P_1,~P_2$ are minimal projections of $\mathscr{S}_{m,+}'$. Then $P_1p_m,~P_2p_m$ are minimal projections of $\mathscr{S}_{m+1,+}'$.
Moreover $P_1$ and $P_2$ are equivalent in $\mathscr{S}_{m,+}'$ if and only if  $P_1p_m$ and $P_2p_m$ are equivalent in $\mathscr{S}_{m+1,+}'$.
\end{proposition}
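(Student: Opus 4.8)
The plan is to mirror the proof of Proposition \ref{frob e 1}, treating the biprojection-derived Jones projection $p_m$ in the refined tower $\mathscr{S}_{m,+}'\subset\mathscr{S}_{m,+}\subset\mathscr{S}_{m+1,+}'$ exactly as the ordinary Jones projection is treated in the unrefined basic construction. The essential new feature is that, unlike a generic projection, $p_m$ interacts with its neighbour $p_{m+1}$ through the exchange relation of Proposition \ref{ex of bipro}, and it is this commutation that makes the map $P\mapsto Pp_m$ land in the correct refined algebra.

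First I would check that $P\mapsto Pp_m$ is well defined as a map from projections of $\mathscr{S}_{m,+}'$ to projections of $\mathscr{S}_{m+1,+}'$. By the definition of $\mathscr{S}_{m,+}'$, every $P\in\mathscr{S}_{m,+}'$ commutes with $p_m$, so $Pp_m$ is again a projection. To see $Pp_m\in\mathscr{S}_{m+1,+}'$, I would use that $\mathscr{S}_{m,+}$ commutes with $p_{m+1}$ (the refined analogue of the fact that $e_{m+1}$ commutes with $\mathscr{S}_{m,+}$) together with the commutation $p_mp_{m+1}=p_{m+1}p_m$ furnished by the exchange relation; then $Pp_m$ commutes with $p_{m+1}$ and hence lies in $\mathscr{S}_{m+1,+}\cap\{p_{m+1}\}'=\mathscr{S}_{m+1,+}'$. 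Since $P$ and $p_m$ commute, $P\mapsto Pp_m$ is a $*$-homomorphism on $\mathscr{S}_{m,+}'$, and it is injective because $\operatorname{tr}(Pp_m)$ is a fixed positive multiple of $\operatorname{tr}(P)$; thus it is a $*$-isomorphism of $\mathscr{S}_{m,+}'$ onto $\mathscr{S}_{m,+}'p_m$.

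The heart of the argument is to identify $\mathscr{S}_{m,+}'\subset\mathscr{S}_{m,+}\subset\mathscr{S}_{m+1,+}'$ as a basic construction whose Jones projection is a multiple of $p_m$; concretely, I would establish the pull-down identity $p_m\,\mathscr{S}_{m+1,+}'\,p_m=\mathscr{S}_{m,+}'\,p_m$, equivalently $p_mXp_m=E(X)p_m$ for $X\in\mathscr{S}_{m,+}$, where $E$ is the trace-preserving conditional expectation of $\mathscr{S}_{m,+}$ onto $\mathscr{S}_{m,+}'$. Granting this, minimality and equivalence follow formally: for $P$ minimal in $\mathscr{S}_{m,+}'$ one computes $(Pp_m)\mathscr{S}_{m+1,+}'(Pp_m)=P\big(p_m\mathscr{S}_{m+1,+}'p_m\big)P=(P\mathscr{S}_{m,+}'P)p_m=\mathbb{C}\,Pp_m$, so $Pp_m$ is minimal; a partial isometry $v\in\mathscr{S}_{m,+}'$ with $v^*v=P_1,\ vv^*=P_2$ yields $vp_m$ implementing $P_1p_m\sim P_2p_m$ in $\mathscr{S}_{m+1,+}'$; and conversely, a partial isometry $w$ realizing $P_1p_m\sim P_2p_m$ satisfies $w=p_mwp_m$, so the pull-down identity lets me write $w=vp_m$ with $v\in\mathscr{S}_{m,+}'$, and injectivity of $X\mapsto Xp_m$ forces $v^*v=P_1,\ vv^*=P_2$.

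The main obstacle is precisely this basic-construction step: the pull-down identity and the fact that $p_m$ implements the conditional expectation onto $\mathscr{S}_{m,+}'$. For an ordinary Jones projection this is immediate from the Temperley-Lieb relations, but here it must be extracted from the Fuss-Catalan diagrammatics of the biprojection. The relevant inputs are the exchange relation of Proposition \ref{ex of bipro} and the diagrammatic description of $\mathscr{S}_{m,+}'$ from Proposition \ref{equ def} as the vectors carrying a single $a/b$-colour through-string on the right. Once the conditional-expectation property of $p_m$ is secured, the remainder is the same formal basic-construction bookkeeping as in Proposition \ref{frob e 1}.
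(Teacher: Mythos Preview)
Your proposal is correct and matches the paper's approach: the paper's entire proof is the single sentence ``This proposition is the same as Proposition~\ref{frob e 1},'' and you have correctly identified and fleshed out why that reduction works, namely that $p_m$ plays the role of the Jones projection for the step $\mathscr{S}_{m,+}'\subset\mathscr{S}_{m,+}\subset\mathscr{S}_{m+1,+}'$ and the needed pull-down identity $p_m\mathscr{S}_{m+1,+}'p_m=\mathscr{S}_{m,+}'p_m$ follows from the Fuss-Catalan relations among the $p_j$'s (Proposition~\ref{realtion of e p} and the exchange relation of Proposition~\ref{ex of bipro}). Your concern about this ``main obstacle'' is well placed but not a gap: those relations are exactly what the paper is implicitly invoking when it declares the proof identical to the unrefined case.
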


This proposition is the same as Proposition \ref{frob e 1}.


\begin{proposition}[Frobenius Reciprocity]\label{Frob}
\mbox{}

(1) For a minimal projection $P\in\mathscr{S}_{m-1,+}$ and a minimal projection $Q\in\mathscr{S}_{m,+}'$, we have
$Qp_m$ is a minimal projection of $\mathscr{S}_{m+1,+}'$, $Pe_m$ is a minimal projection of $\mathscr{S}_{m+1,+}'$, and
$$\dim(P(\mathscr{S}_{m,+}')Q)=\dim(Pe_m(\mathscr{S}_{m+1,+})Qp_m).$$

(2) For a minimal projection $P'\in\mathscr{S}_{m,+}'$ and a minimal projection $Q'\in\mathscr{S}_{m,+}$, we have
$P'p_m$ is a minimal projection of $\mathscr{S}_{m+1,+}'$, and
$$\dim(P'(\mathscr{S}_{m,+})Q')=\dim(P'p_m(\mathscr{S}_{m+1,+}')Q').$$
\end{proposition}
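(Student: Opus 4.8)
The plan is to read both parts as refined Frobenius reciprocity for the fine tower
$$\mathscr{S}_{m-1,+}\subset\mathscr{S}_{m,+}'\subset\mathscr{S}_{m,+}\subset\mathscr{S}_{m+1,+}'\subset\mathscr{S}_{m+1,+},$$
in which the biprojection $p_m$ is a half Jones projection and the Jones projection $e_m$ a full one, and to prove them by exhibiting explicit mutually inverse linear maps between the two corners, exactly as in the Temperley--Lieb case but one fine level at a time. The only input beyond the unrefined arguments of Propositions \ref{frob e 1} and \ref{frob e 2} is the exchange relation of the biprojection (Proposition \ref{ex of bipro}, codified in Proposition \ref{equ def}), which is what forces the constructed projections and intertwiners into the commutant algebras $\mathscr{S}'_{\bullet,+}$.

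First I would settle the minimality claims. That $Qp_m$ and $P'p_m$ are minimal in $\mathscr{S}_{m+1,+}'$ is precisely Proposition \ref{equ pro}. For $Pe_m$, Proposition \ref{frob e 1} (applied with $m-1$ in place of $m$) gives that it is a minimal projection of $\mathscr{S}_{m+1,+}$; what remains is to check that $Pe_m$ commutes with $p_{m+1}$, so that it lies in $\mathscr{S}_{m+1,+}'$ and stays minimal there. Here $e_m$ and $p_{m+1}$ genuinely overlap on the rightmost strands, so this is not a locality statement; instead I would slide $p_{m+1}$ across $e_m$ using the exchange relation, reducing the commutation to the defining relations of a biprojection.

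For the two dimension identities I would construct the compression maps explicitly. In part (2), given $x=P'xQ'\in P'\mathscr{S}_{m,+}Q'$, set $\Phi(x)=p_m x$; the key point is that $\Phi(x)=(P'p_m)(xQ')$ is a product of the two elements $P'p_m\in\mathscr{S}_{m+1,+}'$ (Proposition \ref{equ pro}) and $xQ'\in\mathscr{S}_{m,+}\subset\mathscr{S}_{m+1,+}'$, so $\Phi(x)$ indeed lands in $P'p_m\,\mathscr{S}_{m+1,+}'\,Q'$. The inverse is the conditional expectation implemented by $p_m$, using the Jones relation $p_m y p_m = E(y) p_m$ valid for the intermediate Jones projection $p_m$; that $\Phi$ and this expectation are mutually inverse is the same bookkeeping as in the unrefined proof. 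Part (1) follows the identical template, now with the full Jones projection entering on the $P$-side: one sends $x\in P\mathscr{S}_{m,+}'Q$ to $e_m x p_m$, which lies in $Pe_m\,\mathscr{S}_{m+1,+}\,Qp_m$ because $P$ commutes with $e_m$ (disjoint supports) and $Q$ commutes with $p_m$ (as $Q\in\mathscr{S}_{m,+}'$), and one inverts it by successively removing $e_m$ and $p_m$ through their Jones relations.

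The step I expect to be the main obstacle is not the dimension count---once the maps are in place their inversion is formal---but the well-definedness, namely verifying at each stage that multiplying by $e_m$ or $p_m$ keeps the relevant projections inside the commutant algebras $\mathscr{S}'_{\bullet,+}$ and that the compressions stay bijective there. Every such check amounts, after isotopy, to tracking the single through-string of the biprojection and applying the exchange relation once or twice; granting these, parts (1) and (2) read off exactly as Propositions \ref{frob e 1} and \ref{frob e 2}.
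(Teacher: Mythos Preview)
Your approach is essentially the same as the paper's: construct mutually inverse linear maps between the two corners, with Proposition~\ref{equ def} (the exchange relation, encoding $X\in\mathscr{S}_{m,+}'$ as $X=X*\mathcal{F}(p_1)$) as the one nontrivial input. The paper implements the maps diagrammatically as a pair of planar tangles $\phi_1:\mathscr{S}_{m,+}\to\mathscr{S}_{m+1,+}$ and $\phi_2:\mathscr{S}_{m+1,+}\to\mathscr{S}_{m,+}$, checks that $\phi_1$ lands in $Pe_m(\mathscr{S}_{m+1,+})Qp_m$ precisely by invoking the coproduct characterization, and observes that $\phi_1,\phi_2$ are mutual inverses on the corners; part~(2) is dispatched in one line as ``the same as Proposition~\ref{frob e 2}.'' Your algebraic recasting via multiplication by $e_m,p_m$ and Jones--type conditional expectations unwinds to the same maps (note that for $x\in\mathscr{S}_{m,+}'$ one has $e_m x p_m=e_m p_m x=e_m x$, so your map for part~(1) is already the standard Frobenius map), and the ``well-definedness'' checks you flag are exactly where the paper appeals to Proposition~\ref{equ def}.

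One remark on emphasis: you devote effort to showing $Pe_m$ commutes with $p_{m+1}$ so as to place it inside $\mathscr{S}_{m+1,+}'$. The paper's proof does not address this point at all; the dimension formula on the right-hand side of~(1) involves $\mathscr{S}_{m+1,+}$ without the prime, and the role of $Pe_m$ in the subsequent Bratteli-diagram argument only requires it to be minimal in $\mathscr{S}_{m+1,+}$, which is immediate from Proposition~\ref{frob e 1}. So that commutation check, while an interesting exercise in the biprojection relations, is not needed for the paper's application.
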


\begin{proof}
(1) Consider the maps $$\phi_1=\graa{frob3}: \mathscr{S}_{m,+}\rightarrow \mathscr{S}_{m+1,+}, \quad \phi_2=\graa{frob4} :\mathscr{S}_{m+1,+}\rightarrow \mathscr{S}_{m,+}.$$
For $m$ odd,
if $X\in P(\mathscr{S}_{m,+}')Q$, then by Proposition \ref{equ def}, we have $X=P(X'*\mathcal{F}(p_1))Q$ for some $X'\in \mathscr{S}_{m,+}$.
So $\phi_1(X)\in Pe_m(\mathscr{S}_{m+1,+})Qp_m$.
On the other hand, if $Y\in Pe_m(\mathscr{S}_{m+1,+})Qp_m$, then $\phi_2(Y)\in P(\mathscr{S}_{m,+}')Q$. While $\phi_1\circ\phi_2$ is the identity map on $Pe_m(\mathscr{S}_{m+1,+})Qp_m$ and $\phi_2\circ\phi_1$ is the identity map on $P(\mathscr{S}_{m,+}')Q$. So $\dim(P'(\mathscr{S}_{m,+})Q')=\dim(P'p_m(\mathscr{S}_{m+1,+}')Q')$.

For $m$ even, the proof is similar.

(2) This is the same as Proposition \ref{frob e 2}.
\end{proof}

By Proposition(\ref{frob e 1})(\ref{Frob}), the Bratteli diagram $Br_m$ is identified as a subgraph of $Br_{m+1}$.

\begin{definition}\label{refined principal graph of PA}
Let us define the refined principal graph of $\mathscr{S}$ with respect to the biprojection $p_1$ to be the limit of the Bratteli diagram of
$\mathscr{S}_{m,+}\subset\mathscr{S}_{m+1,+}'\subset\mathscr{S}_{m+1,+}$. The vertex corresponds to the identity in $\mathscr{S}_{0,+}$ is marked by a star sign.

Similarly let us define the refined dual principal graph of $\mathscr{S}$ with respect to the biprojection $p_1$ to be the limit of the Bratteli diagram of
$\mathscr{S}_{m,-}\subset\mathscr{S}_{m+1,-}'\subset\mathscr{S}_{m+1,-}$. The vertex corresponds to the identity in $\mathscr{S}_{0,-}$ is marked by a star sign.
\end{definition}

The refined principal graph is an $(\mathcal{N},\mathcal{P},\mathcal{M})$ coloured graph. The $\mathcal{N},\mathcal{P},\mathcal{M}$ coloured vertices are given by equivalence classes of minimal projections of
$\mathscr{S}_{2m,-},\mathscr{S}_{2m+1,-}',\mathscr{S}_{2m+1,-}$ respectively, for $m$ approaching infinity.
Similarly the refined dual principal graph is an $(\mathcal{M},\mathcal{P},\mathcal{N})$ coloured graph.

\begin{definition}
The dimension vector $\lambda$ of the principal graph is defined as follows,
for an $\mathcal{N}$ or $\mathcal{M}$ coloured vertex, its value is the Markov trace of the minimal projection corresponding to that vertex;
for a $\mathcal{P}$ coloured vertex $v$, suppose $Q\in \mathscr{S}_{m,+}'$ is a minimal projection corresponding to $v$.
Then $\lambda(v)=\delta_a^{-1}tr(Q)$, when $m$ is even, where $\delta_a=\sqrt{tr(p_1)}$;   $\lambda(v)=\delta_b^{-1}tr(Q)$, when $m$ is odd, where $\delta_b=\delta\delta_a^{-1}$.
\end{definition}

\begin{remark}
An element in $\mathscr{S}_{m,+}'$ has an $a/b$-colour through string on the rightmost. When we compute the dimension vector for a minimal projection in $\mathscr{S}_{m,+}'$, that string should be omitted. So there is a factor $\delta_a^{-1}$ or $\delta_b^{-1}$.
\end{remark}

Note that the dimension vector satisfies Proposition \ref{dab}. So the refined principal graph is a graph with parameter $(\delta_a,\delta_b)$.
If the Bratteli diagram of $\mathscr{S}_{m,+}\subset\mathscr{S}_{m+1,+}$ is the same as that of $\mathscr{S}_{m+1,+}\subset\mathscr{S}_{m+2,+}$, i.e. $\mathscr{S}$ has finite depth,
then $Br_{m+1}=Br_{m+2}$ by the restriction of the dimension vector.
Specifically the Bratteli diagram of $\mathscr{S}_{m+1,+}'\subset\mathscr{S}_{m+1,+}$ is the same as that of $\mathscr{S}_{m+1,+}\subset\mathscr{S}_{m+2,+}'$.
So $\mathscr{S}_{m+1}'\subset\mathscr{S}_{m+1,+}\subset\mathscr{S}_{m+2,+}'$ forms a basic construction, and $p_{m+1}$ is the Jones projection.
Then the Jones projection can be expressed as a linear sum of loops. We will see the formula later.


The subfactor planar algebra $FC(\sqrt{2},\frac{1+\sqrt{5}}{2})$ contains a trace-2 biprojection.
Considering the middle pattern of its minimal projections, we have its refined principal graph as
$$\gra{principalgraph11};$$
and its refined dual principal graph as
$$\grb{principalgraph25},$$
where the black, mixed, white points are $\mathcal{N},\mathcal{P}, \mathcal{M}$ coloured vertices.

\subsection{Finite-dimensional inclusions}

Now given an inclusion of finite dimensional von Neumann algebras $\mathcal{B}_0\subset \mathcal{B}_1\subset \mathcal{B}_2$, similarly we may consider its Bratteli diagram, adjacent matrixes, Markov trace, and the basic construction.

\begin{definition}
The Bratteli diagram $Br$ for the inclusion $\mathcal{B}_0\subset \mathcal{B}_1\subset \mathcal{B}_2$ is a $(\mathcal{B}_0, \mathcal{B}_1, \mathcal{B}_2)$ coloured graph. Its $\mathcal{B}_i$ coloured vertices are indexed by the minimal central projections
(or equivalently the irreducible representations) of $\mathcal{B}_i$, for $i=0,1,2$. The subgraph of $Br$ consisting of $\mathcal{B}_0$, $\mathcal{B}_1$ coloured vertices and the edges connecting them is the same as the Bratteli diagram for the inclusion $\mathcal{B}_0\subset \mathcal{B}_1$.  The subgraph of $Br$ consisting of $\mathcal{B}_1$, $\mathcal{B}_2$ coloured vertices and the edges connecting them is the same as the Bratteli diagram for the inclusion $\mathcal{B}_1\subset \mathcal{B}_2$.
\end{definition}

Let $\Lambda$, $\Lambda_1$ and $\Lambda_2$ be the adjacent matrixes of $\mathcal{B}_0\subset \mathcal{B}_2$, $\mathcal{B}_0\subset \mathcal{B}_1$ and $\mathcal{B}_1\subset \mathcal{B}_2$ respectively. Then $\Lambda=\Lambda_1\Lambda_2$. Take a faithful tracial state $\tau$ on $\mathcal{B}_2$. Let $L^2(\mathcal{B}_2)$ be the Hilbert space given by the GNS construction with respect to $\tau$. Then $L^2(\mathcal{B}_0)$ and $L^2(\mathcal{B}_1)$ are naturally identified as subspaces of $L^2(\mathcal{B}_2)$. Let $e_1$, $p_1$ be the Jones projections onto the subspaces $L^2(\mathcal{B}_0)$, $L^2(\mathcal{B}_1)$ respectively. Then $\mathcal{B}_3=(\mathcal{B}_2 \cup {p_1})'', ~\mathcal{B}_4=(\mathcal{B}_2 \cup {e_1})''$ are obtained by the basic construction. So $Z(\mathcal{B}_0)=Z(\mathcal{B}_4),~ Z(\mathcal{B}_1)=Z(\mathcal{B}_3)$. And the adjacent matrixes of $\mathcal{B}_2\subset \mathcal{B}_3$, $\mathcal{B}_2\subset \mathcal{B}_4$ are $\Lambda_2^T$, $\Lambda^T$.

\begin{proposition}
The adjacent matrix of $\mathcal{B}_3\subset \mathcal{B}_4$ is $\Lambda_1^T$.
\end{proposition}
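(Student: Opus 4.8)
The plan is to identify the four algebras $\mathcal{B}_1,\mathcal{B}_2,\mathcal{B}_3,\mathcal{B}_4$ at the level of their centers and compute the relevant Bratteli diagram directly. We are given that the iterated basic construction produces $\mathcal{B}_3=(\mathcal{B}_2\cup\{p_1\})''$ and $\mathcal{B}_4=(\mathcal{B}_2\cup\{e_1\})''$, and that the center identifications $Z(\mathcal{B}_1)=Z(\mathcal{B}_3)$ and $Z(\mathcal{B}_0)=Z(\mathcal{B}_4)$ hold. Thus the $\mathcal{B}_3$-coloured vertices of the Bratteli diagram are indexed by the irreducible representations of $\mathcal{B}_1$, and the $\mathcal{B}_4$-coloured vertices by those of $\mathcal{B}_0$. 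This already tells us that the adjacent matrix of $\mathcal{B}_3\subset\mathcal{B}_4$ has the same \emph{shape} as some matrix between the index sets of $\mathcal{B}_1$ and $\mathcal{B}_0$; the content of the proposition is that it equals $\Lambda_1^T$, the transpose of the adjacent matrix of $\mathcal{B}_0\subset\mathcal{B}_1$.

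First I would invoke the standard duality for the basic construction: when $\mathcal{B}_0\subset\mathcal{B}_1\subset\mathcal{B}_2=\langle\mathcal{B}_1,e_1\rangle$ is a basic construction with Jones projection $e_1$ for $\mathcal{B}_0\subset\mathcal{B}_1$, the inclusion $\mathcal{B}_1\subset\mathcal{B}_2$ has adjacent matrix $\Lambda_1^T$ (this is the reflection principle for Bratteli diagrams recorded earlier in the excerpt, where $\mathcal{B}_2\subset\mathcal{B}_4$ gets matrix $\Lambda^T$ and $\mathcal{B}_2\subset\mathcal{B}_3$ gets $\Lambda_2^T$). The key observation is that the square $\mathcal{B}_1\subset\mathcal{B}_2\subset\mathcal{B}_3\subset\mathcal{B}_4$, together with the projections $p_1$ (onto $L^2(\mathcal{B}_1)$) and $e_1$ (onto $L^2(\mathcal{B}_0)$), realizes $\mathcal{B}_1\subset\mathcal{B}_3\subset\mathcal{B}_4$ as \emph{another} basic construction: concretely $\mathcal{B}_4=(\mathcal{B}_3\cup\{e_1\})''$ should be the basic construction of $\mathcal{B}_1\subset\mathcal{B}_3$ with Jones projection $e_1$. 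Verifying this reduces the problem to the reflection principle applied to $\mathcal{B}_1\subset\mathcal{B}_3$, whose adjacent matrix is $\Lambda_2^T$ by the given computation for $\mathcal{B}_2\subset\mathcal{B}_3$ (since $Z(\mathcal{B}_1)=Z(\mathcal{B}_3)$ matches $Z(\mathcal{B}_2)$ appropriately).

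The cleanest route, which I would write out, is a matrix bookkeeping argument on the $2\times 2$ ``commuting square'' of centers. Using $\Lambda=\Lambda_1\Lambda_2$, and the three already-known adjacent matrices ($\Lambda$ for $\mathcal{B}_0\subset\mathcal{B}_2$, $\Lambda_2^T$ for $\mathcal{B}_2\subset\mathcal{B}_3$, $\Lambda^T$ for $\mathcal{B}_2\subset\mathcal{B}_4$), I would compute the composite inclusion $\mathcal{B}_2\subset\mathcal{B}_3\subset\mathcal{B}_4$ in two ways. The adjacent matrix of the composite $\mathcal{B}_2\subset\mathcal{B}_4$ factors as the product of the matrix of $\mathcal{B}_2\subset\mathcal{B}_3$ with that of $\mathcal{B}_3\subset\mathcal{B}_4$; writing $M$ for the unknown adjacent matrix of $\mathcal{B}_3\subset\mathcal{B}_4$, this yields $\Lambda^T=\Lambda_2^T M$, i.e. $(\Lambda_1\Lambda_2)^T=\Lambda_2^T\Lambda_1^T=\Lambda_2^T M$. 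One then concludes $M=\Lambda_1^T$ by cancelling $\Lambda_2^T$ on the left.

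The main obstacle is precisely this last cancellation: $\Lambda_2^T$ need not be injective as a linear map, so $\Lambda_2^T M=\Lambda_2^T\Lambda_1^T$ does not immediately force $M=\Lambda_1^T$. To close the gap I would not rely on naive cancellation but instead verify directly that $\mathcal{B}_1\subset\mathcal{B}_3\subset\mathcal{B}_4$ is a genuine basic construction (checking $\mathcal{B}_4=\langle\mathcal{B}_3,e_1\rangle$ and that $e_1$ implements the conditional expectation $\mathcal{B}_3\to\mathcal{B}_1$), so that the reflection principle gives $M=\Lambda_1^T$ on the nose. Establishing that $e_1$ commutes with $\mathcal{B}_1$ and that $e_1 x e_1=E_{\mathcal{B}_1\cap\mathcal{B}_3}(x)e_1$ for the appropriate expectation — equivalently, that $p_1$ and $e_1$ satisfy the Temperley--Lieb--type relations making the tower $\mathcal{B}_1\subset\mathcal{B}_3\subset\mathcal{B}_4$ standard — is the technical heart, and it is exactly what the Fuss--Catalan (two Jones-projection) structure guarantees. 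Once that compatibility is in hand, the identification $M=\Lambda_1^T$ is forced and the matrix identity $\Lambda^T=\Lambda_2^T\Lambda_1^T$ serves as a consistency check rather than the proof itself.
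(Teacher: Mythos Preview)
Your proposal contains a genuine error. The central claim --- that $\mathcal{B}_1\subset\mathcal{B}_3\subset\mathcal{B}_4$ is a basic construction with Jones projection $e_1$ --- is false in general. If it were a basic construction, the reflection principle would force $Z(\mathcal{B}_4)\cong Z(\mathcal{B}_1)$; but we already know $Z(\mathcal{B}_4)=Z(\mathcal{B}_0)$, and there is no reason for $\mathcal{B}_0$ and $\mathcal{B}_1$ to have isomorphic centers. Moreover, $e_1$ is the orthogonal projection of $L^2(\mathcal{B}_2)$ onto $L^2(\mathcal{B}_0)$, so for $x\in\mathcal{B}_2$ one has $e_1 x e_1 = E_{\mathcal{B}_0}(x)e_1$, not $E_{\mathcal{B}_1}(x)e_1$; thus $e_1$ does \emph{not} implement the conditional expectation $\mathcal{B}_3\to\mathcal{B}_1$. (There is also a side error: the inclusion matrix of $\mathcal{B}_1\subset\mathcal{B}_3$ is $\Lambda_2\Lambda_2^T$, not $\Lambda_2^T$, so even if the tower were standard the reflection principle would give the wrong answer.) Your own observation that $\Lambda_2^T$ cannot be cancelled in $\Lambda_2^T M=\Lambda_2^T\Lambda_1^T$ was correct, and the fallback you chose does not repair it.

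The paper's proof avoids this entirely by using the modular conjugation $J$ on $L^2(\mathcal{B}_2)$. Since $\mathcal{B}_3=J\mathcal{B}_1'J$ and $\mathcal{B}_4=J\mathcal{B}_0'J$, the map $z\mapsto JzJ$ carries $Z(\mathcal{B}_0)$ onto $Z(\mathcal{B}_4)$ and $Z(\mathcal{B}_1)$ onto $Z(\mathcal{B}_3)$, and for minimal central projections $x\in Z(\mathcal{B}_0)$, $y\in Z(\mathcal{B}_1)$ one gets directly
\[
\tilde{x}\tilde{y}\,\mathcal{B}_3'\cap\mathcal{B}_4\,\tilde{x}\tilde{y}
= J\bigl(xy\,\mathcal{B}_0'\cap\mathcal{B}_1\,xy\bigr)J,
\]
so the entry of the inclusion matrix at $(\tilde{x},\tilde{y})$ equals $(\Lambda_1)_{y,x}=(\Lambda_1^T)_{x,y}$. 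This is a one-line dimension count once the $J$-duality is in hand, and it does not require any auxiliary basic construction.
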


\begin{proof}
We assume that the adjacent matrix of $\mathcal{B}_3\subset \mathcal{B}_4$ is $\tilde{\Lambda}$.
Let $J$ denote the modular conjugation operator on $L^2(\mathcal{B}_0)$. Then
$z\rightarrow Jz*J$ is a *-isomorphism of $Z(\mathcal{B}_0)$ onto $Z(\mathcal{B}_4)$, of $Z(\mathcal{B}_1)$ onto $Z(\mathcal{B}_3)$.
Take a minimal central projection $x$ of $\mathcal{B}_0$ and a minimal central projection $y$ of $\mathcal{B}_1$, we have $\tilde{x}=JxJ$ is a minimal central projection of $\mathcal{B}_4$, and $\tilde{y}=JyJ$ is a minimal central projection of $\mathcal{B}_3$.
The definition of the adjacent matrix implies that
$$\Lambda_{y,x}=[\dim(xy\mathcal{B}_0'xy\cap xy\mathcal{B}_1xy)]^{\frac{1}{2}};$$
$$\tilde{\Lambda}_{\tilde{x},\tilde{y}}=[\dim(\tilde{x}\tilde{y}\mathcal{B}_3'\tilde{x}\tilde{y}\cap \tilde{x}\tilde{y}\mathcal{B}_4\tilde{x}\tilde{y})]^{\frac{1}{2}}.$$
Note that
$$\tilde{x}\tilde{y}\mathcal{B}_3'\tilde{x}\tilde{y}\cap \tilde{x}\tilde{y}\mathcal{B}_4\tilde{x}\tilde{y}=JxyJ\mathcal{B}_3'JxyJ\cap JxyJ\mathcal{B}_4JxyJ
=J(xy\mathcal{B}_0'xy\cap xy\mathcal{B}_1xy)J.$$
So $\tilde{\Lambda}_{\tilde{x},\tilde{y}}=\Lambda_{y,x}=\Lambda^T_{x,y}$.
\end{proof}

\begin{definition}
We say $\tau$ is a Markov trace for the inclusion $\mathcal{B}_0\subset\mathcal{B}_1\subset\mathcal{B}_2$, if $\tau$ is a Markov trace for the inclusions $\mathcal{B}_0\subset\mathcal{B}_1$ and $\mathcal{B}_1\subset\mathcal{B}_2$.
\end{definition}

\begin{proposition}
If $\tau$ is a Markov trace for the inclusion $\mathcal{B}_0\subset\mathcal{B}_1\subset\mathcal{B}_2$, then $\tau$ is a Markov trace for the inclusion $\mathcal{B}_0\subset\mathcal{B}_2$.
Moreover $\tau$ extends uniquely to a Markov trace for the inclusion $\mathcal{B}_2\subset\mathcal{B}_3\subset\mathcal{B}_4$.
\end{proposition}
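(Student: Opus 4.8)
The plan is to establish the two claims separately, leveraging the structural results already set up in this subsection. For the first claim, that $\tau$ is a Markov trace for the inclusion $\mathcal{B}_0\subset\mathcal{B}_2$, I would start from the factorization $\Lambda=\Lambda_1\Lambda_2$ of adjacency matrices. By hypothesis $\tau$ is Markov for both $\mathcal{B}_0\subset\mathcal{B}_1$ and $\mathcal{B}_1\subset\mathcal{B}_2$, meaning there are scalars so that $\Lambda_1^*\Lambda_1 \lambda_{\mathcal{B}_1}^\tau = \mu_1 \lambda_{\mathcal{B}_1}^\tau$ and $\Lambda_2^*\Lambda_2 \lambda_{\mathcal{B}_2}^\tau=\mu_2\lambda_{\mathcal{B}_2}^\tau$, where $\mu_i = \|\Lambda_i\|^2$. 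I would compute $\Lambda^*\Lambda\,\lambda_{\mathcal{B}_2}^\tau = \Lambda_2^*\Lambda_1^*\Lambda_1\Lambda_2\,\lambda_{\mathcal{B}_2}^\tau$. The difficulty is that $\Lambda_1^*\Lambda_1$ acts on $L^2$ of the $\mathcal{B}_1$-vertices while $\lambda_{\mathcal{B}_2}^\tau$ lives over the $\mathcal{B}_2$-vertices; the bridge is the relation $\lambda_{\mathcal{B}_1}^\tau = \Lambda_2\,\lambda_{\mathcal{B}_2}^\tau$ coming from the earlier proposition that $\lambda_{\mathcal{B}_i}^\tau=\Lambda_{i+1}\lambda_{\mathcal{B}_{i+1}}^\tau$ for an inclusion. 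Using the Markov condition for the inner inclusion to pass $\Lambda_1^*\Lambda_1$ through, and then the Markov condition for the outer inclusion, I expect the eigenvector equation $\Lambda^*\Lambda\,\lambda_{\mathcal{B}_2}^\tau = \mu_1\mu_2\,\lambda_{\mathcal{B}_2}^\tau$ to drop out, so $\tau$ is Markov for $\mathcal{B}_0\subset\mathcal{B}_2$ with modulus $\mu_1\mu_2$.

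For the second claim I would invoke the basic-construction machinery already recorded in the excerpt. The single-inclusion proposition states that a Markov trace for $\mathcal{B}_i\subset\mathcal{B}_{i+1}$ extends uniquely to a Markov trace for $\mathcal{B}_{i+1}\subset\mathcal{B}_{i+2}$ under the basic construction. Here $\mathcal{B}_3=(\mathcal{B}_2\cup\{p_1\})''$ and $\mathcal{B}_4=(\mathcal{B}_2\cup\{e_1\})''$, and the adjacency matrices have already been identified: $\mathcal{B}_2\subset\mathcal{B}_3$ has matrix $\Lambda_2^T$, $\mathcal{B}_2\subset\mathcal{B}_4$ has matrix $\Lambda^T$, and $\mathcal{B}_3\subset\mathcal{B}_4$ has matrix $\Lambda_1^T$ by the preceding proposition. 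I would apply the single-inclusion extension result to $\mathcal{B}_1\subset\mathcal{B}_2$ to obtain a unique Markov extension on $\mathcal{B}_2\subset\mathcal{B}_3$, and independently to $\mathcal{B}_0\subset\mathcal{B}_2$ (justified by the first claim) to obtain a unique Markov extension on $\mathcal{B}_2\subset\mathcal{B}_4$. The point then is that these two extensions are restrictions of one and the same trace on $\mathcal{B}_4$, since $\mathcal{B}_3\subset\mathcal{B}_4$ and the trace is determined by its values on the larger algebra.

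The remaining work, and the step I expect to be the main obstacle, is verifying that this extended trace is Markov for the \emph{three-step} inclusion $\mathcal{B}_2\subset\mathcal{B}_3\subset\mathcal{B}_4$ in the sense of the definition just given, i.e. that it is simultaneously Markov for $\mathcal{B}_2\subset\mathcal{B}_3$ and for $\mathcal{B}_3\subset\mathcal{B}_4$. The first of these is immediate from the single-inclusion extension theorem. For the second, I would need to check the eigenvector condition $(\Lambda_1^T)^*\Lambda_1^T\,\lambda_{\mathcal{B}_4}^\tau=\mu\,\lambda_{\mathcal{B}_4}^\tau$ directly, using that $\lambda_{\mathcal{B}_4}^\tau$ is obtained by modular conjugation from $\lambda_{\mathcal{B}_0}^\tau$ (via the $*$-isomorphism $z\mapsto JzJ$ of $Z(\mathcal{B}_0)$ onto $Z(\mathcal{B}_4)$ used in the previous proof) together with the Markov property of the original $\tau$ on $\mathcal{B}_0\subset\mathcal{B}_1$, whose matrix is $\Lambda_1$. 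The transpose-and-conjugate bookkeeping is the delicate part: one must confirm that the dimension vectors of $\mathcal{B}_3,\mathcal{B}_4$ match the Perron--Frobenius data of $\Lambda_1^T$ with the correct normalization, but since all the ingredients (eigenvector relations, the identification $Z(\mathcal{B}_1)=Z(\mathcal{B}_3)$, $Z(\mathcal{B}_0)=Z(\mathcal{B}_4)$, and the adjacency matrix $\Lambda_1^T$) are already in hand, this reduces to a direct verification rather than any new idea.
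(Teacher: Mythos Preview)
Your proposal is correct and, for the first claim, identical to the paper's argument: from $\Lambda_2\lambda_2=\lambda_1$, $\Lambda_1^T\lambda_0=\|\Lambda_1\|^2\lambda_1$, and $\Lambda_2^T\lambda_1=\|\Lambda_2\|^2\lambda_2$ one gets $\Lambda^T\Lambda\lambda_2=\|\Lambda_1\|^2\|\Lambda_2\|^2\lambda_2$.

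For the second claim the paper is organized a bit more directly than your plan. Rather than producing two separate extensions (to $\mathcal{B}_3$ and to $\mathcal{B}_4$) and then arguing their compatibility, the paper extends \emph{once} to $\mathcal{B}_4$ via the first claim, writes down the explicit dimension vectors $\lambda_4=\|\Lambda\|^{-2}\lambda_0$ and $\lambda_3=\Lambda_1^T\lambda_4=\|\Lambda_2\|^{-2}\lambda_1$, and then verifies both eigenvector conditions $\Lambda_1\Lambda_1^T\lambda_4=\|\Lambda_1\|^2\lambda_4$ and $\Lambda_2\Lambda_2^T\lambda_3=\|\Lambda_2\|^2\lambda_3$ by one-line computations. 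This sidesteps the compatibility question you raised and avoids the modular-conjugation bookkeeping entirely: no $J$ is used, since $\lambda_4$ is already a known scalar multiple of $\lambda_0$ from the standard basic-construction formula. Uniqueness is then handled by observing that any Markov trace for $\mathcal{B}_2\subset\mathcal{B}_3\subset\mathcal{B}_4$ is in particular Markov for $\mathcal{B}_2\subset\mathcal{B}_4$ (apply the first claim to the new tower), and that extension is already known to be unique.
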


\begin{proof}
Let $\lambda_i=\lambda_{\mathcal{B}_i}^\tau$ be the dimension vectors for $i=0,1,2$.
If $\tau$ is a Markov trace for the inclusion $\mathcal{B}_0\subset\mathcal{B}_1\subset\mathcal{B}_2$, then
by the definition $\tau$ is a Markov trace for the inclusions $\mathcal{B}_0\subset\mathcal{B}_1$ and $\mathcal{B}_1\subset\mathcal{B}_2$.
So $\Lambda_2\lambda_2=\lambda_1$; $\Lambda_1\lambda_1=\lambda_0$; $\Lambda_1^T\lambda_0=||\Lambda_1||^2\lambda_1$; and $\Lambda_2^T\lambda_1=||\Lambda_2||^2\lambda_2$.
Then
$\Lambda^T\Lambda\lambda_2
=\Lambda_2^T\Lambda_1^T\Lambda_1\Lambda_2\lambda_2
=||\Lambda_1||^2||\Lambda_2||^2\lambda^2$.
So $\tau$ is a Markov trace for the inclusion $\mathcal{B}_0\subset\mathcal{B}_2$ and $||\Lambda||=||\Lambda_1||\dot||\lambda_2||$. Then $\tau$ extends uniquely to a Markov trace for the inclusion $\mathcal{B}_2\subset\mathcal{B}_4$.
Let $\lambda_i=\lambda_{\mathcal{B}_i}^\tau$ be the dimension vectors for $i=3,4$.
We have $\lambda_4=||\Lambda||^{-2}\lambda_0$ by the uniqueness of the extension of $\tau$. And $\lambda_3=\Lambda_1^T\lambda_4=||\Lambda||^{-2}\Lambda_1^T\lambda_0=||\Lambda_2||^{-2}\lambda_1$. Then by a direct computation $\Lambda_1\Lambda_1^T\lambda_4=||\Lambda_1||^{2}\lambda_4$ and $\Lambda_2\Lambda_2^T\lambda_3=||\Lambda_2||^{2}\lambda_3$. That means $\tau$ extends to a Markov trace for the inclusion $\mathcal{B}_2\subset\mathcal{B}_3\subset\mathcal{B}_4$.

On the other hand, if $\tau$ extends to a Markov trace for the inclusion $\mathcal{B}_2\subset\mathcal{B}_3\subset\mathcal{B}_4$, then it also extends to a Markov trace for the inclusion $\mathcal{B}_0\subset\mathcal{B}_2$. That implies the uniqueness of such an extension.
\end{proof}

\begin{definition}
Given the Bratteli diagram $Br$ for the inclusion $\mathcal{B}_0\subset \mathcal{B}_1\subset \mathcal{B}_2$,
let us define the dimension vector with respect to the Markov trace $\tau$ to be $\lambda^\tau$, a function from the vertices of the Bratteli diagram the into $\mathbb{R}^+$, as follows
for a $\mathcal{B}_0$ coloured vertex, its value is the trace of the minimal projection corresponding to that vertex;
for a $\mathcal{B}_1$ coloured vertex, its value is $||\Lambda_1||$ times the trace of the minimal projection corresponding to that vertex;
for a $\mathcal{B}_1$ coloured vertex, its value is $||\Lambda||$ times the trace of the minimal projection corresponding to that vertex.
\end{definition}

\begin{proposition}\label{unique markov trace}
The inclusion $\mathcal{B}_0\subset\mathcal{B}_1\subset\mathcal{B}_2$ admits a Markov trace if and only if the Bratteli diagram for the inclusion is a graph with parameter $(\delta_a,\delta_b)$. In this case $\delta_a=||\Lambda_1||$ and $\delta_b=||\Lambda_2||$. Under this condition, the Markov trace is unique if and only if the Bratteli diagram is connected.
\end{proposition}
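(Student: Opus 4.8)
The plan is to prove the three assertions in turn: the equivalence ``Markov trace $\iff$ graph with parameter $(\delta_a,\delta_b)$'', the identification $\delta_a=||\Lambda_1||$ and $\delta_b=||\Lambda_2||$, and finally the connectedness criterion for uniqueness. Throughout I would translate the defining relations of a graph with parameter (Proposition \ref{dab}, read for $\mathcal{N}=\mathcal{B}_0$, $\mathcal{P}=\mathcal{B}_1$, $\mathcal{M}=\mathcal{B}_2$) into the four matrix equations $\Lambda_1\lambda_1=\delta_a\lambda_0$, $\Lambda_1^T\lambda_0=\delta_a\lambda_1$, $\Lambda_2\lambda_2=\delta_b\lambda_1$ and $\Lambda_2^T\lambda_1=\delta_b\lambda_2$, where $\lambda_i$ denotes the restriction of $\lambda$ to the $\mathcal{B}_i$-coloured vertices. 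Note that at a $\mathcal{B}_1$-vertex two of these relations are available (one toward $\mathcal{B}_0$, one toward $\mathcal{B}_2$), while at a $\mathcal{B}_0$- or $\mathcal{B}_2$-vertex only one is.

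For the forward direction I would start from a Markov trace $\tau$ and form the dimension vector $\lambda^\tau$ of the preceding definition, whose $\mathcal{B}_0$-, $\mathcal{B}_1$- and $\mathcal{B}_2$-parts are $\lambda_0^\tau$, $||\Lambda_1||\lambda_1^\tau$ and $||\Lambda_1||\,||\Lambda_2||\lambda_2^\tau$. Using the restriction identities $\Lambda_1\lambda_1^\tau=\lambda_0^\tau$, $\Lambda_2\lambda_2^\tau=\lambda_1^\tau$ together with the two Markov relations $\Lambda_1^T\lambda_0^\tau=||\Lambda_1||^2\lambda_1^\tau$ and $\Lambda_2^T\lambda_1^\tau=||\Lambda_2||^2\lambda_2^\tau$, a direct substitution verifies the four equations with $\delta_a=||\Lambda_1||$, $\delta_b=||\Lambda_2||$, and positivity of $\lambda^\tau$ comes from faithfulness of $\tau$. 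For the converse I would first observe that combining the first two equations gives $\Lambda_1^T\Lambda_1\lambda_1=\delta_a^2\lambda_1$ with $\lambda_1>0$; since a positive eigenvector of a nonnegative matrix must belong to its spectral radius, $\delta_a^2=||\Lambda_1||^2$, and symmetrically $\delta_b=||\Lambda_2||$. I would then set $\lambda_2^\tau:=(\delta_a\delta_b)^{-1}\lambda_2$ and let $\tau$ be the (normalized) trace on $\mathcal{B}_2$ it determines via the one-to-one correspondence between functions on minimal central projections and traces. Substituting the four equations back shows $\Lambda_2\lambda_2^\tau=\lambda_1^\tau$, $\Lambda_1\lambda_1^\tau=\lambda_0^\tau$, $\Lambda_2^T\lambda_1^\tau=||\Lambda_2||^2\lambda_2^\tau$ and $\Lambda_1^T\lambda_0^\tau=||\Lambda_1||^2\lambda_1^\tau$, i.e. $\tau$ is Markov for both $\mathcal{B}_0\subset\mathcal{B}_1$ and $\mathcal{B}_1\subset\mathcal{B}_2$, hence for the triple.

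For uniqueness I would run a maximum-ratio Perron--Frobenius argument directly on the tri-coloured graph $Br$. Since the parameters are forced to equal $(||\Lambda_1||,||\Lambda_2||)$, any two positive solutions $\lambda,\lambda'$ share them; set $r(x)=\lambda'(x)/\lambda(x)$ and let $R=\max_x r(x)$, attained at some $x_0$ (the maximum exists as $Br$ is finite). Whatever the colour of $x_0$, the relevant defining equation writes $\delta\lambda(x_0)$ as a positive sum of the $\lambda$-values at its neighbours; comparing with the same equation for $\lambda'$ and using $r\le R$ forces $r\equiv R$ on every neighbour of $x_0$. At a $\mathcal{B}_1$-vertex both the $\mathcal{E}_+$- and the $\mathcal{E}_-$-equation are used, so the equality propagates toward $\mathcal{B}_0$ and toward $\mathcal{B}_2$ simultaneously; this is precisely why connectedness of the whole graph (and not of either bipartite piece) is the correct hypothesis. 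Connectedness then makes $r$ constant, so $\lambda'=R\lambda$. Conversely, if $Br$ splits into components $C_1,\dots,C_k$, restricting a global positive solution to each $C_i$ yields positive solutions with the same parameters, and the distinct positive combinations $\sum_i c_i\lambda|_{C_i}$ give distinct Markov traces, so uniqueness fails.

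The substitutions in the two existence directions are routine bookkeeping; the step I expect to require the most care is the middle-vertex case of the maximum-ratio argument, where both edge classes must be exploited to carry the extremal ratio across the two different scalings $\delta_a$ and $\delta_b$, since the system is a pair of coupled eigenvalue relations rather than a single one.
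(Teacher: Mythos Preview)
Your argument is correct. For the existence equivalence and the identification $\delta_a=\|\Lambda_1\|$, $\delta_b=\|\Lambda_2\|$, you and the paper do essentially the same thing (the paper simply says ``follows from the definitions'' and invokes the fact that a positive eigenvector of $\Lambda_i^T\Lambda_i$ forces the eigenvalue to be $\|\Lambda_i\|^2$; you write this out in full).

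The genuine difference is in the uniqueness step. The paper does \emph{not} run a Perron--Frobenius argument on the tri-coloured graph. Instead it reduces to the classical bipartite situation: it shows that if the tri-coloured Bratteli diagram $Br$ for $\mathcal{B}_0\subset\mathcal{B}_1\subset\mathcal{B}_2$ is connected, then the bipartite Bratteli diagram $Br'$ for $\mathcal{B}_0\subset\mathcal{B}_2$ is also connected (using that every $\mathcal{B}_1$-vertex has at least one $\mathcal{B}_0$-neighbour and one $\mathcal{B}_2$-neighbour), then quotes uniqueness of the Markov trace for a connected bipartite inclusion to pin down $\lambda_0$ and $\lambda_2$, and finally recovers $\lambda_1$ from $\lambda_1=\Lambda_2\lambda_2$. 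Your maximum-ratio argument is a legitimate alternative: it is more self-contained (no appeal to the bipartite case as a black box) and makes transparent exactly where connectedness of the \emph{whole} graph enters, namely at $\mathcal{B}_1$-vertices where both the $\mathcal{E}_+$- and $\mathcal{E}_-$-relations are needed to propagate the extremal ratio in both directions. The paper's route is shorter if one is willing to cite the bipartite result, and has the side benefit of isolating the structural fact ``$Br$ connected $\Rightarrow Br'$ connected'', which is of independent interest for the sequel. Either approach is fine here.
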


\begin{proof}
The first statement follows from the definitions.

In this case, $\delta_a=||\Lambda_1||$ and $\delta_b=||\Lambda_2||$ follows from the fact that the eigenvalue of $\Lambda_i^T\Lambda_i$ with a positive eigenvector has to be $||\Lambda_i||^2$.

Suppose the inclusion $\mathcal{B}_0\subset\mathcal{B}_1\subset\mathcal{B}_2$ admits a Markov trace.
If the bratteli diagram $Br$ is not connected, then we may adjust the proportion to obtain different Markov traces.
If the bratteli diagram $Br$ for the inclusion $\mathcal{B}_0\subset\mathcal{B}_1\subset\mathcal{B}_2$ is connected, we want to show that the bratteli diagram $Br'$ for the inclusion $\mathcal{B}_0\subset\mathcal{B}_2$ is connected. Actually if two $\mathcal{B}_0$ (or $\mathcal{B}_2$) coloured vertices are adjacent to the same $\mathcal{B}_1$ coloured vertex in $Br$. then they are adjacent to the same $\mathcal{B}_2$ (or $\mathcal{B}_0$) coloured vertex in $Br'$, because any $\mathcal{B}_1$ coloured point is adjacent to a $\mathcal{B}_2$ (or $\mathcal{B}_0$) coloured vertex in $Br$. While the bratteli diagram $Br'$ is connected implies the uniquness of the Markov trace for the inclusion $\mathcal{B}_0\subset\mathcal{B}_2$. Then the dimension vectors $\lambda_0$ and $\lambda_2$ are unique. So $\lambda_1$ is also unique. That means the Markov trace for the inclusion $\mathcal{B}_0\subset\mathcal{B}_1\subset\mathcal{B}_2$ is unique.

\end{proof}

\begin{corollary}\label{unique dimension vector}
Given the principal graph for the inclusion $\mathcal{N}\subset\mathcal{P}\subset\mathcal{M}$, its dimension vector is uniquely determined by the graph.
\end{corollary}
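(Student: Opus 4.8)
The final statement to prove is Corollary~\ref{unique dimension vector}: the dimension vector of the principal graph of $\mathcal{N}\subset\mathcal{P}\subset\mathcal{M}$ is uniquely determined by the graph.

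The plan is to deduce this directly from Proposition~\ref{unique markov trace}. First I would observe that the principal graph of $\mathcal{N}\subset\mathcal{P}\subset\mathcal{M}$, together with its dimension vector $\lambda$, was shown (in the proposition immediately preceding the corollary) to be a graph with parameter $(\sqrt{[\mathcal{P}:\mathcal{N}]},\sqrt{[\mathcal{M}:\mathcal{P}]})$ in the sense of Proposition~\ref{dab}. The key structural fact is that the defining equations of a graph with parameter $(\delta_a,\delta_b)$ are exactly the eigenvector equations characterizing a Markov trace for an inclusion $\mathcal{B}_0\subset\mathcal{B}_1\subset\mathcal{B}_2$ whose Bratteli diagram is the given $(\mathcal{N},\mathcal{P},\mathcal{M})$ coloured graph, with $\Lambda_1$ the $\mathcal{N}$-$\mathcal{P}$ incidence matrix and $\Lambda_2$ the $\mathcal{P}$-$\mathcal{M}$ incidence matrix. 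Thus finding a dimension vector on the graph is the same problem as finding a Markov trace on the corresponding finite-dimensional inclusion, and Proposition~\ref{unique markov trace} tells us precisely when that trace is unique.

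Next I would identify $\delta_a=\|\Lambda_1\|$ and $\delta_b=\|\Lambda_2\|$, as guaranteed by Proposition~\ref{unique markov trace}, so that the parameters of the graph coincide with the operator norms of the two incidence matrices. The essential point is then connectivity: a principal graph arising from an irreducible inclusion of factors is connected, since every irreducible bimodule in the decomposition is linked through the generating bimodules $X$ and $Y$ back to the starred vertex $L^2(\mathcal{N})$. Given connectivity, Proposition~\ref{unique markov trace} yields that the Markov trace is unique, and hence the associated dimension vector is unique. Since the dimension vector of the principal graph is, up to the normalization conventions in its definition, the Perron–Frobenius data determined by this Markov trace, uniqueness transfers immediately to $\lambda$.

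I do not expect a serious obstacle here; the corollary is essentially a translation of the already-proved Proposition~\ref{unique markov trace} into the language of principal graphs. The only point requiring care is the bookkeeping matching the normalization of the dimension vector $\lambda$ on the $(\mathcal{N},\mathcal{P},\mathcal{M})$ coloured graph (with its $\delta_a^{-1}$ and $\delta_b^{-1}$ weightings on $\mathcal{P}$-coloured vertices) to the dimension-vector convention for the three-step inclusion $\mathcal{B}_0\subset\mathcal{B}_1\subset\mathcal{B}_2$ (with its $\|\Lambda_1\|$ and $\|\Lambda\|$ weightings), but this is a routine consistency check rather than a genuine difficulty. The mild subtlety worth noting explicitly is the verification of connectivity of the principal graph, which I would justify by the irreducibility of $\mathcal{N}\subset\mathcal{M}$ together with the fact that $X\otimes Y\cong L^2(\mathcal{M})$, ensuring the graph has no isolated components.
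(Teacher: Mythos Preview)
Your approach is essentially the same as the paper's: both invoke Proposition~\ref{unique markov trace} on the connected principal graph to get a unique Markov trace, and then identify the dimension vector with the associated Perron--Frobenius data. The paper's proof is two lines, and the one point it makes explicit that you only allude to under ``normalization conventions'' is that the dimension vector is fixed among all scalar multiples of $\lambda^\tau$ by the requirement $\lambda(\ast)=1$ at the marked vertex; you should state this normalization explicitly rather than leave it implicit.
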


\begin{proof}
The dimension vector is a multiple of the dimension vector $\lambda^\tau$ with respect to the unique Markov trace $\tau$. While the value of the marked point is 1, so the dimension vector is unique.
\end{proof}

Now we may repeat the basic construction to obtain the Jones tower $\mathcal{B}_0\subset\mathcal{B}_1\subset\mathcal{B}_2\subset\mathcal{B}_3\subset\mathcal{B}_4\subset\cdots$ and a sequence of Jones projections $e_1,p_1,e_2,p_2\cdots$.

\begin{proposition}\label{realtion of e p}
The algebra generated by the sequences of projections $\{e_i\}$ and $\{p_j\}$ forms a Fuss-Catalan subfactor planar algebra.
\end{proposition}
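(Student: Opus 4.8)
The plan is to exhibit explicitly the Fuss-Catalan structure on the algebra generated by $\{e_i\}$ and $\{p_j\}$. Since the Fuss-Catalan subfactor planar algebra $FC(\delta_a,\delta_b)$ is characterized (as recalled in the excerpt) as the planar algebra generated by a single biprojection subject to its exchange relation, and equivalently as the algebra generated by the two interleaved sequences of Jones projections $e_{\mathcal{N}}, e_{\mathcal{P}}, e_{\mathcal{M}}, e_{\mathcal{P}_1}, \dots$ arising from an intermediate subfactor, it suffices to verify that the projections $e_1, p_1, e_2, p_2, \dots$ obtained from the tower $\mathcal{B}_0\subset\mathcal{B}_1\subset\mathcal{B}_2\subset\mathcal{B}_3\subset\cdots$ satisfy exactly the same relations. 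First I would record the values of the relevant indices: by Proposition~\ref{unique markov trace} the Markov trace gives $\delta_a=\|\Lambda_1\|$ for the inclusion $\mathcal{B}_0\subset\mathcal{B}_1$ and $\delta_b=\|\Lambda_2\|$ for $\mathcal{B}_1\subset\mathcal{B}_2$, so $e_i$ are $\delta_a^2$-Jones projections for one colour and $p_j$ are $\delta_b^2$-Jones projections for the other, with overall modulus $\delta=\delta_a\delta_b$.

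Next I would establish the two families of Temperley-Lieb relations together with the commutation and ``absorption'' relations that intertwine them. Concretely, the $e_i$ among themselves and the $p_j$ among themselves each satisfy the usual Temperley-Lieb relations (with their respective parameters), because each arises by the basic construction of an honest inclusion with the stated index. The mixed relations follow from the geometry of the tower $\mathcal{B}_0\subset\mathcal{B}_1\subset\mathcal{B}_2\subset\mathcal{B}_3\subset\mathcal{B}_4$ built in the previous propositions: $p_1$ is the Jones projection onto $L^2(\mathcal{B}_1)$ and $e_1$ onto $L^2(\mathcal{B}_0)$, and since $\mathcal{B}_0\subset\mathcal{B}_1$ we get relations such as $e_1 p_1 = p_1 e_1 = e_1$ and the corresponding shifted versions. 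These are precisely the defining relations of the Fuss-Catalan algebra in terms of the interleaved Jones projections, as in the original Bisch-Jones description \cite{BisJonFC}.

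The cleanest route, however, is to invoke the abstract characterisation already supplied in the excerpt. By Proposition~\ref{ex of bipro}, a self-adjoint operator satisfying the exchange relation is a biprojection, and the planar algebra it generates is $FC(\delta_a,\delta_b)$. Here $p_1\in\mathscr{S}_{2,+}$ is by hypothesis a biprojection, so $\mathcal{F}(p_1)$ is a multiple of a projection and $p_1$ satisfies the exchange relation with $\mathcal{F}(p_1)$; the shifted projections $p_k$ and the Jones projections $e_k$ are then exactly the interleaved sequence attached to this biprojection. Thus the algebra they generate \emph{is} the Fuss-Catalan planar algebra generated by $p_1$, and this identification respects the Markov trace because, by the preceding proposition, $\tau$ extends consistently through every stage of the basic construction $\mathcal{B}_2\subset\mathcal{B}_3\subset\mathcal{B}_4$.

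The main obstacle I anticipate is verifying the mixed relations between the two colours of Jones projections with the correct normalising scalars — that is, checking that the spherical normalisation used throughout (rather than a lopsided one) produces exactly the Fuss-Catalan parameters $\delta_a,\delta_b$ and that the Fourier transform $\mathcal{F}$ sends $p_1$ to the expected multiple of $p_2$. This is where the bookkeeping of the factors $\delta_a^{-1},\delta_b^{-1}$ from the dimension-vector conventions must be handled carefully, but it is a routine diagrammatic computation once the exchange relation of Proposition~\ref{ex of bipro} is in hand, and it does not present a conceptual difficulty.
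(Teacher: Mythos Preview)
Your first approach---verifying directly that the $e_i$ and $p_j$ satisfy the Fuss-Catalan relations by appealing to the geometry of the tower and the Bisch--Jones presentation---is essentially what the paper does. The paper's proof is simply a citation: it is the same as Proposition~5.1 in \cite{BisJonFC}, transported from the II$_1$ factor setting to the finite-dimensional setting, and the paper isolates the one ingredient that needs to be checked anew, namely that the trace-preserving conditional expectation induced by the Markov trace sends each Jones projection to a scalar multiple of the identity. Your remark that ``the main obstacle is verifying the mixed relations with the correct normalising scalars'' is exactly this point, so your first route is on target.

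Your second ``cleanest route'', however, is misplaced. You write ``here $p_1\in\mathscr{S}_{2,+}$ is by hypothesis a biprojection'', but in the setting of this proposition there is no ambient subfactor planar algebra $\mathscr{S}$: we are in Section~3.3, working purely with a tower of finite-dimensional von Neumann algebras $\mathcal{B}_0\subset\mathcal{B}_1\subset\mathcal{B}_2\subset\cdots$ built from a Markov trace. The projection $p_1$ is the Jones projection onto $L^2(\mathcal{B}_1)$ inside this tower, not an element of a $2$-box space satisfying a biprojection axiom. The graph planar algebra structure on the relative commutants, and the identification of $p_1$ as a biprojection within it, only appear \emph{after} this proposition (Section~3.4 and Proposition~\ref{ploop}), and indeed rely on it. So invoking Proposition~\ref{ex of bipro} here is circular: you would be assuming the planar-algebraic exchange relation for $p_1$ in order to deduce that the algebra it generates is Fuss-Catalan, when the content of the present proposition is precisely to establish that algebraic structure from the tower. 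Drop that paragraph and keep the direct verification; that is both correct and what the paper intends.
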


This proposition is essentially the same as Proposition 5.1 in \cite{BisJonFC}.
In that case the Jones projections are derived from the inclusion of factors.
The proof is similar. We only need a fact that the trace preserving conditional expectation induced by a Markov trace maps the Jones projections to a multiple of the identity.

\subsection{Graph planar algebras and the embedding theorem}
Given a connected three $(\mathcal{N},\mathcal{P},\mathcal{M})$ coloured graph $\Gamma$ with parameter $(\delta_a,\delta_b)$, we have $\mathcal{V}_{N}$, $\mathcal{V}_{P}$, $\mathcal{V}_{M}$, $\mathcal{E}_{\pm}$, $s$, $t$, $*$ as in Definition \ref{def three}. Let $\lambda$ be the (unique) dimension vector. Let $\Gamma'$ be the bipartite graph induced from $\Gamma$.
Suppose the Bratteli diagram for the inclusion of finite dimensional von Neumann algebras $\mathcal{B}_0\subset\mathcal{B}_1\subset\mathcal{B}_2$ is $\Gamma$. Then the Bratteli diagram for the inclusion of $\mathcal{B}_0\subset\mathcal{B}_2$ is $\Gamma'$. Let $\Lambda_2$ be the adjacent matrix for $\mathcal{B}_1\subset\mathcal{B}_2$.
Applying the basic construction, we will obtain the tower $\mathcal{B}_0\subset\mathcal{B}_1\subset\mathcal{B}_2\subset\mathcal{B}_3\subset\mathcal{B}_4\subset\cdots$.
Let $\{e_i\},\{p_i\}$ be the sequences of Jones projections arising from the basic construction.
Note that the relative commutant of $\mathcal{B}_0$ in the tower can be expressed as linear sums of loops of $\Gamma$. While the even parts of the relative commutant is exactly the graph planar algebra $\mathscr{G}$ of $\Gamma'$. So an element in $\mathscr{G}$ could be expressed as a linear sums of loops of $\Gamma$, instead of loops of $\Gamma'$. Actually an edge of $\Gamma'$ is replaced by a length 2 path $\varepsilon_1\varepsilon_2^*$. It is convenient to express $p_1$ by loops of $\Gamma$.

\begin{proposition}\label{ploop}
Note that $p_1\in \mathcal{B}_1'\cap \mathcal{B}_3$, we have
$$ p_1=\delta_b^{-1}\sum_{\varepsilon_3,\varepsilon_7\in\mathcal{E}_{-},t(\varepsilon_3)=t(\varepsilon_7)} \sqrt{\frac{\lambda(s(\varepsilon_3))\lambda(s(\varepsilon_7))}
{\lambda(t(\varepsilon_3))\lambda(t(\varepsilon_7))}} [\varepsilon_3^*\varepsilon_3\varepsilon_7^*\varepsilon_7].$$

To express $p_1$ as an element in $\mathscr{G}_{2,+}=\mathcal{B}_0'\cap \mathcal{B}_4$, we have
$$ p_1=\delta_b^{-1}\sum_{\substack{\varepsilon_3,\varepsilon_7\in\mathcal{E}_{-}\\
\varepsilon_1,\varepsilon_5\in\mathcal{E}_{+}\\
t(\varepsilon_1)=t(\varepsilon_3)=t(\varepsilon_5)=t(\varepsilon_7)}}
\sqrt{\frac{\lambda(s(\varepsilon_3))\lambda(s(\varepsilon_7))}
{\lambda(t(\varepsilon_3))\lambda(t(\varepsilon_7))}}
[\varepsilon_1\varepsilon_3^*\varepsilon_3\varepsilon_5^*\varepsilon_5\varepsilon_7^*\varepsilon_7\varepsilon_1^*].$$
\end{proposition}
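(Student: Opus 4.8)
The plan is to recognize $p_1$ as an ordinary Jones projection and then to read off both displayed formulas from the graph planar algebra conventions already recorded in Section \ref{graphpa}. By construction $p_1$ is the Jones projection onto $L^2(\mathcal{B}_1)$, so $\mathcal{B}_1\subset\mathcal{B}_2\subset\mathcal{B}_3=(\mathcal{B}_2\cup\{p_1\})''$ is a basic construction, and the Bratteli diagram of the inner inclusion $\mathcal{B}_1\subset\mathcal{B}_2$ is exactly the bipartite graph on $\mathcal{V}_{\mathcal{P}}\cup\mathcal{V}_{\mathcal{M}}$ with edge set $\mathcal{E}_-$. Under the shifted indexing $\mathscr{G}_{m,+}=\mathcal{B}_{\mathrm{base}}'\cap\mathcal{B}_{\mathrm{base}+m}$, the space $\mathcal{B}_1'\cap\mathcal{B}_3$ is precisely the degree-$2$ box of the graph planar algebra of this $\mathcal{E}_-$-graph, whose basis consists of the length-$4$ loops of $\Gamma$ of the form $[\varepsilon_3^*\varepsilon_3\varepsilon_7^*\varepsilon_7]$ based at a $\mathcal{P}$-vertex. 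Under this identification $p_1$ is nothing but the Jones projection $e_1$ of that graph planar algebra.

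First I would check that the restriction of $\lambda$ to $\mathcal{V}_{\mathcal{P}}\cup\mathcal{V}_{\mathcal{M}}$ is the Perron--Frobenius eigenvector for the $\mathcal{E}_-$-inclusion with Markov parameter $\delta_b$. This is exactly the content of the two $\mathcal{E}_-$-relations in Proposition \ref{dab}, namely $\delta_b\lambda(w)=\sum_{\varepsilon\in\mathcal{E}_-,\,s(\varepsilon)=w}\lambda(t(\varepsilon))$ for $w\in\mathcal{V}_{\mathcal{M}}$ and $\delta_b\lambda(v)=\sum_{\varepsilon\in\mathcal{E}_-,\,t(\varepsilon)=v}\lambda(s(\varepsilon))$ for $v\in\mathcal{V}_{\mathcal{P}}$; together these say that the pair $(\lambda|_{\mathcal{V}_{\mathcal{M}}},\lambda|_{\mathcal{V}_{\mathcal{P}}})$ is a Markov eigenvector for $\mathcal{E}_-$, so $\|\Lambda_2\|=\delta_b$. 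With this in hand, the explicit Jones-projection formula recorded in Section \ref{graphpa}, namely $e_1=\delta^{-1}\sum_{s(\varepsilon_1)=s(\varepsilon_3)}\sqrt{\lambda(t(\varepsilon_1))\lambda(t(\varepsilon_3))/(\lambda(s(\varepsilon_1))\lambda(s(\varepsilon_3)))}\,[\varepsilon_1\varepsilon_1^*\varepsilon_3\varepsilon_3^*]$, specializes directly. Here the base vertices of a loop lie in $\mathcal{V}_{\mathcal{P}}$, which are the \emph{targets} of $\mathcal{E}_-$-edges rather than the sources, so the roles of $s$ and $t$ in that formula interchange; substituting $\delta\mapsto\delta_b$ and summing over $\varepsilon_3,\varepsilon_7\in\mathcal{E}_-$ with $t(\varepsilon_3)=t(\varepsilon_7)$ yields precisely the first displayed expression. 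As a cross-check I would verify $p_1^*=p_1$ from the symmetry of the coefficient under $\varepsilon_3\leftrightarrow\varepsilon_7$ together with the adjoint rule, and $p_1^2=p_1$ from the product rule of Section \ref{graphpa}, where the single factor of $\delta_b$ produced by summing the matched middle edges against the Perron--Frobenius relation cancels one of the two $\delta_b^{-1}$ normalizations.

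The second formula I would obtain from the first purely by the standard embedding $\mathcal{B}_1'\cap\mathcal{B}_3\hookrightarrow\mathcal{B}_0'\cap\mathcal{B}_4=\mathscr{G}_{2,+}$ given by adding strings on the left, which re-expresses a $\mathcal{P}$-based loop of $\Gamma$ as an $\mathcal{N}$-based loop by inserting $\mathcal{E}_+$-edges. Concretely, applying the inclusion tangle formula of Section \ref{graphpa} at the outer boundary and in the middle replaces the base $\mathcal{P}$-vertex $t(\varepsilon_3)=t(\varepsilon_7)$ by a pair of excursions down to $\mathcal{N}$: one prepends $\varepsilon_1$ and appends $\varepsilon_1^*$ with $\varepsilon_1\in\mathcal{E}_+$ and $t(\varepsilon_1)=t(\varepsilon_3)$, and one inserts $\varepsilon_5^*\varepsilon_5$ with $\varepsilon_5\in\mathcal{E}_+$ and $t(\varepsilon_5)=t(\varepsilon_3)$, summing over all such $\varepsilon_1,\varepsilon_5$. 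Since the inclusion tangle carries coefficient $1$, the scalar $\delta_b^{-1}\sqrt{\lambda(s(\varepsilon_3))\lambda(s(\varepsilon_7))/(\lambda(t(\varepsilon_3))\lambda(t(\varepsilon_7)))}$ is unchanged, and the loop becomes $[\varepsilon_1\varepsilon_3^*\varepsilon_3\varepsilon_5^*\varepsilon_5\varepsilon_7^*\varepsilon_7\varepsilon_1^*]$ with the four targets all equal, which is exactly the second displayed expression.

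The routine parts are the idempotency and self-adjointness checks. The genuine obstacle is orientation and normalization discipline: one must keep straight that $\mathcal{V}_{\mathcal{P}}$-vertices are \emph{targets} in $\mathcal{E}_-$ (so that $s$ and $t$ swap relative to the stock $e_1$-formula), confirm via Proposition \ref{dab} that the restricted $\lambda$ really is the $\delta_b$-Markov eigenvector feeding that formula, and make sure the inclusion step inserts the $\mathcal{E}_+$-strings on the correct sides so that the resulting length-$8$ loop closes up with the stated source/target constraints and with the coefficient genuinely untouched.
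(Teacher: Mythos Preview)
Your approach is essentially the same as the paper's: recognize $p_1$ as the Jones projection for the basic construction $\mathcal{B}_1\subset\mathcal{B}_2\subset\mathcal{B}_3$ and read off the loop formula, then pass to $\mathscr{G}_{2,+}=\mathcal{B}_0'\cap\mathcal{B}_4$ by the inclusion. You have simply made explicit the bookkeeping (the $s/t$ swap for $\mathcal{E}_-$-edges, the Perron--Frobenius check via Proposition~\ref{dab}, and the two $\mathcal{E}_+$-insertions under the inclusion) that the paper's two-line proof leaves to the reader.
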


\begin{proof}
Note that $p_1$ is the Jones projection for the basic construction $\mathcal{B}_1\subset\mathcal{B}_2\subset\mathcal{B}_3$. So we have the first formula. Take the inclusion from $\mathcal{B}_1' \cap \mathcal{B}_3$ to $\mathcal{B}_0' \cap \mathcal{B}_4$ for $p_1$, we obtained the second formula.
\end{proof}

\begin{theorem}\label{embedding P}
Suppose $\mathscr{S}$ is a finite depth subfactor planar algebra, $p$ is a biprojection in $\mathscr{S}_{2,+}$, $\Gamma'$ is the principal graph of $\mathscr{S}$, and $\Gamma$ is the refined principal graph with respect to the biprojection $p$.
Let $\phi$ the embedding map from $\mathscr{S}$ to the graph planar algebra $\mathscr{G}$.
Then $\phi(p)=p_1$ is a linear some of loops as in Proposition \ref{ploop}.
\end{theorem}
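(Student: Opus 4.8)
The plan is to recognize $\phi(p)$ as a Jones projection living inside the graph planar algebra and then to invoke the uniqueness of such a projection to identify it with the explicit loop formula of Proposition \ref{ploop}. First I would fix the depth: since $\mathscr{S}$ is finite depth, I choose $2r$ large enough that the refined principal graph $\Gamma$ has stabilized, so that (by Propositions \ref{equ pro} and \ref{Frob}, which show the Bratteli diagrams $Br_m$ stabilize) the three colours of $\Gamma$ are realized by the minimal projections of the chain $\mathscr{S}_{2r,+}\subset\mathscr{S}'_{2r+1,+}\subset\mathscr{S}_{2r+1,+}$. I then set $\mathcal{B}_0=\mathscr{S}_{2r,+}$, $\mathcal{B}_1=\mathscr{S}'_{2r+1,+}$, $\mathcal{B}_2=\mathscr{S}_{2r+1,+}$, so that the Bratteli diagram of $\mathcal{B}_0\subset\mathcal{B}_1\subset\mathcal{B}_2$ is precisely $\Gamma$, and I continue the basic construction to produce $\mathcal{B}_3=\mathscr{S}'_{2r+2,+}$ and $\mathcal{B}_4=\mathscr{S}_{2r+2,+}$. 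Under this identification $\mathscr{G}_{2,+}=\mathcal{B}_0'\cap\mathcal{B}_4=\mathscr{S}_{2r,+}'\cap\mathscr{S}_{2r+2,+}$, which is exactly the codomain of the embedding $\phi$ restricted to $\mathscr{S}_{2,+}$.

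Second, I would track the image of the biprojection. The embedding $\phi$ acts by adding $2r$ strings on the left, and the recursion defining the sequence $p_1,p_2,p_3,\dots$ (namely $p_k$ is obtained by adding two strings on the left of $p_{k-2}$) shows that adding $2r$ strings to $p=p_1$ produces $p_{2r+1}$. Thus $\phi(p)=p_{2r+1}$ as a projection in $\mathscr{G}_{2,+}$, and since those $2r$ strings pass straight through, it indeed lies in $\mathscr{S}_{2r,+}'\cap\mathscr{S}_{2r+2,+}$. The finite depth discussion preceding Definition \ref{refined principal graph of PA} tells us that at the stable level the chain $\mathscr{S}'_{2r+1,+}\subset\mathscr{S}_{2r+1,+}\subset\mathscr{S}'_{2r+2,+}$, i.e. $\mathcal{B}_1\subset\mathcal{B}_2\subset\mathcal{B}_3$, is a basic construction whose Jones projection is $p_{2r+1}$. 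Hence $\phi(p)$ is the Jones projection for $\mathcal{B}_1\subset\mathcal{B}_2\subset\mathcal{B}_3$ sitting inside $\mathcal{B}_0'\cap\mathcal{B}_4$, which is precisely the object Proposition \ref{ploop} computes.

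Third, I would close the argument by uniqueness. The Jones projection for a fixed basic construction is unique once the Markov trace is specified, and its expression as a sum of loops of $\Gamma$ is exactly the content of Proposition \ref{ploop}. The only compatibility to verify is that the Perron-Frobenius weights $\lambda$ appearing in that formula agree with the data carried by $\phi$: the Markov trace of $\mathscr{S}$ restricts to the Markov trace on the tower $\{\mathcal{B}_i\}$, and by Corollary \ref{unique dimension vector} its dimension vector is the unique one attached to $\Gamma$, namely the Perron-Frobenius eigenvector used in Proposition \ref{ploop}. Therefore $\phi(p)=p_1$ with $p_1$ given by the loop formula.

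The main obstacle I anticipate is the simultaneous bookkeeping that makes the three identifications rigorous: that the refined principal graph $\Gamma$ really is the Bratteli diagram of $\mathcal{B}_0\subset\mathcal{B}_1\subset\mathcal{B}_2$ at the stabilized level, that adding $2r$ strings to $p_1$ lands it in $\mathcal{B}_0'\cap\mathcal{B}_4$ as the genuine Jones projection $p_{2r+1}$ (rather than merely a projection equivalent to it), and that the normalization $\delta_b^{-1}$ together with the square-root weights emerge correctly from the restricted trace. Once the tower and the trace are aligned, the equality is forced by the uniqueness of the Jones projection, so the essential effort lies in the identification rather than in any further computation.
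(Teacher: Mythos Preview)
Your proposal is correct and follows essentially the same approach as the paper: identify $\phi(p)=p_{2r+1}$ by the add-strings description of $\phi$, use the stabilization to recognize $p_{2r+1}$ as the Jones projection for $\mathcal{B}_1\subset\mathcal{B}_2\subset\mathcal{B}_3$, and conclude via the loop formula of Proposition~\ref{ploop}. The paper's proof is a two-sentence version of exactly this argument, and your additional bookkeeping (the identification $\mathcal{B}_i=\mathscr{S}'_{2r+\cdot}$ or $\mathscr{S}_{2r+\cdot}$ and the appeal to Corollary~\ref{unique dimension vector} for the weights) makes explicit what the paper leaves implicit.
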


\begin{proof}
Note that $p_m$ is the Jones projection for the basic construction $\mathscr{S}_m'\subset\mathscr{S}_m\subset\mathscr{S}_{m+1}'$, when $m$ is odd and greater than the depth of $\mathscr{S}$.
So $\phi(p)$ is the Jones projection for the basic construction $\mathcal{B}_1\subset\mathcal{B}_2 \subset\mathcal{B}_3$, which implies $\phi(p)=p_1$.
\end{proof}

\section{Bisch-Haagerup fish graphs}
The following result is proved by Bisch and Haagerup.
\begin{theorem}
Suppose $\mathcal{N}\subset\mathcal{P}\subset\mathcal{M}$ is an inclusion of factors of type II$_1$, such that $[\mathcal{M}:\mathcal{P}]=\frac{3+\sqrt{5}}{2}$ and $[\mathcal{P}:\mathcal{N}]=2$.
Then either it is a free composed inclusion, or
the principal graph of the subfactor $\mathcal{N}\subset\mathcal{M}$ is
$$\gra{principalgraph8},$$
called the $n_{th}$ Bisch-Haagerup fish graph, when it is of depth $2n+1$.
\end{theorem}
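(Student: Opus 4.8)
The plan is to work inside the bimodule categories of the tower and to compute the principal graph of $\mathcal{N}\subset\mathcal{M}$ depth by depth. Write $\delta_a=\sqrt{2}$ and $\delta_b=\frac{1+\sqrt{5}}{2}$, let $X=L^2(\mathcal{P})$ be the $(\mathcal{N},\mathcal{P})$ bimodule and $Y=L^2(\mathcal{M})$ the $(\mathcal{P},\mathcal{M})$ bimodule, so that $Z=X\otimes Y=L^2(\mathcal{M})$ is the $(\mathcal{N},\mathcal{M})$ bimodule generating the graph. Since $[\mathcal{P}:\mathcal{N}]=2$, the $A_3$ relation gives $X\otimes\overline{X}=1\oplus\sigma$ with $\sigma$ an invertible $(\mathcal{N},\mathcal{N})$ bimodule satisfying $\sigma\otimes\sigma=1$; since $[\mathcal{M}:\mathcal{P}]=\delta_b^2=\frac{3+\sqrt5}{2}$, the $A_4$ relation gives $Y\otimes\overline{Y}=1\oplus\omega$, where $\omega$ is the Fibonacci $(\mathcal{P},\mathcal{P})$ bimodule with $\dim\omega=\delta_b$ and $\omega\otimes\omega=1\oplus\omega$. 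First I would record these generating relations together with Frobenius reciprocity (Proposition~\ref{frob e 2}), the dimension equations of Proposition~\ref{dab}, and the fact that every irreducible bimodule has dimension at least $1$.

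Next I would decompose $Z\otimes\overline{Z}=X\otimes(Y\otimes\overline{Y})\otimes\overline{X}=(1\oplus\sigma)\oplus\pi$, where $\pi=X\otimes\omega\otimes\overline{X}$ has dimension $2\delta_b=1+\sqrt{5}$. The entire dichotomy is located here and is governed by the invertible object $\sigma$: either $\sigma$ acts freely on the objects coming from the $A_4$ side, in which case the fusion graph is that of the free product and reproduces the Fuss--Catalan principal graph recorded earlier in the excerpt; or $\sigma$ \emph{normalizes} the Fibonacci tower, meaning the new object $\pi$ satisfies the extra relation $\sigma\otimes\pi\cong\pi$. A short analysis at depth two, using only dimension counting and reciprocity, should show that these are the only two possibilities, and that the normalizing relation is exactly what produces a single trivalent vertex opening the first fish graph.

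The core of the argument is an induction on depth in the non-free branch. Because $\omega$ satisfies $\omega\otimes\omega=1\oplus\omega$, the $(\mathcal{P},\mathcal{P})$ tower is a rigid linear Fibonacci chain, and I would transport this rigidity across the index-two extension $\mathcal{N}\subset\mathcal{P}$: fusing repeatedly by $Z\otimes\overline{Z}$ adds, at each even depth, exactly one new Fibonacci-type segment, while the normalizing object $\sigma$ folds the chain back on itself into the characteristic tail-plus-body ``fish'' shape. Tracking the Perron--Frobenius weights, which are forced to be proportional to the unique dimension vector by Corollary~\ref{unique dimension vector}, one sees that the graph truncated at depth $2n+1$ is precisely the $n$th Bisch--Haagerup fish graph, and finite depth of $\mathcal{N}\subset\mathcal{M}$ selects a single $n$ by terminating the chain.

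The main obstacle I anticipate is the uniqueness claim in the non-free branch: dimension counting alone permits several a priori continuations, and excluding them requires the full fusion-ring structure rather than just the eigenvector equation of Proposition~\ref{dab}. Concretely, the delicate point should be to verify simultaneously that the principal and dual principal graphs are mutually compatible refinements with the same parameter $(\delta_a,\delta_b)$, which forces the trivalent vertices to line up into the fish pattern and rules out sporadic branching. The two facts that close the induction are the self-duality $\sigma\otimes\sigma=1$ and the absence of invertible $(\mathcal{N},\mathcal{N})$ objects beyond $\{1,\sigma\}$, both of which follow from $[\mathcal{P}:\mathcal{N}]=2$.
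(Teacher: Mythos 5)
There is a genuine gap, and it sits exactly where you put the weight of the argument. Your proposed distinguishing relation $\sigma\otimes\pi\cong\pi$ is vacuous: since $X$ is irreducible and $\dim\hom(\sigma\otimes X,X)=\dim\hom(\sigma,X\otimes\overline{X})=\dim\hom(\sigma,1\oplus\sigma)=1$ while $\dim(\sigma\otimes X)=\dim X$, one always has $\sigma\otimes X\cong X$, hence $\sigma\otimes\pi=\sigma\otimes X\otimes\omega\otimes\overline{X}\cong\pi$ for \emph{every} composed inclusion, free or not. So this relation cannot govern the dichotomy. The meaningful depth-two alternative is whether $\pi=X\otimes\omega\otimes\overline{X}$ is irreducible; by Frobenius reciprocity $\dim\hom(\pi,\pi)=1+\dim\hom(\omega,\tau\otimes\omega\otimes\tau)$, where $\tau$ is the nontrivial $(\mathcal{P},\mathcal{P})$ bimodule of the index-$2$ inclusion $\mathcal{N}\subset\mathcal{P}$. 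This is why the paper (and Bisch--Haagerup) phrase everything in terms of the relations among the $(\mathcal{P},\mathcal{P})$ bimodules $\tau$ and $\omega$ rather than the $(\mathcal{N},\mathcal{N})$ object $\sigma$.

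The second, related problem is the claim that ``the entire dichotomy is located'' at depth two. If $\pi$ is irreducible you are not yet in the free case: the first non-free relation can appear at any even depth $2n$, and each value of $n$ yields a different fish graph. The argument that is actually needed --- and which the paper carries out in detail for the analogous two-$A_4$ statement in Section 5 --- is to take a \emph{shortest} alternating word $w$ in $\tau$ and $\omega$ whose tensor product contains the trivial bimodule, use Frobenius reciprocity to show $w$ may be taken of the form $(\tau\omega)^n$ (ruling out words ending in the wrong letter), and then verify that this single relation together with $\tau\otimes\tau\cong 1$ and $\omega\otimes\omega\cong 1\oplus\omega$ forces the fusion graph of $Z\otimes\overline{Z}$ to be the $n$th fish. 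Your induction (``each even depth adds one Fibonacci segment, $\sigma$ folds the chain back'') assumes this conclusion rather than deriving it, and you yourself concede that dimension counting and the Perron--Frobenius eigenvector do not exclude other continuations. Without the shortest-word analysis the non-free branch is not pinned down.
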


It follows from computing the relation of $(\mathcal{P},\mathcal{P})$ bimodules arisen from the two subfactors $\mathcal{N}\subset\mathcal{P}$ and $\mathcal{P}\subset\mathcal{M}$.

\begin{remark}
It is a free composed inclusion means there is no extra relation between $(\mathcal{P},\mathcal{P})$ bimodules.
In this case, the planar algebra of $\mathcal{N}\subset\mathcal{M}$ is Fuss-Catalan.
\end{remark}

By the embedding theorem, if the principal graph of a subfactor planar algebra is the $n_{th}$ Bisch-Haagerup fish graph, then the subfactor planar algebra is embedded in the graph planar algebra.
Because of the existence of a normalizer in the Bisch-Haagerup fish graph, the planar algebra contains a trace-2 biprojection.
First we will see there is only one possible refined principal graph with respect to the biprojection.
Then in the orthogonal complement of the Fuss-Catalan planar subalgebra, there is a new generator at depth $2n$.
We will show that this generator satisfies some relations.
We hope to solve the generator with such relations in the graph planar algebra.
In the case $n\geq4$, there is no solution. So there is no subfactor planar algebra whose principal graph is the $n_{th}$ fish.
In the case $n=1,2,3$, there is a unique solution up to (planar algebra) isomorphism. So there is at most one subfactor planar algebra for each $n$.
Their existence follows from three known subfactors.

\begin{notation}
Take $\delta_a=\sqrt{2}$, $\delta_b=\frac{1+\sqrt{5}}{2}$, and $\delta=\delta_a\delta_b$. Then $\delta_b^2=\delta_b+1$.
Let $FC=FC(\delta_a,\delta_b)$ be the Fuss-Catalan planar algebra with parameters $(\delta_a,\delta_b)$.
We assume that $f_{2n}$ is the minimal projection in $FC_{2n,+}$ with middle pattern $\underbrace{abba~abba~\cdots~abba}_n$, $n$ copies of $abba$;
and $g_{2n}$ is the minimal projection in $FC_{2n,-}$ with middle pattern $\underbrace{baab~baab~\cdots~baab}_n$.
\end{notation}

\subsection{Principal graphs}
If the $n_{th}$ Bisch-Haagerup fish graph is the principal graph of a subfactor $\mathcal{N}\subset\mathcal{M}$, then
its index is $\delta^2=3+\sqrt{5}$. Because of the existence of a ``normalizer", there is an intermediate subfactor $\mathcal{P}$, such that $[\mathcal{P}:\mathcal{N}]=2$.
\begin{definition}
Let us define the subfactor planar algebra of $\mathcal{N}\subset\mathcal{M}$ to be $\mathscr{B}=\{\mathscr{B}_{m,\pm}\}$, and $e_\mathcal{P}$ to be the biprojection corresponding to the intermediate subfactor $\mathcal{P}$.
\end{definition}


\begin{lemma}\label{refined principal graph}
The refined principal graph with respect to the biprojection $e_\mathcal{P}$ is
$$\grb{principalgraph20}.$$
Its dimension vector $\lambda$ is given by

$\lambda(c_{2k-1})=\delta_a\delta_b^k$, for $1\leq k\leq n$;

$\lambda(d_{2k-1})=\delta_a\delta_b^{k-1}$, for $1\leq k\leq n$;

$\lambda(c_{2k})=2\delta_b^k$, for $1\leq k\leq n-1$;

$\lambda(c_0)=\lambda(d_0)=1$; $\lambda(c_{2n})=\lambda(d_{2n})=\delta_b^{n}$;

$\lambda(g_{2k-1})=\delta_a\delta_b^{k-1}$, for $1\leq k\leq n$;

$\lambda(g_{2k})=\delta_a\delta_b^{k}$, for $1\leq k\leq n$.
\end{lemma}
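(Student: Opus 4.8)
The plan is to determine the refined principal graph from the data we already possess: the Bisch–Haagerup fish graph is the principal graph of $\mathscr{B}$, the parameters are $\delta_a=\sqrt 2$ and $\delta_b=\frac{1+\sqrt5}{2}$, and by Theorem \ref{embedding P} together with the theory of Section 3 the refined principal graph must be an $(\mathcal{N},\mathcal{P},\mathcal{M})$ coloured graph with parameter $(\delta_a,\delta_b)$ that \emph{refines} the fish graph. So the strategy is a constrained combinatorial enumeration: write down every way to insert a column of $\mathcal{P}$-coloured vertices between the $\mathcal{N}$- and $\mathcal{M}$-coloured columns of the fish graph so that each original edge factors through a length-two path, and then use Proposition \ref{dab} (the eigenvector equations relating $\lambda$ across $\mathcal{E}_+$ and $\mathcal{E}_-$) to kill all but one configuration.

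First I would record the fish graph itself and fix the induced bipartite structure $\Gamma'$, identifying which vertices are $\mathcal{N}$-coloured and which are $\mathcal{M}$-coloured. Next I would invoke the constraint from the biprojection having trace $2$: since $[\mathcal{P}:\mathcal{N}]=2$, every $\mathcal{N}$-coloured vertex $u$ satisfies $\delta_a\lambda(u)=\sum_{s(\varepsilon)=u}\lambda(t(\varepsilon))$ with $\delta_a=\sqrt2$, and because $X$ is the $2$-dimensional $A_3$ bimodule each such $u$ connects to at most two $\mathcal{P}$-vertices. This severely limits the local picture at each node. I would then propagate the eigenvector equations outward from the starred vertex $c_0$ (where $\lambda=1$), using the two relations in Proposition \ref{dab} alternately: the $\mathcal{E}_+$ relation forces the $\mathcal{P}$-values from the $\mathcal{N}$-values, and the $\mathcal{E}_-$ relation with $\delta_b=\frac{1+\sqrt5}{2}$ then forces the $\mathcal{M}$-values. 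Since Corollary \ref{unique dimension vector} guarantees the dimension vector is uniquely determined once the graph is fixed, the numbers $\delta_a\delta_b^k$, $2\delta_b^k$, etc., are not free: they must come out consistent, and any candidate refinement producing a value below $1$ or an inconsistency is discarded (the dimension of a bimodule is at least $1$).

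The computation then splits into the ``tail'' of the fish (the linear $A$-type part, where the refinement is forced to look like the refined Fuss–Catalan graph already displayed after Definition \ref{refined principal graph of PA}) and the ``head'' (the forked vertices $c_{2n},d_{2n}$ at the deep end). I would check the tail matches the $FC(\sqrt2,\frac{1+\sqrt5}{2})$ refined graph pattern term by term, reading off $\lambda(c_{2k-1})=\delta_a\delta_b^k$, $\lambda(d_{2k-1})=\delta_a\delta_b^{k-1}$, $\lambda(c_{2k})=2\delta_b^k$, and the $g$-series from the dual side; these are exactly the values produced by iterating Proposition \ref{dab} with multiplicities $1,1,2$. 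Then the degree-$2n$ fork is handled by the two boundary conditions $\lambda(c_{2n})=\lambda(d_{2n})=\delta_b^n$, which are forced by the relation $\delta_b^2=\delta_b+1$ closing up the recursion.

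\textbf{The main obstacle} I anticipate is establishing \emph{uniqueness} of the refinement rather than merely exhibiting the stated one: I must argue that no alternative placement of $\mathcal{P}$-coloured vertices is compatible with the parameter-$(\delta_a,\delta_b)$ constraint. The key leverage is that $\delta_a=\sqrt2$ is rigid — a valence-$\sqrt2$ condition on an integer-weighted bipartite graph is extremely restrictive, essentially forcing each $\mathcal{N}$-vertex to have a fixed local refinement — and that the refined principal graph is genuinely a refinement of a \emph{given} graph, so its induced bipartite graph is pinned down in advance. Combined with the uniqueness of the Markov trace on a connected Bratteli diagram (Proposition \ref{unique markov trace}) and Corollary \ref{unique dimension vector}, this should close the argument: once the coloured graph is shown to be the only parameter-$(\delta_a,\delta_b)$ refinement of the fish graph, its dimension vector is automatically the one listed.
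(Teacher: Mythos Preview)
Your proposal is essentially correct and follows the same overall route as the paper: match the tail to the Fuss--Catalan refined principal graph, then force the head using the parameter-$(\delta_a,\delta_b)$ eigenvector relations from Proposition~\ref{dab}. The paper streamlines the tail step slightly: rather than enumerating refinements combinatorially, it observes that the fish graph and the Fuss--Catalan principal graph coincide up to depth $2n-1$, hence $\mathscr{B}_{m,+}=FC_{m,+}$ for $m\le 2(n-1)$, so the refined principal graph of $\mathscr{B}$ literally \emph{is} the refined principal graph of $FC(\delta_a,\delta_b)$ up to depth $2n-1$; the dimension values at $c_{2k-1},d_{2k-1},c_{2k},g_{2k-1},g_{2k}$ are then read off directly from the middle patterns of the corresponding Fuss--Catalan minimal projections rather than propagated recursively. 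The head argument (new vertices $g_{2n},d_{2n-1},c_{2n},d_{2n}$ forced by the inequalities $\lambda(v)\ge\delta_b^{-1}\lambda(\text{neighbour})$ and the balance conditions) is exactly what you describe.
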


\begin{proof}
Note that $\delta^2=3+\sqrt{5}=\delta_a^2\delta_b^2$, so
the planar subalgebra generated by the trace-2 biprojection $e_\mathcal{P}$ is $FC=FC(\delta_a,\delta_b)$.
Observe that the principal graph of $FC$ is the same as the $n_{th}$ fish up to depth $2n-1$, so $\mathscr{B}_{2(n-1),+}=FC_{2(n-1),+}$.
Then the refined principal graph of $\mathscr{B}$ starts as
$$\grb{principalgraph21}.$$

The vertex $c_{2k-1}$ corresponds to the minimal projection of $FC_{2k-1,+}$ with middle pattern
\\
$\underbrace{abba~\cdots~abba}_{k-1}ab$, $k-1$ copies of $abba$, for $1\leq k\leq n$.
So $\lambda(c_{2k-1})=\delta_a\delta_b^k$.

The vertex $d_{2k-1}$ corresponds to the minimal projection of $FC_{2k+1,+}$ with middle pattern
$\underbrace{abba~\cdots~abba}_{k-1}abbb$, for $1\leq k\leq n-1$.
So $\lambda(d_{2k-1})=\delta_a\delta_b^{k-1}$.

The vertex $c_{2k}$ corresponds to the minimal projection of $FC_{2k,+}$ with middle pattern
\\
$\underbrace{abba~\cdots~abba}_{k}$, for $1\leq k\leq n-1$.
So $\lambda(c_{2k})=2\delta_b^k$;

The vertex $c_0$ is the marked point. So $\lambda(c_0)=1$; The vertex $d_0$ corresponds to the minimal projection of $FC_{2,+}$ with middle pattern $aa$. So $\lambda(d_0)=1$;

The vertex $g_{2k-1}$ corresponds to the minimal projection of $FC'_{2k-1,+}$ with middle pattern
\\
$\underbrace{abba~\cdots~abba}_{k-1}~a$, for $1\leq k\leq n$.
So $\lambda(g_{2k-1})=\delta_a\delta_b^{k-1}$;

The vertex $g_{2k}$ corresponds to the minimal projection of $FC'_{2k,+}$ with middle pattern
\\
$\underbrace{abba~\cdots~abba}_{k-1}~abb$, for $1\leq k\leq n-1$.
So $\lambda(g_{2k})=\delta_a\delta_b^{k}$.

All these vertices are not adjacent to a new point in the refined principal graph except $c_{2n-1}$, because they are identical to the vertices of the refined principal graph of $FC$.

Note that $\delta_b\lambda(c_{2n-1})-\lambda(g_{2n-1})=\delta_a\delta_b^{n+1}-\delta_a\delta_b^{n-1}=\delta_a\delta_b^{n}$. So there is a new $\mathcal{P}$ coloured vertex, denoted by $g_{2n}$, adjacent to $c_{2n-1}$. Then $\lambda(g_{2n})\leq \delta_a\delta_b^{n}$. On the other hand $\lambda(g_{2n})\geq\delta_b^{-1}\lambda(c_{2n-1})=\delta_b^n>\frac{1}{2}\delta_a\delta_b^{n}$. So $g_{2n}$ is unique new $\mathcal{P}$ coloured vertex adjacent to $c_{2n-1}$ and $\lambda(g_{2n})=\delta_a\delta_b^n$.

While $\delta_b\lambda(g_{2n})-\lambda(c_{2n-1})=\delta_a\delta_b^{n+1}-\delta_a\delta_b^{n}=\delta_a\delta_b^{n-1}$, so there is a new $\mathcal{N}$ coloured vertex, denoted by $d_{2n-1}$, adjacent to $g_{2n}$. Then $\lambda(d_{2n-1})\leq \delta_a\delta_b^{n-1}$. On the other hand $\lambda(d_{2n-1})\geq\delta_b^{-1}\lambda(g_{2n})=\delta_a\delta_b^{n-1}$. So $d_{2n-1}$ is unique new $\mathcal{N}$ coloured vertex adjacent to $g_{2n}$ and $\lambda(d_{2n-1})=\delta_a\delta_b^{n-1}$.

Now $\delta_b\lambda(d_{2n-1})=\lambda(g_{2n})$, so there is no new $\mathcal{P}$ coloured vertex adjacent to $d_{2n-1}$.

In the principal graph, there are two $\mathcal{M}$ coloured vertices, denoted by $c_{2n},d_{2n}$, adjacent to $c_{2n-1}$. Thus $c_{2n},d_{2n}$ are adjacent to $g_{2n}$ in the refined principal graph. Moreover $\lambda(c_{2n})=\lambda(d_{2n})=\frac{1}{\delta}(\lambda(c_{2n-1})+\lambda(d_{2n-1}))=\delta_b^{n}$.
Then $\delta_a\lambda(c_{2n})=\delta_a\lambda(d_{2n})=\lambda(g_{2n})$. So there is no new $\mathcal{P}$ coloured vertices adjacent to $c_{2n}$ or $d_{2n}$.

Therefore we have the unique possible refined principal graph and its dimension vector as mentioned in the statement.
\end{proof}

Because $\mathscr{B}$ contains a biprojection, it is decomposed as an $Annular$ $Fuss-Catalan$ $module$ \cite{LiuAFC}, similar to the Temperley-Lieb case \cite{Jonann,JonRez}. The Fuss-Catalan planar subalgebra $FC$ is already a submodule of $\mathscr{B}$. There is a lowest weight vector in $\mathscr{B}_{2n,+}$ which is orthogonal to $FC$. So this vector is rotation invariant up to a phase. Moreover it is $totally$ $uncappable$, see \cite{LiuAFC}. In this special case, we have a direct proof of this result.

\begin{definition}
An element $x\in\mathscr{B}_{m,+}$ is said to be totally uncappable, if
$$\rho^k(x)P=0, ~\rho^k(\mathcal{F}(x))\mathcal{F}(P)=0, \quad \forall k\geq0;$$
An element $y\in\mathscr{B}_{m,-}$ is said to be totally uncappable,
if $\mathcal{F}(y)$ is totally uncappable.
\end{definition}

If we consider $P$ as an a,b-colour diagram,
then an element is totally uncappable means it becomes zero whenever it is capped by an a/b-colour string.

Now let us construct the totally uncappble element $S\in\mathscr{B}_{2n,+}$. If $S$ is totally uncappable, then $S$ is orthogonal to $FC_{2n,+}$. While the minimal projection $f_{2n}$ of $FC_{2n,+}$ is separated into two minimal projections in $\mathscr{B}_{2n,+}$, denoted by $P_c$ $P_d$, with fair trace. So $S$ has to be a multiple of $P_c-P_d$.
Take $S$ to be $P_c-P_d$, then $S$ satisfies the following propositions.

\begin{proposition}\label{relation of S}
For $S=P_c-P_d$ in $\mathscr{B}_{2n,+}$,  we have

(1) $S^*=S$;

(2) $S^2=f_{2n}$;

(3) $S$ is totally uncappable;

(4) $\rho(S)=\omega S$, for some $\omega\in\mathbb{C}$ satisfying $|\omega|=1$.
\end{proposition}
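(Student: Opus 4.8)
The plan is to establish the four properties in the stated order, since each relies on the structural facts already assembled about $S = P_c - P_d$. First, for (1), I would observe that $P_c$ and $P_d$ are projections, hence self-adjoint, so $S^* = P_c^* - P_d^* = P_c - P_d = S$ is immediate. For (2), the key input is that $P_c$ and $P_d$ are the two minimal subprojections into which the Fuss-Catalan minimal projection $f_{2n}$ splits inside $\mathscr{B}_{2n,+}$. Because they are mutually orthogonal minimal projections summing to $f_{2n}$, we have $P_c P_d = P_d P_c = 0$ and $P_c + P_d = f_{2n}$. Then $S^2 = (P_c - P_d)^2 = P_c + P_d = f_{2n}$, using orthogonality to kill the cross terms. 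The only thing to verify carefully is that the splitting is genuinely into two orthogonal minimal projections with $f_{2n} = P_c + P_d$, which follows from the refined principal graph computed in Lemma~\ref{refined principal graph}: the vertex $f_{2n}$ of $FC$ lifts to exactly the two $\mathcal{M}$-coloured vertices $c_{2n}$ and $d_{2n}$ of equal dimension $\delta_b^n$, giving the ``fair trace'' splitting.

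For (3), total uncappability, I would argue that $S$ is orthogonal to the entire Fuss-Catalan subalgebra $FC_{2n,+}$, and then translate orthogonality into the capping conditions. Concretely, any cap applied to a vector in $\mathscr{B}_{2n,+}$ lands in a lower box space, and the adjoint of a capping tangle is an inclusion from the Fuss-Catalan picture; so $\rho^k(S)P = 0$ for all $k$ is equivalent to $S$ being orthogonal to every vector of the form ``cap composed with a $P$-labelled diagram,'' i.e.\ orthogonal to all of $FC_{2n,+}$. Since $P_c$ and $P_d$ have equal trace, $S = P_c - P_d$ has $\mathrm{tr}(S \cdot z) = 0$ for every $z$ that does not distinguish $c_{2n}$ from $d_{2n}$; and the point of the refined principal graph is that $FC_{2n,+}$ consists precisely of the vectors that cannot tell the two apart (they differ only at depth $2n$, beyond where $\mathscr{B}$ and $FC$ agree). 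The same argument applied to $\mathcal{F}(S)$ handles the $\mathcal{F}(x)$ half of the definition. I expect this to be the main obstacle: making the passage from ``$S \perp FC_{2n,+}$'' to the pointwise vanishing of all cappings fully rigorous requires identifying each capping tangle's image with a subspace of $FC$, and checking that the lowest-weight location (depth $2n$) is exactly where the new generator first appears, so that no cap can produce a nonzero vector landing outside the Fuss-Catalan part.

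Finally, for (4), I would use that the rotation $\rho$ is a trace-preserving $*$-automorphism of $\mathscr{B}_{2n,+}$ that preserves the Fuss-Catalan subalgebra and hence fixes $f_{2n}$. Since $\rho(S)$ is again self-adjoint with $\rho(S)^2 = \rho(f_{2n}) = f_{2n}$ and $\rho(S)$ is still totally uncappable (total uncappability is manifestly $\rho$-invariant by its definition), $\rho(S)$ lies in the same one-dimensional space of totally uncappable vectors under $f_{2n}$ that forced $S$ to be a multiple of $P_c - P_d$ in the first place. Therefore $\rho(S) = \omega S$ for a scalar $\omega$. Applying the trace-preserving property, $\mathrm{tr}(\rho(S)^*\rho(S)) = \mathrm{tr}(S^*S) = \mathrm{tr}(f_{2n}) > 0$, while $\mathrm{tr}(\rho(S)^*\rho(S)) = |\omega|^2 \mathrm{tr}(S^*S)$, forcing $|\omega| = 1$. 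The cleanest way to package this is to note that the space of lowest-weight totally uncappable vectors in $\mathscr{B}_{2n,+}$ is one-dimensional (it is spanned by $P_c - P_d$), so $\rho$ acts on it by a scalar of modulus one.
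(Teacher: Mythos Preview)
Your proof is correct, and (1), (2), and (4) match the paper almost verbatim. The interesting difference is in (3), and in the order of (3) and (4).

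The paper proves (4) \emph{before} (3), and then uses (4) to reduce (3) to checking just two vanishing conditions rather than infinitely many. Concretely, the paper argues as follows: from the refined principal graph one sees that $S*P$ must be a multiple of $f_{2n}$, and since $\operatorname{tr}(S*P)=0$ it follows that $S*P=0$; separately, $\operatorname{tr}((SP)^*(SP))=\operatorname{tr}(f_{2n}P)=0$ because $f_{2n}$ is already uncappable in $FC$, so $SP=0$. These two facts, together with $\rho(S)=\omega S$ from (4), immediately give $\rho^k(S)P=0$ and $\rho^k(\mathcal{F}(S))\mathcal{F}(P)=0$ for all $k$.

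Your route for (3) --- showing $S\perp FC_{2n,+}$ and then arguing by duality that every Fuss--Catalan cap of $S$ vanishes --- is also valid, and it is the ``annular module'' point of view that the paper alludes to just before the proposition. The advantage of your approach is conceptual transparency: total uncappability is exactly the statement that $S$ is a lowest-weight vector for the annular Fuss--Catalan action, which is automatic once $S$ sits in the orthogonal complement of $FC_{2n,+}$. The advantage of the paper's approach is that it sidesteps the duality argument you flagged as ``the main obstacle'' and replaces it with two one-line trace computations, at the cost of needing (4) first. One small correction: you say the vanishing of all caps is \emph{equivalent} to $S\perp FC_{2n,+}$, but in fact orthogonality to $FC_{2n,+}$ is strictly stronger, since $f_{2n}\in FC_{2n,+}$ is itself totally uncappable; only the implication you actually need holds.
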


\begin{proof}
(1) $S^*=(P_c-P_d)^*=S$.

(2) $S^2=(P_c-P_d)^2=P_c+P_d=f_{2n}$.

(4) Note that $\rho$ preserves the inner product of $S\in\mathscr{B}_{2n,+}$, and $FC_{2n,+}$ is rotation invariant, so both $S$ and $\rho(S)$ are in the orthogonal complement of $FC_{2n,+}$ which is a one-dimensional subspace. Then we have $\rho(S)=\omega S$ for some $\omega\in\mathbb{C}$. Moreover $||\rho(S)||_2=||S||_2$, so $|\omega|=1$.

(3) From the refined principal graph, we have $S*P$ is a multiple of $f_{2n}$. By computing the trace, we have $S*P=0$. On the other hand $tr((SP)^*(SP))=tr(f_{2n}P)=0$, so $SP=0$. By proposition(4), we have $S$ is totally uncappable.
\end{proof}

If $S\in\mathscr{B}_{2n,+}$ is totally uncappable, then $\mathcal{F}(S)\in\mathscr{B}_{2n,-}$ is also totally uncappable. To describe its relations, we need the dual principal graph of $\mathscr{B}$.

\begin{lemma}
If the principal graph of $\mathscr{B}$ is the $n_{th}$ Bisch-Haagerup fish graph, then
the dual principal graph of $\mathscr{B}$ is
$$\grb{principalgraph18}.$$

For its dimension vector $\lambda'$, we have $\lambda'(v_1)=\delta_b^n,~\lambda'(v_2)=\delta_b^{n-1}$.
\end{lemma}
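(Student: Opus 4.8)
The plan is to compute the dual principal graph by exactly the same method used for the principal graph in Lemma \ref{refined principal graph}, namely by comparing $\mathscr{B}$ with its Fuss-Catalan subalgebra $FC(\delta_a,\delta_b)$ and using the dimension-vector constraints of Proposition \ref{dab}. Since $\mathscr{B}$ contains the trace-$2$ biprojection $e_\mathcal{P}$, and since $FC$ agrees with the $n_{th}$ fish graph up to depth $2n-1$, I would first argue that $\mathscr{B}_{2(n-1),-}=FC_{2(n-1),-}$, so that the dual principal graph of $\mathscr{B}$ coincides with the refined dual principal graph of $FC$ (displayed earlier as \grb{principalgraph25}) up to that depth. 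This identifies all vertices and dimension values below the depth where the new generator $\mathcal{F}(S)$ appears.

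Next I would locate the new vertices by running the Perron--Frobenius/dimension-count argument from Lemma \ref{refined principal graph} verbatim on the dual side. The key inputs are the two relations $\delta_a\lambda'(u)=\sum_{s(\varepsilon)=u}\lambda'(t(\varepsilon))$ and $\delta_b\lambda'(w)=\sum_{s(\varepsilon)=w}\lambda'(t(\varepsilon))$ from Proposition \ref{dab}, together with the recursion $\delta_b^2=\delta_b+1$ for $\delta_b=\tfrac{1+\sqrt5}{2}$. At the depth-$2n$ level the defect $\delta_b\lambda'(\cdot)-\lambda'(\cdot)$ between the expected Fuss-Catalan value and the actual value of the adjacent vertex forces exactly one new vertex to branch off; the inequalities $\lambda'(v)\le(\text{defect})$ from the edge relation and $\lambda'(v)\ge\delta_b^{-1}\lambda'(\text{neighbor})$ from the reverse relation then pin down both the uniqueness of that new vertex and its exact dimension. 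Carrying this through should yield $\lambda'(v_1)=\delta_b^n$ and $\lambda'(v_2)=\delta_b^{n-1}$ as claimed, and reproduce the stated graph shape \grb{principalgraph18}.

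I would keep the bookkeeping organised by tracking, for each Fuss-Catalan vertex, its middle pattern (as in the notation for $f_{2n}$, $g_{2n}$), so that the $\mathcal{M},\mathcal{P},\mathcal{N}$ colours and the $\delta_a$ versus $\delta_b$ edge-weights are assigned correctly; this is the combinatorial content that makes the dimension recursions close up. The fact that the first $n_{th}$-fish principal graph and the Fuss-Catalan graph coincide up to depth $2n-1$ is what guarantees no new vertices appear prematurely, so the only genuine branching happens at the top, exactly where $\mathcal{F}(S)$ lives.

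The main obstacle I anticipate is not the arithmetic but establishing rigorously that $\mathscr{B}_{2(n-1),-}=FC_{2(n-1),-}$ on the dual side, i.e.\ that $\mathscr{B}$ introduces no extra structure in the dual tower before depth $2n$. On the principal-graph side this followed from the shape of the $n_{th}$ fish graph together with the presence of the biprojection; on the dual side one must instead invoke that the subfactor and its dual have the same depth (here depth $2n+1$) and that the biprojection $e_\mathcal{P}$ survives under the Fourier transform as a biprojection in $\mathscr{B}_{2,-}$ (its image being a multiple of $e_{\mathcal{P}_1}$, as recorded in the Fuss-Catalan discussion). Once that agreement is in place, the dimension-vector uniqueness from Corollary \ref{unique dimension vector} makes the remaining identification of $v_1,v_2$ and their $\lambda'$-values automatic.
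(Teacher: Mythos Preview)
Your overall plan---identify the dual tower with Fuss--Catalan up to depth $2n-1$ and then run dimension constraints at the top---is the paper's strategy as well. However, there is a real gap in how you propose to pin down $\lambda'(v_1)$ and $\lambda'(v_2)$ \emph{individually}.

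The defect/inequality argument you describe, applied at depth $2n$, can only tell you that the Fuss--Catalan minimal projection $g_{2n}$ (of trace $\delta_b^{n+1}$) splits in $\mathscr{B}_{2n,-}$; it cannot by itself determine how that trace is apportioned. Knowing $\lambda'(v_1)+\lambda'(v_2)=\delta_b^{n+1}$ together with the lower bound $\lambda'(v_i)\geq\delta^{-1}\lambda'(v_0)=\delta_b^{n-1}$ still leaves a range of possibilities. The paper closes this with an input you do not mention: since $\dim\mathscr{B}_{2n+1,-}=\dim\mathscr{B}_{2n+1,+}$ and the principal graph has a single odd vertex of multiplicity~$2$ at depth $2n+1$, the dual principal graph must likewise have exactly one vertex $v_7$ of multiplicity~$2$ there. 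One then checks from the eigenvector equation that the FC vertex $v_5$ is already forced to be adjacent to $v_7$, so at most one of $v_1,v_2$ can be. Declaring $v_2$ to be the one \emph{not} adjacent to $v_7$ yields $\lambda'(v_2)=\delta^{-1}\lambda'(v_0)=\delta_b^{n-1}$, whence $\lambda'(v_1)=\delta_b^{n+1}-\delta_b^{n-1}=\delta_b^n$. Without importing this depth-$(2n{+}1)$ information from the principal-graph side, your argument does not close.

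Two smaller points. First, this lemma is about the ordinary bipartite dual principal graph, with Perron--Frobenius eigenvalue $\delta$; the refined $(\mathcal{N},\mathcal{P},\mathcal{M})$-coloured graph governed by Proposition~\ref{dab} is the subject of the \emph{next} lemma, so your appeal to separate $\delta_a,\delta_b$ edge weights is premature here. Second, the agreement you flag as the main obstacle is not one: $\mathscr{B}_{2n-1,+}=FC_{2n-1,+}$ (already used on the principal side) immediately gives $\mathscr{B}_{2n-1,-}=FC_{2n-1,-}$, one step beyond your stated $2(n-1)$.
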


\begin{proof}
Note that $\mathscr{B}_{2n-1,+}=FC_{2n-1,+}$, so $\mathscr{B}_{2n-1,-}=FC_{2n-1,-}$. Then the dual principal graph of $\mathscr{B}$ is the same as the dual principal graph of $FC$ up to depth $2n-1$. In $\mathscr{B}_{2n,-}$, there is a totally uncappable element, so the minimal projection $g_{2n}$ of $FC_{2n,-}$ is separated into two minimal projections of $\mathscr{B}_{2n,-}$, denoted by $P_c',~P_d'$. Then we have the dual principal graph up to depth $2n$ as
$$\grb{principalgraph19}.$$

The vertex $v_0$ corresponds to the minimal projection of $FC_{2n-1,-}$ with middle pattern
\\
$\underbrace{baab~\cdots~baab}_{n-1}ba$.
So $\lambda'(v_0)=\delta_a\delta_b^{n}$;

The vertex $v_1$ corresponds to the minimal projection $P_c'$; The vertex $v_2$ corresponds to the minimal projection $P_d'$;

In the case $n=1$, there is no vertex $v_3$; In the case $n\geq 2$, the vertex $v_3$ corresponds to the minimal projection of $FC_{2n,-}$ with middle pattern $\underbrace{baab~\cdots~baab}_{n-1}bb$.
So $\lambda'(v_3)=\delta_b^{n-1}$.

In the case $n=1$, there is no vertex $v_4$; In the case $n\geq2$ the vertex $v_4$ corresponds to the minimal projection of $FC_{2n-1,-}$ with middle pattern $bb\underbrace{baab~\cdots~baab}_{n-2}~ba$.
So $\lambda'(v_4)=\delta_a\delta_b^{n-2}$;

The vertex $v_5$ corresponds to the minimal projection of $FC_{2n,-}$ with middle pattern
\\
$bb~\underbrace{baab~\cdots~baab}_{n-1}$.
So $\lambda'(v_5)=\delta_b^{n-1}$.

In the case $n\leq 2$, there is no vertex $v_6$; In the case $n\geq3$, the vertex $v_5$ corresponds to the minimal projection of $FC_{2n,-}$ with middle pattern $bb\underbrace{baab~\cdots~baab}_{n-2}bb$. So $\lambda'(v_6)=\delta_b^{n-3}$.

In the principal graph, there is one vertex at depth $2n+1$ with multiplicity 2. So in the dual principal graph, there is one vertex at depth $2n+1$ with multiplicity 2, denoted by $v_7$.

While $\delta\lambda'(v_5)-\lambda'(v_4)=\delta_a\delta_b^{n}-\delta_a\delta_b^{n-2}=\delta_a\delta_b^{n-1}$. So $v_5$ is adjacent to $v_7$. Then at most one of $v_1$ and $v_2$ is adjacent to $v_7$. Without loss of generality, we assume that $v_2$ is not adjacent to $v_7$. Then $\lambda'(v_2)=\frac{1}{\delta}\lambda'(v_0)=\delta_b^{n-1}$. So $\lambda'(v_1)=tr(g_{2n})-\lambda'(v_2)=\delta_b^{n+1}-\delta_b^{n-1}=\delta_b^n$.
Then $\delta\lambda'(v_1)-\lambda'(v_0)=\delta_a\delta_b^{n+1}-\delta_a\delta_b^{n}=\delta_a\delta_b^{n-1}$. So $v_1$ is adjacent to $v_7$, and $\lambda'(v_7)=\delta_a\delta_b^{n-1}$.
While $\delta\lambda'(v_7)-\lambda'(v_1)-\lambda'(v_5)=2\delta_b^n-\delta_b^n-\delta_b^{n-1}=\delta_b^{n-2}$. So there is a new $\mathcal{N}$ coloured vertex, denoted by $v_8$, adjacent to $v_7$. Then $\lambda'(v_8)\leq\delta_b^{n-2}$. On the other hand $\lambda'(v_8)\geq\delta^{-1}\lambda'(v_7)=\delta_b^{n-2}$. So $\lambda'(v_8)=\delta_b^{n-2}$. And there is no new vertices in the dual principal graph.

Therefore we obtain the unique possible dual principal graph.
\end{proof}

\begin{definition}
Let us define $\Gamma_n$ to be the (potential) dual principal graph of $\mathscr{B}$.
\end{definition}

Note that the minimal projection $g_{2n}$ of $FC_{2n,-}$ is separated into two minimal projections $P_c',~P_d'$ in $\mathscr{B}_{2n,-}$. And $tr(P_c')=\lambda(v_1)=\delta_b^{n},~tr(P_d')=\lambda(v_2)=\delta_b^{n-1}$. Take $R$ to be $\delta_b^{-1}P_c'-\delta_b^{-2}P_d'$, then $R$ is orthogonal to $FC_{2n,-}$ in $\mathscr{B}_{2n,-}$.
Recall that $\mathcal{F}(S)\in FC_{2n,-}$ is totally uncappable, so $\mathcal{F}(S)$ is also orthogonal to $FC{2n,-}$ in $\mathscr{B}_{2n,-}$. While the orthogonal complement of $FC_{2n,-}$ in $\mathscr{B}_{2n,-}$ is one dimensional. So $\mathcal{F}(S)$ is a multiple of $R$. Then we have the following propositions.

\begin{proposition}\label{relation of R}
For $R=\delta_b^{-1}P_d'-\delta_b^{-2}P_c'$ in $\mathscr{B}_{2n,-}$,  we have

(0) $R=\omega_0\delta^{-1}\mathcal{F}(S)$, for a constant $\omega_0$ satisfying $\omega_0^{-2}=\omega$, where $S$ and $\omega$ are given in Proposition \ref{relation of S};

(1') $R^*=R$;

(2') $R+\delta_b^{-2}g_{2n}$ is a projection;

(3') $R$ is totally uncappable;

(4') $\rho(R)=\omega R$.
\end{proposition}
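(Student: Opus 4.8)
The plan is to derive all five assertions from the single structural input that the orthogonal complement of $FC_{2n,-}$ in $\mathscr{B}_{2n,-}$ is one-dimensional, combined with the properties of $S$ already recorded in Proposition \ref{relation of S} and the golden-ratio identity $\delta_b^2=\delta_b+1$. I would dispose of the elementary parts first. Since $P_c',P_d'$ are self-adjoint projections and the coefficients $\delta_b^{-1},\delta_b^{-2}$ are real, $(1')$ is immediate. For $(2')$ I use that $g_{2n}$ splits as $g_{2n}=P_c'+P_d'$ with $P_c'P_d'=0$, so that $R+\delta_b^{-2}g_{2n}=(\delta_b^{-1}+\delta_b^{-2})P_d'$; because $\delta_b^2=\delta_b+1$ gives $\delta_b^{-1}+\delta_b^{-2}=1$, this equals $P_d'$, a projection. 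To begin $(0)$, I first note that $\mathcal{F}(S)$ is totally uncappable: by the definition of total uncappability in $\mathscr{B}_{2n,-}$ this means $\mathcal{F}(\mathcal{F}(S))=\rho(S)=\omega S$ is totally uncappable, which holds because $S$ is. Hence $\mathcal{F}(S)$ is orthogonal to $FC_{2n,-}$; as $R$ is too and the complement is one-dimensional, $R$ and $\mathcal{F}(S)$ are proportional, say $R=\omega_0\delta^{-1}\mathcal{F}(S)$ for some scalar $\omega_0$. This is the first clause of $(0)$, and $(3')$ follows at once, $R$ being a scalar multiple of the totally uncappable $\mathcal{F}(S)$.

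Next I would prove $(4')$, which needs only the proportionality and $\rho(S)=\omega S$. Rewriting the relation as $\mathcal{F}(S)=\delta\omega_0^{-1}R$ and applying $\mathcal{F}$ gives $\rho(S)=\delta\omega_0^{-1}\mathcal{F}(R)$, whence $\mathcal{F}(R)=\delta^{-1}\omega\omega_0 S$. Applying $\mathcal{F}$ once more and substituting $\mathcal{F}(S)=\delta\omega_0^{-1}R$ yields $\rho(R)=\delta^{-1}\omega\omega_0\,\mathcal{F}(S)=\omega R$, which is $(4')$.

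It remains to pin down $\omega_0$, i.e. to prove $\omega_0^{-2}=\omega$, and I expect this to be the substantive step; it splits into a modulus computation and a phase computation. For the modulus, since $\mathcal{F}$ preserves the Markov-trace inner product, $|\omega_0|=\delta\,\|R\|_2/\|S\|_2$. Using $S^2=f_{2n}$ one has $\|S\|_2^2=tr(f_{2n})=\lambda(c_{2n})+\lambda(d_{2n})=2\delta_b^n$, while $\|R\|_2^2=\delta_b^{-2}tr(P_d')+\delta_b^{-4}tr(P_c')=\delta_b^{-2}\lambda'(v_2)+\delta_b^{-4}\lambda'(v_1)$, which with the dual dimension-vector values collapses via $\delta_b^2=\delta_b+1$ to $\delta_b^{n-2}$; since $\delta^2=2\delta_b^2$ (as $\delta_a^2=2$) this gives $|\omega_0|=1$. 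For the phase I would use $R^*=R$, $S^*=S$ together with the compatibility of the adjoint and the rotation, $\mathcal{F}(x)^*=\mathcal{F}^{-1}(x^*)$: taking adjoints in $R=\omega_0\delta^{-1}\mathcal{F}(S)$ gives $R=\overline{\omega_0}\delta^{-1}\mathcal{F}^{-1}(S)$, hence $S=\delta\,\overline{\omega_0}^{-1}\mathcal{F}(R)$; applying $\mathcal{F}$ and invoking $(4')$ produces $\mathcal{F}(S)=\delta\,\overline{\omega_0}^{-1}\omega R$, and comparison with $\mathcal{F}(S)=\delta\omega_0^{-1}R$ forces $\omega=\overline{\omega_0}/\omega_0$. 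Together with $|\omega_0|=1$ this is exactly $\omega_0^{-2}=\omega$, completing $(0)$.

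The main obstacle is this last phase bookkeeping: correctly combining the adjoint–rotation identity with $(4')$ to isolate $\omega$, and verifying $|\omega_0|=1$, which is not automatic but depends delicately on the specific traces read off the dual principal graph $\Gamma_n$ and on the identity $\delta_b^2=\delta_b+1$. The remaining assertions are then either formal consequences of the proportionality in $(0)$ or direct projection computations.
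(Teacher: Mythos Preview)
Your argument is correct and follows essentially the same route as the paper's: the one-dimensionality of the orthogonal complement of $FC_{2n,-}$ gives the proportionality $R=\omega_0\delta^{-1}\mathcal{F}(S)$, the trace computations on both principal graphs give $|\omega_0|=1$, and the identity $\mathcal{F}(x)^*=\mathcal{F}^{-1}(x^*)$ combined with $R^*=R$, $S^*=S$ yields $\omega_0^{-2}=\omega$. The only cosmetic difference is that you spell out $(4')$ explicitly before using it in the phase step, whereas the paper records $(3')$ and $(4')$ as direct consequences of $(0)$.
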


\begin{proof}
(1') $R^*=(\delta_b^{-1}P_d'-\delta_b^{-2}P_c')^*=R$.

(0) By the argument above, we have $\mathcal{F}(S)$ is a multiple of $R$.
While
$$||\mathcal{F}(S)||_2^2=tr(S*S)=tr(f_{2n})=\delta_a^2\delta_b^n \quad\text{and}$$
$$||R||_2^2=tr(R^*R)=\delta_b^{-2}tr(P_c')+\delta_b^{-4}tr(P_d')=\delta_b^{-2}\delta_b^{n-1}+\delta_b^{-4}\delta_b^{n}=\delta_b^{n-2}=\delta^{-2}||\mathcal{F}(S)||_2^2.$$
So $R=\omega_0\delta^{-1}\mathcal{F}(S)$, for some phase $\omega_0$, i.e. $\omega_0\in\mathbb{C}$ and $|\omega_0|=1$.

Note that
$$(\mathcal{F}(R))^*=\mathcal{F}^{-1}(R^*)=\mathcal{F}^{-1}(R).$$
So $$(\omega_0\delta^{-1}\mathcal{F}^2(S))^*=(\mathcal{F}(R))^*=\mathcal{F}^{-1}(R)=\omega_0\delta^{-1}(S).$$
Then $$\omega_0\rho(S)=(\omega_0S)^*=\overline{\omega_0}S.$$
Recall that $\rho(S)=\omega S$.
Thus $\omega_0^{-2}=\omega$.

(2') $R+\delta_b^{-2}g_{2n}=P_d'$ is a projection.

(3') and (4') follows from (0).

\end{proof}

By the embedding theorem, we hope to solve $(S,R,\omega_0)$ in the graph planar algebra, such that $(S,R,\omega_0)$ satisfies the propositions (0)(1)(2)(3)(4)(1')(2')(3')(4') listed in Proposition(\ref{relation of S})(\ref{relation of R}). In this case, there is no essential difference to solve it in the graph planar algebra of the principal graph or the dual principal graph. But for computations, we may avoid a factor $\frac{1}{2}$ in the graph planar algebra of the dual principal graph. The factor $\frac{1}{2}$ comes from the symmetry of $c_0,d_0$ and $c_{2n},d_{2n}$ in the principal graph. Now let us describe the refined dual principal graph of $\mathscr{B}$.

\begin{lemma}\label{refined dual principal graph}
The refined principal graph of $\mathscr{B}$ with respect to the biprojection $e_\mathcal{P}$ is

$$\grb{principalgraph23}.$$

For computations, let us adjust the refined principal graph and relabel its the vertices as
$$\grc{principalgraph24},$$
where the marked vertex is $b_1$. For convenience, we assume that $a_{4n}=a_0$.

Then its dimension vector $\lambda'$ is given by

$\lambda'(a_{2k-1})=\lambda'(a_{4n-2k+1})=\delta_b^k$, for $1\leq k\leq n$;

$\lambda'(b_{2k-1})=\lambda'(b_{4n-2k+1})=\delta_b^{k-1}$, for $1\leq k\leq n$;

$\lambda'(a_{2k})=\lambda'(a_{4n-2k})=\delta_a\delta_b^k$, for $0\leq k\leq n$;

$\lambda'(h_{2k-1})=\lambda'(h_{4n-2k+2})=\delta_b^{k-1}$, for $1\leq k\leq n$;

$\lambda'(h_{2k})=\lambda'(h_{4n-2k+1})=\delta_b^{k}$, for $1\leq k\leq n$.
\end{lemma}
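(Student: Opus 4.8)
The plan is to mirror the computation in the proof of Lemma~\ref{refined principal graph}, now working on the shaded side. The key input is that $\mathscr{B}_{2n-1,+}=FC_{2n-1,+}$, established while computing the refined principal graph; as noted in the previous lemma this also gives $\mathscr{B}_{2n-1,-}=FC_{2n-1,-}$. Hence the Bratteli diagram of $\mathscr{B}_{m,-}\subset\mathscr{B}_{m+1,-}'\subset\mathscr{B}_{m+1,-}$ agrees with that of the Fuss--Catalan subalgebra $FC$ up to depth $2n-1$, so the refined dual principal graph of $\mathscr{B}$ coincides with the refined dual principal graph of $FC$ below the critical depth. The refined dual principal graph of $FC$ is read off from the middle patterns of its minimal projections \cite{BisJonFC}: each vertex $a_j$, $b_j$ or $h_j$ with $j<2n$ is identified with the minimal projection of the appropriate $FC_{\cdot,-}$ or $FC_{\cdot,-}'$ carrying a specific $baab$-type middle pattern, and its dimension is the product of $\delta_a$'s and $\delta_b$'s dictated by that pattern. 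This produces directly the listed values $\lambda'(a_{2k-1})=\delta_b^{k}$, $\lambda'(b_{2k-1})=\delta_b^{k-1}$, $\lambda'(a_{2k})=\delta_a\delta_b^{k}$, $\lambda'(h_{2k-1})=\delta_b^{k-1}$ and $\lambda'(h_{2k})=\delta_b^{k}$ for the vertices in the lower range.

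Next I would treat the neighbourhood of depth $2n$, where the refinement departs from $FC$. By the dual principal graph $\Gamma_n$ computed above, the minimal projection $g_{2n}$ of $FC_{2n,-}$ splits into the two minimal projections $P_c',P_d'$ of $\mathscr{B}_{2n,-}$ with $tr(P_c')=\delta_b^{n}$ and $tr(P_d')=\delta_b^{n-1}$; these give the two vertices at which the two symmetric arms of the graph close up. To locate the new $\mathcal{P}$-coloured ($h$) vertices and the remaining top vertices I would apply the parameter relations of Proposition~\ref{dab} together with the Perron--Frobenius bounds used in Lemma~\ref{refined principal graph}: a vertex $v$ adjacent to an already known vertex $u$ satisfies $\lambda'(v)\geq\delta_a^{-1}\lambda'(u)$ or $\lambda'(v)\geq\delta_b^{-1}\lambda'(u)$, while the defect $\delta_a\lambda'(u)-\sum\lambda'(t(\varepsilon))$ (resp. with $\delta_b$) measures exactly how much dimension a new neighbour may carry. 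Comparing these two estimates forces each new vertex to be unique and pins down its dimension, exactly as in the unshaded case; this yields the top $h$-vertices and completes the two arms, giving the full graph of the statement after the relabelling of its vertices, with the convention $a_{4n}=a_0$.

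Finally, once the underlying $(\mathcal{M},\mathcal{P},\mathcal{N})$-coloured graph is determined, its dimension vector is unique by Corollary~\ref{unique dimension vector} (equivalently by the uniqueness of the Markov trace in Proposition~\ref{unique markov trace}, the graph being connected). It therefore suffices to check that the displayed function $\lambda'$ satisfies the parameter relations of Proposition~\ref{dab} at every vertex, which is a routine verification using $\delta_b^2=\delta_b+1$ and $\delta_a^2=2$; this simultaneously confirms both the shape and the dimension vector. The palindromic symmetry $\lambda'(a_{2k-1})=\lambda'(a_{4n-2k+1})$, and likewise for $b$ and $h$, reflects the two arms of the fish arising from $FC$ middle patterns with and without the leading $bb$ block, so the two halves of the computation are identical up to this reflection.

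The main obstacle will be the uniqueness step at the critical depth: proving that no further $\mathcal{P}$-coloured vertex can appear beyond the forced $h$-vertices, and that the two arms close up precisely at $P_c'$ and $P_d'$ rather than producing spurious branching. This is controlled by the Perron--Frobenius inequalities above, but it requires careful bookkeeping to match the refined picture with the already-established dual principal graph $\Gamma_n$ and to fix the correct relabelling, since an error in the identification would propagate through the palindromic symmetry to both arms.
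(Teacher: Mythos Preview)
Your proposal is correct and follows essentially the same route as the paper's proof: start from the Fuss--Catalan refined dual principal graph up to depth $2n$ (using $\mathscr{B}_{2n,-}=FC_{2n,-}\oplus\mathbb{C}(R)$ and the splitting of $g_{2n}$ into $P_c',P_d'$), then pin down the remaining $\mathcal{P}$-coloured vertices beyond depth $2n$ by comparing the defect $\delta_b\lambda'(u)-\sum\lambda'(t(\varepsilon))$ against the Perron--Frobenius lower bound $\lambda'(v)\geq\delta_b^{-1}\lambda'(u)$, and finally invoke Corollary~\ref{unique dimension vector} together with the vertical symmetry of the graph to reduce the dimension computation to one arm, where the values are read off from middle patterns. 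The paper carries out the critical-depth bookkeeping explicitly with auxiliary labels $v_9,\ldots,v_{12}$ before relabelling, but the argument is otherwise identical to what you outline.
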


\begin{proof}
The proof is similar to that of Lemma \ref{refined principal graph}.

We have known that $\mathscr{B}_{2n,-}=FC_{2n,-}\oplus\mathbb{C}(R)$, where $\mathbb{C}(R)$ is the one dimensional vector space generated by the totally uncappable element $R$.
So we obtain the refined principal graph up to depth $2n$ as mentioned in the statement.

For the vertices $v_9,v_{10}$ as marked in the statement, we have $\lambda'(v_9)=\delta_b\lambda'(v_2)=\delta_b\delta_b^{n-1}=\delta_b^{n}$, $\lambda'(v_{10})=\delta_b^{-1}\lambda'(v_5)=\delta_b^{-1}\delta_b^{n-1}=\delta_b^{n-2}$.

Then $\delta_b\lambda'(v_1)-\lambda(v_9)=\delta_b\delta_b^{n}-\delta_b^{n}=\delta_b^{n-1}$. So $v_1$ is adjacent to a new $\mathcal{P}$ coloured vertex, denoted by $v_{11}$. Then $\lambda'(v_{11})\leq \delta_b^{n-1}$. On the other hand $\lambda'(v_{11})\geq\delta_b^{-1}\lambda'(v_1)=\delta_b^{n-1}$. So $v_{11}$ is the unique new $\mathcal{P}$ coloured vertex adjacent to $v_{1}$ and $\lambda'(v_{11})=\delta_b^{n-1}$. Then $\delta_b\lambda'(v_{11})=\lambda'(v_1)$ implies $v_8$ is not adjacent to $v_{11}$. And the $\mathcal{N}$ coloured vertex adjacent to $v_{11}$ has to be $v_7$.

Moreover $\delta_b\lambda'(v_5)-\lambda(v_{10})=\delta_b\delta_b^{n-1}-\delta_b^{n-2}=\delta_b^{n-1}$. So $v_1$ is adjacent to a new $\mathcal{P}$ coloured vertex, denoted by $v_{12}$. Then $\lambda'(v_{12})\leq \delta_b^{n-1}$. On the other hand $\lambda'(v_{12})\geq\delta_b^{-1}\lambda'(v_5)=\delta_b^{n-2}>\frac{1}{2}\delta_b^{n-1}$. So $v_{12}$ is the unique new $\mathcal{P}$ coloured vertex adjacent to $v_{5}$ and $\lambda'(v_{12})=\delta_b^{n-1}$. Then $\delta_b\lambda'(v_{12})-\lambda'(v_5)=\lambda'(v_8)$ implies $v_8$ is adjacent to $v_{11}$. And the $\mathcal{N}$ coloured vertex adjacent to $v_{11}$ has to be $v_7$.

While $\delta_a\lambda(v_7)=2\delta_b^{n-1}=\lambda'(v_{11})+\lambda'(v_{12})$, $\delta_b\lambda'(v_8)=\delta_b^{n-1}=\lambda'(v_{12})$. So there is no new $\mathcal{P}$ coloured vertices.
Then we have the unique possible refined dual principal of $\mathscr{B}$.

Now we adjust the refined principal graph and relabel its the vertices as
$$\grc{principalgraph24},$$
where the marked vertex is $b_1$.

The graph is vertically symmetrical, by Corollary \ref{unique dimension vector}, the dimension vector $\lambda'$ is also symmetric. So we only need to compute the value of $\lambda'$ for the upper half vertices.

The vertex $a_1$ corresponds to the minimal projection of $FC_{2,-}$ with middle pattern $bb$. So $\lambda'(a_1)=\delta_b$;
The vertex $a_{2k-1}$ corresponds to the minimal projection of $FC_{2k-2,-}$ with middle pattern $baab~\cdots~baab$, $k-1$ copies of $baab$, for $2\leq k\leq n$.
So $\lambda'(a_{2k-1})=\delta_b^k$, for $2\leq k\leq n$;

The vertex $b_1$ is the marked vertex. So $\lambda'(b_1)=1$;
The vertex $b_{2k-1}$ corresponds to the minimal projection of $FC_{2k-1,-}$ with middle pattern $baab~\cdots~baab~bb$, $k-1$ copies of $baab$, for $2\leq k\leq n$.
So $\lambda'(b_{2k-1})=\delta_b^{k-1}$, for $2\leq k\leq n$;

The vertex $a_0$ corresponds to the minimal projection of $FC_{3,-}$ with middle pattern $bbba$. So $\lambda'(a_0)=\delta_a$;
The vertex $a_{2k}$ corresponds to the minimal projection of $FC_{2k-1,-}$ with middle pattern $baab~\cdots~baab~ba$, $k-1$ copies of $baab$, for $1\leq k\leq n$.
So $\lambda'(a_{2k})=\delta_a\delta_b^k$, for $1\leq k\leq n$;

The vertex $h_1$ corresponds to the minimal projection of $FC'_{3,-}$ with middle pattern $bbb$. So $\lambda'(h_1)=1$;
The vertex $h_{2k-1}$ corresponds to the minimal projection of $FC'_{2k-1,-}$ with middle pattern $baab~\cdots~baab~baa$, $k-2$ copies of $baab$, for $2\leq k\leq n$.
So $\lambda'(a_{2k})=\delta_b^{k-1}$, for $2\leq k\leq n$;

The vertex $h_1$ corresponds to the minimal projection of $FC'_{3,-}$ with middle pattern $bbb$. So $\lambda'(h_1)=1$;
The vertex $h_{2k}$ corresponds to the minimal projection of $FC'_{2k-1,-}$ with middle pattern $baab~\cdots~baab~b$, $k-1$ copies of $baab$, for $1\leq k\leq n$.
So $\lambda'(a_{2k})=\delta_b^{k-1}$, for $1\leq k\leq n$;
\end{proof}

We hope to embed $\mathscr{B}_{m,\mp}$ in the graph planar algebra of the dual principal graph, so we will consider the biprojection $e_{\mathcal{P}_1}=\delta_a^{-1}\delta_b\mathcal{F}(e_\mathcal{P})$ in $\mathscr{B}_{2,-}$.
\begin{definition}
Let us define $\mathscr{G}=\mathscr{G}_{m\pm}$ to be the graph planar algebra of the dual principal graph $\Gamma_n$.
Then $\mathscr{B}_{m,\mp}$ is naturally embedded in $\mathscr{G}_{m,\pm}$. Let $p_1\in\mathscr{G}_{2,+}$ be the image of $e_{\mathcal{P}_1}$.
Then the planar subalgebra $FC(\delta_b,\delta_a)_{m,\pm}$ of $\mathscr{G}$ generated by $p_1$ is identical to the image of $FC(\delta_a,\delta_b)_{m,\mp}$.
The images of $f_{2n}$ and $g_{2n}$ are still denoted by $f_{2n}$ and $g_{2n}$.
\end{definition}

\begin{notation}
Note that the dual principal graph $\Gamma$ is simply laced. A path $\varepsilon$ of $\Gamma_n$ is determined by $s(\varepsilon)$ and $t(\varepsilon)$, so we may use
$$[s(\varepsilon_1)t(\varepsilon_1)s(\varepsilon_3)t(\varepsilon_3)\cdots s(\varepsilon_{2m-1})t(\varepsilon_{2m-1})]$$
to express a loop $[\varepsilon_1\varepsilon_2^*\varepsilon_3\varepsilon_4^*\cdots\varepsilon_{2m-1}\varepsilon_{2m}^*]$ in $\mathscr{G}_{2m,+}$,
similarly for loops in $\mathscr{G}_{2m,-}$.
\end{notation}

\begin{proposition}\label{loop p}
$$p_1=\sum_{k=1}^n [a_{2k-1}a_{2k-2}a_{2k-1}a_{2k-2}]
+[a_{4n-2k+1}a_{4n-2k+2}a_{4n-2k+1}a_{4n-2k+2}]$$
$$+[a_{2k-1}a_{2k}a_{2k-1}a_{2k}]
+[a_{4n-2k+1}a_{4n-2k}a_{4n-2k+1}a_{4n-2k}]$$
$$+[a_{2k-1}a_{2k}b_{2k-1}a_{2k}]
+[a_{4n-2k+1}a_{4n-2k}b_{4n-2k+1}a_{4n-2k}]$$
$$+[b_{2k-1}a_{2k}b_{2k-1}a_{2k}]
+[b_{4n-2k+1}a_{4n-2k}b_{4n-2k+1}a_{4n-2k}]$$
$$+[b_{2k-1}a_{2k}a_{2k-1}a_{2k}]
+[b_{4n-2k+1}a_{4n-2k}a_{4n-2k+1}a_{4n-2k}].$$
\end{proposition}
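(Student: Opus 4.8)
The plan is to read the claim directly off Theorem \ref{embedding P} combined with the explicit refined dual principal graph of Lemma \ref{refined dual principal graph}, and then to simplify the resulting coefficients. Since $p_1 = \phi(e_{\mathcal{P}_1})$ is the image of the biprojection under the embedding into $\mathscr{G}$, Theorem \ref{embedding P} identifies it with the Jones projection of Proposition \ref{ploop}, now written for $\Gamma_n$ and its refinement. Concretely, $p_1$ should be a sum over the $\mathcal{P}$-coloured vertices $h$ of the refined graph, each contributing the loops obtained from all ordered pairs of edges of $\Gamma_n$ that factor through $h$, weighted by $\delta_a^{-1}\sqrt{\lambda'(s(\varepsilon))\lambda'(s(\varepsilon'))/(\lambda'(t(\varepsilon))\lambda'(t(\varepsilon')))}$ (the dual of Proposition \ref{ploop}, with the inner colour now $a$ and the dimension vector $\lambda'$ from Lemma \ref{refined dual principal graph}).

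First I would record the local adjacency at each $\mathcal{P}$-coloured vertex. Using Proposition \ref{dab} and the identity $\delta_b^2 = \delta_b + 1$, one checks that every $h$-vertex has exactly one $\mathcal{N}$-coloured neighbour: for $h_{2k}$ this is $a_{2k}$ (since $\delta_a\lambda'(h_{2k}) = \delta_a\delta_b^k = \lambda'(a_{2k})$), and for $h_{2k-1}$ it is $a_{2k-2}$. Its $\mathcal{M}$-coloured neighbours are then forced: $h_{2k}$ is adjacent to both $a_{2k-1}$ and $b_{2k-1}$ (indeed $\lambda'(a_{2k-1}) + \lambda'(b_{2k-1}) = \delta_b^k + \delta_b^{k-1} = \delta_b^{k+1} = \delta_b\lambda'(h_{2k})$), while $h_{2k-1}$ is adjacent only to $a_{2k-1}$. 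Because the $\mathcal{N}$-neighbour is unique, in every loop through a fixed $h$ the two $\mathcal{N}$-coloured slots are occupied by the same vertex, which is exactly why the loops in the statement carry a repeated even-indexed entry.

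Next I would enumerate. The vertex $h_{2k}$ contributes the four loops coming from the ordered pairs of its $\mathcal{M}$-neighbours $\{a_{2k-1}, b_{2k-1}\}$ routed through $a_{2k}$, namely $[a_{2k-1}a_{2k}a_{2k-1}a_{2k}]$, $[a_{2k-1}a_{2k}b_{2k-1}a_{2k}]$, $[b_{2k-1}a_{2k}b_{2k-1}a_{2k}]$ and $[b_{2k-1}a_{2k}a_{2k-1}a_{2k}]$; the vertex $h_{2k-1}$ contributes the single loop $[a_{2k-1}a_{2k-2}a_{2k-1}a_{2k-2}]$ through its unique pair. The vertical symmetry of $\Gamma_n$ (Corollary \ref{unique dimension vector}) then produces the mirrored families indexed by $4n - 2k + 1$ and $4n - 2k$. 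Letting $k$ run from $1$ to $n$, and using the convention $a_{4n} = a_0$, accounts for all of the $\mathcal{P}$-coloured vertices and hence all terms of $p_1$.

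Finally I would evaluate the coefficients. For a loop through $h$ whose $\mathcal{N}$-slot is the vertex $a_{2j}$, the weight is $\delta_a^{-1}\lambda'(a_{2j})/\lambda'(h)$; since $\lambda'(a_{2k}) = \delta_a\lambda'(h_{2k})$ and $\lambda'(a_{2k-2}) = \delta_a\lambda'(h_{2k-1})$, this ratio collapses to $\delta_a^{-1}\cdot\delta_a = 1$ for every loop, which is why the formula carries no explicit coefficients. The main obstacle is bookkeeping rather than ideas: one must correctly transport Proposition \ref{ploop} through the $\mathcal{N}\leftrightarrow\mathcal{M}$ and $\delta_a\leftrightarrow\delta_b$ swap of the dual picture, pin down the adjacency of the $h$-vertices from the figure of Lemma \ref{refined dual principal graph}, and confirm that the ten listed families are exactly the loops of $\Gamma_n$ factoring through a $\mathcal{P}$-coloured vertex, with none omitted or double-counted.
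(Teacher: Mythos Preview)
Your proposal is correct and follows exactly the paper's approach: the paper's proof is the single line ``It follows from Theorem \ref{embedding P} and Lemma \ref{refined dual principal graph},'' and you have supplied precisely the routine verification that this line suppresses---reading off the $\mathcal{P}$-coloured vertices and their neighbours from the refined dual graph, transporting Proposition \ref{ploop} through the $\delta_a\leftrightarrow\delta_b$ swap of the dual picture, and checking that each coefficient collapses to $1$ because every $h$-vertex has a unique $\mathcal{N}$-coloured neighbour with $\lambda'$-value exactly $\delta_a$ times its own.
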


\begin{proof}
It follows from Theorem \ref{embedding P} and Lemma \ref{refined dual principal graph}.
\end{proof}

\begin{definition}
Note that $\mathscr{G}_{0,+}$ is abelian. Let us define $A_{k},~B_{k}$ to be the minimal projections corresponding to the vertices $a_{2k-1},~b_{2k-1}$ respectively, for $1\leq k\leq 2n$.

Note that $\mathscr{G}_{1,+}$ is abelian. Let us decompose $A_{k}$ into minimal projections $A_{k}^-$ and $A_{k}^+$ as follows,
$A_{k}^-=[a_{2k-1}a_{2k-2}]$, $A_{2n-k}^-=[a_{4n-2k+1}a_{4n-2k+2}]$, $A_{k}^+=[a_{2k-1}a_{2k}]$, $A_{2n-k}^+=[a_{4n-2k+1}a_{4n-2k}]$, for $1\leq k\leq n$.

Let us define $H_{2k-1}$, $H_{4n-2k+1}$, $H_{2k}$ and $H_{4n-2k}$ in $\mathscr{G}_{1,-}$, for $1\leq k\leq n$, as follows
$$H_{2k-1}=[a_{2k-2}a_{2k-1}], H_{2k}=[a_{2k}a_{2k-1}]+[a_{2k}b_{2k-1}],$$
$$H_{4n-2k+2}=[a_{4n-2k+2}a_{4n-2k+1}],H_{4n-2k+1}=[a_{4n-2k}a_{4n-2k+1}]+[a_{4n-2k}b_{4n-2k+1}].$$
\end{definition}

\begin{proposition}\label{ABg commute}
\mbox{}

$A_{k},~B_{k}$ are in the center of $\mathscr{G}_{2n,+}$.

$g_{2n}$ commutes with $A_{k}^+$ and $A_{k}^-$.
\end{proposition}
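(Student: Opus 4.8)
The plan is to compute everything inside the graph planar algebra $\mathscr{G}$ using the explicit product and inclusion formulas of Section \ref{graphpa}, together with the matrix-unit description of $\mathscr{G}_{2n,+}$. Write a length-$4n$ loop in $\mathscr{G}_{2n,+}$ as $E_{\alpha\beta}$, the loop that runs out along a length-$2n$ path $\alpha$ and returns along a length-$2n$ path $\beta$, where $\alpha,\beta$ share source $u=s(\alpha)=s(\beta)\in\mathcal{V}_+$ and midpoint $w=t(\alpha)=t(\beta)$. The product formula gives $E_{\alpha\beta}E_{\gamma\delta}=\delta_{\beta,\gamma}E_{\alpha\delta}$, so $\mathscr{G}_{2n,+}\cong\bigoplus_{(u,w)}\mathrm{Mat}_{d(u,w)}(\mathbb{C})$, where $d(u,w)$ is the number of length-$2n$ paths from $u$ to $w$; the minimal central projections are $z_{u,w}=\sum_{\alpha:u\to w}E_{\alpha\alpha}$. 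Under the inclusion $\mathscr{G}_{0,+}\hookrightarrow\mathscr{G}_{2n,+}$ a vertex $v$ (a minimal projection of the abelian $\mathscr{G}_{0,+}$) maps to the sum of all loops based at $v$, i.e. to $\sum_{w}\big(\sum_{\gamma,\delta:v\to w}E_{\gamma\delta}\big)$, which in the block $(v,w)$ is the rank-one element on the vector $\Sigma_{v,w}=\sum_{\alpha:v\to w}\alpha$.

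For the first statement I would extract the criterion that the image of $v$ is central in $\mathscr{G}_{2n,+}$ if and only if $d(v,w)\le 1$ for every $w$: a rank-one element of $\mathrm{Mat}_{d(v,w)}(\mathbb{C})$ is central precisely when that matrix block is one-dimensional, so centrality is equivalent to \emph{uniqueness of length-$2n$ walks} out of $v$ to each endpoint. I would then verify this uniqueness for $v=a_{2k-1}$ and $v=b_{2k-1}$ directly from the explicit shape of $\Gamma_n$ read off in Lemma \ref{refined dual principal graph}. The graph is an essentially linear spine (the chain $\cdots a_{2k-2}\,a_{2k-1}\,a_{2k}\cdots$) carrying the odd whiskers $b_{2k-1}$, with the only branching concentrated near the head vertex $v_7$; from $a_{2k-1}$ or $b_{2k-1}$ a walk of length $2n$ to a fixed endpoint is forced at every step, the sole place where two competing walks could arise being a detour around the head, which a length-and-parity count rules out. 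Carrying out this finite combinatorial check in $\Gamma_n$ is the crux of the argument, and the part I expect to be the main obstacle, since one must rule out every walk that could wind through the branching region of the fish.

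For the second statement, note that $A_k^{+}=[a_{2k-1}a_{2k}]$ and $A_k^{-}=[a_{2k-1}a_{2k-2}]$ are the two minimal (central) projections of $\mathscr{G}_{1,+}$ lying in the blocks with midpoints $a_{2k}$ and $a_{2k-2}$, so their images in $\mathscr{G}_{2n,+}$ are orthogonal projections with $A_k^{+}+A_k^{-}=A_k$, supported respectively on loops whose central edge is $a_{2k-1}\!-\!a_{2k}$ and $a_{2k-1}\!-\!a_{2k-2}$. Since $A_k$ is central by the first part, $g_{2n}$ commutes with $A_k$, and it suffices to show $A_k^{+}g_{2n}A_k^{-}=A_k^{-}g_{2n}A_k^{+}=0$. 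By the product rule a loop surviving such a sandwich would have to carry an $a_{2k}$-type central edge on one side and an $a_{2k-2}$-type central edge on the other; but $g_{2n}$ is a single Fuss--Catalan minimal projection, supported only on loops compatible with one fixed middle pattern, and in particular cannot connect a loop through $a_{2k}$ to one through $a_{2k-2}$. Hence both off-diagonal terms vanish and $[g_{2n},A_k^{\pm}]=0$. The bookkeeping that $g_{2n}$'s loop support respects this central-edge splitting is again a finite check governed by the middle patterns computed in Lemma \ref{refined dual principal graph}, and is the only delicate point in this half.
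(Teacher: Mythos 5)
Your argument for the first statement is built on a misidentification of the inclusion $\mathscr{G}_{0,+}\hookrightarrow\mathscr{G}_{2n,+}$. The inclusion tangle of the graph planar algebra inserts pairs $\varepsilon\varepsilon^*$, so a vertex $v$ maps to $\sum_{\alpha}E_{\alpha\alpha}$, the sum over length-$2n$ paths $\alpha$ with $s(\alpha)=v$ of the \emph{diagonal} loops only, i.e.\ to the sum of the block identities $\sum_{w}1_{(v,w)}$ — not to the rank-one element $\sum_{\gamma,\delta}E_{\gamma\delta}$ supported on $\Sigma_{v,w}$. (One can see this without any formula: the unital images of all vertices must sum to the identity of $\mathscr{G}_{2n,+}$, which is the sum of the diagonal matrix units; equivalently, $\mathscr{G}_{0,+}=\mathcal{B}_0'\cap\mathcal{B}_0=Z(\mathcal{B}_0)\subset\mathcal{B}_0$ while $\mathscr{G}_{2n,+}\subset\mathcal{B}_0'$, so the commutation is automatic — this is why the paper dismisses the first statement as obvious.) Consequently your criterion ``central iff $d(v,w)\le 1$'' is not the right one, and the walk-uniqueness you propose to verify is in fact \emph{false} for $\Gamma_n$: by Lemma \ref{refined dual principal graph} the vertices $a_0,a_1,\dots,a_{4n-1}$ form a $4n$-cycle (the lemma sets $a_{4n}=a_0$), so from $a_{2k-1}$ there are already two distinct length-$2n$ walks to the antipodal vertex, one around each side of the cycle, before counting backtracking walks. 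Carried out as written, your ``finite combinatorial check'' would lead you to conclude that $A_k$ is \emph{not} central.

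For the second statement, the reduction to $A_k^{+}g_{2n}A_k^{-}=A_k^{-}g_{2n}A_k^{+}=0$ is sound, but the justification — that a Fuss--Catalan minimal projection ``cannot connect a loop through $a_{2k}$ to one through $a_{2k-2}$'' because it has a fixed middle pattern — asserts exactly what must be proved. The middle pattern describes the through-strings of an abstract Fuss--Catalan diagram; which length-$4n$ loops of $\Gamma_n$ carry nonzero coefficient in its image under the embedding is precisely the nontrivial content here, and it is not automatic: a generic element of the embedded $FC_{2n}$ (e.g.\ $e_1$) does \emph{not} commute with $A_k^{\pm}$. The paper's proof reduces the claim to the single generator $p_1$ and uses its explicit loop expansion (Proposition \ref{loop p}, obtained from Theorem \ref{embedding P} and the refined dual principal graph), in which every loop has both of its edges at the base point equal — either both $a_{2k-1}\!-\!a_{2k-2}$ or both $a_{2k-1}\!-\!a_{2k}$ — so that $p_1A_k^{\pm}=A_k^{\pm}p_1$ follows from the product formula. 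Some such concrete input about the loop support of $p_1$ (or of $g_{2n}$ itself) is unavoidable, and your proposal supplies none.
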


\begin{proof}
The first statement is obvious.
For the second statement, it is enough to check $p_1$ commutes with $A_k^{+}$ and $A_k^{-}$.
By Proposition \ref{loop p}, for $1\leq k \leq n$, we have
$$p_1A_k^+=[a_{2k-1}a_{2k}a_{2k}a_{2k-1}a_{2k}]=A_k^+p_1;$$
similarly for other cases.
\end{proof}

\subsection{The potential generater}

Now we sketch the idea of solving the generator $R$ in $\mathscr{G}$. Essentially we are considering the length $8n$ loops on the refined dual principal graph. Observe that if a loop contains a word $h_ka_kh_k$, for $1\leq k\leq 2n$,
then the vertex $a_k$ could be replaced by an a/b-colour cap, because $a_k$ is the unique $\mathcal{N}/\mathcal{M}$ coloured vertex adjacent to $h_k$. The coefficient of such a loop in the totally uncappable element $R$ has to be 0.
Therefore for a loop $l$ with non-zero coefficient in $R$, if it goes to the right, then it will not return until passing the vertex $a_{2n}$. Among these loops, there is exactly one in $A_1^-\mathscr{G}_{2n,+}A_1^+$, that tells the initial condition of $R$.
By proposition(2'), $A_kRA_k$ is determined by $A_k^-RA_k^+$. By proposition(3'), $B_kR$ is determined by $A_k^+RA_k^+$. By proposition(4'), $A_{k+1}^-RA_{k+1}$ is determined by $(A_k+B_k)R(A_k+B_k)$.
That means $R$ could be computed inductively by the initial condition.

\begin{definition}
Let us define $F\in \mathscr{G}_{2,+}$ to be the image of $\mathcal{F}(id-e_\mathcal{P})$, i.e. $F=\delta e_1-\delta_a\delta_b^{-1}p_1$.
\end{definition}
It is easy to check that $F*F=F$, $P*F=F*P=0$, and $F*g_{2n}=g_{2n}*F=0.$

Note that $e_1$ and $p_1$ could be expressed as linear sums of loops, then we have.

$$F=\sum_{1\leq k\leq n}\delta_a\delta_b^{-0.5}([a_{2k-1}a_{2k-2}a_{2k-1}a_{2k}]+[a_{4n-2k+1}a_{4n-2k+2}a_{4n-2k+1}a_{4n-2k}]$$
$$+[a_{2k-1}a_{2k}a_{2k-1}a_{2k-2}]+[a_{4n-2k+1}a_{4n-2k}a_{4n-2k+1}a_{4n-2k+2}])$$
$$+\delta_a\delta_b^{-2}([a_{2k-1}a_{2k}a_{2k-1}a_{2k}]
+[a_{4n-2k+1}a_{4n-2k}a_{4n-2k+1}a_{4n-2k}])$$
$$-\delta_a\delta_b^{-1}([a_{2k-1}a_{2k}b_{2k-1}a_{2k}]
+[a_{4n-2k+1}a_{4n-2k}b_{4n-2k+1}a_{4n-2k}])$$
$$+\delta_a([b_{2k-1}a_{2k}b_{2k-1}a_{2k}]
+[b_{4n-2k+1}a_{4n-2k}b_{4n-2k+1}a_{4n-2k}])$$
$$-\delta_a\delta_b^{-1}([b_{2k-1}a_{2k}a_{2k-1}a_{2k}]
+[b_{4n-2k+1}a_{4n-2k}a_{4n-2k+1}a_{4n-2k}]).$$

We may compute $F*l$ for a loop $l\in\mathscr{G}_{2n,+}$ by the following fact,
$$[y_0y_1y_2y_3]*[x_0x_1\cdots x_{4n-1}]=\delta_{y_1x_1}\delta_{y_2x_0}\delta_{y_3x_{4n-1}} \sqrt{\frac{\lambda'(y_2)\lambda'(y_2)}{\lambda'(y_1)\lambda'(y_3)}} [y_0x_1\cdots x_{4n-1}].$$

\begin{proposition}\label{F inv}
For a loop $l\in\mathscr{G}_{2n,+}$ and $1\leq k\leq 2n$, we have

$F*l=0$, when $l=A_{k}^-l A_{k}^-$,

$F*l=l$, when $l=A_{k}^-l A_{k}^+$ or $l=A_k^+ l A_k^-$;

$F*l=(A_{k}^++B_{k})(F*l)(A_{k}^++B_{k})$, when $l=(A_{k}^++B_{k})l(A_{k}^++B_{k})$.

So $\mathscr{G}_{2n,+}$ is separated into $6n$ invariant subspaces under the the action $F*$.
Moreover the set of length $4n$ loops, as a basis of $\mathscr{G}_{2n,+}$, is separated into $6n$ subsets simultaneously.

\end{proposition}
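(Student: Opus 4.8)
The plan is to prove the three displayed identities by a single mechanical computation, using the explicit expansion of $F$ into length-$4$ loops together with the displayed product rule for $[y_0y_1y_2y_3]*l$. The key structural remark is that $F*$ only disturbs the leftmost vertex of a loop: for $l=[x_0x_1\cdots x_{4n-1}]$, a summand $[y_0y_1y_2y_3]$ of $F$ contributes to $F*l$ exactly when $y_1=x_1$, $y_2=x_0$ and $y_3=x_{4n-1}$, and it then returns $[y_0x_1\cdots x_{4n-1}]$, i.e. it replaces $x_0$ by $y_0$ weighted by the Perron--Frobenius factor $\sqrt{\lambda'(y_2)^2/(\lambda'(y_1)\lambda'(y_3))}$. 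So for each prescribed left-boundary configuration of $l$ I would simply read off from the list of loops in $F$ which summands can match, and record the effect on $x_0$.

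For the first identity, $l=A_k^-lA_k^-$ forces $x_1=x_{4n-1}=a_{2k-2}$; the only summand of $F$ with $y_1=a_{2k-2}$ is $[a_{2k-1}a_{2k-2}a_{2k-1}a_{2k}]$, whose $y_3=a_{2k}\neq a_{2k-2}$, so nothing matches and $F*l=0$. For the second, $l=A_k^-lA_k^+$ forces $x_1=a_{2k-2}$, $x_{4n-1}=a_{2k}$, and the unique matching summand is again $[a_{2k-1}a_{2k-2}a_{2k-1}a_{2k}]$, with $y_0=a_{2k-1}=x_0$; its coefficient $\delta_a\delta_b^{-1/2}$ times the weight $\sqrt{\lambda'(a_{2k-1})^2/(\lambda'(a_{2k-2})\lambda'(a_{2k}))}=\delta_b^{1/2}\delta_a^{-1}$, evaluated from the dimension vector of Lemma \ref{refined dual principal graph}, equals $1$, so $F*l=l$; the case $l=A_k^+lA_k^-$ is identical using the symmetric summand $[a_{2k-1}a_{2k}a_{2k-1}a_{2k-2}]$. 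For the third, $l=(A_k^++B_k)l(A_k^++B_k)$ forces $x_1=x_{4n-1}=a_{2k}$, and the summands of $F$ with $y_1=y_3=a_{2k}$ are precisely $[a_{2k-1}a_{2k}a_{2k-1}a_{2k}]$, $[a_{2k-1}a_{2k}b_{2k-1}a_{2k}]$, $[b_{2k-1}a_{2k}b_{2k-1}a_{2k}]$ and $[b_{2k-1}a_{2k}a_{2k-1}a_{2k}]$; each has $y_0\in\{a_{2k-1},b_{2k-1}\}$, so the new leftmost vertex stays in $\{a_{2k-1},b_{2k-1}\}$ while $x_1,x_{4n-1}$ are untouched, which is exactly the assertion that $F*l$ lies in $(A_k^++B_k)\mathscr{G}_{2n,+}(A_k^++B_k)$.

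For the final statement I would then partition the length-$4n$ loops according to the two edges they occupy at the left boundary, which necessarily share the common vertex $x_0$. Since $F*$ fixes $x_1$ and $x_{4n-1}$ (hence the $\mathcal{P}$-endpoints of both left edges) and can only move $x_0$ within its cluster $\{a_{2k-1},b_{2k-1}\}$, the three identities exhaust every configuration and describe the action on each: the diagonal piece $A_k^-\mathscr{G}_{2n,+}A_k^-$ is annihilated, the off-diagonal piece $A_k^-\mathscr{G}_{2n,+}A_k^+\oplus A_k^+\mathscr{G}_{2n,+}A_k^-$ is fixed, and the corner $(A_k^++B_k)\mathscr{G}_{2n,+}(A_k^++B_k)$ is preserved and mixed internally. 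These are three $F*$-invariant subspaces for each of the $2n$ clusters $k$, giving the asserted $6n$ subspaces, and the same grouping partitions the loop basis.

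The part I expect to be delicate is not the idea but the bookkeeping: one must check that for each boundary configuration the summands listed above are genuinely the \emph{only} ones that match, with no stray contribution coming from the neighbouring clusters $k\pm1$ that share the $\mathcal{P}$-vertex $a_{2k}$, and that in the second identity the loop coefficient and the Perron--Frobenius weight cancel to exactly $1$ (rather than to $\delta_a^{\pm1}$ or $\delta_b^{\pm1/2}$). Equal care is needed at the folded end where $a_{4n}=a_0$, to confirm the uniform computation survives there so that the count is exactly $6n$ and not more.
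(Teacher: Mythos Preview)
Your proposal is correct and is exactly the direct computation the paper leaves to the reader: the paper's own proof consists of the single sentence ``It could be checked by a direct computation.'' You have carried this out explicitly, using the product rule for $[y_0y_1y_2y_3]*l$ and the expansion of $F$ into loops, which is precisely the intended method. Your cautionary remarks about stray contributions from neighbouring clusters and the folded end are appropriate, and the checks do go through by the symmetry of the graph noted in Lemma~\ref{refined dual principal graph}.
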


\begin{proof}
It could be checked by a direct computation.
\end{proof}

\begin{definition}
Let $\beta:A_k^+\mathscr{G}_{2n,+}A_k^+\rightarrow B_k\mathscr{G}_{2n,+}, ~\forall ~ 1\leq k\leq 2n$ be the linear extension of
$$\beta([a_{2k-1}a_{2k-2}x_3x_4\cdots x_{2n-1} a_{2k-2}])=[b_{2k-1}a_{2k-2}x_3x_4\cdots x_{2n-1} a_{2k-2}],$$
for any loop $[a_{2k-1}a_{2k-2}x_3x_4\cdots x_{2n-1} a_{2k-2}]\in A_k^+\mathscr{G}_{2n,+}A_k^+$.
\end{definition}

\begin{proposition}\label{F ++}
The linear map $\beta:A_k^+\mathscr{G}_{2n,+}A_k^+\rightarrow B_k\mathscr{G}_{2n,+}$ is a *-isomorphism.
Moreover
$$F*x=\delta_b^{-2}x-\delta_b^{-1}\beta(x),     ~\forall~ x\in A_k^+\mathscr{G}_{2n,+}A_k^+;$$
$$F*y=\delta_b^{-1}y-\delta_b^{-2}\beta^{-1}(y),   ~\forall ~ y\in B_k\mathscr{G}_{2n,+};$$
$$\beta(A_k^+ g_{2n})=B_k g_{2n}.$$
\end{proposition}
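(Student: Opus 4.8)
The plan is to dispatch the four assertions in order, but to obtain the decisive last identity $\beta(A_k^+g_{2n})=B_kg_{2n}$ \emph{from} the two coproduct formulas rather than by a direct loop computation. A direct attack would require the explicit loop expansion of the Fuss–Catalan projection $g_{2n}$, which I want to avoid; instead I will feed the global relation $F*g_{2n}=0$ through the central block structure.

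First I would show $\beta$ is a $*$-isomorphism. The one structural input, read from Lemma \ref{refined dual principal graph} (or from the loop expression of $F$), is that the black vertex $b_{2k-1}$ has $a_{2k}$ as its \emph{only} white neighbour, whereas $a_{2k-1}$ is adjacent to both $a_{2k}$ and $a_{2k-2}$. Since $B_k$ is central (Proposition \ref{ABg commute}), $B_k\mathscr{G}_{2n,+}=B_k\mathscr{G}_{2n,+}B_k$, so every basis loop there is $[b_{2k-1}a_{2k}x_2\cdots x_{4n-2}a_{2k}]$, and $\beta$ is precisely the relabelling of the initial vertex $a_{2k-1}\mapsto b_{2k-1}$ applied to the loops of $A_k^+\mathscr{G}_{2n,+}A_k^+$. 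This is a bijection of the two loop bases, hence a linear isomorphism; because the adjoint only reverses the interior vertices while fixing the base point, and the loop product matches only interior vertices, $\beta$ intertwines $*$ and multiplication.

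Next I would prove the two formulas by applying the coproduct rule $[y_0y_1y_2y_3]*[x_0\cdots x_{4n-1}]=\delta_{y_1x_1}\delta_{y_2x_0}\delta_{y_3x_{4n-1}}\,\lambda'(y_2)(\lambda'(y_1)\lambda'(y_3))^{-1/2}[y_0x_1\cdots x_{4n-1}]$ to the explicit expansion of $F$. For $x\in A_k^+\mathscr{G}_{2n,+}A_k^+$ one has $x_1=x_{4n-1}=a_{2k}$, $x_0=a_{2k-1}$, so among the ten families of terms in $F$ only $\delta_a\delta_b^{-2}[a_{2k-1}a_{2k}a_{2k-1}a_{2k}]$ and $-\delta_a\delta_b^{-1}[b_{2k-1}a_{2k}a_{2k-1}a_{2k}]$ survive the Kronecker matching, each carrying the scalar $\lambda'(a_{2k-1})/\lambda'(a_{2k})=\delta_b^k/(\delta_a\delta_b^k)=\delta_a^{-1}$, which gives $F*x=\delta_b^{-2}x-\delta_b^{-1}\beta(x)$. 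The case $y\in B_k\mathscr{G}_{2n,+}$ is identical: now the surviving terms are $\delta_a[b_{2k-1}a_{2k}b_{2k-1}a_{2k}]$ and $-\delta_a\delta_b^{-1}[a_{2k-1}a_{2k}b_{2k-1}a_{2k}]$, each with scalar $\lambda'(b_{2k-1})/\lambda'(a_{2k})=(\delta_a\delta_b)^{-1}$, yielding $F*y=\delta_b^{-1}y-\delta_b^{-2}\beta^{-1}(y)$. The only care needed is at the folded indices (near $k=1,n,2n$), where one checks that the mirror terms of $F$ produce no extra matches; the dimension values of Lemma \ref{refined dual principal graph} guarantee this.

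Finally, and this is the crux, I would deduce $\beta(A_k^+g_{2n})=B_kg_{2n}$ from $F*g_{2n}=0$. Since $g_{2n}$ commutes with the orthogonal projections $A_k^+,A_k^-$ and with the central $A_k,B_k$ (Proposition \ref{ABg commute}), it splits with no $A_k^+\!-\!A_k^-$ cross terms as
$$g_{2n}=\sum_{k}\big(A_k^+g_{2n}A_k^++A_k^-g_{2n}A_k^-+B_kg_{2n}\big).$$
Applying $F*$, using $F*(A_k^-g_{2n}A_k^-)=0$ from Proposition \ref{F inv} together with the two formulas above, gives
$$0=F*g_{2n}=\sum_k\Big(\delta_b^{-2}A_k^+g_{2n}-\delta_b^{-1}\beta(A_k^+g_{2n})+\delta_b^{-1}B_kg_{2n}-\delta_b^{-2}\beta^{-1}(B_kg_{2n})\Big).$$
Here $A_{k'}^+\mathscr{G}_{2n,+}A_{k'}^+\subset A_{k'}\mathscr{G}_{2n,+}$ and the $B_k\mathscr{G}_{2n,+}$ lie in pairwise orthogonal central summands, so cutting by the central projection $B_k$ isolates its component and forces $-\delta_b^{-1}\beta(A_k^+g_{2n})+\delta_b^{-1}B_kg_{2n}=0$, which is the claim. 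The main obstacle is thus conceptual rather than computational: one must resist expanding $\beta(A_k^+g_{2n})$ in loops and instead recognise that, by Proposition \ref{F inv}, $F*$ is block-diagonal for the central decomposition, so the single global relation $F*g_{2n}=0$ splits into the desired local identities indexed by $k$.
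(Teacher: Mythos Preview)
Your proof is correct and follows essentially the same approach as the paper: the first three assertions are verified by direct loop computation, and the key identity $\beta(A_k^+g_{2n})=B_kg_{2n}$ is extracted from the global relation $F*g_{2n}=0$ using the block structure of Proposition~\ref{F inv} together with the two coproduct formulas. The only cosmetic difference is that the paper localizes immediately to the single block $(A_k^++B_k)g_{2n}(A_k^++B_k)$ via Proposition~\ref{F inv}, whereas you sum over all $k$ and then cut by the central projection $B_k$; the content is identical.
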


\begin{proof}
It is obvious that $\beta$ is a *-isomorphism.
It is easy to check the first two formulas by a direct computation.
For the third formula, by Proposition \ref{F inv} and the fact that $F*g_{2n}=0$, we have
$$F*((A_k^++B_k)g_{2n}(A_k^++B_k))=0.$$
By Proposition \ref{ABg commute}, we have
$$F*(A_k^+g_{2n})=-F*(B_k g_{2n}).$$
Then
$$\delta_b^{-2}(A_k^+g_{2n})-\delta_b^{-1}\beta(A_k^+g_{2n})=-\delta_b^{-1}(B_k g_{2n})+\delta_b^{-2}\beta^{-1}(B_k g_{2n}).$$
So
$$\beta(A_k^+ g_{2n})=B_k g_{2n}.$$
\end{proof}

\begin{lemma}\label{--=0}
\mbox{}

$A_{k}^-R A_{k}^-=0$, for $1\leq k\leq 2n$.

$H_i\mathcal{F}(R)H_i=0$, for $1\leq i \leq 4n$.

\end{lemma}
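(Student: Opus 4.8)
The plan is to deduce both vanishing statements from the total uncappability of $R$ and of $\mathcal{F}(R)$ (Proposition \ref{relation of R}(3') and Proposition \ref{relation of S}(3)), by exhibiting each two-sided compression as something that factors through a \emph{single-colour} cup. Recall from the Remark following the definition of total uncappability that an uncappable vector becomes $0$ as soon as it is capped by a single $a$- or $b$-colour string in any position; dually, $R$ is orthogonal to the range of every single-colour cup, and the same holds for $\mathcal{F}(R)$. Since $A_k^-$ and $H_i$ are self-adjoint projections, the trace identity $tr(X^*A_k^-RA_k^-)=\langle R,\,A_k^-XA_k^-\rangle$ shows that $A_k^-RA_k^-=0$ is equivalent to $R\perp A_k^-\mathscr{G}_{2n,+}A_k^-$, and likewise $H_i\mathcal{F}(R)H_i=0$ is equivalent to $\mathcal{F}(R)\perp H_i\mathscr{G}_{2n,-}H_i$. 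Thus it suffices to prove that every element of $A_k^-\mathscr{G}_{2n,+}A_k^-$, respectively $H_i\mathscr{G}_{2n,-}H_i$, lies in the range of a single-colour cup.

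I would first treat the minus case, $H_i\mathcal{F}(R)H_i=0$, since the adjacencies are most transparent there. Writing a general element of $\mathscr{G}_{2n,-}$ as a linear combination of length-$4n$ loops on the refined graph of Lemma \ref{refined dual principal graph}, the two-sided compression by $H_i$ pins the outer segment of each loop to the path through the $\mathcal{P}$-coloured vertex $h_i$ and its source, the $\mathcal{N}$-coloured vertex $a_i$. The crucial local fact, read off from the graph, is that $a_i$ is the unique $\mathcal{N}$-coloured vertex adjacent to $h_i$ (for $H_{2k-1}=[a_{2k-2}a_{2k-1}]$ the source is $a_{2k-2}$; for $H_{2k}=[a_{2k}a_{2k-1}]+[a_{2k}b_{2k-1}]$ the source is $a_{2k}$). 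Hence each such loop must contain the word $h_i\,a_i\,h_i$, and because $a_i$ is forced this word is precisely the image of $h_i$ under the single-colour cup turning back at $a_i$. Therefore $H_iYH_i$ lies in the range of that cup for every $Y$, and total uncappability of $\mathcal{F}(R)$ gives $\langle\mathcal{F}(R),\,H_iYH_i\rangle=0$, i.e. $H_i\mathcal{F}(R)H_i=0$.

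The plus case, $A_k^-RA_k^-=0$, is the exact analogue on the other side. Here $A_k^-=[a_{2k-1}a_{2k-2}]$ pins the outer strand to the $\mathcal{M}$--$\mathcal{P}$--$\mathcal{N}$ segment through $h_{2k-1}$, and the relevant uniqueness is that $a_{2k-1}$ is the unique $\mathcal{M}$-coloured vertex adjacent to $h_{2k-1}$ (visible from the single $\mathcal{M}$-term in $H_{2k-1}=[a_{2k-2}a_{2k-1}]$). Compressing by $A_k^-$ on both sides thus forces a turnback at $a_{2k-1}$, so every element of $A_k^-\mathscr{G}_{2n,+}A_k^-$ factors through the corresponding single-colour cup, and total uncappability of $R$ yields $A_k^-RA_k^-=0$. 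Note this is consistent with $A_k^-RA_k^+$ being nonzero (the initial condition), since there the top and bottom strands pass through the \emph{distinct} midpoints $h_{2k-1}$ and $h_{2k}$ and no single-colour turnback is created. One could alternatively route the plus case through the operator $F*$ of Proposition \ref{F inv}, which already annihilates the $A_k^-\cdot A_k^-$ sector, but the cup description is cleaner and handles both cases uniformly.

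The step I expect to be the main obstacle is the diagrammatic bookkeeping behind ``compression $=$ single-colour cup'': one must verify that pinning both the top and the bottom of the outer strand to the \emph{same} edge-projection genuinely produces a single $a$- or $b$-colour turnback, rather than a full (thick) Temperley--Lieb cup or a sum over several intermediate vertices. This is exactly where the uniqueness of the adjacent vertex enters, and it must be checked case by case: on the $\mathcal{M}$-side for $A_k^-$, and on the $\mathcal{N}$-side for the $H_i$ (one cannot turn back on the $\mathcal{M}$-side at $h_{2k}$, which has two $\mathcal{M}$-neighbours $a_{2k-1},b_{2k-1}$), together with the mirror-image vertices arising from the vertical symmetry of $\Gamma_n$. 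Once the local adjacencies of Lemma \ref{refined dual principal graph} are made explicit, every case collapses to the single orthogonality argument above.
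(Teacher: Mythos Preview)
Your argument is correct and in fact formalizes the heuristic the paper itself states in the paragraph immediately preceding this lemma (a loop containing the word $h_k a_k h_k$ has a forced single-colour turnback, so its coefficient in a totally uncappable element must vanish). The paper's actual proof, however, bypasses the cup-factorization entirely. For the first statement it uses precisely the alternative you mention and then set aside: total uncappability gives $R=F*R$, and Proposition~\ref{F inv} says $F*$ annihilates the sector $A_k^-\mathscr{G}_{2n,+}A_k^-$, so $A_k^-RA_k^-=F*(A_k^-RA_k^-)=0$ in one line. For the second statement the paper observes that $\sum_{i=1}^{4n} H_i\,\mathcal{F}(R)\,H_i=\mathcal{F}(Rp_1)=0$ (total uncappability gives $Rp_1=0$), and since the $H_i$ are pairwise orthogonal projections each summand vanishes separately. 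Your route has the merit of deriving both vanishings from a single geometric principle, but the step you rightly flag as the main obstacle---making ``compression equals single-colour cup'' precise in the graph planar algebra of the \emph{unrefined} bipartite graph, where there is no native single-colour cup, only $e_1$ and $p_1$---is genuine work, essentially amounting to re-deriving the content of Proposition~\ref{F inv} and Proposition~\ref{ploop}; the paper's two algebraic shortcuts avoid it entirely.
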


\begin{proof}
By proposition (3'), $R$ is totally uncappable, so $R=F*R$.
Then by Proposition \ref{F inv}, we have
$$(A_{k}^-RA_{k}^-)=F*(A_{k}^-RA_{k}^-)=0.$$
Note that
$$\sum_{1\leq i\leq 4n}H_i\mathcal{F}(R)H_i=\mathcal{F}(Rp_1)=0,$$
so
$$H_i\mathcal{F}(R)H_i=0,\quad \forall~ 1\leq i \leq 4n.$$
\end{proof}

\begin{lemma}\label{ini U}
\mbox{}

$A_1^-RA_1^+$ is a multiple of the loop $[a_1a_{4n}a_{4n-1}\cdots a_2]$, denote by $L_1$;

$A_{2n}^-RA_{2n}^+$ is a multiple of the loop $[a_{4n-1}a_{0}a_{1}\cdots a_{4n-2}]$, denote by $L_2$.
\end{lemma}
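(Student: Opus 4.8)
The plan is to show that the total uncappability of $R$, once translated into the loop combinatorics of the refined dual principal graph $\Gamma_n$, forces the component $A_1^-RA_1^+$ to be concentrated on the single loop that winds once around $\Gamma_n$. First I would pin down the support. Since $\mathscr{G}_{1,+}$ is abelian and $A_1^-=[a_1a_0]$, $A_1^+=[a_1a_2]$ are the minimal projections cutting $\mathscr{G}_{2n,+}$ down to loops whose first edge is $a_1\to a_0$ and whose closing edge is $a_2\to a_1$, the corner $A_1^-\mathscr{G}_{2n,+}A_1^+$ is spanned by the length $4n$ loops $[a_1a_0x_3x_4\cdots x_{4n-1}a_2]$, i.e.\ by closed walks on $\Gamma_n$ leaving $a_1$ towards $a_0$ and re-entering $a_1$ from $a_2$. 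Thus $A_1^-RA_1^+$ is a linear combination of such walks, and it suffices to prove that every walk in this family except $L_1$ has coefficient $0$ in $R$.

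Next I would read off from Lemma \ref{refined dual principal graph} (via the description of the $H_i$) that the relevant part of $\Gamma_n$ is a $4n$-cycle $a_0a_1\cdots a_{4n-1}$ carrying the $\mathcal{P}$-coloured vertices at the even positions, with the leaves $b_{2k-1}$ hanging off the even vertices. I would then use uncappability to exclude every walk that turns back. On the one hand, $R$ is totally uncappable by Proposition \ref{relation of R}(3$'$), so $R=F*R$, and by Proposition \ref{F inv} together with Lemma \ref{--=0} the operator $F*$ annihilates every loop in $A_k^-\mathscr{G}_{2n,+}A_k^-$; on the other hand $\mathcal{F}(R)$ is totally uncappable as well, and the capping observation (each intermediate vertex being the unique pure vertex adjacent to its neighbouring $\mathcal{P}$-vertex, so that a subword $h_ka_kh_k$ may be replaced by an $a/b$-colour cap) kills every walk of $\mathcal{F}(R)$ containing such a subword. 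Matching these vanishing conditions against the three kinds of turn-back — a backtrack at a pure vertex, a backtrack at a $\mathcal{P}$-vertex, and a detour into a leaf — shows that a walk with nonzero coefficient in $A_1^-RA_1^+$ must be a nonbacktracking closed walk on the cycle.

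I would then conclude by a rigidity argument. A nonbacktracking closed walk of length $4n$ on a $4n$-cycle that starts in a fixed direction traverses the cycle exactly once; starting $a_1\to a_0$ and forced to close by $a_2\to a_1$, it must run $a_0\to a_{4n-1}\to a_{4n-2}\to\cdots\to a_2$, which is precisely $L_1$. Hence $A_1^-RA_1^+$ is a multiple of $L_1$. Statement (2) then follows verbatim after replacing the two edges by their mirror images $A_{2n}^-=[a_{4n-1}a_0]$ and $A_{2n}^+=[a_{4n-1}a_{4n-2}]$ and invoking the vertical symmetry of $\Gamma_n$ recorded in Lemma \ref{refined dual principal graph}, which produces the mirror loop $L_2=[a_{4n-1}a_0a_1\cdots a_{4n-2}]$.

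The main obstacle is the second step: converting the single phrase ``totally uncappable'' into the precise statement that every backtrack and every leaf detour has zero coefficient in $R$. This requires assigning to each local turn-back on the cycle-with-leaves exactly one of the available vanishing conditions, namely $A_k^-RA_k^-=0$, $H_i\mathcal{F}(R)H_i=0$, or the unique-adjacency capping of $h_ka_kh_k$, and checking that these cover all turn-backs at every vertex. The bookkeeping is delicate precisely because the conditions live partly in $\mathscr{G}_{2n,+}$ (for $R$) and partly in $\mathscr{G}_{2n,-}$ (for $\mathcal{F}(R)$), so one must pass through the Fourier transform to be sure that no turning walk escapes all of them.
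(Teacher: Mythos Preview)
Your proposal is correct and follows the same line as the paper: pin down the support, then use the uncappability constraints to force the loop to walk once around the cycle without backtracking. The paper carries out your middle step inductively, determining $x_3,x_4,\ldots$ one at a time by alternating the two conditions from Lemma~\ref{--=0}, and for the last half simply observes there is a unique length-$(2n-2)$ path from $a_{2n}$ back to $a_2$.

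The one point on which the paper is sharper than your write-up is exactly the obstacle you flag. The conditions $A_k^-RA_k^-=0$ and $H_i\mathcal{F}(R)H_i=0$ as stated in Lemma~\ref{--=0} constrain only the endpoints of a loop (respectively of its one-click rotate), not interior vertices. The paper bridges this gap not with (3$'$) alone but by invoking proposition~(4$'$), $\rho(R)=\omega R$: if $l$ has nonzero coefficient in $R$, then every rotate $\mathcal{F}^{-2k}(l)$ has nonzero coefficient in $R$ and every $\mathcal{F}^{-2k+1}(l)$ has nonzero coefficient in $\mathcal{F}(R)$, so Lemma~\ref{--=0} applies at each step of the walk. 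Your approach via (3$'$) alone also works, since total uncappability is rotation-invariant and hence Lemma~\ref{--=0} holds verbatim for every $\rho^j(R)$; but you should say this explicitly rather than leaving the Fourier bookkeeping as an unresolved obstacle. Invoking (4$'$) is the shortest way through.
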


\begin{proof}
Note that the coefficient of a loop $l=[a_1a_{4n}x_3x_4\cdots x_{4n-1}a_2]$ in $A_1^-RA_1^+$ is the same as the coefficient of $l$ in $R$.
If it is non-zero, then
by Proposition(4'), the coefficient of $\mathcal{F}^{-2k+1}(l)$ in $\mathcal{F}(R)$ is non-zero and the coefficient of $\mathcal{F}^{-2k}(l)$ in $R$ is non-zero. Applying Lemma \ref{--=0}, we have
$$H_1\mathcal{F}(R)H_1=0 \Rightarrow x_3=a_{4n-1};$$
$$a_{4n-1}^-R a_{4n-1}^-=0 \Rightarrow x_4=a_{4n-2};$$
and for $k=1,2,\cdots, n$,
$$H_{4n+3-2k}\mathcal{F}(R)H_{4n+3-2k}=0\Rightarrow x_{2k+1}=a_{4n+1-2k};$$
$$a_{4n+1-2k}^-R a_{4n+1-2k}^-=0 \Rightarrow x_{2k+2}=a_{4n-2k}.$$
For the rest part, there is only one length $2n-2$ path from $a_{2n}$ to $a_2$.
So
$$l=[a_1a_{4n}a_{4n-1}\cdots a_{2}]=L_1.$$
That means $A_1^-RA_1^+$ is a multiple of $L_1$.
Similarly $A_{2n}^-RA_{2n}^+$ is a multiple of $L_2$
\end{proof}

\begin{definition}
For a loop $l=[x_0x_1\cdots x_{4n-1}]$ and $0\leq k\leq 4n-1$,
the point $x_k$ is said to be a cusp point of the loop $l$, if $x_{k-1}=x_{k+1}$, where $x_{-1}=x_{2n-1}, ~x_{2n}=x_0$.
Otherwise it is said to be a flat point.
\end{definition}

Similar to the proof of Lemma \ref{ini U}, Lemma \ref{--=0} tells that if the coefficient of a loop $l=[x_0x_1\cdots x_{4n-1}]$ in $R$ is non-zero, then the cusp point $x_k$ of $l$ has to be $b_{2i-1}$ or $a_{2i-1}$. In this case, we have $x_{k-1}=x_{k+1}=a_{2i}$, when $1\leq i \leq n$; Or $x_{k-1}=x_{k+1}=a_{2i-2}$, when $n+1\leq i \leq 2n$.
Furthermore if $l$ passes the point $a_0$, then it is unique up to rotation and the adjoint operation $*$;
If $l$ does not pass the point $a_0$, then it is determined by its first point and cusp points.
So we may simplify the expression of a loop by its first point and cusp points.
To compute the product of two loops, we also need the middle point $x_{2n}$.
Then the loop is separated into two length $2n$ paths from the first point to the middle point.
We may label the two paths by the first point, cusp points and the middle point.

\begin{definition}
For a loop $l=[x_0 x_1\cdots x_{4n-1}]$, $x_k\neq a_0,~\forall~ 0\leq k \leq 4n-1$, we assume that $y_1,y_2,\cdots,y_i$ are the cusp points from $x_1$ to $x_{2n-1}$ and $z_1,z_2,\cdots,z_j$ are the cusp points from $x_{2n+1}$ to $x_{4n-1}$.
Then we use $[x_0y_1y_2\cdots y_ix_{2n}\rangle$ to express the first length $2n$ path of $l$,
$\langle x_{2n} z_1z_2\cdots z_jx_{0}]$ to express the second length $2n$ path of $l$ and
$[x_0y_1y_2\cdots y_ix_{2n}\rangle\langle x_{2n} z_1z_2\cdots z_jx_{0}]$
to express the loop $l$. Furthermore
if $x_{2n}$ is a cusp point, then it could be simplified as $[x_0y_1y_2\cdots y_i x_{2n} z_1z_2\cdots z_jx_{0}]$;
if $x_{2n}$ is a flat point, then it could be simplified as $[x_0y_1y_2\cdots y_i z_1z_2\cdots z_jx_{0}]$.


\end{definition}

\begin{definition}
Suppose $R\in\mathscr{G}_{2n,+}$ is a solution of Proposition \ref{relation of R}, i.e. $R$ satisfies the following propositions,

(1') $R^*=R$;

(2') $R+\delta_b^{-2}g_{2n}$ is a projection;

(3') $R$ is totally uncappable;

(4') $\rho(R)=\omega R$,  for some $\omega\in\mathbb{C}$ satisfying $|\omega|=1$.

Let us define $U_k,~P_k,~Q_k,~\overline{P}_k,~\overline{Q}_k,~R_k$ for $1\leq k\leq 2n$ as follows

$U_k=A_{k}^-RA_{k}^+$;



$\overline{P}_k=\delta_b^{-2}(R-\delta_b^{-1}g_{2n})B_{k}$;

$\overline{Q}_k=\delta_b^{-1}(R+\delta_b^{-2}g_{2n})B_{k}$;

$P_k=-\delta_b^{-1}\beta^{-1}(\overline{P}_k)$;

$Q_k=-\delta_b^{-1}\beta^{-1}(\overline{Q}_k)$;

$R_k=(A_{k}^++B_{k})R(A_{k}^++B_{k})$.
\end{definition}

The following lemma is the key to solve the generator $R$ in the graph planar algebra $\mathscr{G}_{2n,+}$.

\begin{lemma}\label{key lemma}
\mbox{}

$U_1=\mu_1\delta_b^{-1.5}L_1$, for some $\mu_1\in\mathbb{C}$, $|\mu_1|=1$;

$U_{2n}=\mu_2\delta_b{-1.5}L_2$, for some $\mu_2\in\mathbb{C}$, $|\mu_2|=1$;

$P_k=U_k^* U_k$, for $1\leq k\leq 2n;$

$R_k=\delta_b^4 F*P_k*F$, for $1\leq k\leq 2n;$

$U_{k+1}=\omega^{-1}\rho(R_k+U_k)$ and
$U_{2n-k}=\omega^{-1}\rho(R_{2n-k+1}+U_{2n-k+1})$, for $1\leq k\leq n-1;$

$R=\sum_{1\leq k\leq 2n}U_k+U_k^*+R_k$.

So $R$ is uniquely determined by $\mu_1,~\mu_2~and~\omega$.
\end{lemma}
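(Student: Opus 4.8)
The plan is to turn the four defining properties (1$'$)--(4$'$) of $R$ into a block recursion with respect to the mutually orthogonal projections $A_k^-,A_k^+,B_k$, and to read off each of the six asserted identities as a statement about one type of block. The skeleton is the decomposition of $R$ furnished by the central projections $A_k,B_k$ of Proposition \ref{ABg commute}: since these are central in $\mathscr{G}_{2n,+}$ and pairwise orthogonal, $R=\sum_k A_kRA_k+\sum_k B_kRB_k$. Writing $A_k=A_k^-+A_k^+$ and invoking Lemma \ref{--=0} to kill $A_k^-RA_k^-$, together with $R^*=R$ to identify $A_k^+RA_k^-=U_k^*$, and the orthogonality $A_k^+B_k=0$ (distinct base vertices) to collapse the cross terms, one obtains $A_kRA_k+B_kRB_k=U_k+U_k^*+R_k$. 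Summing over $k$ yields the assembly identity $R=\sum_k U_k+U_k^*+R_k$, so I would dispose of that formula first.

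Next I would establish the two block relations that move information between the $A_k^+$ corner and the $B_k$ corner. The engine is total uncappability, $F*R=R$: feeding the block structure of $R$ into the formulas of Proposition \ref{F ++} for the action of $F*$ on $A_k^+\mathscr{G}_{2n,+}A_k^+$ and on $B_k\mathscr{G}_{2n,+}$, and matching the $A_k^+$- and $B_k$-components of $F*R=R$, forces the intertwining $B_kRB_k=-\delta_b\,\beta(A_k^+RA_k^+)$ between the two diagonal blocks. I would then bring in the projection relation (2$'$): restricting $(R+\delta_b^{-2}g_{2n})^2=R+\delta_b^{-2}g_{2n}$ to the $2\times 2$ corner spanned by $A_k^-,A_k^+$ gives $U_kU_k^*=\delta_b^{-3}(A_k^-g_{2n})$ from the $(-,-)$ entry and $U_k^*U_k=t-t^2$, with $t=A_k^+RA_k^++\delta_b^{-2}A_k^+g_{2n}$, from the $(+,+)$ entry. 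Combining this with the intertwining relation, the definition $P_k=-\delta_b^{-1}\beta^{-1}(\overline P_k)$, and $\beta(A_k^+g_{2n})=B_kg_{2n}$ from Proposition \ref{F ++}, identifies $P_k=U_k^*U_k$. The reconstruction $R_k=\delta_b^4\,F*P_k*F$ is then the inverse move: applying $F*$ on both sides recovers $A_k^+RA_k^++B_kRB_k=R_k$ from $P_k$, the scalar $\delta_b^4$ being pinned by the coefficients in Proposition \ref{F ++}.

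The base cases come from Lemma \ref{ini U}, which ties $U_1$ to a scalar multiple of the single loop $L_1$; the $(-,-)$ corner relation $U_1U_1^*=\delta_b^{-3}(A_1^-g_{2n})$ then forces the modulus to be $\delta_b^{-1.5}$ and $|\mu_1|=1$, and symmetrically for $U_{2n}$ and $L_2$ at the right-hand end. Finally, the recursion is where property (4$'$), $\rho(R)=\omega R$, enters: since $\rho$ is the two-click rotation, I would show that on the refined dual principal graph it advances the base index $k\mapsto k+1$, carrying the ``level-$k$ forward data'' $R_k+U_k$ exactly onto the next off-diagonal block $\omega\,U_{k+1}$; equating the level-$(k+1)$ component of $\rho(R)=\omega R$ gives $U_{k+1}=\omega^{-1}\rho(R_k+U_k)$, and the mirror statement runs the recursion inward from the right end. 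Together these identities compute $U_1\to P_1\to R_1\to U_2\to\cdots$ and its mirror, so the assembly formula determines $R$ from $\mu_1,\mu_2,\omega$; the two halves meet in the middle, which is exactly why both $\mu_1$ and $\mu_2$ are needed.

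I expect the main obstacle to be the bookkeeping of which loops, equivalently which blocks, survive: showing that $\rho$ shifts the base index cleanly by one and that no ``backward'' loops appear relies on the precise adjacency pattern of Lemma \ref{refined dual principal graph} and on repeatedly using Lemma \ref{--=0} to force cusp points onto the $a_{2i-1}/b_{2i-1}$ vertices, as in the proof of Lemma \ref{ini U}. The delicate computational point is verifying that the $F*R=R$ intertwining and the projection relation (2$'$) are simultaneously compatible in each corner, so that $P_k=U_k^*U_k$ holds exactly rather than only up to a correction supported on $g_{2n}$.
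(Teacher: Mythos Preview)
Your proposal is correct and follows essentially the same route as the paper's proof: the block decomposition via the central projections $A_k,B_k$, the intertwining $A_k^+RA_k^+=-\delta_b^{-1}\beta^{-1}(B_kRB_k)$ from $F*R=R$, the $2\times2$ projection analysis of $A_k(R+\delta_b^{-2}g_{2n})$ to extract $P_k=U_k^*U_k$ and the modulus of $U_1$, the reconstruction $R_k=\delta_b^4\,F*P_k*F$, and the $\rho$-shift for the recursion are exactly the paper's ingredients, only presented in a different order (the paper does the assembly formula last). The one point you flag as delicate---that $P_k=U_k^*U_k$ holds on the nose---is handled in the paper by first showing directly that the $(+,+)$ block equals $\delta_b^2P_k$ (via $A_k^+g_{2n}=\delta_b^3P_k-\delta_b^2Q_k$ and $A_k^+RA_k^+=P_k+Q_k$) and that $\delta_b^3P_k$ is a projection, after which the partial isometry identity is immediate; and the $R_k$ reconstruction uses the right-sided relation $R*F=R$ (and $g_{2n}*F=0$) in addition to the left-sided one.
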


\begin{proof}
For $1\leq k\leq 2n$, by definition, we have
$$R B_k=-\delta_b^{-2}(\delta_b^{-1}g_{2n}-R)B_k+\delta_b^{-1}(R+\delta_b^{-2})B_k
=\overline{P}_k+\overline{Q}_k.$$
By proposition (2')(3'), we have $R+\delta_b^{-2}g_{2n}$ is a subprojection of $g_{2n}$. Then
$$g_{2n}-(R+\delta_b^{-2}g_{2n})=\delta_b^{-1}g_{2n}-R$$
is a projection. So
$$\delta_b \overline{Q}_k=(R+\delta_b^{-2})B_k,~ -\delta_b^2\overline{P}_k=(\delta_b^{-1}g_{2n}-R)B_k$$
are projections, by Proposition \ref{ABg commute}.
Note that
$$R_k=(A_{k}^++B_{k})R(A_{k}^++B_{k})=
A_k^+RA_k^+ + B_k R B_k,$$
so $F*R_k=R_k$, by Proposition \ref{F inv}.
Furthermore by Proposition \ref{F ++}, we have
$$F*R_k
=\delta_b^{-2}A_k^+RA_k^+ -\delta_b^{-1}\beta(A_k^+RA_k^+) + \delta_b^{-1}B_k R B_k-\delta_B^{-2}\beta^{-1}(B_k R B_k).$$
Thus
$$A_k^+RA_k^+=\delta_b^{-2}A_k^+RA_k^+ -\delta_b^{-2}\beta^{-1}(B_k R B_k).$$
Then
$$A_k^+RA_k^+=-\delta_b^{-1}\beta^{-1}(B_k R B_k)
=-\delta_b^{-1}\beta^{-1}(\overline{P}_k+\overline{Q}_k)=P_k+Q_k.$$
By Proposition \ref{F ++}, we have
$$A_k^+g_{2n}=\beta^{-1}(B_k g_{2n})=\beta^{-1}(-\delta_b^{2}\overline{P}_k+\delta_b\overline{Q}_k)=\delta_b^{3}P_k - \delta_b^2Q_k,$$
and $\delta_b^{3}P_k$, $-\delta_b^2Q_k$ are projections.
Then
$$A_k^+ (R+\delta_b^{-2} g_{2n}) A_k^+=(P_k+Q_k)+(\delta_bP_k-Q_k)=\delta_b^2 P_k.$$
By Proposition(\ref{ABg commute})(\ref{--=0}) and proposition(1'), we have
$$\begin{bmatrix}
A_k^- (R+\delta_b^{-2} g_{2n}) A_k^-&A_k^- (R+\delta_b^{-2} g_{2n}) A_k^+\\
A_k^+ (R+\delta_b^{-2} g_{2n}) A_k^-&A_k^+ (R+\delta_b^{-2} g_{2n}) A_k^+\\
\end{bmatrix}
=\begin{bmatrix}
\delta_b^{-2}A_k^-g_{2n}&U_k\\
U_k^*&\delta_b^2 P_k\\
\end{bmatrix}$$
Recall that $R+\delta_b^{-2}g_{2n}$ is a projection, so $A_k(R+\delta_b^{-2}g_{2n})$ is a projection.
Then the matrix
$$\begin{bmatrix}
\delta_b^{-2}A_k^-g_{2n}&U_k\\
U_k^*&\delta_b^2 P_k\\
\end{bmatrix}$$
is a projection.
While $A_k^-g_{2n}$ and $\delta_b^3 P_1$ are projections, so $\delta_b^{1.5}U_k$ is a partial isometry from $\delta_b^3 P_1$ to $A_k^-g_{2n}$.
Then
$$(\delta_b^{1.5}U_k)^*(\delta_b^{1.5}U_k)=\delta_b^3 P_k; \quad
(\delta_b^{1.5}U_k)(\delta_b^{1.5}U_k)^*=A_k^-g_{2n}.
$$
Therefore $$U_k^*U_k=P_k ~\text{and}~ U_1U_1^*=\delta_b^{-3}A_1^-g_{2n}.$$
Observe that $[a_1a_{4n}a_{4n-1}\cdots a_{2n+2}a_{2n+1}a_{2n+2}\cdots a_{4n}]$ is a subprojection of $A_1^-g_{2n}$. So $A_1^-g_{2n}\neq0$. Then $U_1\neq 0$.
By Lemma \ref{ini U}, we have
$$U_1=\mu_1\delta_b^{-1.5}L_1, ~\text{for some}~ \mu_1\in\mathbb{C},~|\mu_1|=1;$$
Symmetrically
$$U_{2n}=\mu_2\delta_b^{-1.5}L_2, ~\text{for some}~ \mu_2\in\mathbb{C},~|\mu_2|=1;$$

Note that
$$B_k(x*F)=(B_kx)*F, ~\forall~ x\in\mathscr{G}_{2n,+},$$
so
$$\delta_b^2\overline{P}_k*F=(B_kR)*F-\delta_b(B_kg_{2n})*F=B_k(R*F)-\delta_bB_k(g_{2n}*F)=B_kR=\overline{P}_k+\overline{Q}_k.$$
Observe that
$$\beta^{-1}(y*F)=\beta^{-1}(y)*F, ~\forall~ y\in B_k\mathscr{G}_{2n,+},$$
so
$$\delta_b^2P_k*F=P_k+Q_k.$$
By Proposition \ref{F ++}, we have
$$\delta_b^2 F*P_k=P_k-\delta_b\beta(P_k)=P_k+\overline{P}_k.$$
So
$$\delta_b^4 F*P_k*F=\delta_b^2(P_k+\overline{P}_k)*F=P_k+Q_k+\overline{P}_k+\overline{Q}_k$$
$$=A_k^+ R A_k^+ +RB_k=(A_k^+ +B_k)R(A_k^+ + B_k)=R_k.$$

Note that $\rho$ induces an one onto one map from the loops of $\mathscr{G}_{2n,+} (A_k^+ +B_k)$ to loops of $A_{k+1}^-\mathscr{G}_{2n,+}A_{k+1}^+$, for $1\leq k\leq n-1$.
So
$$\rho(R (A_k^+ +B_k))=A_{k+1}^-\rho(R)A_{k+1}^+.$$
Then by proposition (4'), we have
$$\rho(R (A_k^+ +B_k))=\omega A_{k+1}^- RA_{k+1}^+.$$
While
$$R (A_k^+ +B_k)=(A_k^+ +B_k)R(A_k^+ +B_k)+A_k^- R(A_k^+)=R_k+U_k,$$
thus
$$U_{k+1}=\omega^{-1}\rho(R_k+U_k).$$
Symmetrically we have
$$U_{2n-k}=\omega^{-1}\rho(R_{2n-k+1}+U_{2n-k+1}).$$

Finally
$$R=\sum_{1\leq k\leq 2n} (A_{k}+B_{k})R(A_{k}+B_{k})
=\sum_{1\leq k\leq 2n} (A_{k}^-+A_{k}^++B_{k})R(A_{k}^-+A_{k}^++B_{k})
$$
$$
=\sum_{1\leq k\leq 2n}A_{k}^-RA_{k}^+ + A_{k}^+RA_{k}^- +(A_{k}^++B_{k})R(A_{k}^++B_{k})
=\sum_{1\leq k\leq 2n}U_k+U_k^*+R_k
$$

Given $\mu_1,~\mu_2~and~\omega$, $U_k,P_k,R_k$ could be obtained inductively.
So $R$ is uniquely determined by $\mu_1,~\mu_2~and~\omega$.
\end{proof}

\subsection{Solutions}
\begin{definition}
Based on Lemma \ref{key lemma}, for fixed $\mu_1,\mu_2,\omega\in\mathbb{C}$, $|\mu_1|=|\mu_2|=|\omega|=1$, let us construct the unique possible generator $R_{\mu_1\mu_2\omega}\in\mathscr{G}_{2n,+}$ inductively,

$U_1=\mu_1\delta_b^{-1.5}L_1$;

$U_{2n}=\mu_2\delta_b^{-1.5}L_2$;

$P_k=U_k^* U_k$, for $1\leq k\leq 2n;$

$R_k=\delta_b^4 F*P_k*F$, for $1\leq k\leq 2n;$

$U_{k+1}=\omega^{-1}\rho(R_k+U_k)$ and
$U_{2n-k}=\omega^{-1}\rho(R_{2n-k+1}+U_{2n-k+1})$, for $1\leq k\leq n-1;$

$R_{\mu_1\mu_2\omega}=\sum_{1\leq k\leq 2n}U_k+U_k^*+R_k$.
\end{definition}

We hope to check proposition(1')(2')(3')(4') for $R_{\mu_1\mu_2\omega}$.
Actually proposition(1')(2')(3') are satisfied, but not obvious.
Proposition(4') fails, when $n\geq4$.
We are going to compute the coefficients of loops in $R_{\mu_1\mu_2\omega}$.
If proposition(4') is satisfied, then their absolute values are determined by the coefficients of loops in $R_k$.

\begin{lemma}\label{pro3}
$R_{\mu_1\mu_2\omega}$ is totally uncappable.
\end{lemma}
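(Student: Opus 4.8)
The plan is to verify total uncappability in its two constituent families of conditions, namely the ones that Lemma \ref{--=0} extracted from uncappability in the a priori case, but now run as a \emph{sufficient} condition for the constructed element $R=R_{\mu_1\mu_2\omega}$. By the cusp-point analysis preceding the construction, together with the remark characterizing uncappability as vanishing under every a/b-colour cap, it suffices to prove the unshaded family $A_k^- R A_k^-=0$ for $1\le k\le 2n$ and the shaded family $H_i\mathcal{F}(R)H_i=0$ for $1\le i\le 4n$. Together these say that no loop with nonzero coefficient in $R$ has a cusp at a forbidden vertex (an even-indexed $a$, or $a_0$), which is exactly total uncappability. I emphasize that I will \emph{not} use the rotational covariance $\rho(R)=\omega R$ anywhere, since that is property (4'), which is precisely what fails for $n\ge 4$.

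For the unshaded family I would first show $F*R=R$ directly and termwise. Each summand $R_k=\delta_b^4\,F*P_k*F$ is fixed by left convolution because the coproduct is associative and $F*F=F$, so $F*R_k=\delta_b^4(F*F)*P_k*F=R_k$. Each $U_k$ lies in $A_k^-\mathscr{G}_{2n,+}A_k^+$ and each $U_k^*$ lies in $A_k^+\mathscr{G}_{2n,+}A_k^-$, so $F*U_k=U_k$ and $F*U_k^*=U_k^*$ by Proposition \ref{F inv}. Summing over $1\le k\le 2n$ gives $F*R=R$, and then Proposition \ref{F inv} forces $A_k^- R A_k^-=F*(A_k^- R A_k^-)=0$ for every $k$, which is the unshaded family.

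For the shaded family I would pass to the Fourier transform and use the identity $\sum_i H_i\mathcal{F}(R)H_i=\mathcal{F}(Rp_1)$ from Lemma \ref{--=0}, together with the mutual independence of the corners cut by the $H_i$ in $\mathscr{G}_{1,-}$; this reduces the entire family $H_i\mathcal{F}(R)H_i=0$ to the single identity $Rp_1=0$. That identity is the shaded mirror of the unshaded computation: $R_k*F=R_k$ again by associativity and $F*F=F$, while $U_k$ and $U_k^*$ meet the biprojection trivially because of the corners they occupy, using Proposition \ref{ABg commute} (that $g_{2n}$, hence $p_1$ on these corners, commutes with $A_k^{\pm}$) and the explicit loop form of $p_1$ in Proposition \ref{loop p}. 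Carrying this out term by term yields $Rp_1=0$, hence the shaded family.

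The main obstacle is exactly this shaded step, and more broadly the propagation of the ``good cusp'' structure through the two operations in the construction that touch the boundary: the sandwich $F*P_k*F$, which a priori could create a cusp at a forbidden even-indexed vertex, and the rotation in $U_{k+1}=\omega^{-1}\rho(R_k+U_k)$, which moves the seam at which $R_k$ and $U_k$ are glued. Controlling both \emph{without} invoking (4') is the crux: one must check, via the explicit subspace bookkeeping of Propositions \ref{F inv} and \ref{F ++} (in particular that $\beta$ and $F*$ map the relevant corners into one another), that no loop cusping at an $a_{2i}$ or $a_0$ vertex is ever produced. Once this is in place, both families hold and $R_{\mu_1\mu_2\omega}$ is totally uncappable.
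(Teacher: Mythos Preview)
Your reduction is not sufficient: the two families $A_k^- R A_k^-=0$ and $H_i\mathcal{F}(R)H_i=0$ (equivalently $F*R=R$ and $Rp_1=0$) are \emph{necessary} consequences of total uncappability, not equivalent to it. Total uncappability demands $\rho^j(R)P=0$ and $\rho^j(\mathcal{F}(R))\mathcal{F}(P)=0$ for \emph{every} $j\ge 0$, i.e.\ capping at every position, whereas the conditions you verify only handle the seam $j=0$. Your claim that ``together these say that no loop with nonzero coefficient in $R$ has a cusp at a forbidden vertex'' is exactly the step that is missing: $A_k^-RA_k^-=0$ rules out a forbidden cusp at the \emph{first} vertex of a loop, not at an interior vertex, and passing to interior vertices is precisely a rotation. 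You identify this as ``the crux'' in your last paragraph but do not carry it out.

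The paper's proof is short and avoids this difficulty by working inductively with the intrinsic characterization: $X$ is totally uncappable iff $g_{2n}Xg_{2n}=X$. One checks that $U_1$ (a scalar multiple of the flat loop $L_1$) is totally uncappable, hence $g_{2n}P_1g_{2n}=P_1$; the exchange relation of the biprojection then gives $g_{2n}(F*P_1*F)g_{2n}=F*P_1*F$, so $R_1$ is totally uncappable; and since total uncappability is \emph{manifestly} preserved by rotation and by scalars (directly from its definition), $U_2=\omega^{-1}\rho(R_1+U_1)$ is totally uncappable as well. Iterating gives all $U_k,R_k$ and hence $R$. Note that this argument uses rotation but does \emph{not} use property~(4'): preserving total uncappability under $\rho$ is automatic, while (4') is the separate (and for $n\ge 4$ false) assertion that $\rho(R)=\omega R$. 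Your stated reason for avoiding rotation conflates these two things.
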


\begin{proof}
Note that $U_1$ is totally uncappable.
So
$$g_{2n}U_1g_{2n}=U_1.$$
Then
$$g_{2n}P_1g_{2n}=P_1.$$
By the exchange relation of the biprojection, we have
$$g_{2n}(F*P_1*F)g_{2n}=F*(g_{2n}*P_1*g_{2n})*F=F*p_1*F.$$
Therefore $R_1=F*P_1*F$ is totally uncappable.
Then $U_2=\omega^{-1}\rho(R_1)$ is totally uncappable.
Inductively we have $U_k,R_k$ are totally uncappable, for $k=1,2,\cdots,n.$
Symmetrically $U_i,R_i$ are totally uncappable, for $i=2n,2n-1,\cdots,n+1.$
So $R_{\mu_1\mu_2\omega}=\sum_{1\leq k\leq 2n}U_k+U_k^*+R_k$ is totally uncappable.
\end{proof}

\begin{lemma}\label{independence of initial condition}
For $1\leq k \leq 2n$,
$R_k$ does not depend on the parameters $\mu_1,\mu_2$ and $\omega$.
\end{lemma}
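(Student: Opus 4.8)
The plan is to reduce everything to the parameter-independence of the projections $P_k$, and then prove that by an induction that runs inward from each end of the graph. Since $R_k=\delta_b^4\,F*P_k*F$ by construction, it suffices to show that each $P_k=U_k^*U_k$ is independent of $\mu_1,\mu_2,\omega$. Because $U_1,\dots,U_n$ are built from $U_1=\mu_1\delta_b^{-1.5}L_1$ using only $\mu_1,\omega$, while $U_{n+1},\dots,U_{2n}$ are built symmetrically from $U_{2n}=\mu_2\delta_b^{-1.5}L_2$, it is enough to treat the left half and invoke the mirror argument for the right half. The base case is immediate: $P_1=U_1^*U_1=|\mu_1|^2\delta_b^{-3}L_1^*L_1=\delta_b^{-3}L_1^*L_1$ since $|\mu_1|=1$, so $P_1$ and hence $R_1=\delta_b^4\,F*P_1*F$ are parameter-free; symmetrically for $P_{2n},R_{2n}$.

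For the inductive step I would assume $R_1,\dots,R_k$ are parameter-free and compute $P_{k+1}=U_{k+1}^*U_{k+1}=\rho(R_k+U_k)^*\rho(R_k+U_k)$, where the factors $\omega^{-1}$ cancel because $|\omega|=1$. Unwinding the recursion with the inductive hypothesis gives the explicit expansion
$$U_k=\mu_1\omega^{-(k-1)}\delta_b^{-1.5}\rho^{k-1}(L_1)+\sum_{j=1}^{k-1}\omega^{-(k-j)}\rho^{k-j}(R_j),$$
so $U_k=\sum_i c_i E_i$ is a sum of parameter-free elements $E_i$ (namely $\delta_b^{-1.5}\rho^{k-1}(L_1)$ and the $\rho^{k-j}(R_j)$) with unit-modulus coefficients $c_i$ that are monomials in $\mu_1,\omega$. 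Consequently the pieces of $U_{k+1}$ are $\rho(R_k)$ together with the $\rho(E_i)$, again with unit-modulus coefficients. Expanding $P_{k+1}=U_{k+1}^*U_{k+1}$, the diagonal terms $|c_i|^2 E_i^*E_i=E_i^*E_i$ are parameter-free, and all parameter dependence is concentrated in the off-diagonal products $\rho(E_i)^*\rho(E_{i'})$ for $i\neq i'$ and $\rho(R_k)^*\rho(E_i)$.

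Thus the entire content of the lemma collapses to a single product-orthogonality claim: distinct pieces in the above decomposition are orthogonal in the sense that $E_i^*E_{i'}=0$ for $i\neq i'$, and this orthogonality is preserved by $\rho$ and by adjoining the new summand $\rho(R_k)$. I would establish this by a support analysis on the refined dual principal graph $\Gamma_n$. By the corner statement used in the proof of Lemma \ref{key lemma}, $\rho$ carries loops of $\mathscr{G}_{2n,+}(A_j^++B_j)$ into $A_{j+1}^-\mathscr{G}_{2n,+}A_{j+1}^+$, so every $\rho^{k-j}(R_j)$ and the rotated $L_1$ term lie in the common corner $A_{k+1}^-\mathscr{G}_{2n,+}A_{k+1}^+$; what distinguishes them is the location of their turning/cusp points, which is governed by the originating depth $j$. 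Using total uncappability (Lemma \ref{--=0}) and the cusp-point description of loops with nonzero coefficient, I would show that the halfway vertex $x_{2n}$ and cusp pattern of each piece are determined by $j$, so that forming $E_i^*E_{i'}$ — which requires matching the second half of $E_i$ with the first half of $E_{i'}$ — is impossible unless $i=i'$. This combinatorial matching argument on $\Gamma_n$ is the main obstacle; once it is in place, every off-diagonal term vanishes, $P_{k+1}$ and therefore $R_{k+1}=\delta_b^4\,F*P_{k+1}*F$ are parameter-free, and the induction (together with its mirror image from the $\mu_2$ end) closes to give the claim for all $1\le k\le 2n$.
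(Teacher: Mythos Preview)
Your approach is essentially the paper's: reduce to parameter-independence of $P_k$, induct, and show the cross terms in $U_{k+1}^*U_{k+1}$ vanish. The paper organizes the expansion iteratively rather than all at once, writing
\[
P_{k+1}=\rho(R_k)^*\rho(R_k)+\rho(U_k)^*\rho(U_k)=\cdots=\sum_{j=1}^{k}\rho^{j}(R_{k+1-j})^*\rho^{j}(R_{k+1-j})+\rho^{k}(U_1)^*\rho^{k}(U_1),
\]
so at each step only one orthogonality $\rho^{j}(R_{k+1-j})^*\rho^{j}(U_{k+1-j})=0$ is needed; this is immediate because at position $2j+1$ the loops in $\rho^{j}(R_{k+1-j})$ carry the vertex $a_{2(k+1-j)}$ (the original $x_1$ of $R_{k+1-j}\in(A_{k+1-j}^++B_{k+1-j})\mathscr{G}(A_{k+1-j}^++B_{k+1-j})$) while those in $\rho^{j}(U_{k+1-j})$ carry $a_{2(k-j)}$ (the original $x_1$ of $U_{k+1-j}\in A_{k+1-j}^-\mathscr{G}A_{k+1-j}^+$), and $2j+1\le 2k-1<2n$. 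That simple position check replaces the cusp-pattern/halfway-vertex analysis you propose (and note: for $E_i^*E_{i'}$ it is the \emph{first} halves that must match, not the second half of $E_i$).
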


\begin{proof}
Note that $P_1=U_1^*U_1$ does not depend on the parameters. So $R_1=\delta_b^4F*P_1*F$ does not depend on the parameters.
By the second principal of mathematical induction,
for $k=1,2,\cdots,n-1$, assume that $R_i$, for any $i\leq k$, does not depend on the parameters.
Note that
$$P_{k+1}=U_k^*U_k$$
$$=\rho(R_k+U_k)^*\rho(R_k+U_k)$$
$$=\rho(R_k)^*\rho(R_k)+\rho(U_k)^*\rho(U_k)$$
$$=\cdots$$
$$=\rho(R_k)^*\rho(R_k)+\rho^2(R_{k-1})^*\rho^2(R_{k-1})+\cdots +\rho^k(R_1)^*\rho^k(R_1)+\rho^k(U_1)^*\rho^k(U_1)^k.$$
Moreover $\rho^k(U_1)^*\rho^k(U_1)$ does not depend on the parameters. So $P_{k+1}$ does not depend on the parameters. Then $R_{k+1}=\delta_b^4F*P_{k+1}*F$ does not depend on the parameters.
For $n+1\leq k\leq 2n$, the proof is similar.
\end{proof}

To compute $R_k$, we may fix the parameters as $\mu_1=\mu_2=\omega=1$ first.
Now let us compute the coefficients of loops in $R=R_{111}$.

\begin{definition}
For a loop $l\in\mathscr{G}_{2n,+}$, let us define $C_R(l)$ to be the coefficient of $l$ in $R=R_{111}$.
Let us define $C_P(l)$ to be the coefficient of $l$ in $P=\sum_{1\leq k \leq 2n} P_k$.
\end{definition}

If a loop $l'$ has a cusp point $b_{2i-1}$, then we may substitute $b_{2i-1}$ by $a_{2i-1}$ to obtain another loop $l$. By Proposition(\ref{F inv})(\ref{F ++}) and Lemma \ref{pro3}, we have $C_R(l')$ is determined by $C_R(l)$.
Essentially we only need to compute the coefficients of loops whose points are just $a_j's$.
Their relations are given by the following lemma.

\begin{lemma}\label{coefficient R P}
For a loop $l_1'\in \mathscr{G}_{2n+}$,
$l_1'=[x_0\cdots b_{2i-1}\cdots x_{2n}\rangle \langle x_{2n}\cdots x_0]$, we have
$$C_R(l_1')=-\delta_b^{\frac{1}{2}}C_R(l_1),$$
where $l_1=[x_0\cdots a_{2i-1}\cdots x_{2n}\rangle \langle x_{2n}\cdots x_0]$ is the loop replacing the given point $b_{2i-1}$ by $a_{2i-1}$ in $l_1'$.

For a loop $l_2\in A_k^+\mathscr{G}_{2n+}A_k^+$, $l_2=[a_{2k-1}\cdots a_{2m-1}\rangle\langle a_{2m-1}\cdots a_{2k-1}]$, we have
$$C_R(l_2)=
\left\{
\begin{array}{ll}
\delta_b^2 C_P(l_2),
&\text{when the middle point $a_{2m-1}$ is a flat point};\\
C_P(l_2)-C_P(l_2'),
&\text{when the middle point $a_{2m-1}$ is a cusp point},\\
\end{array}
\right.
$$
where $l_2'=[a_{2k-1}\cdots b_{2m-1}\rangle\langle b_{2m-1}\cdots a_{2k-1}]$ is the loop replacing the middle point $a_{2m-1}$ by $b_{2m-1}$ in $l_2$.
\end{lemma}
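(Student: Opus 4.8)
The plan is to derive both identities as coefficient computations for $R=R_{111}$, using only that $R$ is totally uncappable (Lemma \ref{pro3}), the corner description of the coproduct $F*$ through the $*$-isomorphism $\beta$ (Propositions \ref{F inv} and \ref{F ++}), and the identity $A_k^+RA_k^+=\delta_b^2\,(P_k*F)$ obtained inside the proof of Lemma \ref{key lemma}. The guiding principle is that $F*$ and its right-handed counterpart $*F$ each implement a single elementary move on loops — fixing them, killing them, or exchanging an $a$-vertex with the corresponding $b$-vertex via $\beta$ — so that the two cases of the lemma are simply the relevant cases of Proposition \ref{F inv} applied at the correct position of the loop.

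For the first identity I would localize at the cusp $b_{2i-1}$. Since total uncappability is stable under $\rho$, every rotate $\rho^j(R)$ is again totally uncappable and hence satisfies $\rho^j(R)=F*\rho^j(R)$; I would choose $j$ so that the cusp becomes the initial vertex, placing the loop in the $B_i$-corner. Writing the $A_i^+$- and $B_i$-components of $\rho^j(R)$ as $X_A$ and $X_B$ and matching the $B_i$-part of $\rho^j(R)=F*\rho^j(R)$ using $F*y=\delta_b^{-1}y-\delta_b^{-2}\beta^{-1}(y)$ on $B_i\mathscr{G}_{2n,+}$ gives $X_B=\delta_b^{-1}X_B-\delta_b^{-1}\beta(X_A)$, whence $X_B=-\delta_b\,\beta(X_A)$ after using $\delta_b-1=\delta_b^{-1}$. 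As $\beta$ relabels the initial vertex $a_{2i-1}\mapsto b_{2i-1}$ with no scalar, this front relation contributes the factor $-\delta_b$; rotating back then inserts the ratio of the two-click rotation scalars at the distinguished vertex, which by Lemma \ref{refined dual principal graph} equals $\sqrt{\lambda'(b_{2i-1})/\lambda'(a_{2i-1})}=\delta_b^{-1/2}$ because $\lambda'(a_{2i-1})/\lambda'(b_{2i-1})=\delta_b$. Multiplying, $(-\delta_b)\cdot\delta_b^{-1/2}=-\delta_b^{1/2}$, which is the claimed constant $C_R(l_1')=-\delta_b^{1/2}C_R(l_1)$.

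For the second identity I would use $C_R(l_2)=\delta_b^2\,C_{P*F}(l_2)$, valid for $l_2\in A_k^+\mathscr{G}_{2n,+}A_k^+$ because there only $P_k$ contributes to $P$ and $A_k^+RA_k^+=\delta_b^2\,(P_k*F)$. The right coproduct $*F$ acts on the middle vertex $a_{2m-1}$, so the flat/cusp dichotomy of that vertex is exactly the case split of Proposition \ref{F inv}. When $a_{2m-1}$ is flat its two edges lie in the different corners $A_m^+$ and $A_m^-$, so $F$ acts there as the identity, no other loop feeds $l_2$ under $*F$, and $C_{P*F}(l_2)=C_P(l_2)$, giving $C_R(l_2)=\delta_b^2 C_P(l_2)$. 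When $a_{2m-1}$ is a cusp directed toward $a_{2m}$ the local configuration lies in $A_m^+\mathscr{G}_{2n,+}A_m^+$, where by Proposition \ref{F ++} the right coproduct acts as $\delta_b^{-2}$ times the identity minus $\delta_b^{-1}$ times the relabelling that exchanges the middle vertex $a_{2m-1}$ for $b_{2m-1}$, i.e. that sends $l_2$ to $l_2'$; collecting the contribution $\delta_b^{-2}$ from $l_2$ itself and the contribution $-\delta_b^{-2}$ that $l_2'*F$ feeds back into $l_2$ yields $C_{P*F}(l_2)=\delta_b^{-2}\bigl(C_P(l_2)-C_P(l_2')\bigr)$, hence $C_R(l_2)=C_P(l_2)-C_P(l_2')$. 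The degenerate cusp directed toward $a_{2m-2}$ need not be considered, since $A_m^-RA_m^-=0$ by Lemma \ref{--=0} forces the corresponding coefficients to vanish.

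The hard part will be the bookkeeping of normalization scalars rather than the structural argument. In the first identity I must track the two-click rotation factors through an arbitrary number of clicks and confirm that every contribution except the one at the distinguished $a/b$-vertex cancels between $l_1$ and $l_1'$, leaving precisely $\delta_b^{-1/2}$; in the second identity I must verify that no loops other than $l_2$ and $l_2'$ can produce $l_2$ under $*F$, which rests on $b_{2m-1}$ being the unique $\mathcal{P}$-coloured neighbour of the middle vertex and on Proposition \ref{F inv} cleanly separating the invariant subspaces. These are exactly the places where an index shift or a stray power of $\delta_b$ would propagate, so I would cross-check them against the explicit loop expansions of $F$ and of $p_1$ in Proposition \ref{loop p}.
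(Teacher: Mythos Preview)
Your argument is correct and, for the first identity, essentially identical to the paper's: both rotate the cusp to the front, invoke $F*(\rho^{j}(R))=\rho^{j}(R)$ from total uncappability, apply the $A^+/B$ corner formulas of Proposition~\ref{F ++} to get the factor $-\delta_b$, and then compare rotation scalars to pick up the remaining $\delta_b^{-1/2}$.

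For the second identity your route differs slightly from the paper's. The paper works with $R_k=\delta_b^{4}\,F*P_k*F$, then uses the trace isotopy $tr((F*P_k*F)l_2^*)=tr(P_k(F*l_2*F)^*)$ to transfer both convolutions onto $l_2^*$ and computes $F*l_2*F$ explicitly; the left $F*$ only produces $\beta$-terms, which vanish against $P_k\in A_k^+\mathscr{G}_{2n,+}A_k^+$, so effectively only the right $*F$ matters. You bypass this by using the sharper identity $A_k^+RA_k^+=\delta_b^{2}(P_k*F)$ (which is indeed established inside the proof of Lemma~\ref{key lemma} as $\delta_b^{2}P_k*F=P_k+Q_k=A_k^+RA_k^+$) and reading off the coefficient of $l_2$ directly from the $180^\circ$-rotated version of Proposition~\ref{F ++}. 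This is a genuine, if modest, simplification: it avoids the trace duality step and makes transparent why only $l_2$ and $l_2'$ contribute. The paper's version has the advantage of being phrased entirely in terms of the data $R_k=\delta_b^4 F*P_k*F$ that appears in the inductive definition of $R_{\mu_1\mu_2\omega}$, so it requires no detour through the intermediate identity $P_k+Q_k=A_k^+RA_k^+$.
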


\begin{proof}
For a loop $l_1'\in \mathscr{G}_{2n+}$,
$$l_1'=[x_0\cdots x_{2k-1}b_{2i-1}x_{2k+1}\cdots x_{2n}\rangle \langle x_{2n}x_{2n+1}\cdots x_{4n-1}x_0],$$
we take $l_1$ to be the loop
$$l_1=[x_0\cdots x_{2k-1}a_{2i-1}x_{2k+1}\cdots x_{2n}\rangle \langle x_{2n}x_{2n+1}\cdots x_{4n-1}x_0].$$
Assume that
$$l_0'=[b_{2i-1}x_{2k+1}\cdots x_{2n+2k}\rangle \langle x_{2n+2k}\cdots x_{4n-1}x_0\cdots x_{2k-1}b_{2i-1}]$$
and
$$l_0=[a_{2i-1}x_{2k+1}\cdots x_{2n+2k}\rangle \langle x_{2n+2k}\cdots x_{4n-1}x_0\cdots x_{2k-1}a_{2i-1}].$$
Then the coefficient of $l_0'$ in $\rho^{-k}(R)$ is $$\sqrt{\frac{\lambda'(x_0)\lambda'(x_{2n})}{\lambda'(b_{2i-1})\lambda'(x_{2n+2k})}}C_R(l_1');$$
and the coefficient of $l_0$ in $\rho^{-k}(R)$ is $$\sqrt{\frac{\lambda'(x_0)\lambda'(x_{2n})}{\lambda'(a_{2i-1})\lambda'(x_{2n+2k})}}C_R(l_1).$$
By Proposition \ref{F inv}, the linear space spanned by $l_0,l_0'$ is invariant under the coproduct of $F$ on the left side.
By Lemma \ref{pro3}, we have
$$F*(\rho^{-k}(R))=\rho^{-k}(R).$$
So
$$\sqrt{\frac{\lambda'(x_0)\lambda'(x_{2n})}{\lambda'(b_{2i-1})\lambda'(x_{2n+2k})}}C_R(l_1')l_0'
+\sqrt{\frac{\lambda'(x_0)\lambda'(x_{2n})}{\lambda'(a_{2i-1})\lambda'(x_{2n+2k})}}C_R(l_1)l_0
$$
is invariant under the coproduct of $F$ on the left side.
By Proposition \ref{F ++}, we have
$$\sqrt{\frac{\lambda'(x_0)\lambda'(x_{2n})}{\lambda'(b_{2i-1})\lambda'(x_{2n+2k})}}C_R(l_1')
+\delta_b\sqrt{\frac{\lambda'(x_0)\lambda'(x_{2n})}{\lambda'(a_{2i-1})\lambda'(x_{2n+2k})}}C_R(l_1)=0
.$$
Thus $$C_R(l_1')=-\delta_b^{\frac{1}{2}}C_R(l_1).$$

For a loop $l_2\in A_k^+\mathscr{G}_{2n+}A_k^+$, $l_2=[a_{2k-1}\cdots a_{2m-1}\rangle\langle a_{2m-1}\cdots a_{2k-1}]$, we have
$$C_R(l_2)=\frac{tr(R l_2^*)}{tr(l_2 l_2^*)}=\frac{tr(R_k l_2^*)}{tr(l_2 l_2^*)}
=\delta_b^4\frac{tr((F*P_k*F) l_2^*)}{tr(l_2l_2^*)}
.$$
Note that
$$tr((F*P_k*F) l_2^*)=tr(P_k (F*l_2^**F))$$ by a diagram isotopy.
So
$$C_R(l_2)=\delta_b^4\frac{tr(P_k (F*l_2^**F))}{tr(l_2l_2^*)}
=\delta_b^4\frac{tr(P_k (F*l_2*F)^*)}{tr(l_2l_2^*)}.$$

If $a_{2m-1}$ is a flat point, then $l_2*F=l_2$, by a direct computation.
By Proposition \ref{F ++}, we have
$$F*l_2=\delta_b^{-2}l_2-\delta_b^{-1}\beta(l_2).$$
So
$$C_R(l_2)=\delta_b^4\delta_b^{-2}\frac{tr(P_k l_2^*)}{tr(l_2l_2^*)}=\delta_b^2C_P(l_2).$$

If $a_{2m-1}$ is a cusp point, then
$$l_2*F=\delta_b^{-2}l_2-\delta_b^{-1}l_2',$$
by Proposition \ref{F ++} and an $180^\circ $ rotation,
where $l_2'=[a_{2k-1}\cdots b_{2m-1}\rangle\langle b_{2m-1}\cdots a_{2k-1}]$ is the loop replacing the middle point $a_{2m-1}$ by $b_{2m-1}$ in $l_2$.
Again by Proposition \ref{F ++}, we have
$$F*l_2*F=
\delta_b^{-4}l_2-\delta_b^{-3}\beta(l_2)
-\delta_b^{-3}l_2'+\delta_b^{-2}\beta(l_2').$$
So
$$C_R(l_2)=\delta_b^4\delta_b^{-4}\frac{tr(P_k l_2^*)}{tr(l_2l_2^*)}
-\delta_b^4\delta_b^{-3}\frac{tr(P_k l_2'^*)}{tr(l_2l_2^*)}.$$
Observe that $$tr(l_2l_2^*)=\delta_b tr(l_2'l_2'^*).$$
Therefore
$$C_R(l_2)=C_P(l_2)-C_P(l_2').$$
\end{proof}

Note that $P_k=U_k^*U_k$, to compute the coefficient of a loop in $P_k$ we only need the coefficients of loops in $U_k$. They are determined by the coefficients of loops in $R_{k-1}$.

\begin{definition}
For $1\leq k \leq n$, let us define $[a_{2k-1},y\rangle$ to be the set of all length $2n$ pathes from $a_{2k-1}$ to $y$ starting with $a_{2k-1}a_{2k-2}$.
For a path $\eta=[z_0z_1\cdots z_{k-1} z_{k}\rangle$, let us define $\eta^*$ to be the path $\langle z_{k},z_{k-1},\cdots,z_1,z_0]$.
\end{definition}

\begin{lemma}\label{coefficient P U}
For a loop $\eta_1\eta_2^*\in A_k^+\mathscr{G}_{2n,+}A_k^+$ whose first point is $a_{2k-1}$, suppose its middle point is $y$. Then we have
$$C_P(\eta_1\eta_2^*)=\sum_{\eta\in[a_{2k-1}y\rangle}C_R(\eta_1\eta^*) C_R(\eta\eta_2^*).$$
\end{lemma}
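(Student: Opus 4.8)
The plan is to unwind the definition $P_k=U_k^*U_k$ and read the coefficient of $\eta_1\eta_2^*$ straight off the graph-planar-algebra product formula of Section \ref{graphpa}. First I would make two reductions. Since $P=\sum_{1\le j\le 2n}P_j$ with each $P_j=U_j^*U_j$ supported on the corner $A_j^+\mathscr{G}_{2n,+}A_j^+$, and since the $A_j^+$ are mutually orthogonal minimal projections (they split the central projections $A_j$ of Proposition \ref{ABg commute}), the only summand that can contribute to a loop $\eta_1\eta_2^*\in A_k^+\mathscr{G}_{2n,+}A_k^+$ is $P_k$; hence $C_P(\eta_1\eta_2^*)$ is exactly the coefficient of $\eta_1\eta_2^*$ in $P_k$. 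Second, I would identify the loops occurring in the two factors. Because $U_k=A_k^-RA_k^+$ and the minimal projections $A_k^-=[a_{2k-1}a_{2k-2}]$, $A_k^+=[a_{2k-1}a_{2k}]$ act by restriction with unit coefficient, the loops of $U_k$ are precisely $\eta\,\zeta^*$ with $\eta\in[a_{2k-1},y\rangle$ and $\zeta$ an $A_k^+$-path to the same middle point $y$, each carrying its original coefficient $C_R(\eta\zeta^*)$. Dually, using $R^*=R$ (proposition (1') of Proposition \ref{relation of R}), which forces the coefficient of $l^*$ to be the conjugate of that of $l$, the coefficient of a loop $\xi\,\eta^*$ in $U_k^*=A_k^+RA_k^-$ is again its $C_R$-value $C_R(\xi\eta^*)$, with $\xi$ an $A_k^+$-path and $\eta\in[a_{2k-1},y\rangle$.

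Next I would apply the product rule for $\mathscr{G}_{m,+}$ with $m=2n$ even: the product of two loops is nonzero exactly when the second half of the first loop matches the reversed first half of the second, and then equals the loop built from the first half of the first and the second half of the second, with no extra scalar. For a factor $\xi\,\eta^*\in U_k^*$ and a factor $\eta'\,\zeta^*\in U_k$, the matched strands are the inner $A_k^-$-paths: the second half $\eta^*$ of the first loop must equal the reversed first half of $\eta'\,\zeta^*$, which forces $\eta=\eta'$ and in particular pins both middle points to the common vertex $y$. The resulting loop is then $\xi\,\zeta^*$. Summing the products $C_R(\xi\eta^*)\,C_R(\eta\zeta^*)$ over all admissible inner paths $\eta\in[a_{2k-1},y\rangle$ and relabeling $\xi=\eta_1$, $\zeta=\eta_2$ gives $C_P(\eta_1\eta_2^*)=\sum_{\eta\in[a_{2k-1},y\rangle}C_R(\eta_1\eta^*)\,C_R(\eta\eta_2^*)$, which is the assertion.

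The step I expect to be the main obstacle is the bookkeeping inside the graph-planar-algebra product rather than any conceptual difficulty. One must match the correct halves (the second half of the $U_k^*$-factor against the first half of the $U_k$-factor), keep track of the orientation reversal encoded by $\eta\mapsto\eta^*$, and verify that the matched strand is exactly the $A_k^-$-path indexed by $[a_{2k-1},y\rangle$ so that the two middle points coincide. I would also make the self-adjointness step explicit before collecting terms: it is precisely $R^*=R$ that turns the $U_k^*$-coefficients into honest $C_R$-values instead of complex conjugates, so that both factors in the sum are of the form $C_R(\cdot)$. Finally I would confirm that the even-$m$ product formula contributes no normalization factor, so that coefficients multiply directly.
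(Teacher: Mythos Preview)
Your proposal is correct and follows essentially the same approach as the paper. The paper's own proof is the one-line observation that a path $\eta\in[a_{2k-1},y\rangle$ starts with $a_{2k-1}a_{2k-2}$, so $C_R(\eta\eta_2^*)$ and $C_R(\eta_1\eta^*)$ are the relevant coefficients in $U_k$ and $U_k^*$, and then $P_k=U_k^*U_k$; you have simply unpacked the product formula, the corner reduction $C_P=C_{P_k}$ on $A_k^+\mathscr{G}_{2n,+}A_k^+$, and the role of $R^*=R$ in identifying the $U_k^*$-coefficients with $C_R$-values, all of which the paper leaves implicit.
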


\begin{proof}
Note that a length $2n$ path ${\eta\in[a_{2k-1}y\rangle}$ starts with $a_{2k-1}a_{2k-2}$,
so $C_R(\eta^*\eta_2)$ is the coefficient of $\eta^*\eta_2$ in $U_k$
and $C_R(\eta_1\eta^*)$ is the coefficient of $\eta_1\eta^*$ in $U_k^*$.
Then the statement follows from the fact $P_k=U_k^*U_k$.
\end{proof}

When the initial condition $\mu_1=\mu_2=\omega=1$ is fixed, given a loop $$l=[a_{k_1}a_{2n+k_2}a_{2n-k_3}\cdots a_{2n+k_{2t}}a_{k_1}], ~\text{for} ~1\leq k_1,k_2,\cdots,k_{2t} \leq 2n-1,$$ we may compute $C_R(l)$ by repeating Lemma(\ref{coefficient R P})(\ref{coefficient P U}). A significant fact is that the computation only depends on $k_1,k_2,\cdots,k_{2t}$, in other words, $C_R(l)$ is independent of $n$.
We list all the coefficients for $k_1\leq 7$ in the Appendix. This is enough to rule out the $4_{th}$ fish by comparing the coefficients $C_R([a_5a_9a_5a_9a_5])$ and $C_R([a_7a_{11}a_7a_{11}a_7])$.
It is possible to rule out finitely many Bisch-Haagerup fish graphs by computing more coefficients.
To rule out the $n_{th}$ Bisch-Haagerup fish graph, for all $n\geq 4$, we need formulas for the coefficients of two families of loops which do not match the proposition(4'). Then only the first three Bisch-Haagerup fish graphs are the principal graphs of subfactors.

\begin{lemma}\label{d3}
$$C_R([a_{2k-1}a_{2n+2k-1} a_{2k-1}])=\delta_b^{-3}, \forall ~ 1\leq k \leq n.$$
\end{lemma}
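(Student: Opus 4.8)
The plan is to compute $C_R(l_k)$ for the maximal-reach loop $l_k=[a_{2k-1}a_{2n+2k-1}a_{2k-1}]$ by peeling off its turning point. The vertex $a_{2n+2k-1}$ is the $a$-vertex at distance exactly $2n$ from $a_{2k-1}$ along the chain of $a$-vertices, so any $2n$-step path reaching it is a geodesic, both halves of $l_k$ are forced to be monotone, and the middle point $a_{2n+2k-1}$ is a cusp. First I would apply the cusp case of Lemma \ref{coefficient R P} to write $C_R(l_k)=C_P(l_k)-C_P(l_k')$, where $l_k'=[a_{2k-1}b_{2n+2k-1}a_{2k-1}]$ moves the turn onto the branch vertex $b_{2n+2k-1}$.

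Next I would evaluate the two $C_P$-terms through Lemma \ref{coefficient P U}. Write $\gamma$ for the geodesic $a_{2k-1},a_{2k},\dots,a_{2n+2k-1}$ and $\alpha$ for the opposite geodesic $a_{2k-1},a_{2k-2},\dots,a_{2n+2k-1}$ through $a_0$, so that $l_k=\gamma\gamma^*$. In the sum $C_P(\gamma\gamma^*)=\sum_{\eta}C_R(\gamma\eta^*)C_R(\eta\gamma^*)$ the path $\eta$ must begin with $a_{2k-1}a_{2k-2}$ and reach $a_{2n+2k-1}$ in $2n$ steps, which forces $\eta=\alpha$; hence the only surviving term is $C_R(\gamma\alpha^*)C_R(\alpha\gamma^*)=|C_R(M_k)|^2$, where $M_k:=\alpha\gamma^*$ is the unique loop of $U_k=A_k^-RA_k^+$ attaining the far vertex $a_{2n+2k-1}$ and $C_R(\gamma\alpha^*)=C_R(M_k^*)=\overline{C_R(M_k)}$ by $R=R^*$. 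Applying the same reasoning to $l_k'$ shows $C_P(l_k')$ can be nonzero only if $U_k$ contains a loop turning at $b_{2n+2k-1}$, which I will rule out, giving $C_P(l_k')=0$.

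The heart of the argument is thus the structural claim that $U_k$ has a single loop $M_k$ of maximal reach $a_{2n+2k-1}$, that $M_k$ avoids $b_{2n+2k-1}$, and that $|C_R(M_k)|=\delta_b^{-1.5}$. I would establish this by induction on $k$: the base case is $U_1=\mu_1\delta_b^{-1.5}L_1$ from Lemma \ref{ini U}, where $L_1$ is precisely the geodesic out-and-back loop turning at $a_{2n+1}$; the inductive step uses $U_{k+1}=\omega^{-1}\rho(R_k+U_k)$ together with $R_k=\delta_b^4\,F*P_k*F$, since the rotation $\rho$ advances the base point from $a_{2k-1}$ to $a_{2k+1}$ and the reach from $a_{2n+2k-1}$ to $a_{2n+2k+1}$, while $F*(\cdot)*F$ only rearranges the loop near the base (exchanging $a_{2k-1}$ and $b_{2k-1}$ via $\beta$) and creates no new extremal endpoints. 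An equivalent, cleaner route is to read $|C_R(M_k)|$ directly off the partial-isometry identity $U_kU_k^*=\delta_b^{-3}A_k^-g_{2n}$ from the proof of Lemma \ref{key lemma}: the extreme diagonal loop $M_kM_k^*$ occurs in $U_kU_k^*$ with coefficient $|C_R(M_k)|^2$ and in $A_k^-g_{2n}$ with coefficient equal to the extreme diagonal coefficient of the minimal projection $g_{2n}$, which is $1$.

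Granting the structural claim the conclusion is immediate: $C_P(l_k)=|C_R(M_k)|^2=\delta_b^{-3}$ and $C_P(l_k')=0$, so $C_R(l_k)=\delta_b^{-3}$, uniformly in $k$, and since the computation never refers to $n$ it is uniform in $n$ as well. I expect the main obstacle to be the structural claim itself, that is, tracking the modulus $|C_R(M_k)|=\delta_b^{-1.5}$ through the recursion $U_{k+1}=\omega^{-1}\rho(R_k+U_k)$ and confirming no loop of $U_k$ turns at $b_{2n+2k-1}$; the partial-isometry identity $U_kU_k^*=\delta_b^{-3}A_k^-g_{2n}$ reduces this to the single fact that the extreme diagonal coefficient of $g_{2n}$ equals $1$, which I would verify from the middle-pattern description of $g_{2n}$ as a minimal Fuss-Catalan projection.
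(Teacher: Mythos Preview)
Your proposal is correct and follows the same skeleton as the paper: apply the cusp case of Lemma~\ref{coefficient R P}, then Lemma~\ref{coefficient P U}, observe that the only admissible path $\eta\in[a_{2k-1},a_{2n+2k-1}\rangle$ is the geodesic $\alpha$ through $a_0$, and that $[a_{2k-1},b_{2n+2k-1}\rangle$ is empty (reaching $b_{2n+2k-1}$ from $a_{2k-1}$ through $a_0$ takes $2n+2$ steps, not $2n$), so $C_P(l_k')=0$.

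Where you diverge from the paper is in computing $|C_R(M_k)|=\delta_b^{-1.5}$. The paper does this in one line: since $R=R_{111}$ is built with $\omega=1$, one has $\rho(R)=R$ globally, and then $C_R(M_k)=C_R(\rho^{k-1}(L_1))$ equals $C_R(L_1)=\delta_b^{-1.5}$ times the dimension-vector ratio $\sqrt{\lambda'(a_1)\lambda'(a_{2n+1})/\lambda'(a_{2k-1})\lambda'(a_{2n+2k-1})}$, which is $1$ because $\lambda'(a_{2k-1})\lambda'(a_{2n+2k-1})=\delta_b^{k}\cdot\delta_b^{\,n-k+1}=\delta_b^{n+1}$ is independent of $k$. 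Your inductive route through $U_{k+1}=\omega^{-1}\rho(R_k+U_k)$ is the step-by-step version of exactly this observation (the maximal-reach loop in $U_{k+1}$ can only come from $\rho(U_k)$, not from $\rho(R_k)$), so it works but is needlessly laborious. Your alternative via $U_kU_k^*=\delta_b^{-3}A_k^-g_{2n}$ is also valid, but it trades the trivial rotation computation for a separate verification of the extreme diagonal coefficient of $g_{2n}$, which is more work than necessary. In short: your argument is sound, but you are missing the shortcut that the convention $\omega=1$ makes $R$ literally rotation-invariant, so the coefficient propagates for free.
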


\begin{proof}
For $1\leq k \leq n$, by Lemma \ref{coefficient R P}, we have
$$C_R([a_{2k-1}a_{2n+2k-1} a_{2k-1}])
=C_P([a_{2k-1}a_{2n+2k-1} a_{2k-1}])-C_P([a_{2k-1}b_{2n+2k-1} a_{2k-1}]).$$
By Lemma \ref{coefficient P U}, we have
$$C_P([a_{2k-1}a_{2n+2k-1} a_{2k-1}])$$
$$=C_R([a_{2k-1}\cdots a_{4n-1}a_0\cdots a_{2k-1} a_{2k-2}]) C_R([a_{2k-1}a_{2k-2}\cdots a_0a_{4n-1}\cdots a_{2k}]),$$
because $$[a_{2k-1}a_{2n+2k-1} a_{2k-1}]=[a_{2k-1}a_{2n+2k-1}\rangle \langle a_{2n+2k-1} a_{2k-1}],$$
and $a_{2k-1} a_{2k-2}\cdots a_0a_{4n-1}\cdots a_{2n+2k-1}$ is the unique path in $[a_{2k-1},a_{2n+2k-1}\rangle$.
Note that
$$[a_{2k-1}\cdots a_{4n-1}a_0\cdots a_{2k-1} a_{2k-2}]^*=[a_{2k-1}a_{2k-2}\cdots a_0a_{4n-1}\cdots a_{2k}],$$
and $R=R^*$, so
$$C_R([a_{2k-1}\cdots a_{4n-1}a_0\cdots a_{2k-1} a_{2k-2}])=\overline{C_R([a_{2k-1}a_{2k-2}\cdots a_0a_{4n-1}\cdots a_{2k}])}.$$
Observe that $$\rho[a_1a_0a_{4n-1}\cdots a_2]$$
$$=\sqrt{\frac{\lambda'(a_1)\lambda'(a_{2n+1})}{\lambda'(a_{2k-1})\lambda'(a_{2n+2k-1})}}   [a_{2k-1}a_{2k-2}\cdots a_1a_0a_{4n-1}\cdots a_{2k}]$$
$$=[a_{2k-1}a_{2k-2}\cdots a_1a_0a_{4n-1}\cdots a_{2k}].$$
and $\rho(R)=R$, (we assumed that $\mu_1=\mu_2=\omega=1$,) so
$$C_R([a_{2k-1}a_{2k-2}\cdots a_0a_{4n-1}\cdots a_{2k}])=C_R([a_1a_0a_{4n-1}\cdots a_2])=\delta_b^{-1.5}.$$
Then
$$C_P([a_{2k-1}a_{2n+2k-1} a_{2k-1}])=\delta_b^{-3}.$$
On the other hand,
$$[a_{2k-1}b_{2n+2k-1} a_{2k-1}]=[a_{2k-1}b_{2n+2k-1}\rangle \langle b_{2n+2k-1} a_{2k-1}],$$
but there is no path in $[a_{2k-1},b_{2n+2k-1}]$, so
$$C_P([a_{2k-1}b_{2n+2k-1} a_{2k-1}])=0.$$
Then
$$C_R([a_{2k-1}a_{2n+2k-1} a_{2k-1}])=\delta_b^{-3}.$$
\end{proof}

\begin{lemma}\label{d5}
$$C_R([a_{2k-1}a_{2n+1} a_{2n-2k+3} a_{2n+1}a_{2k-1}])=\delta_b^{-5}, \forall ~ 2\leq k \leq n;$$
$$C_R([a_{2k-1}a_{2n+1}a_{2n-1} a_{2n+2k-3} a_{2k-1}])=\delta_b^{-5.5}, \forall ~ 3\leq k \leq n;$$
$$C_R([a_{2k-1} a_{2n+2k-3}a_{2n-1}a_{2n+1} a_{2k-1}])=\delta_b^{-5.5}, \forall ~ 3\leq k \leq n.$$
\end{lemma}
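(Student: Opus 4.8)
The plan is to continue the recursive coefficient computation of Lemmas \ref{coefficient R P} and \ref{coefficient P U}, exactly as in the proof of Lemma \ref{d3}, but for the slightly longer loops appearing here. Throughout I fix $\mu_1=\mu_2=\omega=1$, so that $R=R_{111}$, all coefficients $C_R(\cdot)$ are real, and $\rho(R)=R$, $R=R^*$; by the $n$-independence noted after Lemma \ref{coefficient P U} (and by Lemma \ref{independence of initial condition}) it suffices to track the relative indices. The two inputs at which the recursion bottoms out are the single nonzero coefficient $C_R(L_1)=\delta_b^{-1.5}$ of $U_1$ (Lemmas \ref{ini U} and \ref{key lemma}) and the value $\delta_b^{-3}$ of Lemma \ref{d3}.

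For the first family I would argue as follows. Tracing the $a$-cycle shows that the middle vertex $x_{2n}$ of $l_1=[a_{2k-1}a_{2n+1}a_{2n-2k+3}a_{2n+1}a_{2k-1}]$ is the cusp point $a_{2n-2k+3}$, and that $l_1$ is a palindrome, so its two halves coincide, $\eta_1=\eta_2=[a_{2k-1}a_{2n+1}a_{2n-2k+3}\rangle$. First I apply the cusp case of Lemma \ref{coefficient R P}, giving $C_R(l_1)=C_P(l_1)-C_P(l_1')$, where $l_1'$ replaces the middle $a_{2n-2k+3}$ by $b_{2n-2k+3}$; as in Lemma \ref{d3} there is no admissible path ending at $b_{2n-2k+3}$, so $C_P(l_1')=0$ and $C_R(l_1)=C_P(l_1)$. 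Expanding $C_P(l_1)=\sum_{\eta}C_R(\eta_1\eta^*)C_R(\eta\eta_2^*)$ by Lemma \ref{coefficient P U}, the fish geometry (a rightward path may not turn before the nose $a_{2n}$) together with the cusp constraint of Lemma \ref{--=0} kills all but one path $\eta$; the two surviving half-loop coefficients are then reduced, via $\rho(R)=R$ and $R=R^*$, to $\delta_b^{-1.5}$ and to the Lemma \ref{d3} value $\delta_b^{-3}$ (or to two intermediate values whose $\delta_b$-powers sum correctly), yielding $C_R(l_1)=\delta_b^{-5}$.

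For the second and third families the same tracing shows that the middle $x_{2n}=a_{2n+2k-5}$ of $l_2=[a_{2k-1}a_{2n+1}a_{2n-1}a_{2n+2k-3}a_{2k-1}]$ is now a \emph{flat} point, so the flat case of Lemma \ref{coefficient R P} gives $C_R(l_2)=\delta_b^2 C_P(l_2)$, and the identical reduction of Lemma \ref{coefficient P U} produces $C_P(l_2)=\delta_b^{-7.5}$, hence $C_R(l_2)=\delta_b^{-5.5}$. The third loop is the reverse of the second: since the adjoint reverses a loop in edge notation, $l_3=l_2^*$ where $l_3=[a_{2k-1}a_{2n+2k-3}a_{2n-1}a_{2n+1}a_{2k-1}]$, and because $R=R^*$ we get $C_R(l_3)=\overline{C_R(l_2)}$, which equals $\delta_b^{-5.5}$ by reality. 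No separate computation for the third family is needed.

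The hard part will be the combinatorics hidden in the phrase ``only one path survives'': one must enumerate the length-$2n$ paths $\eta\in[a_{2k-1},y\rangle$ of Lemma \ref{coefficient P U}, show that every admissible $\eta$ but one forces either a forbidden cusp (contradicting Lemma \ref{--=0}) or a vanishing half-coefficient, and then assemble the surviving product of $\delta_b$-powers while checking the boundary cases $k=2$ (respectively $k=3$). The fish-graph turning rule and the cusp constraint do most of the pruning, so the real care lies in keeping the reductions of the half-loops consistent with the already-known coefficients $\delta_b^{-1.5}$ and $\delta_b^{-3}$.
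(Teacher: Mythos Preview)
Your overall plan---apply Lemma \ref{coefficient R P} at the middle point, then expand $C_P$ via Lemma \ref{coefficient P U}, and reduce the half-loops by rotation to the values in Lemma \ref{d3}---is exactly the paper's strategy, and your handling of the third family by adjoint is fine. But the sentence ``kills all but one path $\eta$'' is where your argument breaks.

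For the first family, the paths $\eta\in[a_{2k-1},a_{2n-2k+3}\rangle$ start left and must turn back; the cusp can be at $a_1$ \emph{or} at $b_1$, and Lemma \ref{--=0} does not exclude the $b_1$ cusp. So two paths survive, not one. The paper's trick (which you are missing) is to use the first part of Lemma \ref{coefficient R P}: the half-loop coefficient for the $b_1$ path equals $-\delta_b^{1/2}$ times that for the $a_1$ path, so the sum over $\eta$ collapses to $1+(-\delta_b^{1/2})^2=\delta_b^2$ times the single $a_1$ contribution. That contribution is $(\delta_b^{-3.5})^2$ after rotating to $C_R([a_1 a_{2n+1} a_1])=\delta_b^{-3}$, giving $\delta_b^{2}\cdot\delta_b^{-7}=\delta_b^{-5}$. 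With only one path you would get $\delta_b^{-7}$, and your suggested factors $\delta_b^{-1.5}\cdot\delta_b^{-3}=\delta_b^{-4.5}$ do not produce $\delta_b^{-5}$ either.

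For the second family the same two-path phenomenon occurs (now the cusp is at $a_{2k-3}$ or $b_{2k-3}$, again contributing a factor $\delta_b^2$), and more importantly the paper does \emph{not} compute $C_P(l_2)$ directly: it sets up a recursion
\[
C_R([a_{2k-1}a_{2n+1}a_{2n-1}a_{2n+2k-3}a_{2k-1}])=C_R([a_{2k-3}a_{2n+1}a_{2n-1}a_{2n+2k-5}a_{2k-3}])
\]
for $k\geq4$, with the base case $k=3$ reducing (after a rotation factor $\delta_b^{-1/2}$) to the first family at $k=2$. Your proposal says ``the identical reduction \ldots\ produces $C_P(l_2)=\delta_b^{-7.5}$'' without indicating either the $a/b$ combination or the induction on $k$, so as written it would not go through.
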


\begin{proof}
For $2\leq k \leq n$, by Lemma \ref{coefficient R P}, we have
$$C_P([a_{2k-1}a_{2n+1}a_{2n-2k+3} a_{2n+1}a_{2k-1}])
=C_P([a_{2k-1}a_{2n+1}a_{2n-2k+3}\rangle \langle a_{2n-2k+3} a_{2n+1}a_{2k-1}])$$
$$=C_R([a_{2k-1}a_{2n+1}a_{2n-2k+3}\rangle \langle a_{2n-2k+3} a_1 a_{2k-1}])C_R([a_{2k-1} a_1 a_{2n-2k+3}\rangle \langle a_{2n-2k+3}a_{2n+1}a_{2k-1}])$$
$$+C_R([a_{2k-1}a_{2n+1}a_{2n-2k+3}\rangle \langle a_{2n-2k+3} b_1 a_{2k-1}])C_R([a_{2k-1} b_1 a_{2n-2k+3}\rangle \langle a_{2n-2k+3}a_{2n+1}a_{2k-1}])$$
By Lemma \ref{coefficient R P}, we have
$$C_R([a_{2k-1} b_1 a_{2n-2k+3}\rangle \langle a_{2n-2k+3}a_{2n+1}a_{2k-1}]=-\delta_b^{0.5}C_R([a_{2k-1} a_1 a_{2n-2k+3}\rangle \langle a_{2n-2k+3}a_{2n+1}a_{2k-1}].$$
So the formula is simplified as
$$C_P([a_{2k-1}a_{2n+1}a_{2n-2k+3} a_{2n+1}a_{2k-1}])$$
$$=\delta_b^{2}C_R([a_{2k-1}a_{2n+1}a_{2n-2k+3}\rangle \langle a_{2n-2k+3} a_1 a_{2k-1}])C_R([a_{2k-1} a_1 a_{2n-2k+3}\rangle \langle a_{2n-2k+3}a_{2n+1}a_{2k-1}]),$$
where $\delta_b^2$ is given by $1+(-\delta_b^{0.5})^2=\delta_b^2$.

We see that the cusp point of a path in $[a_{2k-1} a_{2n-2k+3}\rangle$ could be $a_1$ or $b_1$, but we may ignore the path with the cusp point $b_1$ by adding a factor $\delta_b^2$.

While
$$C_R([a_{2k-1} a_1 a_{2n+1}a_{2k-1}])$$
$$=\sqrt{\frac{\lambda'(a_1)\lambda'(a_{2n+1})}{\lambda'(a_{2k-1})\lambda'(a_{2n-2k+3})}}C_R([a_1 a_{2n+1} a_1])=\delta_b^{-0.5}\delta_b^{-3}=\delta_b^{-3.5}.$$
So
$$C_P([a_{2k-1}a_{2n+1}a_{2n-2k+3} a_{2n+1}a_{2k-1}])=\delta_b^2(\delta_b^{-3.5})^2=\delta_b^{-5}.$$

On the other hand, there is no path in $[a_{2k-1}b_{2n-2k+3}\rangle$, so
$$C_P([a_{2k-1}a_{2n+1}b_{2n-2k+3} a_{2n+1}a_{2k-1}])=0.$$
Then
$$C_R([a_{2k-1}a_{2n+1}a_{2n-2k+3} a_{2n+1}a_{2k-1}])=\delta_b^{-5}.$$

For the formula $C_R([a_{2k-1}a_{2n+1}a_{2n-1} a_{2n+2k-3} a_{2k-1}])$,
when $k=3$, we have
$$C_R([a_{3}a_{2n+1} a_{2n-1} a_{2n+1} a_{3}])=\delta_b^{-5}.$$

When $k\geq 3$, by Lemma \ref{coefficient R P}, we have
$$C_P([a_{2k-1}a_{2n+1}a_{2n-1} a_{2n+2k-3} a_{2k-1}])$$
$$=C_P([a_{2k-1}a_{2n+1}a_{2n-1}a_{2n+2k-5}\rangle \langle a_{2n+2k-5} a_{2n+2k-3} a_{2k-1}])$$
$$=\delta_b^2C_R([a_{2k-1}a_{2n+1}a_{2n-1}a_{2n+2k-5}\rangle \langle a_{2n+2k-5} a_{2k-3} a_{2k-1}])$$
$$\times C_R([a_{2k-1} a_{2k-3} a_{2n+2k-5} \rangle \langle a_{2n+2k-5} a_{2n+2k-3} a_{2k-1}]),$$

where the factor $\delta_b^2$ comes from the choice the cusp point $a_{2k-3}$. While

$$C_R([a_{2k-1}a_{2n+1}a_{2n-1} a_{2n+2k-3} a_{2k-1}])$$
$$=C_R([a_{2k-1}a_{2n+1}a_{2n-1}a_{2n+2k-5}\rangle \langle a_{2n+2k-5} a_{2k-3} a_{2k-1}])$$
$$=\sqrt{\frac{\lambda'(a_{2k-3})\lambda'(a_{2n+2k-7})}{\lambda'(a_{2k-1})\lambda'(a_{2n+2k-5})}}C_R([a_{2k-3}a_{2n+1}a_{2n-1}a_{2n+2k-5}a_{2k-3}])$$

\begin{equation*}
  =\left\{
   \begin{aligned}
   &\delta_b^{-0.5}C_R([a_3a_{2n+1}a_{2n-1}a_{2n+1}a_3])=\delta_b^{-5.5}& ~when~ k=3;  \\
   &C_R([a_{2k-3}a_{2n+1}a_{2n-1}a_{2n+2k-5}a_{2k-3}])& ~when ~k\geq 4. \\
   \end{aligned}
   \right.
\end{equation*}

$$C_R([a_{2k-1}a_{2k-3}a_{2n+2k-3}a_{2k-1}])$$
$$=\sqrt{\frac{\lambda'(a_{2k-3})\lambda'(a_{2n+2k-3})}{\lambda'(a_{2k-1})\lambda'(a_{2n+2k-5})}}C_R([a_{2k-3}a_{2n+2k-3}a_{2k-3}])=\delta_b^{-1}\delta_b^{-3}=\delta_b^{-4}.$$

Note that the middle point $a_{2n+2k-5}$ is a flat point, by Lemma \ref{coefficient R P}, we have
$$C_R([a_{2k-1}a_{2n+1}a_{2n-1} a_{2n+2k-3} a_{2k-1}])=\delta_b^2 C_P([a_{2k-1}a_{2n+1}a_{2n-1} a_{2n+2k-3} a_{2k-1}]).$$
Then $C_R([a_{2k-1}a_{2n+1}a_{2n-1} a_{2n+2k-3} a_{2k-1}])=\delta_b^{-5.5}$ when $k=3$;

$C_R([a_{2k-1}a_{2n+1}a_{2n-1} a_{2n+2k-3} a_{2k-1}])=C_R([a_{2k-3}a_{2n+1}a_{2n-1}a_{2n+2k-5}a_{2k-3}])$ when $k\geq 4$.

Therefore we have
$C_R([a_{2k-1}a_{2n+1}a_{2n-1} a_{2n+2k-3} a_{2k-1}])=\delta_b^{-5.5}$ inductively, for $3\leq k\leq n$.

Take the adjoint, we have $C_R([a_{2k-1} a_{2n+2k-3}a_{2n-1}a_{2n+1} a_{2k-1}])=\delta_b^{-5.5}.$

\end{proof}

\begin{lemma}\label{d8}
$$C_R([a_{2k-1}a_{2n+1}a_{2n-1} a_{2n+2k-5}a_{2n-1}a_{2n+1}a_{2k-1}])=-\delta_b^{-8}, \forall ~3\leq k \leq n$$
\end{lemma}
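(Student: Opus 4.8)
The plan is to compute $C_R$ of this depth-eight loop by the same reduction already used for Lemmas \ref{d3} and \ref{d5}: split the loop at its geometric middle, convert $C_R$ into coefficients $C_P$ of $P=\sum_k P_k$, and then re-expand $C_P$ through $P_k=U_k^*U_k$ into products of already-known $C_R$ values. Throughout I work under the fixed normalization $\mu_1=\mu_2=\omega=1$, so that both $\rho(R)=R$ and $R=R^*$ are available by Lemma \ref{key lemma}, and I freely use the dimension-vector data of Lemma \ref{refined dual principal graph}.

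First I would locate the middle point $x_{2n}$. Tracing $[a_{2k-1}a_{2n+1}a_{2n-1}a_{2n+2k-5}a_{2n-1}a_{2n+1}a_{2k-1}]$ from $a_{2k-1}$ up through $a_{2n}$ to the cusp $a_{2n+1}$ ($2n-2k+2$ edges), back down to the cusp $a_{2n-1}$ ($2$ edges), and up to $a_{2n+2k-5}$ ($2k-4$ edges) uses exactly $2n$ edges, so $x_{2n}=a_{2n+2k-5}$; since it is flanked on both sides by $a_{2n+2k-6}$, it is a \emph{cusp} point. (This is the structural difference from Lemma \ref{d5}, whose central listed vertex was two steps past the flat geometric middle.) Hence the cusp case of Lemma \ref{coefficient R P} applies and gives $C_R(l)=C_P(l)-C_P(l')$, where $l'$ replaces the central $a_{2n+2k-5}$ by $b_{2n+2k-5}$. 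I expect the negative answer to originate precisely here: one of the two terms survives and carries a minus sign, exactly as the $b$-for-$a$ substitution rule $C_R(l_1')=-\delta_b^{1/2}C_R(l_1)$ feeds a minus into the surviving contribution.

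Next I would expand each $C_P$ via Lemma \ref{coefficient P U} as a sum $\sum_{\eta}C_R(\eta_1\eta^*)C_R(\eta\eta_2^*)$ over length-$2n$ paths $\eta$ from $a_{2k-1}$ to the middle. The decisive simplification is that almost every path gives a vanishing factor: an $R$-loop may only have cusps at odd vertices $a_{2i-1},b_{2i-1}$ (the remark following Lemma \ref{ini U}), and $A^-_jRA^-_j=0$ by Lemma \ref{--=0}, so only those $\eta$ survive whose companion loop $\eta_1\eta^*$ reduces, after a rotation by $\rho$, to one of the depth-five loops already evaluated in Lemma \ref{d5}. Summing over the two admissible choices ($a$- or $b$-cusp) at the interior odd vertex collapses, through $1+(-\delta_b^{1/2})^2=\delta_b^2$ and $\delta_b^2=\delta_b+1$, to a single term with a clean power of $\delta_b$. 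Rotating back with $\rho(R)=R$ and using $R=R^*$ then replaces each factor $C_R(\eta_1\eta^*)$ by a Lemma \ref{d5} value, up to the $\sqrt{\lambda'(\cdot)/\lambda'(\cdot)}$ ratios, which I would check become trivial for $k\ge 4$; this produces the inductive step, showing $C_R(l)$ is independent of $k$.

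The induction is anchored at $k=3$, where $a_{2n+2k-5}=a_{2n+1}$ coincides with the outer cusps and the two factors reduce directly to the values $\delta_b^{-5}$ and $\delta_b^{-5.5}$ of Lemma \ref{d5}; assembling the prefactor $\delta_b^2$ from the cusp sum against these base values yields $-\delta_b^{-8}$. The main obstacle will not be any single identity but the combinatorial bookkeeping: enumerating the handful of non-vanishing paths $\eta$, tracking the dimension-vector normalizations under each rotation, and pinning down the single overall sign. I would control this by always presenting a loop split at its middle as $[x_0y_1\cdots y_i\,x_{2n}\,z_1\cdots z_j x_0]$ and verifying that, for $3\le k\le n$, the reduction is uniform in $n$ (as noted before Lemma \ref{d3}), so that the entire computation collapses onto the two base loops of Lemma \ref{d5}.
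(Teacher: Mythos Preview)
Your plan is essentially the paper's: locate the cusp middle $a_{2n+2k-5}$, apply the cusp case of Lemma~\ref{coefficient R P} to get $C_R(l)=C_P(l)-C_P(l')$, expand each $C_P$ through Lemma~\ref{coefficient P U} over the two admissible paths (interior cusp at $a_{2k-3}$ or $b_{2k-3}$, collapsing to a prefactor $\delta_b^{2}$), and rotate the surviving factor back to a Lemma~\ref{d5} value. Two small corrections to your bookkeeping: the loop is self-adjoint, so the two factors $C_R(\eta_1\eta^*)$ and $C_R(\eta\eta_2^*)$ are \emph{equal} (both $\delta_b^{-5.5}$, for every $3\le k\le n$, not one $\delta_b^{-5}$ and one $\delta_b^{-5.5}$); and the $a\to b$ substitution at the \emph{middle} point does not obey the interior rule $-\delta_b^{1/2}$ but picks up an extra $\sqrt{\lambda'}$ ratio because $x_{2n}$ itself changes, so that $C_P(l)-C_P(l')=(1-\delta_b^{2})C_P(l)=-\delta_b\cdot\delta_b^{2}(\delta_b^{-5.5})^{2}=-\delta_b^{-8}$ directly --- no induction on $k$ is needed.
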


\begin{proof}
For $3\leq k \leq n$, by Lemma \ref{coefficient P U}, we have
$$C_P([a_{2k-1}a_{2n+1}a_{2n-1}a_{2n+2k-5}a_{2n-1}a_{2n+1}a_{2k-1}])$$
$$=C_P([a_{2k-1}a_{2n+1}a_{2n-1}a_{2n+2k-5}\rangle \langle a_{2n+2k-5}a_{2n-1}a_{2n+1}a_{2k-1}])$$
$$=\delta_b^2 C_R([a_{2k-1}a_{2n+1}a_{2n-1}a_{2n+2k-5}\rangle \langle a_{2n+2k-5} a_{2k-3} a_{2k-1}])$$
$$\times C_R([a_{2k-1} a_{2k-3} a_{2n+2k-5}\rangle \langle a_{2n+2k-5}a_{2n-1}a_{2n+1}a_{2k-1}]),$$
where $\delta_b^2$ is given by the choice of $a_{2k-3}$.

On the other hand
$$C_P([a_{2k-1}a_{2n+1}a_{2n-1}b_{2n+2k-5}a_{2n-1}a_{2n+1}a_{2k-1}])$$
$$=C_P([a_{2k-1}a_{2n+1}a_{2n-1}b_{2n+2k-5}\rangle \langle b_{2n+2k-5}a_{2n-1}a_{2n+1}a_{2k-1}])$$
$$=\delta_b^2 C_R([a_{2k-1}a_{2n+1}a_{2n-1}b_{2n+2k-5}\rangle \langle b_{2n+2k-5} a_{2k-3} a_{2k-1}])$$
$$\times C_R([a_{2k-1} a_{2k-3} b_{2n+2k-5}\rangle \langle b_{2n+2k-5}a_{2n-1}a_{2n+1}a_{2k-1}]),$$
where $\delta_b^2$ is given by the choice of $a_{2k-3}$.

Note that
$$C_R([a_{2k-1}a_{2n+1}a_{2n-1} b_{2n+2k-5} a_{2k-3} a_{2k-1}])$$
$$=\delta_b^{-1}C_R([a_{2k-1}a_{2n+1}a_{2n-1}a_{2n+2k-5} a_{2k-3} a_{2k-1}]);$$
$$C_R([a_{2k-1} a_{2k-3} b_{2n+2k-5} a_{2n-1}a_{2n+1}a_{2k-1}])$$
$$=\delta_b^{-1} C_R([a_{2k-1} a_{2k-3} a_{2n+2k-5} a_{2n-1}a_{2n+1}a_{2k-1}]).$$

By Lemma \ref{coefficient R P}, we have
$$C_R([a_{2k-1}a_{2n+1}a_{2n-1}a_{2n+2k-5}a_{2n-1}a_{2n+1}a_{2k-1}])$$
$$=C_P([a_{2k-1}a_{2n+1}a_{2n-1}a_{2n+2k-5}a_{2n-1}a_{2n+1}a_{2k-1}])$$
$$-C_P([a_{2k-1}a_{2n+1}a_{2n-1}b_{2n+2k-5}a_{2n-1}a_{2n+1}a_{2k-1}])$$
$$=\delta_b^{-1}\delta_b^2 C_R([a_{2k-1}a_{2n+1}a_{2n-1}a_{2n+2k-5}\rangle \langle a_{2n+2k-5} a_{2k-3} a_{2k-1}])$$
$$\times C_R([a_{2k-1} a_{2k-3} a_{2n+2k-5}\rangle \langle a_{2n+2k-5}a_{2n-1}a_{2n+1}a_{2k-1}]).$$
where $-\delta_b$ is given by $1-(\delta_b^{-1})^2=-\delta_b$.

We see that if the middle point is a cusp point, and both $a_{2n+2k-5}$ and $b_{2n+2k-5}$ contribute to the middle point of a loop in the multiplication, then we may ignore the loop with middle point $b_{2n+2k-5}$ by adding a factor $-\delta_b$.

While
$$C_R([a_{2k-1} a_{2k-3} a_{2n+2k-5} a_{2n-1}a_{2n+1}a_{2k-1}]$$
$$=\sqrt{\frac{\lambda'(a_{2k-3})\lambda'(a_{2n+2k-7})}{\lambda'(a_{2k-1})\lambda'(a_{2n+2k-5})}}C_R([a_{2k-3}a_{2n+2k-5}a_{2n-1}a_{2n+1}a_{2k-1}])$$

\begin{equation*}
  =\left\{
   \begin{aligned}
   &\delta_b^{-0.5}C_R([a_{3}a_{2n+1}a_{2n-1}a_{2n+1}a_{3}])=\delta_b^{-5.5}& ~when~ k=3;  \\
   &C_R([a_{2k-3}a_{2n+2k-5}a_{2n-1}a_{2n+1}a_{2k-1}])=\delta_b^{-5.5}& ~when ~k\geq 4. \\
   \end{aligned}
   \right.
\end{equation*}

So $$C_R([a_{2k-1}a_{2n+1}a_{2n-1}a_{2n+2k-5}a_{2n-1}a_{2n+1}a_{2k-1}])=-\delta_b\delta_b^2(\delta_b^{-5.5})^2=-\delta_b^{-8}, ~\forall~ k\geq3.$$

\end{proof}

\begin{lemma}\label{d22}
For $5\leq k \leq n$,
we assume that

$\eta_{k1}=[a_{2k-1}a_{2n+2k-5}a_{2n+2k-9}\rangle;$

$\eta_{k2}=[a_{2k-1}a_{2n+1}a_{2n-1}a_{2n+2k-7}a_{2n+2k-9}\rangle;$

$\tilde{\eta}_{k1}=[a_{2k-1}a_{2k-5}a_{2n+2k-9} \rangle ;$

$\tilde{\eta}_{k2}=[a_{2k-1}a_{2k-3}a_{2n+1}a_{2n-1}a_{2n+2k-9} \rangle;$

Then

$\begin{bmatrix}
C_R(\eta_{k1}\tilde{\eta}_{k1}^*)&C_R(\eta_{k1}\tilde{\eta}_{k2}^*)\\
C_R(\eta_{k2}\tilde{\eta}_{k1}^*)&C_R(\eta_{k2}\tilde{\eta}_{k2}^*)\\
\end{bmatrix}$
=
$\begin{bmatrix}
\delta_b^{-5}&\delta_b^{-6.5}\\
\delta_b^{-6.5}&-\delta_b^{-9}\\
\end{bmatrix}$
\end{lemma}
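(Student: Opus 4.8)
The plan is to recognize all four quantities as off-diagonal, ``$U_k$-type'' loop coefficients and to evaluate them by the rotation-reduction already carried out in Lemmas \ref{d5} and \ref{d8}. Indeed, $\eta_{k1},\eta_{k2}$ are up-starting half-paths (first edge $a_{2k-1}\to a_{2k}$) while $\tilde\eta_{k1},\tilde\eta_{k2}$ are down-starting (first edge $a_{2k-1}\to a_{2k-2}$), and all four share the middle point $a_{2n+2k-9}$; hence each loop $\eta_{ki}\tilde\eta_{kj}^*$ lies in $A_k^+\mathscr{G}_{2n,+}A_k^-$, i.e. it is a coefficient of $U_k^*$, and these are precisely the building blocks $C_R(\eta_1\eta^*)$ appearing in the sum of Lemma \ref{coefficient P U}. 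Since we have fixed $\mu_1=\mu_2=\omega=1$, the identity $\rho(R)=R$ holds, so each such coefficient equals, up to the explicit $\sqrt{\lambda'/\lambda'}$ rotation scalars, the coefficient of a loop one level lower; this is the mechanism that will push each entry down onto a value already recorded in Lemmas \ref{d3}, \ref{d5}, \ref{d8}.

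Concretely, I would treat the four entries as mild extensions of the building blocks that appear inside the proof of Lemma \ref{d8}. The $(1,1)$ entry comes from the shortest pair of excursions ($\eta_{k1}$ peaks once at $a_{2n+2k-5}$, $\tilde\eta_{k1}$ dips once at $a_{2k-5}$) and reduces, after peeling these excursions by the rotation identity, to the value $\delta_b^{-5}$ of the first formula of Lemma \ref{d5}. The $(2,2)$ entry carries the full double peak $a_{2n+1},a_{2n-1}$ on each side and therefore threads through the computation of Lemma \ref{d8}: the extra depth of the middle point $a_{2n+2k-9}$ contributes one additional rotation scalar $\delta_b^{-1}$, and the key sign is produced exactly as in Lemma \ref{d8}, where the vertex shared by the two excursion feet admits both an $a$- and a $b$-cusp and the two contributions combine through $1-\delta_b^{-2}=-\delta_b$; this turns $-\delta_b^{-8}$ into $-\delta_b^{-9}$. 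The two off-diagonal entries pair one short with one long half-path; each is computed by the same peeling, using Lemma \ref{coefficient R P} to trade every $b$-cusp for its $a$-cusp at the cost of $-\delta_b^{1/2}$, and both come out equal to $\delta_b^{-6.5}$, the half-integer exponent tracing back to the $\sqrt{\lambda'/\lambda'}$ rotation scalars exactly as in Lemma \ref{d5}.

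The difficulty is entirely in the bookkeeping. First one must confirm, using total uncappability (Lemma \ref{--=0}) and the resulting constraint on the shape of loops with nonzero coefficient, that only the cusp configurations named in the four paths actually contribute, so that no stray loop slips into a rotation chain. Second, one must track simultaneously three kinds of factors: the $\sqrt{\lambda'/\lambda'}$ scalars from $\rho$, the $-\delta_b^{1/2}$ from each $b$-cusp, and the $\delta_b^2$-versus-$(-\delta_b)$ dichotomy that arises at a shared excursion foot according to whether the relevant middle point is flat or a cusp. The genuinely delicate point is the minus sign in the $(2,2)$ entry: because
\[
\det\begin{bmatrix}\delta_b^{-5}&\delta_b^{-6.5}\\ \delta_b^{-6.5}&-\delta_b^{-9}\end{bmatrix}=-\delta_b^{-14}-\delta_b^{-13}=-\delta_b^{-12}\neq 0,
\]
the four entries are linearly independent and the sign cannot be recovered from the other three; the $-\delta_b$ cancellation at the $a$/$b$-cusp step must therefore be carried out in full rather than inferred.
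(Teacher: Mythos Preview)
Your approach is essentially the paper's: recognize each $\eta_{ki}\tilde\eta_{kj}^*$ as a $U_k^*$-type loop (first edge in $A_k^+$, last in $A_k^-$), then use $\rho(R)=R$ to rotate the base point down and cite an earlier lemma. The paper's proof is four one-line reductions:
\[
C_R(\eta_{k1}\tilde\eta_{k1}^*)=\delta_b^{-2}C_R([a_{2k-5}a_{2n+2k-5}a_{2k-5}])=\delta_b^{-5}\quad\text{(Lemma \ref{d3})},
\]
\[
C_R(\eta_{k1}\tilde\eta_{k2}^*)=\delta_b^{-1}C_R([a_{2k-3}a_{2n+2k-5}a_{2n-1}a_{2n+1}a_{2k-3}])=\delta_b^{-6.5}\quad\text{(Lemma \ref{d5})},
\]
\[
C_R(\eta_{k2}\tilde\eta_{k1}^*)=\delta_b^{-1}C_R([a_{2k-5}a_{2n+1}a_{2n-1}a_{2n+2k-7}a_{2k-5}])=\delta_b^{-6.5}\quad\text{(Lemma \ref{d5})},
\]
\[
C_R(\eta_{k2}\tilde\eta_{k2}^*)=\delta_b^{-1}C_R([a_{2k-3}a_{2n+1}a_{2n-1}a_{2n+2k-7}a_{2n-1}a_{2n+1}a_{2k-3}])=-\delta_b^{-9}\quad\text{(Lemma \ref{d8})}.
\]

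Two corrections to your write-up. First, the $(1,1)$ entry reduces to Lemma~\ref{d3} (value $\delta_b^{-3}$) with rotation factor $\delta_b^{-2}$, not to Lemma~\ref{d5}. Second, you overcomplicate the $(2,2)$ entry: the minus sign and the $a$/$b$-cusp cancellation are already packaged inside Lemma~\ref{d8}; here you only need one rotation step giving a factor $\delta_b^{-1}$ and then a direct citation. There is no need to rerun the $-\delta_b$ mechanism, and your concluding paragraph about the sign being ``genuinely delicate'' and needing to be ``carried out in full'' is misplaced for this lemma.
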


\begin{proof}
$$C_R(\eta_{k1}\tilde{\eta}_{k1}^*)=C_R([a_{2k-1}a_{2n+2k-5}a_{2k-5}a_{2k-1}])$$
$$=\delta_b^{-2}C_R([a_{2k-5}a_{2n+2k-5}a_{2k-5}])=\delta_b^{-5}, ~\text{by Lemma \ref{d3}};$$
$$C_R(\eta_{k1}\tilde{\eta}_{k2}^*)=C_R([a_{2k-1}a_{2n+2k-5}a_{2n-1}a_{2n+1}a_{2k-3}a_{2k-1}])$$
$$=\delta_b^{-1}C_R([a_{2k-3}a_{2n+2k-5}a_{2n-1}a_{2n+1}a_{2k-3}])=\delta_b^{-6.5}, ~\text{by Lemma \ref{d5}};$$
$$C_R(\eta_{k2}\tilde{\eta}_{k1}^*)=C_R([a_{2k-1}a_{2n+1}a_{2n-1}a_{2n+2k-7}a_{2k-5}a_{2k-1}])$$
$$=\delta_b^{-1}C_R([a_{2k-5}a_{2n+1}a_{2n-1}a_{2n+2k-7}a_{2k-5}])=\delta_b^{-6.5}, ~\text{by Lemma \ref{d5}};$$
$$C_R(\eta_{k2}\tilde{\eta}_{k2}^*)=C_R([a_{2k-1}a_{2n+1}a_{2n-1}a_{2n+2k-7}a_{2n-1}a_{2n+1}a_{2k-3}a_{2k-1}])$$
$$=\delta_b^{-1}C_R([a_{2k-3}a_{2n+1}a_{2n-1}a_{2n+2k-7}a_{2n-1}a_{2n+1}a_{2k-3}])=-\delta_b^{-9}, ~\text{by Lemma \ref{d8}}.$$
\end{proof}

\begin{lemma}\label{etak}
For $5\leq k\leq n$,
we assume that

$\eta_{k3}=[a_{2k-1}a_{2n+1}a_{2n-1}a_{2n+1}a_{2n-1}a_{2n+2k-9}\rangle$;

$\eta_{k4}=[a_{2k-1}a_{2n+3}a_{2n-1}a_{2n+2k-9}\rangle$;

$\eta_{k5}=[a_{2k-1}a_{2n+1}a_{2n-3}a_{2n+2k-9}\rangle$.

Then

\begin{center}
\begin{tabular}{|c|c|c|c|c|c|}

\hline

$k$&$5l+5$&$5l+6$&$5l+7$&$5l+8$&$5l+9$\\

\hline
$C_R(\eta_{k1}\eta_{k3}^*)$& $0$ & $0$&$\delta_b^{-8}$ & $-\delta_b^{-9}$ &$ \delta_b^{-8}$ \\
\hline
$C_R(\eta_{k2}\eta_{k3}^*)$& $-\delta_b^{-10.5}$ &$\delta_b^{-9.5} $&$ -\delta_b^{-10.5} $&$ \delta_b^{-11.5} $&$ \delta_b^{-11.5}$\\
\hline
$C_R(\eta_{k1}\eta_{k4}^*)$& $\delta_b^{-5.5}$&$0 $& $\delta_b^{-6.5}$&$\delta_b^{-6.5} $&$0$ \\
\hline
$C_R(\eta_{k2}\eta_{k4}^*)$&$0$ &$0$ &$ \delta_b^{-8} $&$ -\delta_b^{-9}$&$ \delta_b^{-8}$\\
\hline
$C_R(\eta_{k1}\eta_{k5}^*)$&$ 0 $ &$ \delta_b^{-5.5} $ &$ 0 $&$ \delta_b^{-6.5} $&$ \delta_b^{-6.5} $ \\
\hline
$C_R(\eta_{k2}\eta_{k5}^*)$&$ \delta_b^{-8} $&$ 0 $&$0 $&$\delta_b^{-8}  $&$ -\delta_b^{-9} $\\
\hline

$C_R(\eta_{k3}\eta_{k3}^*)$& $\delta_b^{-13}$ & $-\delta_b^{-12}$&$-\delta_b^{-12}$ & $\delta_b^{-13}$ &$-\delta_b^{-14}$ \\
\hline
$C_R(\eta_{k3}\eta_{k4}^*)$& $0$ &$0$&$0$&$\delta_b^{-9.5} $&$-\delta_b^{-10.5}$\\
\hline
$C_R(\eta_{k3}\eta_{k5}^*)$& $\delta_b^{-9.5}$&$0 $& $0$&$0$&$-\delta_b^{-10.5}$ \\
\hline
$C_R(\eta_{k4}\eta_{k4}^*)$&$-\delta_b^{-8}$ &$\delta_b^{-7}$ &$-\delta_b^{-8} $&$0$&$0$\\
\hline
$C_R(\eta_{k4}\eta_{k5}^*)$&$0$&$0$&$0$&$0$&$\delta_b^{-8}$ \\
\hline
$C_R(\eta_{k5}\eta_{k5}^*)$&$0$&$-\delta_b^{-8}$&$\delta_b^{-7}$&$-\delta_b^{-8}$&$0$\\
\hline

\hline

\end{tabular}
.
\end{center}
\end{lemma}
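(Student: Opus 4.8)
The plan is to obtain every entry of the table by reducing the loop $\eta_{ki}\eta_{kj}^*$ to the coefficients already pinned down in Lemmas \ref{d3}, \ref{d5}, \ref{d8} and \ref{d22}, using nothing beyond the two reduction rules of Lemma \ref{coefficient R P}, the factorization $P_k=U_k^*U_k$ of Lemma \ref{coefficient P U}, and the two symmetries $R=R^*$ and $\rho(R)=R$, the latter valid because we have fixed $\mu_1=\mu_2=\omega=1$. The starting observation is that all five paths $\eta_{k1},\dots,\eta_{k5}$ are plus-direction paths from $a_{2k-1}$ to the common endpoint $a_{2n+2k-9}$, so each of the twelve loops $\eta_{ki}\eta_{kj}^*$ lies in $A_k^+\mathscr{G}_{2n,+}A_k^+$ and has middle vertex $a_{2n+2k-9}$.

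First I would enlarge Lemma \ref{d22} to the full list of couplings $C_R(\eta_{ki}\tilde{\eta}_{km}^*)$ between a plus-direction path $\eta_{ki}$, $1\le i\le 5$, and a minus-direction path $\tilde{\eta}_{km}$, $m\in\{1,2\}$. The $2\times 2$ block $i\in\{1,2\}$ is exactly Lemma \ref{d22}; the remaining rows $i\in\{3,4,5\}$ are computed by peeling the outer caps with the first formula of Lemma \ref{coefficient R P}, each peel replacing a cusp point $a_{2j-1}$ or $b_{2j-1}$ and multiplying by a controlled power of $\delta_b$, which collapses the loop onto one of the $\delta_b^{-3}$, $\delta_b^{-5}$, $\delta_b^{-5.5}$ or $-\delta_b^{-8}$ values of Lemmas \ref{d3}, \ref{d5}, \ref{d8}. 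Whenever the cusp patterns of the two half-paths are incompatible there is no admissible intermediate path and the coupling vanishes, which is the source of the zeros in the table.

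Then I would assemble each table entry. For a loop $\eta_{ki}\eta_{kj}^*$, Lemma \ref{coefficient R P} expresses $C_R$ through $C_P$ of the same loop (via the factor $\delta_b^2$ when its middle vertex is flat, or as a difference $C_P(\cdot)-C_P(\cdot')$ when it is a cusp), and Lemma \ref{coefficient P U} rewrites that $C_P$ as the sum $\sum_m C_R(\eta_{ki}\tilde{\eta}_{km}^*)\,C_R(\tilde{\eta}_{km}\eta_{kj}^*)$ over the minus-direction paths reaching $a_{2n+2k-9}$, essentially $\tilde{\eta}_{k1},\tilde{\eta}_{k2}$ with their $b$-cusp variants absorbed by the usual $\delta_b^2$ and $-\delta_b$ factors; the second factor is turned into a coupling from the previous step by $R=R^*$. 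Substituting and simplifying with $\delta_b^2=\delta_b+1$ produces the listed powers and signs, the extra factors $\delta_b^2$ and $-\delta_b$ arising exactly as in the proofs of Lemmas \ref{d5} and \ref{d8} when an $a$-cusp and its $b$-cusp partner both contribute.

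The five columns are organized as a case analysis on $k \bmod 5$: for a fixed residue the set of admissible intermediate paths and the flat-versus-cusp status of the shared middle vertex are the same for every $k$ in that class, so the answer depends on $k$ only through the residue, which is why the table stabilizes with period five and is independent of $n$. I expect the main obstacle to be organizational rather than conceptual: each individual reduction is routine given the earlier lemmas, but one must track, for each of the five residues and each of the twelve entries, precisely which cusp points occur, whether the middle vertex is flat or a cusp, and hence which sign and which power of $\delta_b$ survives. Keeping this bookkeeping consistent across the five columns, and checking that the incompatible path pairs genuinely give $0$, is where the care is required.
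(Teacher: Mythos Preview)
Your plan has a genuine gap in the justification of the period-five behaviour. You assert that ``for a fixed residue the set of admissible intermediate paths and the flat-versus-cusp status of the shared middle vertex are the same for every $k$ in that class, so the answer depends on $k$ only through the residue.'' Neither clause is correct. The flat/cusp status of the middle vertex $a_{2n+2k-9}$ depends only on the pair $(i,j)$, not on $k$ at all; and the set of minus-direction paths in $[a_{2k-1},a_{2n+2k-9}\rangle$ is not the two-element set $\{\tilde\eta_{k1},\tilde\eta_{k2}\}$ you describe --- there is, for instance, the path $[a_{2k-1}a_{2k-3}a_{2n+2k-7}a_{2n+2k-9}\rangle$, and the collection of such paths grows with $k$. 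So a direct case split on $k\bmod 5$ does not close.

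What actually produces the periodicity is an \emph{induction in $k$}, not a case analysis. Writing $\alpha_{ki}=C_R(\eta_{k1}\eta_{ki}^*)$ and $\beta_{ki}=C_R(\eta_{k2}\eta_{ki}^*)$, one combines Lemma~\ref{coefficient P U} with the rotation identity $C_R(\tilde\eta_{k1}\eta_{ki}^*)=\alpha_{(k-2)i}$, $C_R(\tilde\eta_{k2}\eta_{ki}^*)=\beta_{(k-1)i}$ and the $2\times2$ matrix of Lemma~\ref{d22} to obtain a coupled recursion for $(\alpha_{ki},\beta_{ki})$ in terms of $(\alpha_{(k-2)i},\beta_{(k-1)i})$. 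Eliminating $\beta$ gives the scalar recurrence $\alpha_{(k+1)i}+\delta_b^{-1}\alpha_{ki}-\delta_b^{-1}\alpha_{(k-1)i}-\alpha_{(k-2)i}=0$, whose characteristic polynomial factors as $(x-1)(x^2+\delta_b x+1)$; since $\delta_b=2\cos(\pi/5)$, the non-trivial roots are primitive fifth roots of unity, and this is where the period five comes from. The entries $C_R(\eta_{ki}\eta_{kj}^*)$ for $i,j\in\{3,4,5\}$ are then bilinear in the $\alpha$'s and $\beta$'s (with the extra third path contributing a $\delta_b^4\alpha_{(k-1)i}\alpha_{(k-1)j}$ term), so they inherit the same period. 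Your reductions via Lemmas~\ref{coefficient R P} and~\ref{coefficient P U} are the right building blocks, but without organizing them into this recurrence and invoking the root structure you cannot conclude the table is eventually periodic, let alone compute its entries for all $k$.
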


\begin{proof}
For $5\leq k \leq n$, $i=3,4,5$, we assume that

$$\begin{bmatrix}
\alpha_{ki}\\
\beta_{ki}\\
\end{bmatrix}
=\begin{bmatrix}
C_R(\eta_{k1}\eta_{ki}^*)\\
C_R(\eta_{k2}\eta_{ki}^*)\\
\end{bmatrix}.$$

Then
$$\begin{bmatrix}
\alpha_{ki}\\
\beta_{ki}\\
\end{bmatrix}
=\begin{bmatrix}
C_R(\eta_{k1}\eta_{ki}^*)\\
C_R(\eta_{k2}\eta_{ki}^*)\\
\end{bmatrix}
=\delta_b^2\begin{bmatrix}
C_R(\eta_{k1}\tilde{\eta}_{k1}^*)&C_R(\eta_{k1}\tilde{\eta}_{k2}^*)\\
C_R(\eta_{k2}\tilde{\eta}_{k1}^*)&C_R(\eta_{k2}\tilde{\eta}_{k2}^*)\\
\end{bmatrix}
\begin{bmatrix}
\delta_b^2C_R(\tilde{\eta}_{k1}\eta_{ki}^*)\\
\delta_b^6C_R(\tilde{\eta}_{k2}\eta_{ki}^*)\\
\end{bmatrix}
$$

Furthermore we have
$$
C_R(\tilde{\eta}_{k1}\eta_{ki}^*)
=
C_R(\rho^{-2}(\tilde{\eta}_{(k-2)1}\eta_{(k-2)i}^*))
=
C_R(\tilde{\eta}_{(k-2)1}\eta_{(k-2)i}^*)
=
\alpha_{(k-2)i}
, ~\text{when} ~k\geq7.
$$
$$
C_R(\tilde{\eta}_{k2}\eta_{k2}^*)
=
C_R(\rho^{-1}(\tilde{\eta}_{(k-1)2}\eta_{(k-1)i}^*))
=
C_R(\tilde{\eta}_{(k-1)2}\eta_{(k-1)i}^*)
=
\beta_{(k-1)i}
, ~\text{when} ~k\geq6.
$$

So
$$\begin{bmatrix}
\alpha_{ki}\\
\beta_{ki}\\
\end{bmatrix}
=\delta_b^2
\begin{bmatrix}
\delta_b^{-5}&\delta_b^{-6.5}\\
\delta_b^{-6.5}&-\delta_b^{-9}\\
\end{bmatrix}
\begin{bmatrix}
\delta_b^{2}\alpha_{(k-2)i}\\
\delta_b^{6}\beta_{(k-1)i}\\
\end{bmatrix}
=
\begin{bmatrix}
\delta_b^{-1}&\delta_b^{1.5}\\
\delta_b^{-2.5}&-\delta_b^{-1}\\
\end{bmatrix}
\begin{bmatrix}
\alpha_{(k-2)i}\\
\beta_{(k-1)i}\\
\end{bmatrix}.
$$

Substituting $\beta_{ki}$ by $\alpha_{ki}$, we have

$$\alpha_{(k+1)i}+\delta_b^{-1}\alpha_{ki}-\delta_b^{-1}\alpha_{(k-1)i}-\alpha_{(k-2)i}=0.$$ 

While $x^3+\delta_b^{-1}x^2-\delta_b^{-1}x-1=0$ has three roots $1,-q_b,-q_b^{-1}$.
So $$\alpha_{ki}=r_{1i}+r_{2i}(-q_b)^k+r_{3i}(-q_b)^{-k},$$ for some constant $r_{1i},r_{2i},r_{3i}$.
Then the periodicity is $5$.

Based on the results listed in the Appendix, the initial condition is

$$
\begin{bmatrix}
\alpha_{33}\\
\alpha_{43}\\
\beta_{43}\\
\end{bmatrix}
=
\begin{bmatrix}
C_R(\tilde{\eta}_{51}\eta_{53}^*)\\
C_R(\tilde{\eta}_{61}\eta_{53}^*)\\
C_R(\tilde{\eta}_{52}\eta_{53}^*)\\
\end{bmatrix}
=
\begin{bmatrix}
-\delta_b^{-9}\\
\delta_b^{-8}\\
\delta_b^{-11.5}\\
\end{bmatrix};$$
$$
\begin{bmatrix}
\alpha_{34}\\
\alpha_{44}\\
\beta_{44}\\
\end{bmatrix}
=
\begin{bmatrix}
C_R(\tilde{\eta}_{51}\eta_{54}^*)\\
C_R(\tilde{\eta}_{61}\eta_{54}^*)\\
C_R(\tilde{\eta}_{52}\eta_{54}^*)\\
\end{bmatrix}
=
\begin{bmatrix}
\delta_b^{-6.5}\\
0\\
\delta_b^{-8}\\
\end{bmatrix};$$
$$\begin{bmatrix}
\alpha_{35}\\
\alpha_{45}\\
\beta_{45}\\
\end{bmatrix}
=
\begin{bmatrix}
C_R(\tilde{\eta}_{51}\eta_{55}^*)\\
C_R(\tilde{\eta}_{61}\eta_{55}^*)\\
C_R(\tilde{\eta}_{52}\eta_{55}^*)\\
\end{bmatrix}
=
\begin{bmatrix}
\delta_b^{-6.5}\\
\delta_b^{-6.5}\\
-\delta_b^{-9}\\
\end{bmatrix}.
$$

For example,
$$\alpha_{33}=C_R(\tilde{\eta}_{51}\eta_{53}^*)=C_R([a_9a_{5}a_{2n+1}a_{2n-1}a_{2n+1}a_{2n-1}a_{2n+1}a_9)$$
$$=\delta_b^{-1}C_R([a_5a_{2n+1}a_{2n-1}a_{2n+1}a_{2n-1}a_{2n+1}a_5])=-\delta_b^{-9}.$$

The others are similar.

Then $\begin{bmatrix}
\alpha_{ki}\\
\beta_{ki}\\
\end{bmatrix}$
is obtained inductively. The result is listed in the following table

\begin{center}
\begin{tabular}{|c|c|c|c|c|c|}

\hline

$k$&$5l+5$&$5l+6$&$5l+7$&$5l+8$&$5l+9$\\

\hline

$\alpha_{k3}$& $0$ & $0$&$\delta_b^{-8}$ & $-\delta_b^{-9}$ &$ \delta_b^{-8}$ \\
\hline
$\beta_{k3}$& $-\delta_b^{-10.5}$ &$\delta_b^{-9.5} $&$ -\delta_b^{-10.5} $&$ \delta_b^{-11.5} $&$ \delta_b^{-11.5}$\\
\hline
$\alpha_{k4}$& $\delta_b^{-5.5}$&$0 $& $\delta_b^{-6.5}$&$\delta_b^{-6.5} $&$0$ \\
\hline
$\beta_{k4}$&$0$ &$0$ &$ \delta_b^{-8} $&$ -\delta_b^{-9}$&$ \delta_b^{-8}$\\
\hline
$\alpha_{k5}$&$ 0 $ &$ \delta_b^{-5.5} $ &$ 0 $&$ \delta_b^{-6.5} $&$ \delta_b^{-6.5} $ \\
\hline
$\beta_{k5}$&$ \delta_b^{-8} $&$ 0 $&$0 $&$\delta_b^{-8}  $&$ -\delta_b^{-9} $\\
\hline

\hline

\end{tabular}
.
\end{center}

For $5\leq k \leq n$, $3 \leq i,j\leq 5$, by Lemma(\ref{coefficient R P})(\ref{coefficient P U}), we have
$$C_R(\eta_{ki}\eta_{kj}^*)=
-\delta_b(\delta_b^2C_R(\eta_{ki}\tilde{\eta}_{k1}^*)C_R(\tilde{\eta}_{k1}\eta_{kj}^*)+\delta_b^6C_R(\eta_{ki}\tilde{\eta}_{k2}^*)C_R(\tilde{\eta}_{k2}\eta_{kj}^*))$$
$$+\delta_b^4C_R(\eta_{ki}\langle a_{2n+2k-9}a_{2n+2k-7} a_{2k-3}a_{2k-1}])C_R([a_{2k-1}a_{2k-3}a_{2n+2k-7}a_{2n+2k-9}\rangle\eta_{kj}^*)$$
$$
=-\delta_b(\delta_b^2C_R(\eta_{ki}\tilde{\eta}_{k1}^*)C_R(\tilde{\eta}_{k1}\eta_{kj}^*)+\delta_b^6C_R(\eta_{ki}\tilde{\eta}_{k2}^*)C_R(\tilde{\eta}_{k2}\eta_{kj}^*))$$
$$+\delta_b^4C_R(\eta_{(k+1)i}\tilde{\eta}_{(k+1)1}^*)C_R(\tilde{\eta}_{(k+1)1}\eta_{kj}^*).$$
$$=-\delta_b^3\alpha_{(k-2)i}\alpha_{(k-2)j}-\delta_b^{7}\beta_{(k-1)i}\beta_{(k-1)j}+\delta_b^4\alpha_{(k-1)i}\alpha_{(k-1)j}.$$

Then
\begin{center}
\begin{tabular}{|c|c|c|c|c|c|}

\hline

$k$&$5l+5$&$5l+6$&$5l+7$&$5l+8$&$5l+9$\\

\hline

$C_R(\eta_{k3}\eta_{k3}^*)$& $\delta_b^{-13}$ & $-\delta_b^{-12}$&$-\delta_b^{-12}$ & $\delta_b^{-13}$ &$-\delta_b^{-14}$ \\
\hline
$C_R(\eta_{k3}\eta_{k4}^*)$& $0$ &$0$&$0$&$\delta_b^{-9.5} $&$-\delta_b^{-10.5}$\\
\hline
$C_R(\eta_{k3}\eta_{k5}^*)$& $\delta_b^{-9.5}$&$0 $& $0$&$0$&$-\delta_b^{-10.5}$ \\
\hline
$C_R(\eta_{k4}\eta_{k4}^*)$&$-\delta_b^{-8}$ &$\delta_b^{-7}$ &$-\delta_b^{-8} $&$0$&$0$\\
\hline
$C_R(\eta_{k4}\eta_{k5}^*)$&$0$&$0$&$0$&$0$&$\delta_b^{-8}$ \\
\hline
$C_R(\eta_{k5}\eta_{k5}^*)$&$0$&$-\delta_b^{-8}$&$\delta_b^{-7}$&$-\delta_b^{-8}$&$0$\\
\hline

\hline

\end{tabular}
.
\end{center}

\end{proof}

\begin{lemma}\label{dfinal}
$$C_R(a_{2n-1}a_{4n-7}a_{2n-1}a_{2n+1}a_{2n-1}a_{2n+1}a_{2n-1}a_{2n+1}a_{2n-1})$$
\begin{equation*}
  =\left\{
   \begin{aligned}
   &-\delta_b^{-13.5} &~\text{when}~ n=20l+8;~  \\
   &-\delta_b^{-13.5} &~\text{when}~ n=20l+13;  \\
   &-\delta_b^{-11.5} &~\text{when}~ n=20l+18;  \\
   &-\delta_b^{-11.5} &~\text{when}~ n=20l+23.  \\
   \end{aligned}
   \right. \quad~\forall ~l\geq0.
\end{equation*}

\end{lemma}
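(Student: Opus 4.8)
The plan is to compute the target coefficient by exactly the reduction that drives Lemmas \ref{d3}--\ref{etak}: peel the loop at its middle point, convert $C_R$ into $C_P$ by Lemma \ref{coefficient R P}, expand each $C_P$ as a sum of products of half-loop coefficients by Lemma \ref{coefficient P U}, and finally read the surviving products off the tables of Lemmas \ref{d22} and \ref{etak}. First I would write
$$\ell=[a_{2n-1}\,a_{4n-7}\,a_{2n-1}\,a_{2n+1}\,a_{2n-1}\,a_{2n+1}\,a_{2n-1}\,a_{2n+1}\,a_{2n-1}]$$
and split it at its middle vertex into two length-$2n$ paths $\eta,\zeta$ starting at $a_{2n-1}$, so that $\ell=\eta\zeta^*$. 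Both halves lie in $A_n^+\mathscr{G}_{2n,+}A_n^+$ (the first point is $a_{2n-1}$, so the relevant index is $k=n$), and I recognize $\eta,\zeta$ as members of the family $\eta_{n3},\eta_{n4},\eta_{n5}$ of Lemma \ref{etak}, using the vertical symmetry $a_{2k-1}\leftrightarrow a_{4n-2k+1}$ to place the turnaround cusp $a_{4n-7}$.

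Next, by Lemma \ref{coefficient R P} (distinguishing a flat from a cusp middle point, and accounting for the $a\mapsto b$ substitution factors $-\delta_b^{1/2}$ and $-\delta_b$) I rewrite $C_R(\ell)$ in terms of $C_P$ of $\ell$ and of the loop with a $b$-point inserted at the middle; by Lemma \ref{coefficient P U} each such $C_P$ becomes a sum $\sum_{\eta}C_R(\eta_i\eta^*)C_R(\eta\eta_j^*)$ over intermediate paths $\eta\in[a_{2n-1},y\rangle$. The key simplification is that almost every intermediate path contributes a vanishing factor: the zero entries in the tables of Lemma \ref{etak}, together with the rule that cusp points forced by $A_k^-RA_k^-=0$ (Lemma \ref{--=0}) are uniquely determined, collapse the sum to a handful of products $C_R(\eta_{ni}\eta_{nj}^*)$ tabulated in the second table of Lemma \ref{etak}. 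Those products depend on $n$ only through $n\bmod 5$, via the recursion $\alpha_{(k+1)i}+\delta_b^{-1}\alpha_{ki}-\delta_b^{-1}\alpha_{(k-1)i}-\alpha_{(k-2)i}=0$ whose characteristic roots $1,-q_b,-q_b^{-1}$ satisfy $q_b+q_b^{-1}=\delta_b=2\cos(\pi/5)$, hence $(-q_b)^5=1$.

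The main obstacle, and the origin of the period $20$ rather than $5$, is the $n$-dependence that is invisible in this $k$-periodicity alone. Observe that the two exponent values in the statement differ by exactly a factor $\delta_b^2=\delta_b+1$ (since $-11.5=-13.5+2$), which is precisely the flat-versus-cusp middle-point factor produced by Lemma \ref{coefficient R P}. So the crux is to determine, as a function of $n\bmod 20$, which intermediate paths survive and whether the governing middle point is flat or a cusp; this is where combining the period-$5$ coefficient table with the global rotational alignment of $\ell$ around the cyclic graph $\Gamma_n$ introduces an extra period-$4$ modulation, giving the four classes $n\equiv 8,13,18,23\pmod{20}$ (all of which lie in $n\equiv 3\pmod 5$, the only residue for which $\ell$ is admissible).

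Concretely, I would verify the claim by fixing one representative in each of these four classes, feeding the explicit small-$k$ entries from the Appendix (and Lemmas \ref{d5}, \ref{d8}) as initial data for the $\eta_{ki}$-coefficients, and then propagating through the recursion of Lemma \ref{etak}; the delicate bookkeeping is the tracking of half-integer powers of $\delta_b$ coming from the edge weights $\sqrt{\lambda'(\cdot)/\lambda'(\cdot)}$ and the signs from the $b$-point substitutions. I expect the final assembly to yield the common sign $-$ in all four cases, with the exponent toggling between $-13.5$ for $n\equiv 8,13$ and $-11.5$ for $n\equiv 18,23$ exactly according to the flat/cusp alternation; confirming this alternation is the heart of the computation.
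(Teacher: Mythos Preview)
Your overall strategy—peel via Lemma \ref{coefficient R P}, expand via Lemma \ref{coefficient P U}, and feed in previously computed coefficients—is exactly right, and is what the paper does. But there is a genuine gap in the execution: you assert that the two half-paths of $\ell$ lie in the family $\eta_{n3},\eta_{n4},\eta_{n5}$ of Lemma \ref{etak}. They do not. The $\eta_{ki}$ end at $a_{2n+2k-9}$, whereas the half-paths of $\ell$ (for $k=n$) must end at $a_{2n+2k-13}$; the extra pair of $a_{2n-1}a_{2n+1}$ cusps in $\ell$ forces one more ``step'' of depth than the $\eta$-family can reach. The paper therefore introduces a \emph{new} five-member family $\xi_{k1},\ldots,\xi_{k5}$ (and companions $\tilde{\xi}_{ki}$) and a $5\times 5$ transfer matrix $T_k$ expressing $\bigl(C_R(\xi_{ki}\xi_k^*)\bigr)_i$ in terms of values at $k-1,k-2,k-3$; the target coefficient is $C_R(\xi_{n1}\xi_n^*)$. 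The entries of $T_k$ are assembled from Lemmas \ref{d22} and \ref{etak} and have period $5$ in $k$, but the resulting five-dimensional recursion with three-step memory has period $20$, which one sees by running it from explicit initial data at $k=7,8,9$ through $k=29$ and observing the repeat.

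Two consequences of this: first, the ``period-$4$ modulation from global rotational alignment'' you propose is not the mechanism—there is no separate geometric period $4$; the period $20$ is purely the period of the linear recursion built on the period-$5$ matrices $T_k$. Second, checking one representative in each residue class does not establish periodicity; you must either exhibit the recursion and run it until it visibly closes up, or diagonalize the associated transfer operator. Your flat/cusp heuristic for the $\delta_b^{\pm 2}$ discrepancy between $-11.5$ and $-13.5$ is suggestive but is not how the values are derived; in the paper they simply fall out of the table for $C_R(\xi_{k1}\xi_k^*)$ at $k=8,13,18,23$.
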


\begin{proof}
When $7\leq k \leq n$,
we assume that

$\xi_{k1}=[a_{2k-1}a_{2n+2k-7}a_{2n+2k-13}\rangle;$

$\xi_{k2}=[a_{2k-1}a_{2n+1}a_{2n-1}a_{2n+2k-9}a_{2n+2k-13}\rangle;$

$\xi_{k3}=[a_{2k-1}a_{2n+1}a_{2n-1}a_{2n+1}a_{2n-1}a_{2n+2k-11}a_{2n+2k-13}\rangle;$

$\xi_{k4}=[a_{2k-1}a_{2n+3}a_{2n-1}a_{2n+2k-11}a_{2n+2k-13}\rangle;$

$\xi_{k5}=[a_{2k-1}a_{2n+1}a_{2n-3}a_{2n+2k-11}a_{2n+2k-13}\rangle;$

$\tilde{\xi}_{k1}=[a_{2k-1}a_{2k-7}a_{2n+2k-13} \rangle ;$

$\tilde{\xi}_{k2}=[a_{2k-1}a_{2k-5}a_{2n+1}a_{2n-1}a_{2n+2k-13} \rangle;$

$\tilde{\xi}_{k3}=[a_{2k-1}a_{2k-3}a_{2n+1}a_{2n-1}a_{2n+1}a_{2n-1}a_{2n+2k-13} \rangle;$

$\tilde{\xi}_{k4}=[a_{2k-1}a_{2k-3}a_{2n+3}a_{2n-1}a_{2n+2k-13} \rangle;$

$\tilde{\xi}_{k5}=[a_{2k-1}a_{2k-3}a_{2n+1}a_{2n-3}a_{2n+2k-13} \rangle.$

By Lemma(\ref{d22})(\ref{etak}), we may compute $T_k$, for $k\geq7$, where
$$T_k=\begin{bmatrix}
C_R(\xi_{k1}\tilde{\xi}_{k1}^*)&C_R(\xi_{k1}\tilde{\xi}_{k2}^*)&C_R(\xi_{k1}\tilde{\xi}_{k3}^*)&C_R(\xi_{k1}\tilde{\xi}_{k4}^*)&C_R(\xi_{k1}\tilde{\xi}_{k5}^*)\\
C_R(\xi_{k2}\tilde{\xi}_{k1}^*)&C_R(\xi_{k2}\tilde{\xi}_{k2}^*)&C_R(\xi_{k2}\tilde{\xi}_{k3}^*)&C_R(\xi_{k2}\tilde{\xi}_{k4}^*)&C_R(\xi_{k2}\tilde{\xi}_{k5}^*)\\
C_R(\xi_{k3}\tilde{\xi}_{k1}^*)&C_R(\xi_{k3}\tilde{\xi}_{k2}^*)&C_R(\xi_{k3}\tilde{\xi}_{k3}^*)&C_R(\xi_{k3}\tilde{\xi}_{k4}^*)&C_R(\xi_{k3}\tilde{\xi}_{k5}^*)\\
C_R(\xi_{k4}\tilde{\xi}_{k1}^*)&C_R(\xi_{k4}\tilde{\xi}_{k2}^*)&C_R(\xi_{k4}\tilde{\xi}_{k3}^*)&C_R(\xi_{k4}\tilde{\xi}_{k4}^*)&C_R(\xi_{k4}\tilde{\xi}_{k5}^*)\\
C_R(\xi_{k5}\tilde{\xi}_{k1}^*)&C_R(\xi_{k5}\tilde{\xi}_{k2}^*)&C_R(\xi_{k5}\tilde{\xi}_{k3}^*)&C_R(\xi_{k5}\tilde{\xi}_{k4}^*)&C_R(\xi_{k5}\tilde{\xi}_{k5}^*)\\
\end{bmatrix}.$$

For $1\leq i,j\leq 2$, we have
$$C_R(\xi_{ki}\tilde{\xi}_{kj}^*)=\delta_b^{-1}C_R(\eta_{(k-1)i}\tilde{\eta}_{(k-1)j}^*) \quad \forall k\geq7.$$

For $1\leq i\leq 5,3\leq j\leq 5$, we have
$$C_R(\xi_{ki}\tilde{\xi}_{kj}^*)=\delta_b^{-1}C_R(\eta_{(k-1)i}\tilde{\eta}_{(k-1)j}^*) \quad \forall k\geq7.$$

For $3\leq i\leq 5,j=2$, we have
$$C_R(\xi_{ki}\tilde{\xi}_{kj}^*)=\delta_b^{-1}C_R(\eta_{(k-2)i}\tilde{\eta}_{(k-2)j}^*) \quad \forall k\geq7.$$

For $3\leq i\leq 5,j=1$, we have
$$C_R(\xi_{ki}\tilde{\xi}_{kj}^*)=\delta_b^{-1}C_R(\eta_{(k-3)i}\tilde{\eta}_{(k-3)j}^*) \quad \forall k\geq8.$$

Based on the results listed in the Appendix, we have
$$C_R(\xi_{73}\tilde{\xi}_{71}^*)=\delta_b^{-9};   \quad
C_R(\xi_{74}\tilde{\xi}_{71}^*)=0; \quad
C_R(\xi_{75}\tilde{\xi}_{71}^*)=\delta_b^{-7.5}.$$
For example,
$$C_R(\xi_{73}\tilde{\xi}_{71}^*)
=C_R([a_{13}a_{2n+1}a_{2n-1}a_{2n+1}a_{2n-1}a_{2n+3}a_7a_{13}])
$$
$$=\delta_b^{-1.5}C_R([a_7a_{2n+1}a_{2n-1}a_{2n+1}a_{2n-1}a_{2n+3}])
=\delta_b^{-9}.
$$
The others are similar.

Then
$$T_k=\begin{bmatrix}
\delta_b^{-6}&\delta_b^{-7.5}&0&0&\delta_b^{-6.5}\\
\delta_b^{-7.5}&-\delta_b^{-10}&\delta_b^{-10.5}&0&0\\
\delta_b^{-9}&-\delta_b^{-11.5}&-\delta_b^{-13}&0&0\\
0&0&0&\delta_b^{-8}&0\\
\delta_b^{-7.5}&\delta_b^{-9}&0&0&-\delta_b^{-9}\\
\end{bmatrix}, ~\text{when} ~k=5l+7;$$
$$=\begin{bmatrix}
\delta_b^{-6}&\delta_b^{-7.5}&\delta_b^{-9}&\delta_b^{-7.5}&0\\
\delta_b^{-7.5}&-\delta_b^{-10}&-\delta_b^{-11.5}&\delta_b^{-9}&0\\
0&\delta_b^{-10.5}&-\delta_b^{-13}&0&0\\
\delta_b^{-6.5}&0&0&-\delta_b^{-9}&0\\
0&0&0&0&\delta_b^{-8}\\
\end{bmatrix},~\text{when} ~k=5l+8;$$
$$=\begin{bmatrix}
\delta_b^{-6}&\delta_b^{-7.5}&-\delta_b^{-10}&\delta_b^{-7.5}&\delta_b^{-7.5}\\
\delta_b^{-7.5}&-\delta_b^{-10}&\delta_b^{-12.5}&-\delta_b^{-10}&\delta_b^{-9}\\
0&-\delta_b^{-11.5}&\delta_b^{-14}&\delta_b^{10.5}&0\\
0&\delta_b^{-9}&\delta_b^{10.5}&0&0\\
\delta_b^{-6.5}&0&0&0&-\delta_b^{-9}\\
\end{bmatrix}, ~\text{when} ~k=5l+9;$$
$$=\begin{bmatrix}
\delta_b^{-6}&\delta_b^{-7.5}&\delta_b^{-9}&0&\delta_b^{-7.5}\\
\delta_b^{-7.5}&-\delta_b^{-10}&\delta_b^{-12.5}&\delta_b^{-9}&-\delta_b^{-10}\\
\delta_b^{-9}&\delta_b^{-12.5}&-\delta_b^{-15}&-\delta_b^{-11.5}&-\delta_b^{-11.5}\\
\delta_b^{-7.5}&-\delta_b^{-10}&-\delta_b^{-11.5}&0&\delta_b^{-9}\\
0&\delta_b^{-9}&-\delta_b^{-11.5}&\delta_b^{-9}&0\\
\end{bmatrix}, ~\text{when} ~k=5l+10;$$
$$=\begin{bmatrix}
\delta_b^{-6}&\delta_b^{-7.5}&0&\delta_b^{-6.5}&0\\
\delta_b^{-7.5}&-\delta_b^{-10}&-\delta_b^{-11.5}&0&\delta_b^{-9}\\
-\delta_b^{-10}&\delta_b^{-12.5}&\delta_b^{-14}&0&\delta_b^{-10.5}\\
\delta_b^{-7.5}&\delta_b^{-9}&0&-\delta_b^{-9}&0\\
\delta_b^{-7.5}&-\delta_b^{-10}&\delta_b^{10.5}&0&0\\
\end{bmatrix}, ~\text{when} ~k=5l+11.$$

Take $\xi_k$ to be $[a_{2k-1}a_{2n+1}a_{2n-1}a_{2n+1}a_{2n-1}a_{2n+1}a_{2n-1}a_{2n+2k-13} \rangle$, then
$$\begin{bmatrix}
C_R(\xi_{k1}\xi_k^*)\\
C_R(\xi_{k2}\xi_k^*)\\
C_R(\xi_{k3}\xi_k^*)\\
C_R(\xi_{k4}\xi_k^*)\\
C_R(\xi_{k5}\xi_k^*)\\
\end{bmatrix}
=\delta_b^2T_k
\begin{bmatrix}
\delta_b^2 C_R(\tilde{\xi}_{k1}\xi_k^*)\\
\delta_b^6 C_R(\tilde{\xi}_{k2}\xi_k^*)\\
\delta_b^{10} C_R(\tilde{\xi}_{k3}\xi_k^*)\\
\delta_b^6 C_R(\tilde{\xi}_{k4}\xi_k^*)\\
\delta_b^6 C_R(\tilde{\xi}_{k5}\xi_k^*)\\
\end{bmatrix}, ~\forall k\geq7.$$

Furthermore
$$C_R(\tilde{\xi}_{k1}\xi_k^*)=C_R({\xi}_{(k-3)1}\xi_{k-3}^*), ~\text{when} ~k\geq10;$$
$$C_R(\tilde{\xi}_{k2}\xi_k^*)=C_R({\xi}_{(k-2)2}\xi_{k-2}^*), ~\text{when} ~k\geq9;$$
$$C_R(\tilde{\xi}_{k3}\xi_k^*)=C_R({\xi}_{(k-1)3}\xi_{k-1}^*), ~\text{when} ~k\geq8;$$
$$C_R(\tilde{\xi}_{k4}\xi_k^*)=C_R({\xi}_{(k-1)4}\xi_{k-1}^*), ~\text{when} ~k\geq8;$$
$$C_R(\tilde{\xi}_{k5}\xi_k^*)=C_R({\xi}_{(k-1)5}\xi_{k-1}^*), ~\text{when} ~k\geq8.$$

So we may compute it inductively. Based on Lemma(\ref{d22})(\ref{etak}), by a direct computation, the initial condition is

$$\begin{bmatrix}
C_R(\tilde{\xi}_{71}\xi_7^*)&C_R(\tilde{\xi}_{81}\xi_8^*)&C_R(\tilde{\xi}_{91}\xi_9^*)\\
&C_R(\tilde{\xi}_{72}\xi_7^*)&C_R(\tilde{\xi}_{82}\xi_8^*)\\
&&C_R(\tilde{\xi}_{73}\xi_7^*)\\
&&C_R(\tilde{\xi}_{74}\xi_7^*)\\
&&C_R(\tilde{\xi}_{75}\xi_7^*)\\
\end{bmatrix}
=
\begin{bmatrix}
\delta_b^{-12.5}&0&-\delta_b^{-12.5}\\
&\delta_b^{-14}&\delta_b^{-13}\\
&&\delta_b^{-14.5}\\
&&-\delta_b^{-14}\\
&&-\delta_b^{-14}\\
\end{bmatrix}.$$
For example,
$$C_R(\tilde{\xi}_{91}\xi_9^*)
=C_R([a_{17}a_{11}a_{2n+5}a_{2n-1}a_{2n+1}a_{2n-1}a_{2n+1}a_{2n-1}a_{2n+1}a_{17}])
$$
$$
=\delta_b^{-0.5}C_R([a_{11}a_{2n+5}a_{2n-1}a_{2n+1}a_{2n-1}a_{2n+1}a_{2n-1}a_{2n+1}a_{11}])
$$
$$
=\delta_b^{-0.5}(\delta_b^2C_R([a_{11}a_{2n+5} a_5a_{11}])
C_R([a_{11}a_5 a_{2n+1}a_{2n-1}a_{2n+1}a_{2n-1}a_{2n+1}a_{11}])
$$
$$
+\delta_b^6C_R([a_{11}a_{2n+5}a_{2n-3}a_{2n+1}a_9a_{11}])
C_R([a_{11}a_9a_{2n+1}a_{2n-3}a_{2n+1}a_{2n-1}a_{2n+1}a_{2n-1}a_{2n+1}a_{11}])
$$
$$
-\delta_b\delta_b^4C_R([a_{11}a_{2n+5}a_{2n-1}a_{2n+1}a_7a_{11}])
C_R([a_{11}a_7a_{2n+1}a_{2n-1}a_{2n+1}a_{2n-1}a_{2n+1}a_{2n-1}a_{2n+1}a_{11}])
$$
$$
-\delta_b\delta_b^4C_R([a_{11}a_{2n+5}a_{2n-1}a_{2n+3}a_9a_{11}])
C_R([a_{11}a_9a_{2n+3}a_{2n-1}a_{2n+1}a_{2n-1}a_{2n+1}a_{2n-1}a_{2n+1}a_{11}])
$$
$$
-\delta_b\delta_b^8C_R([a_{11}a_{2n+5}a_{2n-1}a_{2n+1}a_{2n-1}a_{2n+1}a_9a_{11}])
$$
$$
C_R([a_{11}a_9a_{2n+1}a_{2n-1}a_{2n+1}a_{2n-1}a_{2n+1}a_{2n-1}a_{2n+1}a_{2n-1}a_{2n+1}a_{11}])
)
$$
$$
=\delta_b^{-0.5}(\delta_b^2\delta_b^{-2.5}\delta_b^{-3}\delta_b^{-1.5}(-\delta_b^{-8})
+\delta_b^6\delta_b^{-0.5}C_R(\eta_{51}\eta_{55}^*)
\delta_b^{-0.5}C_R(\eta_{55}\eta_{53}^*)
$$
$$
-\delta_b\delta_b^4\delta_b^{-1.5}\delta_b^{-7.5}
\delta_b^{-1}\delta_b^{-11}
-\delta_b\delta_b^4\delta_b^{-0.5}C_R(\eta_{51}\eta_{54}^*)
\delta_b^{-0.5}C_R(\eta_{54}\eta_{53}^*)
$$
$$
-\delta_b\delta_b^8\delta_b^{-0.5}C_R(\eta_{51}\eta_{53}^*)
\delta_b^{-0.5}C_R(\eta_{53}\eta_{53}^*)
)
$$
$$
=\delta_b^{-0.5}(\delta_b^2\delta_b^{-2.5}\delta_b^{-3}\delta_b^{-1.5}(-\delta_b^{-8})
+0
-\delta_b\delta_b^4\delta_b^{-1.5}\delta_b^{-5.5}
\delta_b^{-1}\delta_b^{-11}
+0
+0
)
=-\delta_b^{-12.5}.
$$
The others are similar.

Then we have

\begin{center}
\begin{tabular}{|c|c|c|c|c|c|c|c|}

\hline

$k$&$7$&$8$&$9$&$10$&$11$&$12$&$13$\\

\hline

$C_R(\xi_{k1}\xi_k^*)$&$0$&$-\delta_b^{-13.5}$&$\delta_b^{-12.5}$&$-\delta_b^{-12.5}$&$-\delta_b^{-12.5}$&$\delta_b^{-12.5}$&$-\delta_b^{-13.5}$\\
\hline
$C_R(\xi_{k2}\xi_k^*)$&$\delta_b^{-13}$ &$-\delta_b^{-15}$&$\delta_b^{-16}$&$\delta_b^{-12}$ &$\delta_b^{-15}$&0&$\delta_b^{-16}$\\
\hline
$C_R(\xi_{k3}\xi_k^*)$&$-\delta_b^{-15.5}$ &$\delta_b^{-14.5}$ &$\delta_b^{-17.5}$ &$-\delta_b^{-16.5}$ &$\delta_b^{-16.5}$&$-\delta_b^{-15.5}$&$\delta_b^{-15.5}$\\
\hline
$C_R(\xi_{k4}\xi_k^*)$&$-\delta_b^{-14}$&$\delta_b^{-15}$&$\delta_b^{-12}$&$-\delta_b^{-14}$&$\delta_b^{-15}$&$\delta_b^{-15}$&$-\delta_b^{-14}$\\
\hline
$C_R(\xi_{k5}\xi_k^*)$&$\delta_b^{-13}$&$\delta_b^{-13}$&$-\delta_b^{-13}$&$\delta_b^{-14}$&$-\delta_b^{-14}$&$\delta_b^{-12}$&$\delta_b^{-12}$\\
\hline

\hline

\end{tabular}
\end{center}

\begin{center}
\begin{tabular}{|c|c|c|c|c|c|c|c|c|}

\hline

$14$&$15$&$16$&$17$&$18$&$19$&$20$&$21$\\

\hline

$0$&$-\delta_b^{-12.5}$&$0$&$\delta_b^{-12.5}$&$-\delta_b^{-11.5}$&$0$&$0$&$0$\\
\hline
$\delta_b^{-13}+\delta_b^{-16}$&$\delta_b^{-13}$&$-\delta_b^{-14}$&$\delta_b^{-14}$&$\delta_b^{-13}+\delta_b^{-16}$&$\delta_b^{-14}$&$0$&$0$\\
\hline
$-\delta_b^{-18.5}$&$\delta_b^{-15.5}$&$\delta_b^{-14.5}$&$-\delta_b^{-14.5}$&$\delta_b^{-17.5}$&$-\delta_b^{-15.5}-\delta_b^{-18.5}$&$\delta_b^{-15.5}$&$\delta_b^{-13.5}$\\
\hline
$\delta_b^{-14}$&$-\delta_b^{-14}$&$\delta_b^{-13}$&$\delta_b^{-13}$&$-\delta_b^{-13}$&$\delta_b^{-14}$&$0$&$0$\\
\hline
$-\delta_b^{-13}-\delta_b^{-15}$&$\delta_b^{-14}$&$0$&$\delta_b^{-14}$&$\delta_b^{-14}$&$-\delta_b^{-15}$&$\delta_b^{-12}$&$0$\\
\hline

\hline

\end{tabular}
\end{center}

\begin{center}
\begin{tabular}{|c|c|c|c|c|c|c|c|c|}

\hline

$22$&$23$&$24$&$25$&$26$&$27$&$28$&$29$\\

\hline

$0$&$-\delta_b^{-11.5}$&$\delta_b^{-12.5}$&$0$&$-\delta_b^{-12.5}$&$0$&$-\delta_b^{-13.5}$&$\delta_b^{-12.5}$\\
\hline
$\delta_b^{-12}$&$\delta_b^{-14}$&$-\delta_b^{-15}$&$\delta_b^{-14}$&$\delta_b^{-13}$&$\delta_b^{-13}$&$-\delta_b^{-15}$&$\delta_b^{-16}$\\
\hline
$-\delta_b^{-14.5}$&$\delta_b^{-15.5}$&$-\delta_b^{-16.5}$&$-\delta_b^{-16.5}$&$\delta_b^{-14.5}$&$-\delta_b^{-15.5}$&$\delta_b^{-14.5}$&$\delta_b^{-17.5}$\\
\hline
$0$&$0$&$\delta_b^{-12}$&$0$&$-\delta_b^{-14}$&$-\delta_b^{-14}$&$\delta_b^{-15}$&$\delta_b^{-12}$\\
\hline
$0$&$0$&$0$&$\delta_b^{-12}$&$-\delta_b^{-14}$&$\delta_b^{-13}$&$\delta_b^{-13}$&$-\delta_b^{-13}$\\
\hline

\hline

\end{tabular}
.
\end{center}

Note that the periodicity is $20$. So
$$C_R(a_{2n-1}a_{4n-7}a_{2n-1}a_{2n+1}a_{2n-1}a_{2n+1}a_{2n-1}a_{2n+1}a_{2n-1})=C_R(\xi_{n1}\xi_n^*)$$
\begin{equation*}
  =\left\{
   \begin{aligned}
   &-\delta_b^{-13.5} &~when~ n=20l+8;  \\
   &-\delta_b^{-13.5} &~when~ n=20l+13;  \\
   &-\delta_b^{-11.5} &~when~ n=20l+18;  \\
   &-\delta_b^{-11.5} &~when~ n=20l+23.  \\
   \end{aligned}
   \right.
\end{equation*}

\end{proof}

\begin{theorem}
When $n\geq4$, the $n_{th}$ Bisch-Haagerup fish graph is not the principal graph of a subfactor.
\end{theorem}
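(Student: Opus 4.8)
The plan is to argue by contradiction: suppose the $n_{th}$ Bisch-Haagerup fish graph is the principal graph of a subfactor planar algebra $\mathscr{B}$ with $n\geq 4$. Because of the ``normalizer'' there is a trace-$2$ biprojection $e_\mathcal{P}$, so by Theorem \ref{embedding P} the algebra $\mathscr{B}$ embeds into the graph planar algebra $\mathscr{G}$ of its dual principal graph $\Gamma_n$, with the Fuss-Catalan subalgebra identified as in Lemma \ref{refined dual principal graph}. Comparing $\Gamma_n$ with the principal graph of $FC$ produces the totally uncappable generator $R\in\mathscr{B}_{2n,-}\subset\mathscr{G}_{2n,+}$ of Proposition \ref{relation of R}, satisfying (1$'$)--(4$'$). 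By Lemma \ref{key lemma} such an $R$ is forced to equal $R_{\mu_1\mu_2\omega}$ for some phases $\mu_1,\mu_2,\omega$, built from the explicit initial loops $L_1,L_2$ by the inductive recipe $P_k=U_k^*U_k$, $R_k=\delta_b^4\,F*P_k*F$, $U_{k+1}=\omega^{-1}\rho(R_k+U_k)$. Since $|\mu_1|=|\mu_2|=|\omega|=1$, the phases enter the loop expansion only as unimodular factors; hence the absolute values $|C_R(l)|$ are independent of $(\mu_1,\mu_2,\omega)$ (cf. Lemma \ref{independence of initial condition}) and may be computed from $R=R_{111}$.

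The whole argument then reduces to a single point: of the four required relations, (1$'$), (2$'$) and (3$'$) hold for $R_{\mu_1\mu_2\omega}$ automatically (Lemma \ref{pro3} and the discussion preceding it), so it remains only to test the rotational eigenvalue equation (4$'$), $\rho(R)=\omega R$. The inductive construction already enforces $U_{k+1}=\omega^{-1}\rho(R_k+U_k)$ as $k$ runs up the left branch and, symmetrically, down the right branch; what it does \emph{not} enforce is the compatibility of these two halves where they meet over the top vertex $a_{2n-1}$. Equation (4$'$) is precisely this seam condition, and at the level of coefficients it demands that loops lying in a common $\rho$-orbit carry coefficients of equal modulus, up to the dimension factors appearing in the formula for $\rho$. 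I would extract the contradiction by evaluating the coefficient of a loop threading the top vertex in two ways: once through the inductive Lemmas \ref{d3}, \ref{d5}, \ref{d8}, \ref{d22} and \ref{etak}, whose outputs depend only on the relative index pattern of the loop and are therefore the same $n$-independent powers of $\delta_b$ one sees in the generic part of the graph; and once through Lemma \ref{dfinal}, where the wrap-around at the top forces the coefficient $C_R(\xi_{n1}\xi_n^*)$ to depend on $n$, taking the value $-\delta_b^{-13.5}$ or $-\delta_b^{-11.5}$ according to $n\bmod 20$. Rotation invariance requires the two evaluations to agree, but the second is not constant in $n$, so (4$'$) must fail for a range of residues.

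The main obstacle is to make this confrontation cover \emph{every} $n\geq 4$ rather than a few residue classes. The resolution is to run the same scheme for two independent families of top-threading loops: one family has period $5$ in $n$ (the periodicity already visible in Lemma \ref{etak}), the other period $20$ (Lemma \ref{dfinal}). Each family, compared against its rotation image, rules out all but finitely many residues, and I would check that the two exclusion patterns together leave no $n\geq 4$ unaccounted for, while for $n=1,2,3$ the top is reached before any genuine wrap-around occurs and no inconsistency arises. The delicate part is thus not any single coefficient but the bookkeeping of the two periodicities: verifying that the $n$-dependent top values genuinely violate the equal-modulus constraint imposed by $\rho(R)=\omega R$, and that no residue of $n\geq 4$ escapes both families. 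Granting this, no element $R$ can satisfy (1$'$)--(4$'$) when $n\geq 4$, so $\mathscr{B}$ cannot exist and the $n_{th}$ fish graph is not a principal graph.
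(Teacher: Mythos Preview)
Your proposal is correct and follows essentially the same route as the paper: embed into the graph planar algebra, use Lemma~\ref{key lemma} to pin down $R=R_{\mu_1\mu_2\omega}$, invoke Lemma~\ref{independence of initial condition} so that loop-coefficient moduli are parameter-free, and then test (4$'$) against the period-$5$ data of Lemma~\ref{etak} and the period-$20$ data of Lemma~\ref{dfinal}. The one place where the paper is slightly more concrete than your outline is the small-$n$ edge: the periodic families only begin at $n\geq 5$ (for Lemma~\ref{etak}) and $n\geq 8$ (for Lemma~\ref{dfinal}), so the paper dispatches $n=4$ by a separate direct comparison of $C_R([a_5a_9a_5a_9a_5])$ with its symmetric counterpart $C_R([a_9a_5a_9a_5a_9])$ rather than by the two families alone.
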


\begin{proof}
By Lemma \ref{independence of initial condition},
to compute the coefficients $C_R$ of loops in $A_k^+\mathscr{G}_{2n,+}A_k^+$,
we may fix the initial condition as $\mu_1=\mu_2=\omega=1$.

When $n=4$, from the Appendix, we have
$C_R([a_5a_{9}a_5a_{9}a_{5}])=\delta_b^{-5}$ and
$C_R([a_7a_{11}a_7a_{11}a_{7}])=0$.
By the symmetry of the dual principal graph, we may substitute $2k-1$ by $4n-2k+1$.
Then $C_R([a_9a_{5}a_9a_{5}a_{9}])=0$.
By Lemma \ref{independence of initial condition}, these coefficients are independent of the parameters $\mu_1,\mu_2,\omega$.
If $R_{\mu_1\mu_2\omega}$ is a solution of Proposition \ref{relation of R},
then $$\frac{\lambda'(a_5)}{\lambda'(a_9)}C_R([a_5a_{9}a_5a_{9}a_{5}])=\omega^2C_R([a_9a_{5}a_9a_{5}a_{9}]).$$
So $$|\delta_b^{-1}C_R([a_5a_{9}a_5a_{9}a_{5}])|=|C_R([a_9a_{5}a_9a_{5}a_{9}])|.$$
It is a contradiction. That means the $4_{th}$ Bisch-Haagerup fish graph is not the principal graph of a subfactor.

By the symmetry of the dual principal graph, we may substitute $2k-1$ by $4n-2k+1$.
Then $C_R([a_9a_{5}a_9a_{5}a_{9}])=0$.
So $C_R([a_5a_{9}a_5a_{9}a_{5}])=0$, by proposition(4'). It is a contradiction.
That means the $4_{th}$ Bisch-Haagerup fish graph is not the principal graph of a subfactor.

When $n\geq5$, by Lemma \ref{d8}, we have
$C_R([a_5a_{2n+1}a_{2n-1}a_{2n+1}a_{2n-1}a_{2n+1}a_{5}])=-\delta_b^{-8}$.
By the symmetry of the dual principal graph, we have  $C_R([a_{4n-5}a_{2n-1}a_{2n+1}a_{2n-1}a_{2n+1}a_{2n-1}a_{4n-5}])=-\delta_b^{-8}$.
If $R_{\mu_1\mu_2\omega}$ is a solution of Proposition \ref{relation of R}, then by Lemma \ref{independence of initial condition}, we have
$$|C_R([a_{2n-1}a_{4n-5}a_{2n-1}a_{2n+1}a_{2n-1}a_{2n+1}a_{2n-1}])|$$
$$=|\delta_b^{-1}C_R([a_{4n-5}a_{2n-1}a_{2n+1}a_{2n-1}a_{2n+1}a_{2n-1}a_{4n-5}])|=\delta_b^{-9}.$$
On the other hand, by Lemma \ref{etak},
$$|C_R(\eta_{n1}\eta_{n3}^*)|=|C_R([a_{2n-1}a_{4n-5}a_{2n-1}a_{2n+1}a_{2n-1}a_{2n+1}a_{2n-1}])|=\delta_b^{-9}$$ implies $5|n-3$.

When $n\geq8$ and $5|n-3$, from the Appendix, we have
$$C_R([a_7a_{2n+1}a_{2n-1}a_{2n+1}a_{2n-1}a_{2n+1}a_{2n-1}a_{2n+1}a_{7}])=\delta_b^{-11}.$$
By the symmetry of the dual principal graph, we have  $$C_R([a_{4n-7}a_{2n-1}a_{2n+1}a_{2n-1}a_{2n+1}a_{2n-1}a_{2n+1}a_{2n-1}a_{4n-7}])=\delta_b^{-11}.$$
So $$|C_R([a_{2n-1}a_{4n-7}a_{2n-1}a_{2n+1}a_{2n-1}a_{2n+1}a_{2n-1}a_{2n+1}a_{2n-1}])|=\delta_b^{-12.5}.$$
On the other hand, by Lemma \ref{dfinal}, we have
$$|C_R([a_{2n-1}a_{4n-7}a_{2n-1}a_{2n+1}a_{2n-1}a_{2n+1}a_{2n-1}a_{2n+1}a_{2n-1}])|$$ is $\delta_b^{-11.5}$ or $\delta_b^{-13.5}$. It is a contradiction.

Therefore the $n_{th}$ Bisch-Haagerup fish graph is not the principal graph of a subfactor whenever $n\geq4$.
\end{proof}

\subsection{Uniqueness}

\begin{theorem}\label{uniqueness}
There is only one subfactor planar algebra whose principal graph is the $n_{th}$ Bisch-Haagerup fish graph, for $n=1,2,3$.
\end{theorem}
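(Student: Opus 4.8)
The plan is to prove uniqueness by reconstructing any candidate planar algebra from a single generator that lies in a one-parameter family, and then to show the relations force that parameter into one orbit; existence is supplied by the three known subfactors, giving exactly one planar algebra for each $n=1,2,3$.

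First I would argue that any subfactor planar algebra $\mathscr{B}$ with the $n$th Bisch-Haagerup fish graph as principal graph is generated, as a planar algebra, by the biprojection $e_{\mathcal{P}}$ together with the totally uncappable lowest weight vector $S=P_c-P_d\in\mathscr{B}_{2n,+}$ of Proposition \ref{relation of S}. Indeed, the Fuss-Catalan subalgebra $FC$ generated by $e_{\mathcal{P}}$ is a submodule of $\mathscr{B}$ under the annular Fuss-Catalan action, and its orthogonal complement is the cyclic module generated by $S$; hence a planar-algebra isomorphism of two such $\mathscr{B}$'s is the same as matching $e_{\mathcal{P}}$ with $e_{\mathcal{P}}$ and $S$ with a multiple of $S$. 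By the embedding theorem the image of $e_{\mathcal{P}_1}$ in $\mathscr{G}$ is the fixed element $p_1$ of Proposition \ref{loop p}, and by Lemma \ref{key lemma} the image $\phi(R)$ of $R=\omega_0\delta^{-1}\mathcal{F}(S)$ equals $R_{\mu_1\mu_2\omega}$ for a triple of phases $(\mu_1,\mu_2,\omega)$. Thus, up to the gauge automorphisms of $\mathscr{G}$ fixing $p_1$, the isomorphism class of $\mathscr{B}$ is determined by the triple $(\mu_1,\mu_2,\omega)$.

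Next I would pin down the admissible triples. Self-adjointness $(1')$ together with $(2')$ and $(3')$ hold for every $R_{\mu_1\mu_2\omega}$ by construction and by Lemma \ref{pro3}, while by Lemma \ref{independence of initial condition} the blocks $R_k$ are independent of the parameters; the only genuine constraint is proposition $(4')$, $\rho(R)=\omega R$, which couples the left induction producing $U_1,\dots,U_n$ with the right induction producing $U_{2n},\dots,U_{n+1}$ and must be consistent at the middle box. Since $\rho^{2n}$ is the identity on $\mathscr{B}_{2n,+}$, the eigenvalue $\omega$ is a $2n$th root of unity, so only finitely many values can occur, and the relation $\omega_0^{-2}=\omega$ of Proposition \ref{relation of R}(0) ties $\omega$ to the self-adjoint normalization of $S$. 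The boundary phases $\mu_1,\mu_2$, carried by the loops $L_1,L_2$ through the distinguished vertices $a_0,a_{4n}$, I would remove using the edge-phase gauge of $\mathscr{G}$ together with the residual sign ambiguity $S\mapsto-S$ coming from the symmetric roles of $P_c,P_d$, normalizing $\mu_1=\mu_2=1$.

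With the parameters normalized I would verify, for each $n=1,2,3$ separately and using only the low-depth coefficient data of Lemmas \ref{d3}--\ref{d8}, that the values of $\omega$ compatible with $(4')$ form a single gauge-orbit, so that $R_{\mu_1\mu_2\omega}$, and hence the embedded copy $\phi(\mathscr{B})$, is unique up to gauge equivalence; this yields at most one planar algebra for each such $n$. Combined with the existence of the tensor-product subfactor ($n=1$), the Bisch-Haagerup flip subfactor ($n=2$), and the intermediate subfactor of the reduced dual $3^{\mathbb{Z}_4}$ ($n=3$), there is exactly one in each case. The hardest step will be this final gauge and consistency analysis: one must track how the edge-phase gauge and the $S\mapsto-S$ symmetry act on $(\mu_1,\mu_2,\omega)$ through the inductive formulas of Lemma \ref{key lemma}, and confirm that proposition $(4')$ leaves no spurious non-isomorphic solution, so that the uniqueness of the potential solution is genuine rather than merely up to undetermined phases.
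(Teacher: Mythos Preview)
Your overall strategy---reduce to the parametrized family $R_{\mu_1\mu_2\omega}$ in the graph planar algebra and then argue the parameters collapse to a single orbit---is plausible in outline, but it diverges from the paper's proof at the decisive step and leaves exactly the hard part open.

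First, the paper pins down $\omega$ far more directly than you propose. The loop $[\underbrace{a_{2n-1}a_{2n+1}\cdots a_{2n-1}a_{2n+1}}_n]$ is literally fixed by $\rho$, and by Lemma~\ref{independence of initial condition} its coefficient in $R$ is nonzero and independent of $(\mu_1,\mu_2,\omega)$; hence $\omega=1$ immediately, with no need to sift through Lemmas~\ref{d3}--\ref{d8} or to speak of a ``gauge orbit'' of $\omega$ (note that graph-planar-algebra automorphisms commute with $\rho$, so they cannot move $\omega$ at all). Then $\omega_0$ is normalized to $1$ by the sign symmetry $(S,R,\omega_0)\mapsto(-S,R,-\omega_0)$.

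The real difference is what happens next. You want to finish by gauging away $\mu_1,\mu_2$ inside $\mathscr{G}$ and concluding that the embedded subalgebra is unique. But you never carry out this gauge analysis, and it is not free: $L_1$ and $L_2$ traverse the same cycle of edges of $\Gamma_n$, so edge-phase automorphisms act on $(\mu_1,\mu_2)$ in a coupled way, and the paper in fact records (just after the proof) that $\mu_1=\mu_2=\pm1$ both yield genuine solutions in $\mathscr{G}$. More fundamentally, your sentence ``a planar-algebra isomorphism of two such $\mathscr{B}$'s is the same as matching $e_{\mathcal P}$ with $e_{\mathcal P}$ and $S$ with a multiple of $S$'' presupposes that the relations of Propositions~\ref{relation of S}--\ref{relation of R} already form a complete presentation of $\mathscr{B}$; that is precisely the content that needs proof, and the annular Fuss-Catalan module decomposition alone does not supply it, since it controls the annular action but not the full multiplication.

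The paper supplies this missing mechanism via \emph{Fuss-Catalan Jellyfish relations}. Lemmas~\ref{R jellyfish} and~\ref{S jellyfish} are Wenzl-type identities $P=\delta^2 P e_{2n}P$ and $Q=\delta\delta_a Q p_{2n}Q$ for $P=\delta_b^{-1}g_{2n}-R$ and $Q=\tfrac12(f_{2n}+S)$; together with $p_{2n}\in\mathscr{I}_{2n+2,+}$ they show that adding a string to $R$ or $S$ (on either side, in either shading) lands back in the annular Fuss-Catalan span of $R$. Thus the generating space $V_\pm\subset\mathscr{B}_{2n+1,\pm}$ is closed under the jellyfish moves, giving an evaluation algorithm that turns the abstract relations (with $\omega=\omega_0=1$) into a presentation. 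This yields uniqueness without any reference to $\mu_1,\mu_2$ or to automorphisms of $\mathscr{G}$. That Jellyfish step is exactly the ingredient your proposal is missing.
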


It is easy to generalize the Jellyfish technic \cite{BMPS} for Fuss-Catalan tangles, or tangles labeled by the biprojection. We are going to check the Fuss-Catalan Jellyfish relations for the generators $S$ and $R$.
Before that let us prove two Lemmas which tell the Fuss-Catalan Jellyfish relations.

\begin{lemma}\label{R jellyfish}
If $R$ is a solution of Proposition \ref{relation of R} in a subfactor planar algebra with a biprojection, then
$$P=\delta^2Pe_{2n}P,$$
where $P=\delta_b^{-1}g_{2n}-R$.
\end{lemma}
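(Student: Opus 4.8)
The plan is to prove the equivalent statement $P e_{2n} P = \delta^{-2} P$, from which $P = \delta^2 P e_{2n} P$ is immediate. The strategy rests on two facts: that $P$ is a \emph{minimal} projection of $\mathscr{B}_{2n,-}$, and that this minimal projection dominates only a single minimal projection one level up, so that $P\mathscr{B}_{2n+1,-}P$ is one-dimensional and hence $Pe_{2n}P$ is forced to be a scalar multiple of $P$.

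First I would verify that $P=\delta_b^{-1}g_{2n}-R$ is the minimal projection $P_c'$. By property (3') of Proposition \ref{relation of R}, $R$ is totally uncappable, hence orthogonal to $FC_{2n,-}$; since the orthogonal complement of $FC_{2n,-}$ in $\mathscr{B}_{2n,-}$ is one-dimensional and lies inside $g_{2n}\mathscr{B}_{2n,-}g_{2n}$, we have $g_{2n}R=Rg_{2n}=R$. Setting $Q:=R+\delta_b^{-2}g_{2n}$, property (2') says $Q$ is a projection, and $g_{2n}Q=Qg_{2n}=Q$ shows $Q\le g_{2n}$. Using $\delta_b^2=\delta_b+1$, i.e. $1-\delta_b^{-2}=\delta_b^{-1}$, one gets $g_{2n}-Q=\delta_b^{-1}g_{2n}-R=P$, so $P$ is a projection with $P\le g_{2n}$. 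Because $g_{2n}$ is minimal in $FC_{2n,-}$ and splits into exactly the two minimal projections $P_c',P_d'$ of $\mathscr{B}_{2n,-}$ (so $g_{2n}\mathscr{B}_{2n,-}g_{2n}$ is two-dimensional), and since $R\neq 0$ forces $P\notin\{0,g_{2n}\}$, the projection $P$ is minimal; it is the vertex $v_1$ of the dual principal graph, with $tr(P)\neq 0$ (indeed $tr(P)$ is proportional to $\lambda'(v_1)=\delta_b^{n}$).

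Next I would read off from the dual principal graph $\Gamma_n$ that, in the Bratteli diagram of the inclusion $\mathscr{B}_{2n,-}\subset\mathscr{B}_{2n+1,-}$, the vertex $v_1$ is joined to exactly one vertex at depth $2n+1$, namely the multiplicity-two vertex $v_7$ (shared with $v_5$), by a single edge. By the general structure of finite-dimensional inclusions, for a minimal projection $p$ with central support a vertex $u$ one has $p\,\mathscr{B}_{2n+1,-}\,p\cong\bigoplus_{v} M_{\Lambda_{u,v}}(\mathbb{C})$, the sum being over depth-$(2n+1)$ vertices $v$. With $u=v_1$ this collapses to $P\mathscr{B}_{2n+1,-}P\cong M_1(\mathbb{C})=\mathbb{C}P$. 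Consequently the positive element $P e_{2n} P\in P\mathscr{B}_{2n+1,-}P$ must be a scalar multiple of $P$, say $P e_{2n} P=cP$. Finally I would pin down $c$ with the Markov trace: since $P^2=P$ and the trace is tracial, $tr(P e_{2n} P)=tr(P e_{2n})$, and the Markov property of the Jones projection $e_{2n}$ for the basic construction $\mathscr{B}_{2n-1,-}\subset\mathscr{B}_{2n,-}\subset\mathscr{B}_{2n+1,-}$ (of index $\delta^2$) gives $E_{\mathscr{B}_{2n,-}}(e_{2n})=\delta^{-2}$, so $tr(P e_{2n})=\delta^{-2}tr(P)$. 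Comparing with $tr(cP)=c\,tr(P)$ and $tr(P)\neq 0$ yields $c=\delta^{-2}$, i.e. $P e_{2n}P=\delta^{-2}P$ and hence $P=\delta^2 P e_{2n}P$.

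The main obstacle is the middle step: one must be certain that $v_1$ connects to a \emph{unique} vertex at the next depth and with multiplicity \emph{one}, for this is precisely what makes $P\mathscr{B}_{2n+1,-}P$ reduce to $\mathbb{C}P$. This is exactly where the earlier determination of the dual principal graph $\Gamma_n$ is indispensable; if $v_1$ had a second upward neighbour, or an edge of multiplicity greater than one, then $P\mathscr{B}_{2n+1,-}P$ would be larger than $\mathbb{C}$, $P e_{2n}P$ need not be proportional to $P$, and the identity would fail. Everything else is bookkeeping with the relations (1'),(2'),(3') and the standard Markov normalization.
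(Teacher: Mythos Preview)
Your strategy---show that $P$ stays minimal in $\mathscr{B}_{2n+1,-}$ so that $Pe_{2n}P\in\mathbb{C}P$, and then fix the scalar via the Markov trace---is the right idea and is essentially what the paper's proof encodes (``$\delta^2Pe_{2n}P$ is a subprojection of $P$ with the same trace'').  But the execution of the middle step is wrong.  The simple summands of $\mathscr{B}_{2n+1,-}$ are indexed by \emph{all} odd-depth vertices of $\Gamma_n$ up to depth $2n+1$, not only by the depth-$(2n+1)$ vertices.  Thus for a minimal projection $p$ at a depth-$2n$ vertex $u$ one has $p\,\mathscr{B}_{2n+1,-}\,p\cong\bigoplus_{v}M_{\Lambda_{u,v}}(\mathbb{C})$ with $v$ running over the neighbours of $u$ at depths $2n-1$ \emph{and} $2n+1$.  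With your identification $P=P_c'$ at $v_1$, you must count the edge $v_1$--$v_0$ as well as $v_1$--$v_7$, so $P\mathscr{B}_{2n+1,-}P\cong\mathbb{C}^2$.  Worse: since $e_{2n}$ generates the ``old'' ideal $\mathscr{I}_{2n+1,-}$, the depth-$(2n+1)$ summand $q_7\le P$ satisfies $q_7e_{2n}=0$, hence $Pe_{2n}P$ is supported on $q_0$ alone and can never equal a nonzero multiple of $P=q_0+q_7$.  The identity therefore \emph{fails} for $P_c'$.

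The Wenzl-type formula does hold for the \emph{other} subprojection $P_d'=R+\delta_b^{-2}g_{2n}$, which sits at the leaf vertex $v_2$ of $\Gamma_n$ (adjacent only to $v_0$).  For that projection your argument goes through verbatim: $P_d'\mathscr{B}_{2n+1,-}P_d'=\mathbb{C}P_d'$, hence $P_d'e_{2n}P_d'=cP_d'$, and the Markov property gives $c=\delta^{-2}$.  (There is evidently a $P_c'/P_d'$ labelling slip in the paper here; the lemma is a Wenzl formula and only makes sense for the minimal projection whose vertex has a \emph{single} neighbour in the Bratteli diagram of $\mathscr{B}_{2n,-}\subset\mathscr{B}_{2n+1,-}$.)  So your method is correct, but it must be applied at the leaf vertex, and the unique edge that matters is the one back to depth $2n-1$, not forward to depth $2n+1$.
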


\begin{proof}
Note that $P=\delta_b^{-1}g_{2n}-R$ is a projection. It is easy to check that
$\delta^2Pe_{2n}P$ is a subprojection of $P$. Moreover they have the same trace. So
$P=\delta^2Pe_{2n}P.$
\end{proof}

\begin{remark}
This is Wenzl's formula \cite{Wen87} \cite{Liuex} for the minimal projection $P$.
\end{remark}

\begin{lemma}\label{S jellyfish}
If $S$ is a solution of Proposition \ref{relation of S} in a subfactor planar algebra with a biprojection, then
$$Q=\delta\delta_a Qp_{2n}Q,$$
where $Q=\frac{1}{2}(f_{2n}+S)$.
\end{lemma}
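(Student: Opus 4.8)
The plan is to follow the proof of Lemma \ref{R jellyfish} verbatim in spirit: produce $\delta\delta_a\,Qp_{2n}Q$ as a self-adjoint idempotent lying under $Q$ and carrying the same trace as $Q$, so that the two projections must coincide. The whole argument then rests on one computation, the value of the sandwich $p_{2n}Qp_{2n}$, into which the hypotheses on $S$ feed.

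First I would check that $Q=\frac12(f_{2n}+S)$ is a projection. By Proposition \ref{relation of S}(1) we have $Q^*=Q$, and by Proposition \ref{relation of S}(2) we have $S^2=f_{2n}$. Since $S$ is self-adjoint and $S^2=f_{2n}$ is a projection, the support projection of $S$ equals $f_{2n}$, whence $f_{2n}S=Sf_{2n}=S$. Therefore $Q^2=\tfrac14\bigl(f_{2n}^2+f_{2n}S+Sf_{2n}+S^2\bigr)=\tfrac14(2f_{2n}+2S)=Q$, so $Q$ is a projection (concretely $Q=P_c$), and moreover $Q=f_{2n}Qf_{2n}\le f_{2n}$.

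The crux is the identity $p_{2n}Qp_{2n}=\tfrac{1}{\delta\delta_a}\,p_{2n}$. I would split $Q=\tfrac12(f_{2n}+S)$ and treat the two summands separately. For the $S$-part, Proposition \ref{relation of S}(3) says $S$ is totally uncappable; since $p_{2n}$ is the $a$-colour Jones projection, the product $p_{2n}S$ caps $S$ by an $a$-colour string, so $p_{2n}S=0$ and hence $p_{2n}Sp_{2n}=0$. For the $f_{2n}$-part, $p_{2n}f_{2n}p_{2n}=\tfrac{2}{\delta\delta_a}\,p_{2n}$ is a Fuss--Catalan computation: $p_{2n}$ is a Jones projection, so $p_{2n}f_{2n}p_{2n}=c\,p_{2n}$ with $c=\operatorname{tr}(f_{2n}p_{2n})/\operatorname{tr}(p_{2n})$, and the Markov property together with the trace values $\operatorname{tr}(f_{2n})=2\delta_b^{\,n}$ and $\operatorname{tr}(p_{2n})=\delta_b^{\,n}$ read off from the refined principal graph give $c=\tfrac{2}{\delta\delta_a}$. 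Combining the two parts yields $p_{2n}Qp_{2n}=\tfrac12\,p_{2n}f_{2n}p_{2n}=\tfrac{1}{\delta\delta_a}\,p_{2n}$.

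Granting this, the conclusion is formal. The element $\delta\delta_a\,Qp_{2n}Q$ is self-adjoint, and $(\delta\delta_a\,Qp_{2n}Q)^2=(\delta\delta_a)^2\,Q(p_{2n}Qp_{2n})Q=\delta\delta_a\,Qp_{2n}Q$, so it is a projection; as it has the form $Q(\cdot)Q$ its range sits inside that of $Q$, making it a subprojection of $Q$. Finally $\operatorname{tr}(\delta\delta_a\,Qp_{2n}Q)=\delta\delta_a\operatorname{tr}(p_{2n}Qp_{2n})=\operatorname{tr}(p_{2n})=\operatorname{tr}(Q)$, where $\operatorname{tr}(S)=0$ (again by total uncappability) gives $\operatorname{tr}(Q)=\tfrac12\operatorname{tr}(f_{2n})=\operatorname{tr}(p_{2n})$. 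A subprojection of $Q$ with the same trace as $Q$ equals $Q$, so $Q=\delta\delta_a\,Qp_{2n}Q$. I expect the main obstacle to be the Fuss--Catalan sandwich $p_{2n}f_{2n}p_{2n}$ with the precise constant $\tfrac{2}{\delta\delta_a}$ (equivalently pinning down $\operatorname{tr}(p_{2n})=\delta_b^{\,n}$), together with the clean identification that total uncappability of $S$ is exactly the statement that the $a$-colour Jones projection caps $S$ to zero; everything else is the same bookkeeping as in Lemma \ref{R jellyfish}.
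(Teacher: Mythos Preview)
Your proposal is correct and follows essentially the same approach as the paper's proof. Both arguments run: $Q$ is a projection; $\delta\delta_a\,Qp_{2n}Q$ is a subprojection of $Q$; the two have equal trace; hence they coincide. The paper compresses this into three lines (``easy to check''), while you unpack each step, in particular isolating the sandwich $p_{2n}Qp_{2n}$ and splitting it along $Q=\tfrac12(f_{2n}+S)$ so that total uncappability kills the $S$-part and a Fuss--Catalan computation handles the $f_{2n}$-part.

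One cautionary remark: the literal identity $p_{2n}Qp_{2n}=\tfrac{1}{\delta\delta_a}p_{2n}$ is stronger than what you need, since it asserts that the conditional expectation of $Q$ onto $\mathscr{S}_{2n}'$ is a scalar, and $\mathscr{S}_{2n}'$ is not one-dimensional. What actually makes the idempotency go through is the compressed version $Q\,(p_{2n}Qp_{2n})\,Q=\tfrac{1}{\delta\delta_a}Qp_{2n}Q$, and likewise the trace equality only needs $\delta\delta_a\operatorname{tr}(Qp_{2n})=\operatorname{tr}(Q)$ rather than the stronger assertion $\operatorname{tr}(p_{2n})=\operatorname{tr}(Q)$. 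Your own closing paragraph already flags exactly this as the place needing care, so the strategy is sound; just be aware that the intermediate scalar identity should be stated in its compressed form to be literally true.
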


\begin{proof}
Note that $Q=\frac{1}{2}(f_{2n}+S)$ is a projection. It is easy to check that
$\delta\delta_a Qp_{2n}Q$ is a subprojection of $Q$. Moreover they have the same trace. So
$Q=\delta\delta_a Qp_{2n}Q.$
\end{proof}

\begin{proof}[Proof of Theorem \ref{uniqueness}]
We have known three examples whose principal graphs are the first three Bisch-Haagerup fish graphs. We only need to prove the uniqueness.

For $n=1,2,3$, suppose $R_{\mu_1\mu_2\omega}$ is a solution of Proposition \ref{relation of R}.
Note that the loop
$$[\underbrace{a_{2n-1}a_{2n+1}\cdots a_{2n-1}a_{2n+1}}_n]$$
is rotation invariant. Moreover its coefficient in $R$ is non-zero. So $\omega=1$.

If $(S,R,\omega_0)$ is a solution of Proposition(\ref{relation of S})(\ref{relation of R}), then $(-S,R,-\omega_0)$ is also a solution. Up to this isomorphism, we may assume $\omega_0=1$.

Suppose $\mathscr{B}$ is a subfactor planar algebra whose principal graph is the $n_{th}$ Bisch-Haagerup fish graph, and its generators $R,S$ satisfy Proposition(\ref{relation of S})(\ref{relation of R}), such that $\omega_0=1$.
Let us consider the linear subspaces $V_\pm$ of $\mathscr{B}_{2n+1,\pm}$ generated by annular Fuss-Catalan tangles acting on $R$.
We claim that the space $V_\pm$ satisfies Fuss-Catalan Jellyfish relations. Therefore the subfactor planar algebra is unique.

Obviously $V_\pm$ is * closed and rotation invariant.
The multiplication on $V_\pm$ is implied by the Lemma \ref{R jellyfish}.
Now let us check the Fuss-Catalan Jellyfish relations.

When we add one string in an unshaded region,
for example, we add one string on the right of $\tilde{R}$, where $\tilde{R}\in V_-$ is the diagram adding one string on the right of $R$.
Then by Lemma \ref{R jellyfish}, we have $\delta_b^{-1}g_{2n}-R\in \mathscr{I}_{2n+2,-}$, where $\mathscr{I}_{2n+2,-}$ is the two sided ideal of $\mathscr{B}_{2n,-}$ generated by the Jones projection. That implies the Jellyfish relation of $\tilde{R}$ while adding one string on the right. Other Jellyfish relations are similar.

When we add one string in a shaded region,
for example, we add one string on the right of $\tilde{S}$, where $\tilde{S}\in V_+$ is the diagram adding one string on the right of $S$.
Then by Lemma \ref{S jellyfish}, and the fact that $p_{2n}\in \mathscr{I}_{2n+2,+}$, where $\mathscr{I}_{2n+2,+}$ is the two sided ideal of $\mathscr{B}_{2n,+}$ generated by the Jones projection,
we have $\frac{1}{2}(f_{2n}+S)\in \mathscr{I}_{2n+2,+}$.  That implies the Jellyfish relation of $\tilde{S}$ while adding one string on the right. Other Jellyfish relations are similar.
\end{proof}

It is easy to check that the possible solution $(R,S)$, for $\mu_1=\mu_2=\pm1,\omega_0=1$, in the graph planar algebra does satisfy Proposition(\ref{relation of S})(\ref{relation of R}).
The skein theoretic construction of the three subfactor planar algebras corresponding to the first three Bisch-Haagerup fish graphs could be realized by the Fuss-Catalan Jellyfish relations of the generating vector space $V_\pm$ mentioned above.
We leave the details to the reader.

\section{Composed inclusions of two $A_4$ subfactors}
In this section, we will consider composed inclusions $\mathcal{N}\subset\mathcal{P}\subset\mathcal{M}$ of two $A_4$ subfactors.
Let $id$ be the trivial $(\mathcal{P},\mathcal{P})$ bimodule, and $\rho_1,\rho_2$ be the non trivial $(\mathcal{P},\mathcal{P})$ bimodules arise from $\mathcal{N}\subset\mathcal{P}$, $\mathcal{P}\subset\mathcal{M}$ respectively.
Then $\rho_i^2=\rho_i\oplus id$, for $i=1,2$. If it is a free composed inclusion, i.e., there is no relation between $\rho_1$ and $\rho_2$, then its planar algebra is $FC(\delta_b,\delta_b)$; Otherwise
take $w$ to be a shortest word of $\rho_1,\rho_2$ which contains $id$.
If $w=(\rho_1\rho_2)^n\rho_1$, and $n$ is even, then by Frobenius reciprocity, we have
$$\dim(\hom((\rho_1\rho_2)^\frac{n}{2}\rho_1,(\rho_1\rho_2)^\frac{n}{2}))=c\geq 1.$$
So
$$\dim(\hom((\rho_1\rho_2)^\frac{n}{2}\rho_1^2,(\rho_1\rho_2)^\frac{n}{2}))=\dim(\hom((\rho_1\rho_2)^\frac{n}{2}\rho_1,(\rho_1\rho_2)^\frac{n}{2}\rho_1))\geq c+1.$$
Note that $\rho_1^2=\rho\oplus id$, we have
$$\dim(\hom((\rho_1\rho_2)^\frac{n}{2},(\rho_1\rho_2)^\frac{n}{2}))\geq 1.$$
So $(\rho_1\rho_2)^n$ contains $id$, which contradicts to the assumption that $w$ is shortest. It is similar for the other cases. Without loss of generality, we have $w=(\rho_1\rho_2)^n$, for some $n\geq 1$.

Considering the planar algebra $\mathscr{B}$ of $\mathcal{N}\subset\mathcal{M}$ as an annular Fuss-Catalan module, then it contains a lowest weight vector $T\in\mathscr{B}_{n,+}$ which induces a morphism from $(\rho_1\rho_2)^n$ to $id$. So $T$ is totally uncappable.

\begin{remark}
There is another proof without using bimodules.
The lowest weight vector $T\in\mathscr{B}_{n,+}$ is totally uncappable, for $n\geq 2$, see \cite{LiuAFC}.
For the case $n=1$, to show it is totally uncappable, we need the fact that the biprojection cutdown induces an planar algebra isomorphism \cite{BisJonfree}.
\end{remark}

\begin{definition}
Let us define $\Omega_n$, for $n\geq 1$, to be the ($\mathcal{N},\mathcal{P},\mathcal{M}$) coloured graph with parameter $(\delta_b,\delta_b)$ as
$$\grb{refpri440},$$
where the black vertices are $\mathcal{N},\mathcal{M}$ coloured, and the white vertices are $\mathcal{P}$ coloured, and the number of white vertices is $2n$.
\end{definition}

\begin{lemma}
Suppose $\mathscr{B}$ is a composition of two $A_4$ Temperley-Lieb planar algebras.
Then either $\mathscr{B}$ is Fuss-Catalan, or its refined principal graph is $\Omega_n$, for some $n\geq 1$.
\end{lemma}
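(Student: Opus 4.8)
The plan is to compute the refined principal graph of $\mathscr{B}$ with respect to the biprojection $e_\mathcal{P}$ directly, following the strategy of Lemma \ref{refined principal graph}. Since $[\mathcal{P}:\mathcal{N}]=[\mathcal{M}:\mathcal{P}]=\delta_b^2$, the planar subalgebra generated by $e_\mathcal{P}$ is $FC(\delta_b,\delta_b)$, whose refined principal graph is read off from its Fuss-Catalan middle patterns exactly as in Lemma \ref{refined principal graph}. The first step is to determine how far $\mathscr{B}$ agrees with $FC(\delta_b,\delta_b)$: because $w=(\rho_1\rho_2)^n$ is the \emph{shortest} word in $\rho_1,\rho_2$ containing $id$, no new $(\mathcal{N},\mathcal{N})$, $(\mathcal{N},\mathcal{P})$ or $(\mathcal{N},\mathcal{M})$ bimodule relation can occur below the depth of $T$. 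Hence $\mathscr{B}_{m,+}=FC(\delta_b,\delta_b)_{m,+}$ for all $m<n$, and the refined principal graph of $\mathscr{B}$ coincides with that of $FC(\delta_b,\delta_b)$ up to that depth.

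The second step is to locate the new generator. The lowest weight vector $T\in\mathscr{B}_{n,+}$ is totally uncappable, hence orthogonal to $FC(\delta_b,\delta_b)_{n,+}$; consequently the Fuss-Catalan minimal projection at this depth that supports $T$ is no longer minimal in $\mathscr{B}_{n,+}$ and splits into two minimal projections. Exactly as in the passage from $FC$ to $\mathscr{B}$ in Lemma \ref{refined principal graph}, this splitting forces one extra vertex into the refined principal graph at the appropriate depth, after which the graph must close up.

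To finish, I would run the dimension-vector induction. By Proposition \ref{dab} the refined principal graph is a graph with parameter $(\delta_b,\delta_b)$, and by Corollary \ref{unique dimension vector} its dimension vector $\lambda$ is determined uniquely by the graph. Starting from the portion that agrees with $FC(\delta_b,\delta_b)$ together with the split vertex, I would adjoin vertices one at a time: at each vertex the relations $\delta_b\lambda(u)=\sum_{s(\varepsilon)=u}\lambda(t(\varepsilon))$ of Proposition \ref{dab}, combined with the requirement that every bimodule has dimension $\geq 1$ and that $\lambda$ is positive, leave a unique admissible continuation. This reproduces $\Omega_n$ with precisely its $2n$ $\mathcal{P}$-coloured vertices, in complete parallel with the inductive computations of Lemma \ref{refined principal graph} and Lemma \ref{refined dual principal graph}.

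The main obstacle is the uniqueness of this completion: one must verify that at each inductive step the dimension bounds genuinely exclude every alternative branching — in particular that no spurious vertex can be attached and that the two ends of the graph terminate so as to give exactly $2n$ white vertices rather than extending further. A secondary point requiring separate treatment is the base case $n=1$, where, as noted in the Remark above, the total uncappability of $T$ does not follow from the general annular Fuss-Catalan argument but instead relies on the biprojection-cutdown planar algebra isomorphism of \cite{BisJonfree}.
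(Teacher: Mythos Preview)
Your outline follows the template of Lemma \ref{refined principal graph} and Lemma \ref{refined dual principal graph}, but those computations had an input you are missing here: in each case the (unrefined) principal graph was already known, so in particular one knew in advance how the traces of the minimal projections were apportioned after the split. In the present $A_4*A_4$ situation nothing tells you a priori how $f_n$ decomposes into two minimal projections of $\mathscr{B}_{n,+}$, i.e.\ what $\lambda$ is at the two new vertices; without that seed, the Perron--Frobenius relations of Proposition \ref{dab} together with positivity do not obviously force a unique continuation. This is exactly the ``main obstacle'' you flag, and it is not a detail --- it is the heart of the lemma.

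The paper closes this gap by a different mechanism: it invokes the embedding theorem to place $T$ inside the graph planar algebra, and then argues, in parallel with Lemma \ref{ini U} and Lemma \ref{key lemma}, that the loop through the vertex with middle pattern $aaa$ must carry a nonzero coefficient in $T$ and must in fact be a length-$2n$ \emph{flat} loop. This loop constraint is what pins down the split and hence the completion of the refined principal graph; only then does a trace computation finish the job. So the paper's argument is not a longer version of your dimension-vector induction --- it substitutes a structural constraint coming from the graph planar algebra for the uniqueness step you were unable to supply. If you want to push your approach through, you would need an independent way to determine the traces of the two summands of $f_n$ before the induction can begin.
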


\begin{proof}
If $\mathscr{B}$ is not Fuss-Catalan, then it contains a lowest weight vector $T\in\mathscr{B}_{n,+}$ which is totally uncappable, for some $n\geq1$.
So the refined principal graph of $\mathscr{B}$ is the same as that of $FC(\delta_b,\delta_b)$, until the vertex corresponding to $f_n$ splits, where $f_n$ the minimal projection of $FC(\delta_b,\delta_b)_{n,+}$ with middle pattern $abba\cdots abba(ab)$.

By the embedding theorem, $T$ is embedded in the graph planar algebra.
Similar to the proof of Lemma \ref{key lemma},
the loop passing the vertex, corresponding to the middle pattern $aaa$, has non-zero coefficient in $S$. Similar to the proof of Lemma \ref{ini U}, it has to be a length 2n flat loop, a loop whose vertices are all flat.
Via computing the trace, there is a unique way to complete the refined principal graph as
$$\grc{refpri44}.$$
\end{proof}

For $n=1,2,3$, it is easy to check that $\Omega_n$ is the refined principal graph of the reduced subfactor from the vertex $a_3$, corresponding to the middle pattern $baab$, in the (refined) dual principal graph of the $n_{th}$ fish factor.

Comparing this refine principal graph with the one obtained in Lemma \ref{refined dual principal graph}, they share the same black and white vertice and the same dimension vector on these vertices.
Similar to Proposition \ref{relation of R}, we have the following result.
\begin{proposition}\label{relation of R1}
Suppose $\mathscr{B}$ is a planar algebra as a composition of two $A_4$ planar algebras, and it is not Fuss-Catalan.
Then there is a lowest weight vector $T\in\mathscr{B}_{n,+}$, such that

(1) $T*=T$;

(2) $T+\delta_b^{-2}f_{n}$ is a projection;

(3) $T$ is totally uncappable;

(4) $\rho(T)=\omega T$,\\
where $f_n$ is the minimal projection of $FC(\delta_b,\delta_b)_{n,+}$ with middle pattern $abba\cdots abba (ab)$.
\end{proposition}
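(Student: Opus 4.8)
The plan is to mirror the proof of Proposition \ref{relation of R} essentially verbatim, with the refined principal graph $\Omega_n$ playing the role that the refined dual principal graph $\Gamma_n$ played there. By the preceding Lemma, when $\mathscr{B}$ is not Fuss-Catalan its refined principal graph is $\Omega_n$. Reading off this graph, the minimal projection $f_n$ of $FC(\delta_b,\delta_b)_{n,+}$ splits into exactly two minimal projections $P_1,P_2$ of $\mathscr{B}_{n,+}$, while every other minimal projection of $FC(\delta_b,\delta_b)_{n,+}$ remains minimal in $\mathscr{B}_{n,+}$. Hence the orthogonal complement of $FC(\delta_b,\delta_b)_{n,+}$ inside $\mathscr{B}_{n,+}$ is one-dimensional, spanned by any element of $\mathrm{span}\{P_1,P_2\}$ orthogonal to $f_n=P_1+P_2$.

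First I would record the traces of the two split projections from the dimension vector of $\Omega_n$; as in the $A_3$--$A_4$ case they come out in the ratio $\delta_b:1$, say $\mathrm{tr}(P_1)=\delta_b\,\mathrm{tr}(P_2)$. The lowest weight vector $T$ produced earlier, as the bottom of the annular Fuss-Catalan module, is totally uncappable, so it is orthogonal to $FC(\delta_b,\delta_b)_{n,+}$ and therefore lies in this one-dimensional complement. I would then fix the normalization by setting $T=\delta_b^{-1}P_2-\delta_b^{-2}P_1$, which is exactly the combination used for $R$ in Proposition \ref{relation of R}; the trace ratio guarantees $\mathrm{tr}(T)=\delta_b^{-1}\mathrm{tr}(P_2)-\delta_b^{-2}\mathrm{tr}(P_1)=0$, consistent with $T\perp f_n$.

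With this choice the four properties follow as before. Property (1) is immediate since $P_1,P_2$ are self-adjoint. For (2), using $\delta_b^2=\delta_b+1$ one computes $\delta_b^{-1}+\delta_b^{-2}=1$, whence $T+\delta_b^{-2}f_n=(\delta_b^{-1}+\delta_b^{-2})P_2=P_2$ is a projection. Property (3) is the total uncappability already established for the lowest weight vector. For (4), $\rho$ is an isometry fixing the rotation-invariant subspace $FC(\delta_b,\delta_b)_{n,+}$, hence it preserves the one-dimensional orthogonal complement; therefore $\rho(T)=\omega T$ for some $\omega$ with $|\omega|=1$.

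The only genuinely new input is the identification of the refined principal graph together with the trace computation, i.e. confirming that $f_n$ splits into precisely two pieces in the correct trace ratio. This is exactly what the preceding Lemma supplies, so the main obstacle---pinning down the geometry of the refined graph and the associated dimension vector---has already been dispatched; everything after that is a direct translation of the $R$-argument in Proposition \ref{relation of R}.
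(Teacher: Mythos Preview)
Your proposal is correct and matches the paper's approach exactly: the paper states this proposition without an explicit proof, relying entirely on the analogy with Proposition~\ref{relation of R} (it simply writes ``Similar to Proposition~\ref{relation of R}, we have the following result''), and your argument carries out precisely that analogy. The one-dimensional orthogonal complement, the trace ratio $\delta_b:1$ read from $\Omega_n$ (which the paper notes shares its vertex structure and dimension vector with the refined dual graph of Lemma~\ref{refined dual principal graph}), the normalization $T=\delta_b^{-1}P_2-\delta_b^{-2}P_1$, and the verification of (1)--(4) all transfer directly; in particular, total uncappability of the lowest weight vector was already established in the discussion preceding the proposition, so your invocation of it for (3) is exactly what the paper intends.
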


Note that the dual of $\mathscr{B}$ is still a composition of two $A_4$ planar algebras. So the refined dual principal graph is the same as $\Omega_n$.
Then there is a lowest weight vector $T'\in\mathscr{B}_{n,-}$ satisfying similar propositions.
Solving this generators $T,T'$ in the graph planar algebra is the same as solving $R$ for the compositions of $A_3$ with $A_4$, while the rotation is replaced by the Fourier transform.
Therefore we have the following result.

\begin{theorem}
There are exactly four subfactor planar algebras as a composition of two $A_4$ planar algebras.
\end{theorem}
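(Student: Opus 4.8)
The plan is to run the same three-part argument — dichotomy, obstruction for large $n$, and existence/uniqueness for small $n$ — that settled the $A_3$–$A_4$ case. First I would invoke the preceding lemma: either $\mathscr{B}$ is Fuss–Catalan, in which case it is $FC(\delta_b,\delta_b)$ and contributes one planar algebra, or its refined principal graph is $\Omega_n$ for some $n\geq 1$. This reduces the whole problem to determining for which $n$ the graph $\Omega_n$ is actually realized, and by how many planar algebras.

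Second, to rule out $n\geq 4$, I would embed $\mathscr{B}$ into the graph planar algebra $\mathscr{G}$ of $\Omega_n$ via the embedding theorem and study the lowest weight vector $T\in\mathscr{B}_{n,+}$, which by Proposition \ref{relation of R1} satisfies the analogues of the relations (1')–(4'). The crucial observation, already noted in the text, is that $\Omega_n$ shares its black and white vertices and its dimension vector with the refined dual principal graph $\Gamma_n$ of the $n$th fish factor from Lemma \ref{refined dual principal graph}. Consequently the coefficient-chasing machinery — the cusp/flat point analysis, the isomorphism $\beta$, and the $F*$-invariant decomposition — transfers verbatim, and the loop coefficients $C_T$ obey exactly the same recursions computed in Lemmas \ref{d3}, \ref{d5}, \ref{d8}, \ref{d22}, \ref{etak} and \ref{dfinal}, with the two-click rotation $\rho$ replaced throughout by the Fourier transform $\mathcal{F}$. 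These yield the same periodicity-$5$ and periodicity-$20$ formulas, and comparing the coefficients of the two distinguished families of loops against the symmetry relation (4) produces the same contradiction for every $n\geq 4$.

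Third, for $n=1,2,3$ I would establish existence and uniqueness. Existence is already supplied by the reduced subfactors: $\Omega_n$ is the refined principal graph of the reduction at the vertex $a_3$ in the dual principal graph of the $n$th fish factor, whose existence was proved in the previous section. Uniqueness follows from the Fuss–Catalan Jellyfish argument of Theorem \ref{uniqueness}: the generators $T$ and $T'$ satisfy Jellyfish relations of the form given by Lemmas \ref{R jellyfish} and \ref{S jellyfish}, so the planar algebra is generated by $T$ under determined relations and is therefore unique up to isomorphism, once the phases $\mu_1,\mu_2,\omega_0$ are normalized exactly as before. Together with the Fuss–Catalan case this gives precisely $1+3=4$ planar algebras.

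The main obstacle is the transfer of the coefficient recursions from the rotation-based setting to the Fourier-based one. One must check carefully that replacing $\rho$ by $\mathcal{F}$ does not disturb the bookkeeping: here the relevant loops are governed by a generator at depth $n$ rather than $2n$, so the parity of $n$ and the one-click (rather than two-click) symmetry can introduce phase and sign factors in the initial conditions and in relation (4) that must be tracked. Once it is verified that $\Omega_n$ and $\Gamma_n$ agree on the portion of the graph driving the recursion, and that the Fourier symmetry is correctly normalized, the remainder of the computation is identical; establishing that agreement and the correct normalization is where the real work lies.
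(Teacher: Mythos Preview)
Your proposal is correct and follows essentially the same route as the paper: reduce to the dichotomy Fuss--Catalan or $\Omega_n$, transfer the entire coefficient machinery from the $A_3$--$A_4$ case with $\rho$ replaced by $\mathcal{F}$ to rule out $n\geq 4$, and then invoke the reduced-subfactor construction for existence and the Fuss--Catalan Jellyfish argument (via Wenzl's formula) for uniqueness when $n=1,2,3$. The paper adds one small clarification you only gesture at: in the uniqueness step it pins down $\omega=1$ by observing $T=\mathcal{F}(T')=\rho(T)$ from the coefficient of the rotation-invariant loop, but otherwise your outline and your identification of the Fourier-vs-rotation bookkeeping as the only real subtlety are exactly on target.
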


\begin{proof}
Suppose $\mathscr{B}$ is a planar algebra as a composition of two $A_4$ planar algebras.
If $\mathscr{B}$ is not Fuss-Catalan, then there is a lowest weight vector $T\in\mathscr{B}_{n,+}$ satisfying proposition (1)(2)(3)(4), and $T'\in\mathscr{B}_{n,+}$ satisfying similar propositions.
Comparing with the process of solving $R$ in the graph planar algebra for the composition of $A_3$ and $A_4$, we have the $\Omega_n$, for $n\geq4$, is not the refined principal graph of a subfactor.

For $n=1,2,3$, three examples are known as reduced subfactors. We only need to prove the uniqueness.
Similar to the proof of Theorem \ref{uniqueness}, by comparing the coefficient of the rotation invariant loop, we have $T=\mathcal{F}(T')=\rho(T)$. So $\omega=1$.
Furthermore the linear subspaces $V_\pm$ of $\mathscr{B}_{n+1,\pm}$ generated by annular Fuss-Catalan tangles acting on $T$ satisfy Fuss-Catalan Jellyfish relations, which are derived from Wenzl's formula similar to Lemma \ref{S jellyfish} and Theorem \ref{uniqueness}. Therefore the subfactor planar algebra is unique.
\end{proof}

Similarly we may construct the generators $(T,T')$ in the graph planar algebra.
The skein theoretic construction of the three subfactor planar algebras could be realized by the Fuss-Catalan Jellyfish relations of the generating vector space $V_\pm$.

\appendix
\section{the initial conditions}
Up to the rotation, we only need $C_R(l)$ for a loop $l\in A_k^+\mathscr{G}_{2n,+}A_k^+$.
Now we list of results up to adjoint for $1\leq k\leq 4$. They are obtained by a direct computation by Lemma(\ref{coefficient R P})(\ref{coefficient P U}).

When $n\geq1$,

$C_R([a_1a_{2n+1}])=\delta_b^{-3}.$

When $n\geq2$,

$C_R([a_3a_{2n+3}])=\delta_b^{-3};$

$C_R([a_3a_{2n+1}a_{2n-1}a_{2n+1}])=\delta_b^{-5}.$

When $n\geq3$,

$C_R([a_5a_{2n+5}])=\delta_b^{-3};$

$C_R([a_5a_{2n+1}a_{2n-3}a_{2n+1}])=\delta_b^{-5};$

$C_R([a_5a_{2n+1}a_{2n-1}a_{2n+1} a_{2n-1}a_{2n+1}])=-\delta_b^{-8};$

$C_R([a_5a_{2n+1}a_{2n-1}a_{2n+3}])=\delta_b^{-5.5}.$

When $n\geq4$,

$C_R([a_7a_{2n+7}])=\delta_b^{-3};$

$C_R([a_7a_{2n+1}a_{2n-5}a_{2n+1}])=\delta_b^{-5};$

$C_R([a_7a_{2n+3}a_{2n-1}a_{2n+3}])=0;$

$C_R([a_7a_{2n+3}a_{2n-1}a_{2n+1}a_{2n-1}a_{2n+1}])=\delta_b^{-7.5};$

$C_R([a_7a_{2n+1}a_{2n-1}a_{2n+1}a_{2n-1}a_{2n+1}a_{2n-1}a_{2n+1}])=\delta_b^{-11};$

$C_R([a_7a_{2n+1}a_{2n-3}a_{2n+1}a_{2n-1}a_{2n+1}])=-\delta_b^{-8.5};$

$C_R([a_7a_{2n+1}a_{2n-3}a_{2n+3}])=\delta_b^{-6};$

$C_R([a_7a_{2n+5}a_{2n-1}a_{2n+1}])=\delta_b^{-5.5};$

$C_R([a_7a_{2n+1}a_{2n-1}a_{2n+3}a_{2n-1}a_{2n+1}])=-\delta_b^{-8}.$




\bibliography{bibliography}

\providecommand{\bysame}{\leavevmode\hbox to3em{\hrulefill}\thinspace}
\providecommand{\MR}{\relax\ifhmode\unskip\space\fi MR }
\providecommand{\MRhref}[2]{%
  \href{http://www.ams.org/mathscinet-getitem?mr=#1}{#2}
}
\providecommand{\href}[2]{#2}
\begin{thebibliography}{GdlHJ89}

\bibitem[AH99]{AsaHaa}
M.~Asaeda and U.~Haagerup, \emph{Exotic subfactors of finite depth with jones
  indices $(5+\sqrt{13})/2$ and $(5+\sqrt{17})/2$}, Commun. Math. Phys.
  \textbf{202} (1999), 1--63.

\bibitem[BH96]{BisHaa96}
D.~Bisch and U.~Haagerup, \emph{Composition of subfactors: new examples of
  infinite depth subfactors}, Ann. Sci. Ecole Norm. Sup. \textbf{29} (1996),
  329--383.

\bibitem[Bis97]{Bis97}
D.~Bisch, \emph{Bimodules, higher relative commutants and the fusion algebra
  associated to a subfactor}, Operator algebras and their applications
  (Waterloo, ON, 1994/1995), 13-63, Fields Inst. Commun., 13, Amer. Math. Soc.,
  Providence, RI, 1997.

\bibitem[BJ]{BisJonfree}
D.~Bisch and V.~F.~R. Jones, \emph{The free product of planar algebras, and
  subfactors}, unpublished.

\bibitem[BJ97]{BisJonFC}
\bysame, \emph{Algebras associated to intermediate subfactors}, Invent. Math.
  \textbf{128} (1997), 89--157.

\bibitem[BMPS12]{BMPS}
S.~Bigelow, S.~Morrison, E.~Peters, and N.~Snyder, \emph{Constructing the
  extended {H}aagerup planar algebra}, Acta Math. (2012), 29--82.

\bibitem[GdlHJ89]{GHJ}
F.~Goodman, P.~de~la Harpe, and V.F.R. Jones, \emph{Coxeter graphs and towers
  of algebras}, vol.~14, Springer-Verlag, MSRI publications, 1989.

\bibitem[Har]{Harth}
M.~Hartglass, \emph{The {GJS} construction for {F}uss {C}atalan and {N-P-M}
  {P}lanar algebras}, Dissertation Thesis.

\bibitem[IMP]{IMD}
M.~Izumi, S.~Morrison, and D.~Penneys, \emph{Fusion categories between
  $\mathcal{C} \boxtimes \mathcal{D}$ and $\mathcal{C} * \mathcal{D}$}.

\bibitem[Izu01]{Izu01}
M.~Izumi, \emph{The structure of sectors associated with {L}ongo-{R}ehren
  inclusions {II}. {E}xamples}, Rev. Math. Phys. \textbf{13} (2001), 603--674.

\bibitem[Jon]{JonPA}
V.~F.~R. Jones, \emph{Planar algebras, {I}}, arXiv:math.QA/9909027.

\bibitem[Jon83]{Jon83}
\bysame, \emph{Index for subfactors}, Invent. Math. \textbf{72} (1983), 1--25.

\bibitem[Jon00]{Jon00}
\bysame, \emph{The planar algebra of a bipartite graph}, Knots in {H}ellas '98
  ({D}elphi), 94-117, Ser. Knots Everything, 24, World Sci. Publ., River Edge,
  NJ, 2000.

\bibitem[Jon01]{Jonann}
\bysame, \emph{The annular structure of subfactors}, Essays on geometry and
  related topics, {V}ol. 1, 2, Monogr. Enseign. Math., vol.~38, Enseignement
  Math., Geneva, 2001, pp.~401--463.

\bibitem[Jon12]{Jon12}
\bysame, \emph{Quadratic tangles in planar algebras}, Duke Math. J.
  \textbf{161} (2012), 2257--2295.

\bibitem[JP11]{JonPen}
V.~F.~R. Jones and D.~Penneys, \emph{The embedding theorem for finite depth
  subfactor planar algebras}, Quantum Topol. \textbf{2} (2011), 301--337.

\bibitem[JR06]{JonRez}
V.~F.~R. Jones and S.~Reznikoff, \emph{Hilbert space representations of the
  annular temperley-lieb algebra}, Pacific J. Math. \textbf{228,} (2006), no.
  2, 219--249.

\bibitem[JS97]{JS}
V.~F.~R. Jones and V.S. Sunder, \emph{Introduction to subfactors}, vol. 234,
  Cambridge University Press, 1997.

\bibitem[Lan02]{Lan02}
Z.~Landau, \emph{Exchange relation planar algebras}, Geometriae Dedicata
  \textbf{95} (2002), 183--214.

\bibitem[Liua]{LiuAFC}
Z.~Liu, \emph{Hilbert space representations of the annular {F}uss-{C}atalan
  algebra}.

\bibitem[Liub]{Liuex}
\bysame, \emph{Planar algebras of small thickness}.

\bibitem[MP]{MorPet12}
S.~Morrison and E.~Peters, \emph{The little desert? some subfactors with index
  in the interval $(5;3 + \sqrt{5})$}, arXiv:1205.2742v1.

\bibitem[MW10]{MorWal}
S.~Morrison and K.~Walker, \emph{The graph planar algebra embedding theorem},
  2010, \href{http://tqft.net/gpa}{tqft.net/gpa}.

\bibitem[Ocn88]{Ocn88}
A.~Ocneanu, \emph{Quantized groups, string algebras and {G}alois theory for
  algebras}, Operator algebras and applications, Vol.\ 2, London Math. Soc.
  Lecture Note Ser., vol. 136, Cambridge Univ. Press, Cambridge, 1988,
  pp.~119--172.

\bibitem[Pet10]{Pet10}
E.~Peters, \emph{A planar algebra construction of the {H}aagerup subfactor},
  International Journal of Mathematics \textbf{21} (2010), 987--1045.

\bibitem[Pop90]{Pop90}
S.~Popa, \emph{Classification of subfactors: reduction to commuting squares},
  Invent. Math. \textbf{101} (1990), 19--43.

\bibitem[Pop94]{Pop94}
\bysame, \emph{Classification of amenable subfactors of type {II}}, Acta Math.
  \textbf{172} (1994), 352--445.

\bibitem[Pop95]{Pop95}
\bysame, \emph{An axiomatization of the lattice of higher relative commutants},
  Invent. Math. \textbf{120} (1995), 237--252.

\bibitem[PP]{PetPen}
D.~Penneys and E.~Peters, \emph{Computing two-strand jellyfish relations},
  arXiv:1308.5197.

\bibitem[Wen87]{Wen87}
H.~Wenzl, \emph{On sequences of projections}, C. R. Math. Rep. Acad. Sci.
  Canada \textbf{9(1)} (1987), 5--9.

\end{thebibliography}
\bibliographystyle{amsalpha}

\end{document}